\title{Bi-Lagrangian structures and Teichmüller theory}
\author{Brice Loustau\footnote{Heidelberg University, Mathematisches Institut. 69120 Heidelberg, Germany. \newline
E-mail: \url{bloustau@mathi.uni-heidelberg.de}} ~ 
and Andrew Sanders\footnote{Heidelberg University, Mathematisches Institut. 69120 Heidelberg, Germany. \newline
E-mail: \url{asanders@mathi.uni-heidelberg.de}}}
\date{} 
\begin{document}

\pdfbookmark[1]{Title page, abstract}{Title}
\hypersetup{pageanchor=false}
\begin{titlepage}
\maketitle
\thispagestyle{empty}

\bigskip \bigskip
\bigskip \bigskip

\begin{abstract}

In this paper, we review or introduce several differential structures on manifolds in the general setting of real and complex differential geometry, and apply this study to Teichmüller theory. We focus on bi-Lagrangian \ie{} para-Kähler structures, 
which consist of a symplectic form and a pair of transverse Lagrangian foliations. We prove in particular that the complexification of a real-analytic Kähler manifold has a natural complex bi-Lagrangian structure. We then specialize to moduli spaces of geometric structures on closed surfaces, and show that old and new geometric features of these spaces are formal consequences of the general theory. We also gain clarity on several well-known results of Teichmüller theory by deriving them from pure differential geometric machinery.

\bigskip \bigskip
\bigskip \bigskip

\noindent \textbf{Key words and phrases:}
Bi-Lagrangian $\cdot$ 
para-Kähler $\cdot$ 
complex geometry $\cdot$
symplectic geometry of moduli spaces $\cdot$
Teichmüller theory $\cdot$
quasi-Fuchsian $\cdot$
hyper-Kähler

\bigskip

\noindent \textbf{2000 Mathematics Subject Classification:} 
Primary: 
53C15; 
Secondary: 
30F60 $\cdot$ 
32Q15 $\cdot$ 
53B05 $\cdot$ 
53B35 $\cdot$ 
53C26 $\cdot$ 
53D05 $\cdot$ 
53D30 $\cdot$ 
57M50 

\end{abstract}

\end{titlepage}
\hypersetup{pageanchor=true}

\thispagestyle{empty}
\setcounter{tocdepth}{2}
\pdfbookmark[1]{Contents}{Contents}  
{\tableofcontents}
\addtocounter{page}{1}

\cleardoublepage\phantomsection 
\section*{Introduction}
\addcontentsline{toc}{section}{Introduction}

A Lagrangian foliation of a symplectic manifold $(M, \omega)$ is a foliation by Lagrangian submanifolds, \ie{} maximally isotropic submanifolds for $\omega$. These play a major role in geometric quantization where they are known as real polarizations (see \eg{} \cite{MR1806388}).
Weinstein \cite{MR0286137} showed that each leaf of a Lagrangian foliation admit a natural affine structure given by a flat torsion-free connection
called the \emph{Bott connection}.
The Bott connection is only defined tangentially to the foliation; it can always be extended to a full connection but in general there is no preferred extension. However, if $(M, \omega)$ is given \emph{two} transverse Lagrangian foliations $\mathcal{F}_1$ and $\mathcal{F}_2$, there exists a unique 
symplectic connection $\nabla$ on $M$
which extends the Bott connection of each foliation. This was first proved by He{\ss} \cite{MR607691, HessPhD}, who studied symplectic connections
in polarized symplectic manifolds as a tool for geometric quantization. 
In this scenario $(\omega, \mathcal{F}_1,\mathcal{F}_2)$ is called a bi-Lagrangian structure.

Bi-Lagrangian structures can alternatively be described as para-Kähler structures via an approach that may be more appealing to complex geometers.
Note that on a smooth manifold $M$, the data of an ordered pair of transverse distributions $L_1, L_2 \subset \upT M$ of the same dimension is equivalent to a traceless endomorphism $F$ of $\upT M$ such that $F^2 = \mathbf{1}$, called an \emph{almost para-complex} structure.
As for an almost complex structure, 
there is a notion of integrability for $F$, which amounts to the involutivity of both distributions $L_1$ and $L_2$.
By the Frobenius theorem, this is equivalent to $L_1$ and $L_2$ being the tangent bundles to two transverse foliations $\mathcal{F}_1$ and $\mathcal{F}_2$. The condition that $\mathcal{F}_1$ and $\mathcal{F}_2$
are both Lagrangian is equivalent to $g(u,v) \coloneqq \omega(Fu, v)$ being symmetric, in which case it is a pseudo-Riemannian metric of signature $(n,n)$, called a neutral metric. Therefore, a bi-Lagrangian structure
 $(\omega, \mathcal{F}_1,\mathcal{F}_2)$ is equivalent to a \emph{para-Kähler structure} $(g, F, \omega)$.

Thus, bi-Lagrangian structures lie at the intersection of symplectic, para-complex, and neutral pseudo-Riemannian geometry. They also relate
to affine differential geometry via the Bott connection. Para-complex geometry is perhaps the least heavily investigated of these fields, 
although it has long been on mathematicians' radars (\cite{MR0056351}, \cite{MR0048893}). It shares many similarities with complex geometry, 
with the algebra of para-complex numbers playing the role of complex numbers (the imaginary unit squares to $1$ instead of $-1$), with some important differences nonetheless. 
Interested readers may refer to \cite{MR1386154} and \cite[Chapter 15]{MR2605651}  for expositions on para-complex geometry.

In this paper we are interested in the analog of bi-Lagrangian structures in the complex setting, which is essentially absent from the existing literature.
Hence, we review complex symplectic structures and complex Lagrangian foliations, introduce complex Bott connections and complex bi-Lagrangian connections, \etc{} The algebra of bicomplex numbers naturally appears when 
one introduces para-complex structures in the complex setting.
This is the algebra over the reals generated by three units $i$, $j$, $f$, such that $i^2 = j^2 = -f^2 = -1$ and $ij = ji = f$. 
The fact that any two of the three units are sufficient to generate this algebra enables several characterizations of complex bi-Lagrangian structures,
encapsulated by a \emph{bicomplex Kähler} structure.

In section \ref{sec:BiLagrangianComplexification} we focus on the complexification of a
real-analytic Kähler manifold, and show that it has a canonical complex bi-Lagrangian structure (\autoref{thm:BiLagComplexificationKahler}). 
Recall that any real-analytic manifold $N$ admits a complexification, unique up to germ. When $N$ is a complex manifold, one can define its complexification as $N^c \coloneqq N \times \overline{N}$, where $N$ is diagonally embedded. In particular, there is a canonical pair $(\mathcal{F}_1, \mathcal{F}_2)$ of transverse foliations in $N^c$, which amounts to a bicomplex structure (\autoref{thm:BicomplexInComplexification}).
When $N$ is additionally Kähler, its symplectic structure extends as a complex symplectic structure on $N^c$ which gives a complex bi-Lagrangian structure. This structure can be identified to the bicomplex Kähler structure complexifying the Kähler structure (\autoref{thm:BicomplexKahlerInComplexification}).
After giving a quick application (\autoref{corollary:CKA} and \autoref{corollary:CKB}), we conclude this section by explicitly working out the example of $\CP^1$ 
with the Fubini-Study metric. (NB: \autoref{fig:diagram} below summarize somes of the interrelationships between all the structures discussed so far.)

Delaying for the moment sections \ref{sec:TeichmullerBackground} and \ref{sec:TeichmullerApplications} on Teichmüller theory, in the final section \ref{sec:HyperKahler} we push further 
the study of the complexification
of a Kähler manifold, constructing a natural almost hyper-Hermitian structure. This is the quaternionic analog of a Hermitian structure,
and is called hyper-Kähler when integrable. The Calabi metric \cite{MR543218} in $\upT^* \CP^n$, previously discovered by Eguchi-Hanson \cite{MR540896} for $n=1$, was the first nontrivial example of a hyper-Kähler structure on a noncompact manifold. Feix \cite{FeixThesis} and Kaledin \cite{MR1815021} independently
proved that, more generally, there is a canonical hyper-Kähler structure in the cotangent bundle of any Kähler manifold, though typically only defined
in a neighborhood of the zero section. It can be transported from the cotangent bundle to the complexification, 
although the resulting hyper-Kähler structure has never been properly characterized (\autoref{qu:FKComp}). We were initially hoping to recover
Feix-Kaledin in a more tangible way with our construction, but soon realized that our almost hyper-Hermitian structure is typically not integrable.
On the other hand, it is parallel and satisfies all the other sensible requirements for a hyper-Kähler extension of in the complexification (\autoref{thm:MainThmHK}). Moreover it is part of a \emph{biquaternionic structure} (\autoref{thm:BiQuaternionicHermitian}
and \autoref{cor:ParaHH}).

\bigskip

Let us now turn to the applications to Teichmüller theory, which were the initial motivation of our work.
Here we quote the first paragraph of the Foreword written by Papadopoulos--which we recommend reading \emph{in extenso}--in the Handbook of Teichmüller theory \cite{MR2284826} : 
\begin{quote}
In a broad sense, Teichmüller theory is the study of moduli spaces for geometric structures on surfaces. 
This subject makes important connections between several areas in mathematics, including low-dimensional topology, 
hyperbolic geometry, dynamical systems theory, differential geometry, algebraic topology, representations of discrete groups in Lie groups, symplectic geometry, topological quantum field
theory, string theory, and there are others.
\end{quote}
Complex geometry and (pseudo-)Riemannian geometry can surely be appended to Papadopoulos' list; these features of Teichmüller 
theory are the focus of the present paper.

Let $S$ be a closed oriented surface of negative Euler characteristic. The Teichmüller space of $S$ is the space of isotopy classes
of complex structures on $S$. This space and its complex-analytic structure were originally constructed and studied by Oswald Teichmüller in the 1930s.  
The naturality of its complex structure was fortified by the deformation theory of Kodaira-Spencer \cite{MR0112154} and the algebraic approach of 
Grothendieck \cite{GrothendieckTechniques}. The complex-analytic theory of Teichmüller space was then studied intensely until the 1970s by Lars Ahlfors, Lipman Bers, and others. We refer to \cite{MR3021551} for a historical introduction.

The story with which this paper is concerned properly started when Weil \cite{MR0124485} realized that the Petersson pairing of automorphic forms provides a Hermitian structure on $\TS$ 
now known as the Weil-Petersson metric. Ahlfors \cite{MR0204641} soon proved that this Hermitian metric is Kähler for the natural complex structure on $\TS$, 
unleashing the possibility of a rich symplectic geometry\footnote{Weil already claimed this in \cite{MR0124485}, but in lieu of a proof he wrote that it is ``stupid computation''.}.

In the 1970s and 1980s, Teichmüller theory was deeply influenced 
by the singular geometric vision of William Thurston, and the emphasis partly shifted from complex analysis to hyperbolic geometry. 
It was eventually understood that the symplectic geometry of Teichmüller space is more intrinsic on the deformation space of hyperbolic 
structures on $S$, which we call the Fricke-Klein space $\FS$, and which is in bijection with $\TS$ by the Poincaré uniformization theorem.

The symplectic theory of $\TS$ and $\FS$ was beautifully developed by Wolpert in the 1980s \cite{MR657237, MR690844, MR796909}.
Wolpert showed that given a simple closed curve $\gamma$ on $S$, 
the Hamiltonian flow of the hyperbolic length function $l_\gamma \colon \FS \to \R$ is the flow on $\FS$ that consists in twisting the hyperbolic structure along $\gamma$. 
Wolpert's work shows that given a pants decomposition of $S$, the length functions of the pants curves define an integrable Hamiltonian system 
whose action-angle variables are the famous Fenchel-Nielsen coordinates relative to the pants decomposition.

The full extent of the naturality of the symplectic structure of deformation spaces relative to closed surfaces was established by Goldman \cite{MR762512}, following 
Atiyah-Bott \cite{MR702806}. Goldman showed that there is a natural symplectic structure 
on the character variety $\mathcal{X}(\pi_1(S), G)$ for any semisimple Lie group $G$, where the character variety is the space of closed conjugacy classes of group homomorphisms $\pi_1(S) \to G$. A consequence of Goldman's work is that 
the Fricke-Klein space $\FS$, the quasi-Fuchsian space $\QFS$, and 
the deformation space of complex projective structures $\CPS$ all enjoy natural real or complex symplectic structures, inherited from the character variety for $G = \PSL_2(\C)$.

The complex symplectic geometry of $\CPS$ was carefully studied by Loustau in \cite{MR3352248, MR3323643}.
We recall that a complex projective structure on $S$ is defined by an atlas on $S$ whose charts take values in $\CP^1$, and whose transition functions
are projective linear transformations. Since such an atlas is, in particular, a holomorphic atlas, there is a forgetful projection $p \colon \CPS \to \TS$.
One of the main results of \cite{MR3352248} is that the Schwarzian parametrization of $\CPS$ relative to any Lagrangian section of $p$
provides a symplectomorphism $\CPS \to \upT^* \TS$, generalizing a result of Kawai \cite{MR1386110}. 
Loustau also showed that the symplectic theory of $\FS$
developed by Wolpert holomorphically extends to quasi-Fuchsian space,  extending results of Platis \cite{MR1866841} and Goldman \cite{MR2094117}.

The present paper is in part a continuation of \cite{MR3352248} in that we apply general machinery from symplectic geometry 
to obtain clarifications of results in Teichmüller theory, which up to this point only had proofs relying on idiosyncratic features of the spaces in consideration. 
Furthermore, we discover new geometric features of these deformation spaces.

One simple yet key observation is that quasi-Fuchsian space $\QFS$ is the complexification of Teichmüller space $\TS$.
Without precisely defining $\QFS$ in this introduction, let us just remember that it is a complex manifold and that there is a ``simultaneous uniformization'' biholomorphism
$\TS \times \overline{\TS} \to \QFS$, whose restriction to the diagonal coincides with the uniformization map $\TS \isomap \FS$.
If we recall that $\TS \times \overline{\TS}$ is the canonical complexification of $\TS$, we see that simultaneous uniformization
is ``just'' the complexification of uniformization.
Since $\TS$ is a real-analytic Kähler manifold for the Weil-Petersson metric, its complexification $\QFS$ is a candidate for the general
theory of section \ref{sec:BiLagrangianComplexification}. Thus $\QFS$ admits a complex bi-Lagrangian structure, and we show that the canonical transverse Lagrangian foliations are the well-known foliations of $\QFS$ by Bers slices (\autoref{thm:QFBL}). Other features of this complex bi-Lagrangian structure are yet to be explored, such as its holomorphic Riemannian structure which we partly elucidate in \autoref{thm:QFHoloMetric}.
Note that since the hyper-Hermitian construction of section \ref{sec:HyperKahler} is different from the Feix-Kaledin hyper-Kähler structure, our bi-Lagrangian and hyper-Hermitian picture of quasi-Fuchsian space is not the same as the hyper-Kähler structure beautifully constructed by Donaldson in \cite{MR2039989} and further described in \cite{HodgePHD} and \cite{Trautwein2018}.

Let us conclude this introduction by listing the other applications of section \ref{sec:TeichmullerApplications}:
\begin{itemize}
 \item The Weil-Petersson metric can be defined on Fricke-Klein space $\FS$ via pure complex symplectic geometry, without using the uniformization theorem (\autoref{subsec:WPredef}).
 \item The remarkable affine structure in the fibers of the forgetful projection $p \colon \CPS \to \TS$, which is classically described via the Schwarzian derivative,
 coincides with the Bott affine structure in the leaves of a Lagrangian foliation (\autoref{thm:SchwarzianAffine}).
 \item The family of affine structures on Teichmüller space provided by the Bers embeddings
 coincides with the family of affine structures on a real-analytic Kähler manifold induced by the bi-Lagrangian structure in its complexification (\autoref{thm:TSAffine}).
 \item The derivative of the Bers embedding at the origin is equal to -1/2 times the musical isomorphism induced by the Weil-Petersson metric (\autoref{thm:DerivativeBersEmbedding}).
\end{itemize}

\bigskip

\noindent \emph{A note to the reader:} The segment of the paper dealing with general differential geometry (sections \ref{sec:LagrangianAffine},
\ref{sec:BiLagrangian}, \ref{sec:BiLagrangianComplexification}, \ref{sec:HyperKahler}) and the segment on Teichmüller theory (sections \ref{sec:TeichmullerBackground}, \ref{sec:TeichmullerApplications}) are quite dissimilar in flavor and may 
appeal to different audiences. We tried to make both expositions self-contained while trying to keep a reasonable length;
it is inevitable that parts of the paper will feel too detailed and others too condensed to different readers.

\phantomsection 
\subsection*{Acknowledgments}
\addcontentsline{toc}{section}{Acknowledgments}

The authors wish to thank
Charles Boubel,
David Dumas,
Jonah Gaster,
Bill Goldman,
Nigel Hitchin,
John Loftin,
Curt McMullen,
Jean-Marc Schlenker, and
Leon Takhtajan
for valuable conversations and correspondences pertaining to this work.

%

Both authors gratefully acknowledge research support from the NSF Grant DMS1107367 \emph{RNMS: GEometric structures And Representation varieties} (the \emph{GEAR Network}).

\section{Lagrangian foliations and affine structures} \label{sec:LagrangianAffine}

In this section, we review basics of Lagrangian foliations, define affine structures and discuss Bott connections, both in the real and in the complex setting. 
Further reliable references for the real setting include \cite{MR0286137}, \cite{MR1038491} and \cite{MR3130569}.

\subsection{Lagrangian foliations}

Let $M$ be a smooth manifold. A \emph{symplectic structure} on $M$ is a $2$-form $\omega\in \Omega^{2}(M, \mathbb{R})$ which is 
closed ($\upd \omega = 0$) and nondegenerate as a bilinear form on $\upT M$.
A \emph{Lagrangian submanifold} is a smooth embedded submanifold $\iota : N \hookrightarrow M$ which is maximally isotropic, \ie{} $\iota^{*}\omega=0$ and  $\dim N = \dim M/2$. A \emph{Lagrangian foliation} of $M$ is a foliation $\mathcal{F}$ of $M$ with Lagrangian leaves.

If $M$ is a complex manifold, a \emph{complex symplectic structure} $\omega$ is a closed nondegenerate $(2,0)$-form.
This definition implies that $\omega$ is holomorphic: it is locally written $\omega = \omega_{ij} \upd z^i \wedge \upd z^j$ 
in a complex chart $(z^i)$ on $M$, where the $\omega_{ij}$ are holomorphic functions. Note that we use the Einstein notation that implies summation over repeated indices.
In this category, a \emph{complex Lagrangian submanifold} is a holomorphically embedded half-dimensional submanifold $\iota : N \hookrightarrow M$ such that $\iota^{*}\omega=0$.
Note that both the real and imaginary parts of $\omega$ are real symplectic structures on $M$, and a complex Lagrangian submanifold
is real Lagrangian for both $\Real(\omega)$ and $\Imag(\omega)$.
A \emph{complex Lagrangian foliation} is a foliation $\mathcal{F}$ of $M$ with complex Lagrangian leaves.

In either the smooth or complex setting, let $p: M \to B$ be a fiber bundle where $(M,\omega)$ is a symplectic manifold.
The fiber bundle $p:M\to B$ is called a \emph{Lagrangian fibration} if for every $b\in B,$ the fiber $p^{-1}(b)$ is Lagrangian in $M$. 
Therefore, the total space of any Lagrangian fibration has a Lagrangian foliation. 
Conversely, given a symplectic manifold with a Lagrangian foliation,
projection to the leaf space yields a Lagrangian fibration in any sufficiently small open set.

\begin{example}[Cotangent bundle] \label{ex:CotangentBundle}
A fundamental example of a symplectic manifold and Lagrangian fibration is the cotangent bundle
of any manifold. Let $N$ be a manifold either in the smooth or complex category. Let $M = \upT ^*N$ denote the total space of the cotangent bundle $p : \upT ^* N \to N$. 
NB: In the complex setting, we let $\upT ^*N$ denote the holomorphic cotangent bundle \ie{} the complex dual of $\upT ^{(1, 0)} N$.
Then $M$ admits a \emph{canonical $1$-form} $\xi$ and a \emph{canonical symplectic structure} $\omega$. These are defined by $\xi_{\alpha} = p^* \alpha$ for any $\alpha \in M$, and $\omega = \upd\xi$. 
Choosing local coordinates $(q^1, \dots, q^n)$ on $N$ defines natural coordinates $(q^1, \dots,  q^n, p_1, \dots, p_n)$ on $M = \upT ^{*}N$, by writing a generic covector $p_i \, \upd q^i \in \upT ^{*}N$. 
In these coordinates, the canonical
$1$-form and symplectic structure are given by $\xi = p_i \, \upd q^i$ and $\omega = \upd p_i \wedge \upd q^i$\footnote{Some authors' definition
of the canonical symplectic structure $\omega$ differs from ours by a minus sign.}.
One quickly checks that the bundle projection $p:\upT ^{*}N\rightarrow N$ is a Lagrangian fibration for the canonical symplectic structure.
\end{example}

Lagrangian foliations satisfy the following structure theorems, \emph{both in the real and in the complex categories}:

\begin{theorem}[Theorem of Darboux-Lie] \label{thm:Darboux-Lie}
Let $(M,\omega)$ be a symplectic manifold with a Lagrangian foliation $\mathcal{F}$. There exists local coordinates
$(q^1, \dots , q^n, p_1, \dots, p_n)$ near any point such that:
\begin{enumerate}[(i)]
\item $(q^1, \dots , q^n, p_1, \dots, p_n)$ are Darboux coordinates for the symplectic structure up to sign:
 \begin{equation}
 \omega = \upd p_i \wedge \upd q^i~.
 \end{equation}
 \item The leaves of the foliations are the level sets of $q = (q_1, \dots, q_n)$.
\end{enumerate}
\end{theorem}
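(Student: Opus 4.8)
The plan is to combine the local normal form of a foliation with the classical Carathéodory–Jacobi–Lie straightening argument, the one conceptual input being that the Lagrangian hypothesis translates into an involutivity condition on the ``position'' functions. First I would invoke the local structure theorem for foliations (equivalently the Frobenius theorem, valid verbatim in both the smooth and holomorphic categories) to produce, on a sufficiently small $U$, a submersion $q = (q^1, \dots, q^n)$ whose fibres are exactly the leaves of $\mathcal{F}$. This already secures conclusion (ii), so the entire content of the theorem reduces to constructing complementary coordinates $p_i$ that bring $\omega$ to the standard form (the phrase ``up to sign'' merely reflects the choice between $\upd p_i \wedge \upd q^i$ and $\upd q^i \wedge \upd p_i$, which is absorbed into a sign convention for Hamiltonian vector fields).

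The key step is to observe that the Lagrangian condition is equivalent to the $q^i$ being in involution, $\{q^i, q^j\} = 0$. Indeed, the leaf through a point has tangent space $\ker \upd q = \bigcap_i \ker \upd q^i$, whose $\omega$-orthogonal is spanned by the Hamiltonian vector fields $X_{q^i}$ defined by $\iota_{X_{q^i}} \omega = \upd q^i$; the leaf being Lagrangian means it equals its own $\omega$-orthogonal, so each $X_{q^i}$ is tangent to the leaves, whence $\{q^i, q^j\} = \omega(X_{q^i}, X_{q^j}) = \upd q^i(X_{q^j}) = 0$. Consequently the $X_{q^i}$ are $n$ pointwise independent vector fields tangent to the leaves, and they commute because $[X_{q^i}, X_{q^j}] = X_{\{q^i, q^j\}} = 0$. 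I would then build the $p$-coordinates by flowing: choose an $n$-dimensional transversal $\Sigma$ on which $q$ restricts to a coordinate system, and define $p = (p_1, \dots, p_n)$ so that the point with coordinates $(q, p)$ is obtained from the point of $\Sigma$ carrying the prescribed $q$-value by applying the commuting flows of the $X_{q^i}$ for times $p_i$. Since $X_{q^j} q^i = \{q^i, q^j\} = 0$, the $q^i$ are constant along these flows, so $(q, p)$ is a genuine chart in which $\partial/\partial p_i = X_{q^i}$.

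In this chart the identity $\iota_{\partial/\partial p_i} \omega = \upd q^i$ yields at once $\omega(\partial_{p_i}, \partial_{p_j}) = 0$ and $\omega(\partial_{p_i}, \partial_{q^j}) = \delta^i_j$, so $\omega = \upd p_i \wedge \upd q^i + \beta$ where $\beta = \tfrac12\, \omega(\partial_{q^i}, \partial_{q^j})\, \upd q^i \wedge \upd q^j$ involves only the $\upd q^i$. The last step, which I expect to be the main obstacle to carry out cleanly, is to absorb $\beta$: since $\upd\omega = 0$ and $\upd(\upd p_i \wedge \upd q^i) = 0$ the form $\beta$ is closed, and as it is annihilated by each $\iota_{\partial/\partial p_i}$ and invariant under the corresponding flows, it is the pullback under $q$ of a closed $2$-form on the $q$-space. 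The Poincaré lemma (in its holomorphic version in the complex case) writes this pullback as $\upd(a_i(q)\, \upd q^i)$, and replacing $p_i$ by $\tilde p_i = p_i + a_i(q)$ gives $\omega = \upd \tilde p_i \wedge \upd q^i$ while preserving the leaves, because the new momenta differ from the old by a function of $q$ alone. The complex case is identical throughout: the foliation charts, the Hamiltonian vector fields, and their flows are all holomorphic, and the holomorphic Poincaré lemma supplies the final normalization.
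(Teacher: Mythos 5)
The paper itself offers no proof of this theorem: it is stated as classical background (the Darboux--Lie theorem), with the reader implicitly referred to the sources cited at the head of Section 1 (Weinstein \cite{MR0286137}, \cite{MR1038491}, \cite{MR3130569}). So the comparison here is between your argument and the standard literature proof, and on that score your proposal is correct and is essentially the classical Carath\'eodory--Jacobi--Lie straightening argument. The logical skeleton is sound: Frobenius gives the submersion $q$ and conclusion (ii); the Lagrangian hypothesis is correctly converted into $\{q^i,q^j\}=0$ via $T\mathcal{F} = (T\mathcal{F})^{\omega}$, hence the Hamiltonian fields $X_{q^i}$ are independent, tangent to the leaves, and commuting; flowing from a transversal produces a chart with $\partial/\partial p_i = X_{q^i}$; and the residual term $\beta = \tfrac12\,\omega(\partial_{q^i},\partial_{q^j})\,\upd q^i\wedge \upd q^j$ is basic (closed, horizontal, and $\mathcal{L}_{\partial_{p_k}}\beta = \upd\,\iota_{\partial_{p_k}}\beta + \iota_{\partial_{p_k}}\upd\beta = 0$), so the Poincar\'e lemma on the leaf space absorbs it by the shift $p_i \mapsto p_i + a_i(q)$, which is triangular and so preserves both the chart property and the leaves. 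Two remarks. First, the step you flagged as the likely obstacle is in fact the easiest to dispose of; an alternative is to choose the transversal $\Sigma = \{p=0\}$ Lagrangian from the start, which forces $\beta\equiv 0$, but producing a Lagrangian transversal requires its own argument, so your Poincar\'e-lemma correction is the cleaner route. Second, in the holomorphic category the phrase ``flowing for times $p_i$'' means the local holomorphic flow with complex time, which exists and depends holomorphically on initial conditions; since the paper explicitly invokes this theorem in both the smooth and complex settings (and later relies on it through the complex Bott connection), it is worth stating this point as carefully as you did the holomorphic Poincar\'e lemma.
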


\begin{remark}
The Darboux-Lie theorem implies that, given a Lagrangian submanifold $f: N\rightarrow M$ which is transverse to a Lagrangian foliation $\mathcal{F}$ of $M$, there is a canonical identification 
between a neighborhood of the zero section in $T^{*}N$ and a neighborhood of $f(N)\subset M.$
\end{remark}

\begin{theorem}[Weinstein's symplectic tubular neighborhood theorem \cite{MR0286137}]
Let $(M,\omega)$ be a symplectic manifold with a Lagrangian foliation $\mathcal{F}$ and let $\iota :N \hookrightarrow M$ be a Lagrangian submanifold transverse to $\mathcal{F}$. 
There exists a unique germ of a diffeomorphism $\phi: U\to V$ where $U$ is a neighborhood of $N$ in $M$
and $V$ is a neighborhood of the zero section in $\upT ^* N$ such that:
\begin{enumerate}[(i)]
\item $\phi$ is a fiber-preserving symplectomorphism.
\item $\phi \circ \iota$ is the zero section $N \to \upT^* N$.
\end{enumerate}
\end{theorem}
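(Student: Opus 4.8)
The plan is to bypass the usual Moser-theoretic argument and instead extract everything from the Darboux--Lie normal form, which was just established. The strategy has three stages: first produce the germ $\phi$ locally in adapted coordinates and verify (i)--(iii); second prove that any germ satisfying (i)--(iii) is locally unique; third observe that local uniqueness forces the local models to agree on overlaps, so they patch into a single germ along $f(N)$. Since transversality of $f(N)$ to $\mathcal{F}$ means that near $f(N)$ every point lies on a unique leaf meeting $f(N)$ exactly once, we obtain a leaf-space projection $\pi\colon U\to N$ (identifying $f(N)\cong N$ via $f$), and the three conditions amount to saying that $\phi$ covers $\pi$ under the bundle map $\upT^*N\to N$ (this is (iii) together with (i)), that $\phi$ sends $f(N)$ to the zero section, and that $\phi^*\omega_{\mathrm{can}}=\omega$.

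For local existence, fix a point of $f(N)$ and apply \autoref{thm:Darboux-Lie} to get coordinates $(q^1,\dots,q^n,p_1,\dots,p_n)$ with $\omega=\upd p_i\wedge\upd q^i$ and leaves $\{q=\mathrm{const}\}$. I would first normalize so that $f(N)=\{p=0\}$. Since $f(N)$ is transverse to the leaves it is locally a section $p=\psi(q)$, and since it is Lagrangian the $1$-form $\psi_i\,\upd q^i$ is closed, hence exact by the Poincaré lemma; the symplectic change of coordinates $\tilde p_i=p_i-\psi_i(q)$, $\tilde q^i=q^i$ then leaves both $\omega$ and the foliation unchanged while moving $f(N)$ onto $\{\tilde p=0\}$. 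In these adapted coordinates I define $\phi(q,p):=p_i\,\upd q^i\in\upT^*_qN$, using $q$ as coordinates on $N$ through $f$. Conditions (i) and (iii) are immediate, and (ii) holds because the canonical form of $\upT^*N$ pulls back to $\upd p_i\wedge\upd q^i=\omega$.

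For local uniqueness, suppose $\phi$ satisfies (i)--(iii) in such adapted coordinates. Condition (iii) forces the base component of $\phi$ to depend only on the leaf, i.e. only on $q$, and then (i) forces it to be $q$ itself; writing $\phi(q,p)=(q,P(q,p))$, condition (i) gives $P(q,0)=0$, while the symplectomorphism condition $\upd P_i\wedge\upd q^i=\upd p_i\wedge\upd q^i$ gives $\partial P_i/\partial p_j=\delta_{ij}$ together with $\partial P_i/\partial q^j$ symmetric in $i,j$; integrating yields $P_i=p_i$. Hence $\phi(q,p)=(q,p)$, so the germ is unique and coincides with the one constructed above. Because (i)--(iii) are coordinate-free, this uniqueness applies on overlaps of adapted charts: the locally defined maps agree there and glue to a single germ of diffeomorphism along $f(N)$, which is a local diffeomorphism (it is the identity in the matched coordinates) and, after shrinking, a diffeomorphism from a neighborhood of $f(N)$ onto a neighborhood of $s_0(N)$.

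The only real subtlety---and the step I would treat most carefully---is the passage from local to global: the explicit formula $\phi(q,p)=p_i\,\upd q^i$ is a priori chart-dependent, and it is precisely the local uniqueness statement that certifies it is canonical and patches. The coordinate normalization putting $f(N)$ at $\{p=0\}$ is the other point where the Lagrangian hypothesis enters in an essential way. In the complex setting the argument is identical: the holomorphic Darboux--Lie theorem supplies holomorphic coordinates, the graph of a closed holomorphic $(1,0)$-form is exact by the holomorphic Poincaré lemma, and every computation is holomorphic, so $\phi$ is a biholomorphism of germs.
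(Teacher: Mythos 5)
The paper does not actually prove this theorem: it states it with a citation to Weinstein, preceded only by the remark that the Darboux--Lie theorem yields the identification. Your proposal correctly carries out precisely that derivation --- Darboux--Lie chart, symplectic shear moving $f(N)$ onto $\{p=0\}$ (where only closedness of $\psi_i\,\upd q^i$ is really needed, not exactness), rigidity of fiber-preserving symplectomorphisms fixing the zero section, and gluing of germs via local uniqueness --- so it is correct and consistent with the route the paper points to, the only remaining details being the routine paracompactness bookkeeping in the final patching/injectivity step.
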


\subsection{Affine structures}

Let $M$ be a manifold of dimension $n$. Once again, we work either in the real or complex category.

\begin{definition} \label{def:AffineStructure}
An \emph{affine structure} on $M$ is equivalently the data of:
\begin{enumerate}[(i)]
 \item A compatible $(X,G)$-structure on $M$, where $X = \mathbb{A}^n$ is the standard $n$-dimensional affine space and $G = \Aff(\mathbb{A}^n)$
 is the group of affine transformations of $\mathbb{A}^n$.
 \item A flat torsion-free [complex] connection $\nabla$ in the tangent bundle $\upT M$.
\end{enumerate}
\end{definition}

We pause to quickly review the terminology appearing in the previous definition:
\begin{itemize}
 \item The standard $n$-dimensional affine space is $\mathbb{A}^n = k^n$ and its group of affine transformations is 
 $G = \Aff(k^n) \approx \GL_n(k) \ltimes k^n$, where $k = \R$ or $\C$.
 \item A \emph{compatible $(X,G)$-structure} on $M$ is given by
 an atlas of charts with values in $X$ with transition functions given by elements of $G$ acting on $X$.
 For more background on $(X,G)$-structures, see \autoref{subsubsec:GeomStrDef}.
 \item In the complex setting, we require the connection $\nabla$ to be a \emph{complex connection}, 
 meaning that the almost complex structure $I$ on $M$ is parallel: $\nabla I = 0$. Any flat torsion-free complex connection is in fact a \emph{holomorphic connection}, meaning that 
 $\nabla_{X} Y$ is a holomorphic vector field whenever $X$ and $Y$ are holomorphic vector fields.
\end{itemize}

The equivalence of the two definitions above is elementary and left out for brevity.
A third characterization exists in terms of a locally defined free and transitive action of the vector space $k^n$ on $M$, but
we shall not need it.

An $(X, G)$-structure is called \emph{complete} when it is given as the quotient of the model space $X$ by a discrete subgroup of $G$ (provided $X$
is simply connected), see \autoref{subsubsec:GeomStrDef} for details. 
In the case of affine structures, this is equivalent to the connection $\nabla$ being geodesically complete.

\subsection{Bott connection}

\subsubsection{Real Bott connection}

Let us first review Bott connections in the real setting: in what follows, $M$ is a smooth manifold.

We begin with the notion of a partial connection in a vector bundle along a distribution:
\begin{definition}
Let $V \to M$ be a vector bundle on $M$ and $L \subset \upT M$ a distribution on $M$, \ie{} a smooth subbundle of $\upT M$. A \emph{partial (linear) connection} 
in $V$ along $L$ is a map 
\begin{equation}
\begin{split}
\nabla: \Gamma(L) \times \Gamma(V) \to \Gamma(V) \\
(X, s) \mapsto \nabla_X s
\end{split}
\end{equation}
which is ${\mathcal C}^\infty(M, \R)$-linear in $X$ and a derivation in $s$, \ie{} satisfies the Leibniz rule $\nabla_X (f s) = X(f) s + f \nabla_X s$
for all $f \in {\mathcal C}^\infty(M, \R)$.
We use the notation $\Gamma$ above for the space of smooth sections.
\end{definition}

One can then define the Bott connection in the annihilator of an involutive distribution. We recall that a smooth
distribution $L \subset \upT M$ is called \emph{involutive} or \emph{integrable} if the space of sections of $L$ is stable under the Lie bracket. The Frobenius theorem states that a distribution is involutive if and only if it is the tangent space to a smooth foliation.

\begin{definition}
Let $L$ be an involutive distribution on $M$
and $L^\perp \subset \upT ^*M$ be the annihilator of $L$,
\ie{} the subbundle of $T^*M$ defined by $L_x^\perp := \{\alpha \in T_x^*M \colon L_x \subseteq \ker \alpha\}$.
The \emph{Bott connection}
in $L^\perp$ along $L$ is the partial connection defined by $\nabla_X \alpha =  {\mathcal L}_X \alpha$, 
where ${\mathcal L}$ is the Lie derivative.
\end{definition}

It is an elementary exercise to check that this defines a partial connection, and that it is flat:
\begin{proposition}
The Bott connection in $L^\perp$ along $L$ is flat.
\end{proposition}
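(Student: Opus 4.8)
The plan is to show that the curvature of the Bott connection vanishes identically, after first checking that the partial connection is well-defined. For well-definedness I would verify that $\mathcal{L}_X \alpha$ remains a section of $L^\perp$ whenever $X \in \Gamma(L)$ and $\alpha \in \Gamma(L^\perp)$: this is precisely where the involutivity hypothesis enters. Concretely, for any $Y \in \Gamma(L)$ the Cartan-type formula for the Lie derivative of a $1$-form gives
\[
(\mathcal{L}_X \alpha)(Y) = X\bigl(\alpha(Y)\bigr) - \alpha([X,Y])~.
\]
The first term vanishes since $\alpha(Y) = 0$, and the second vanishes because $[X,Y] \in \Gamma(L)$ by involutivity while $\alpha$ annihilates $L$. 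Hence $(\mathcal{L}_X\alpha)(Y) = 0$ for every $Y \in \Gamma(L)$, so $\mathcal{L}_X\alpha \in \Gamma(L^\perp)$ as required.

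For flatness itself, recall that the curvature of a partial connection $\nabla$ along $L$ is the expression
\[
R(X,Y)\alpha = \nabla_X \nabla_Y \alpha - \nabla_Y \nabla_X \alpha - \nabla_{[X,Y]} \alpha
\]
defined for $X, Y \in \Gamma(L)$ and $\alpha \in \Gamma(L^\perp)$; observe that the last term is legitimate precisely because $[X,Y] \in \Gamma(L)$, again by involutivity, so that $\nabla_{[X,Y]}\alpha$ is defined. Substituting the definition $\nabla_X = \mathcal{L}_X$ turns the right-hand side into $\mathcal{L}_X \mathcal{L}_Y \alpha - \mathcal{L}_Y \mathcal{L}_X \alpha - \mathcal{L}_{[X,Y]}\alpha$.

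The key step is then the standard identity $[\mathcal{L}_X, \mathcal{L}_Y] = \mathcal{L}_{[X,Y]}$, which expresses that the Lie derivative is a Lie algebra morphism from vector fields into the derivations of the tensor algebra. Applying it immediately yields $R(X,Y)\alpha = \mathcal{L}_{[X,Y]}\alpha - \mathcal{L}_{[X,Y]}\alpha = 0$, so the Bott connection is flat. I do not expect any substantial obstacle: once well-definedness is secured, flatness collapses to the commutation identity for Lie derivatives. The only points deserving care are bookkeeping—confirming that each occurrence of $\nabla_{[X,Y]}\alpha$ is meaningful, which rests on involutivity, and that $R$ is genuinely tensorial in $X$, $Y$ and $\alpha$ rather than merely a differential operator, which follows from the same Lie-derivative identities together with the Leibniz rule already built into the definition of a partial connection.
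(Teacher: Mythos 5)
Your proof is correct: the paper itself leaves this as a ``straightforward exercise,'' and your argument---well-definedness of $\mathcal{L}_X\alpha \in \Gamma(L^\perp)$ via involutivity, followed by flatness from the commutation identity $[\mathcal{L}_X,\mathcal{L}_Y] = \mathcal{L}_{[X,Y]}$---is exactly the intended standard argument. Your attention to the point that $\nabla_{[X,Y]}\alpha$ is only meaningful because $[X,Y] \in \Gamma(L)$ is well placed, as that is the one spot where involutivity enters the flatness computation itself.
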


Now let $\omega$ be a symplectic structure on $M$. Consider a Lagrangian involutive distribution $L$, or equivalently
a Lagrangian foliation ${\cal F}$ of $M$ ($L$ is the tangent distribution to the foliation ${\cal F}$).

\begin{definition}
The \emph{Bott connection in $L$} is the partial connection in $L$ along $L$ obtained from the Bott connection in 
$L^\perp$ along $L$ using the isomorphism $L \approx L^\perp$ given by symplectic duality
\end{definition}

Recall that the musical isomorphism $\flat : \upT M \to \upT ^*M$, $u \mapsto u^\flat$ is defined by $u^\flat = \omega(u, \cdot)$. 
It restricts to an isomorphism $L \to L^\perp$, which we called symplectic duality above.

\begin{proposition} \label{prop:RealBottConnection}
Let $(M, \omega)$ be a real symplectic manifold, let ${\cal F}$ be a Lagrangian foliation of $M$ and denote
by $\nabla$ the Bott connection as above.
\begin{enumerate}[(i)]
	\item $\nabla$ is characterized by the formula 
	\begin{equation}
	 X \cdot \omega(Y, Z)  = \omega(\nabla_X Y, Z) + \omega(Y, [X,Z])
	\end{equation}
	where $X$ and $Y$ are vector fields on $M$ tangent to the foliation and $Z$ is any vector field.
        \item $\nabla$ is flat and torsion-free.
\end{enumerate}
\end{proposition}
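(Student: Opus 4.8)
The plan is to deduce everything from the definition of $\nabla$ as the transport of the Bott connection in $L^\perp$ under the symplectic musical isomorphism $\flat \colon L \to L^\perp$, $u \mapsto \omega(u, \cdot)$. For a pair of vector fields $X, Y$ tangent to $\mathcal{F}$ (so sections of $L$), the $1$-form $Y^\flat = \omega(Y, \cdot)$ lies in $\Gamma(L^\perp)$ precisely because $L$ is Lagrangian, and by definition $\nabla_X Y$ is the unique section of $L$ satisfying $(\nabla_X Y)^\flat = {\mathcal L}_X(Y^\flat)$.

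For part (i), I would simply evaluate both sides on an arbitrary vector field $Z$. The Cartan formula for the Lie derivative of a $1$-form gives $({\mathcal L}_X Y^\flat)(Z) = X \cdot \omega(Y, Z) - \omega(Y, [X, Z])$, while the left-hand side is $\omega(\nabla_X Y, Z)$; comparing the two yields the displayed identity at once. Nondegeneracy of $\omega$ then shows that this identity determines $\nabla_X Y$ uniquely, hence characterizes $\nabla$.

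For part (ii), flatness is essentially free: the Bott connection in $L^\perp$ is flat by the preceding proposition, and since $\nabla$ in $L$ is its image under the bundle isomorphism $\flat$, the two curvature tensors are conjugate, so $\nabla$ is flat as well. Torsion-freeness is where the real work lies. I would first note that the torsion $T(X, Y) = \nabla_X Y - \nabla_Y X - [X, Y]$ is a well-defined section of $L$, since $\nabla_X Y, \nabla_Y X \in \Gamma(L)$ and $[X, Y] \in \Gamma(L)$ by involutivity of $L$. To show it vanishes, I would pair it with $\omega$ against an arbitrary $Z$ and use the characterization from (i), which expresses $\omega(T(X, Y), Z)$ as the antisymmetrized combination $X \cdot \omega(Y, Z) - \omega(Y, [X, Z]) - Y \cdot \omega(X, Z) + \omega(X, [Y, Z]) - \omega([X, Y], Z)$.

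The crux is then to recognize this expression as the value $\upd\omega(X, Y, Z)$, up to the term $Z \cdot \omega(X, Y)$ which vanishes because $X, Y$ are tangent to a Lagrangian foliation. Since $\omega$ is closed, $\upd\omega(X, Y, Z) = 0$, so $\omega(T(X, Y), Z) = 0$ for all $Z$, and nondegeneracy forces $T(X, Y) = 0$. I expect this final step---invoking $\upd\omega = 0$ together with the Lagrangian condition---to be the heart of the argument, as it is the only place where the genuinely symplectic (as opposed to merely almost-symplectic) hypothesis enters; part (i) and the flatness statement are purely formal.
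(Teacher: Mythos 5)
Your proof is correct. The paper itself states this proposition without proof (it is presented as a collection of standard facts, with references to Weinstein and others), and your argument is precisely the natural one set up by the paper's definitions: part (i) by evaluating $(\mathcal{L}_X Y^\flat)(Z)$ and invoking nondegeneracy, flatness by conjugating the flat Bott connection in $L^\perp$ under $\flat$, and torsion-freeness by antisymmetrizing the characterization and identifying the result with $\upd\omega(X,Y,Z) - Z\cdot\omega(X,Y)$, both terms vanishing by closedness of $\omega$ and the Lagrangian condition respectively. Your closing observation, that this last step is the only place where closedness and the Lagrangian hypothesis genuinely enter, is accurate and worth keeping.
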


The Bott connection may extend to a ``full'' connection in $\upT M$, but there is no canonical way to choose such an extension in general. 
In the next section, we will see that given two transverse Lagrangian foliations, there exists a unique symplectic connection in $M$
that extends both Bott connections (\autoref{prop:BiLagrangianConnection1}).
On the other hand, the Bott connection restricts to a full connection in any leaf, which moreover is flat
and torsion-free by \autoref{prop:RealBottConnection}. It thus defines an affine structure in any leaf of the foliation according to \autoref{def:AffineStructure}:

\begin{theorem}\label{thm:RealLagrangianAffine}
The Bott connection equips every leaf of a Lagrangian foliation with a natural affine structure. 
\end{theorem}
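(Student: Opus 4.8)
The plan is to restrict the Bott connection to a single leaf and observe that the partial connection becomes an honest full connection there, to which \autoref{def:AffineStructure}(ii) then applies directly. Fix a leaf $N$ of the foliation $\mathcal{F}$ and let $L \subset \upT M$ be its tangent distribution. The crucial structural observation is that $\upT N = L|_N$: every vector tangent to the leaf lies in $L$. Consequently, along $N$ the Bott connection $\nabla$---which a priori is only defined as a partial connection in $L$ along $L$---receives as inputs precisely the vector fields tangent to $N$ and returns vector fields tangent to $N$. In other words, once restricted to $N$ the partial connection has no ``missing'' directions and should furnish a genuine affine connection $\nabla^N$ on $\upT N$.

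First I would verify that $\nabla$ genuinely descends to a well-defined connection $\nabla^N$ on $N$, i.e. that $(\nabla_X Y)_p$ for $p \in N$ depends only on the restrictions of $X$ and $Y$ to $N$. The dependence on $X$ is immediate from the $\mathcal{C}^\infty(M, \R)$-linearity in the first slot, which guarantees that $(\nabla_X Y)_p$ depends only on the value $X_p \in L_p = \upT_p N$. For the second slot, one invokes the standard fact that $(\nabla_X Y)_p$ depends only on the values of $Y$ along a curve through $p$ tangent to $X_p$; since $X_p \in \upT_p N$, such a curve may be chosen to lie entirely inside the leaf $N$, so that $(\nabla_X Y)_p$ depends only on $Y|_N$. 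Hence setting $\nabla^N_X Y := (\nabla_X Y)|_N$ for $X, Y \in \Gamma(\upT N)$ (extended arbitrarily to sections of $L$ away from $N$) is unambiguous and defines a connection in $\upT N$.

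It then remains to transfer the flatness and torsion-freeness already recorded in \autoref{prop:RealBottConnection}(ii), both of which are local tensorial identities. For the torsion, $T^{\nabla^N}(X,Y) = \nabla^N_X Y - \nabla^N_Y X - [X,Y]$ with $X, Y \in \Gamma(\upT N)$; because $L$ is involutive, the bracket $[X,Y]$ is again tangent to $N$, so this expression is exactly the restriction of the (vanishing) torsion of the partial connection, and therefore $T^{\nabla^N} = 0$. The curvature of $\nabla^N$ is likewise the restriction of the curvature of the partial Bott connection, evaluated on directions lying in $L$, which vanishes by the same proposition; thus $\nabla^N$ is flat. A flat, torsion-free connection in $\upT N$ is precisely an affine structure in the sense of \autoref{def:AffineStructure}(ii), and it is canonically determined by $\omega$ and $\mathcal{F}$, hence natural. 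The only step demanding genuine care is the well-definedness of the restriction in the second paragraph; everything else is a formal consequence of the established properties of the Bott connection together with the involutivity of $L$, which comes for free from $\mathcal{F}$ being a foliation.
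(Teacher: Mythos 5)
Your proof is correct and follows essentially the same route as the paper, which simply observes that the Bott connection restricts to a full connection on each leaf, invokes \autoref{prop:RealBottConnection} for flatness and torsion-freeness, and applies \autoref{def:AffineStructure}(ii). The only difference is that you spell out the well-definedness of the restriction (dependence only on $X_p$ and on $Y|_N$), a point the paper treats as immediate.
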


\autoref{thm:RealLagrangianAffine} relates to the Liouville-Arnold theorem. Recall that a Lagrangian foliation is locally
a Lagrangian fibration. Choosing coordinates on the base defines an integrable Hamiltonian system on the total space.
The Liouville-Arnold theorem guarantees the existence of ``action-angle coordinates'' that are canonical for the foliation as in 
the Darboux-Lie \autoref{thm:Darboux-Lie}, and such that there is an $\R^n$-action in the level sets of the action coordinates.  These level sets are the leaves of the foliation, and the $\R^n$-action is an equivalent way to think about the affine structure.

\subsubsection{Complex Bott connection}

Let us now turn to the Bott connection in the complex setting. Let $M$ be a complex manifold. Assume that $M$ is equipped with a complex symplectic structure $\omega$ and that ${\cal F}$ is a complex 
Lagrangian foliation of $M$. Denote by $\omega = \omega_1 + i\omega_2$ the real and imaginary parts of $\omega$. These are real symplectic structures on $M$, and
${\cal F}$ is a Lagrangian foliation with respect to both $\omega_1$ and $\omega_2$.

The \emph{complex Bott connection} in the distribution $L = \upT {\cal F}$ is characterized as follows:

\begin{theorem} \label{thm:ComplexBottConnection}
 There is a unique partial connection $\nabla$ in $L$ along $L$ such that:
 \begin{enumerate}[(i)]
  \item $\nabla$ is a flat and torsion-free partial complex connection.
  \item $\nabla = \nabla^1 = \nabla^2$, where $\nabla^i$ is the Bott connection of $\omega_i$ in $L$ ($i = 1,2$).
 \end{enumerate}
\end{theorem}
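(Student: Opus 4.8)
The plan is to treat condition (ii) as the defining constraint and condition (i) as a property to be verified afterwards. Since any connection satisfying (ii) must in particular coincide with the real Bott connection $\nabla^1$ of $\omega_1$, which is the unique solution of the characterizing formula in \autoref{prop:RealBottConnection}(i), uniqueness is immediate; the entire content lies in \emph{existence}. Concretely, I must show that the two real Bott connections actually agree, $\nabla^1 = \nabla^2$, and that their common value is a flat, torsion-free, partial complex connection. Flatness and torsion-freeness will be inherited for free from \autoref{prop:RealBottConnection}(ii), so the two substantive points are the coincidence $\nabla^1 = \nabla^2$ and the complex-connection property $\nabla I = 0$. I would begin by recording the pointwise relations forced by $\omega$ being of type $(2,0)$: letting $I$ act on vectors, one has $\omega(IX,Y) = i\,\omega(X,Y)$, and taking real and imaginary parts gives $\omega_1(IX,Y) = -\omega_2(X,Y)$ and $\omega_2(IX,Y) = \omega_1(X,Y)$, together with the analogous identities in the second slot.

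The crux of the argument, and the step I expect to be the main obstacle, is the following lemma: for every $X \in \Gamma(L)$ and every vector field $Z$, one has $(\mathcal{L}_X I)(Z) \in L$, where $\mathcal{L}$ denotes the Lie derivative. I would prove this using the holomorphic Frobenius theorem: locally the leaves of $\mathcal{F}$ are the common level sets of holomorphic functions $z^1, \dots, z^n$, so that $L$ is annihilated by the closed $(1,0)$-forms $\upd z^j$, with $\upd z^j(X) = 0$ whenever $X \in \Gamma(L)$. Since $\upd z^j$ is closed, $\upd z^j([A,B]) = A(\upd z^j(B)) - B(\upd z^j(A))$, and since $\upd z^j$ is of type $(1,0)$ one has $\upd z^j(IW) = i\,\upd z^j(W)$. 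Expanding $\langle \upd z^j, (\mathcal{L}_X I)(Z)\rangle = \langle \upd z^j, [X, IZ]\rangle - \langle \upd z^j, I[X,Z]\rangle$ and using $\upd z^j(X)=0$, both terms reduce to $i\,X(\upd z^j(Z))$ and therefore cancel. Hence $(\mathcal{L}_X I)(Z)$ is annihilated by every $\upd z^j$ (and, by conjugation, every $\upd \bar z^j$), so it lies in $L$. It is precisely here that the integrability of the complex structure $I$ enters the proof.

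With the lemma established, I would combine it with the fact that $L$ is \emph{isotropic} for $\omega_1$: since both $Y$ and $(\mathcal{L}_X I)(Z)$ lie in $L$, this gives $\omega_1\bigl(Y, (\mathcal{L}_X I)(Z)\bigr) = 0$ for all $X, Y \in \Gamma(L)$. Now I would substitute $Z \mapsto IZ$ into the characterizing formula of \autoref{prop:RealBottConnection}(i) for $\nabla^1$ and use the relations above to convert the resulting $\omega_1$-quantities into $\omega_2$-quantities. The vanishing $\omega_1\bigl(Y, [X,IZ] - I[X,Z]\bigr) = \omega_1\bigl(Y,(\mathcal{L}_X I)(Z)\bigr) = 0$ is exactly what is needed to conclude that $\nabla^1$ also satisfies the characterizing formula of the $\omega_2$-Bott connection. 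By uniqueness of that formula's solution (nondegeneracy of $\omega_2$, and the fact that $\nabla^1$ takes values in $L$), this forces $\nabla^1 = \nabla^2 =: \nabla$, which is condition (ii).

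It then remains to verify that $\nabla$ is a complex connection, that is $\nabla_X(IY) = I\nabla_X Y$ for $X, Y \in \Gamma(L)$; this is where having both characterizations pays off. I would pair each side against an arbitrary vector field $Z$ using $\omega_1$: expanding $\omega_1(\nabla_X(IY), Z)$ through the $\omega_1$-formula and $\omega_1(I\nabla_X Y, Z)$ through the $\omega_2$-formula (legitimate since $\nabla = \nabla^2$), the relations among $\omega_1$, $\omega_2$ and $I$ make the two expressions coincide for every $Z$, whence $\nabla_X(IY) = I\nabla_X Y$ by nondegeneracy of $\omega_1$. Finally, flatness and torsion-freeness of $\nabla$ are inherited directly from \autoref{prop:RealBottConnection}(ii), and the remark following \autoref{def:AffineStructure} then guarantees that a flat, torsion-free complex connection is automatically holomorphic. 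This settles existence, and uniqueness was already noted at the outset.
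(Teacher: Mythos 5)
Your proof is correct, and it supplies real content at the very point where the paper is silent: the paper's own ``proof'' is the one-line remark that the theorem easily follows from \autoref{prop:RealBottConnection}, and the intended easy route is presumably the complex Darboux--Lie theorem (\autoref{thm:Darboux-Lie}), in whose holomorphic coordinates $\omega = \upd p_i \wedge \upd q^i$ both real Bott connections are visibly the standard flat connection along the leaves $\{q = \mathit{constant}\}$. Your argument is coordinate-free instead: you reduce everything to the lemma that $(\mathcal{L}_X I)(Z)$ is tangent to the foliation for $X \in \Gamma(L)$, and that lemma is exactly the crux --- substituting $IZ$ into the $\omega_1$-characterization shows that $\nabla^1$ satisfies the $\omega_2$-characterization precisely when $\omega_1(Y, (\mathcal{L}_X I)(Z)) = 0$ for all $Z$, and the same identities among $\omega_1$, $\omega_2$, $I$ then yield $\nabla_X(IY) = I\nabla_X Y$ (using also that $L$ is $I$-invariant, so the characterizing formula may legitimately be applied to $IY$; worth stating). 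Uniqueness, as you note, is free from condition (ii).

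One caveat should be made explicit. Your lemma is proved via the holomorphic Frobenius theorem, i.e.\ it assumes the foliation is \emph{holomorphic} (locally the fiber foliation of a holomorphic submersion). The paper's literal definition asks only for a smooth foliation whose leaves are complex Lagrangian submanifolds, and that is strictly weaker: on $\{|z|<1\} \times \C \subset \C^2$ with $\omega = \upd z \wedge \upd w$, the foliation by the complex lines $w = c + \overline{c}\,z$ (one for each $c \in \C$) has holomorphically embedded, automatically Lagrangian leaves, but its tangent distribution is spanned by $\partial_z + \beta\,\partial_w$ with $\beta = (\overline{w} - w\overline{z})/(1-|z|^2)$, which is not a holomorphic line field; a direct computation there shows $(\mathcal{L}_X I)(Z) \notin L$ and indeed $\nabla^1 \neq \nabla^2$. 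So holomorphy of the foliation is not a convenience of your method but a necessary hypothesis for the theorem itself: no argument can derive it purely from \autoref{prop:RealBottConnection}. Your proof has the virtue of isolating exactly this input, which the paper --- whose applications all involve holomorphic foliations --- leaves implicit.
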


The proof of this theorem easily follows from the properties of real Bott connections stated in
\autoref{prop:RealBottConnection}. One derives from \autoref{thm:ComplexBottConnection} the fundamental property:

\begin{theorem} \label{thm:ComplexLagrangianAffine}
The complex Bott connection equips every leaf of a complex Lagrangian foliation with a natural complex affine structure. 
\end{theorem}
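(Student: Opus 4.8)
The plan is to mirror the proof of the real analog \autoref{thm:RealLagrangianAffine}, replacing the real Bott connection by the complex Bott connection furnished by \autoref{thm:ComplexBottConnection}. First I would fix a leaf $N$ of the complex Lagrangian foliation $\mathcal{F}$. Since $N$ is a complex Lagrangian submanifold of $M$, its holomorphic tangent bundle coincides with the restriction $L|_N$ of the foliated distribution $L = \upT \mathcal{F}$ to $N$. The complex Bott connection $\nabla$ is a \emph{partial} connection in $L$ along $L$, meaning that $\nabla_X s$ is defined precisely when both $X$ and $s$ are sections of $L$. The key observation is that, once we restrict attention to the single leaf $N$, the direction field $X$ and the differentiated section $s$ both range over \emph{all} of $\Gamma(\upT N)$, so the partial connection restricts to a genuine (full) connection $\nabla^N$ in the holomorphic tangent bundle $\upT N$.

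Next I would verify that $\nabla^N$ inherits the defining properties of $\nabla$. Flatness and torsion-freeness are local, tensorial conditions: since the curvature and torsion of $\nabla$ vanish identically by \autoref{thm:ComplexBottConnection}(i), their restrictions to $N$ vanish as well, so $\nabla^N$ is flat and torsion-free. Moreover $\nabla$ is a partial \emph{complex} connection, and the complex structure on the leaf $N$ is simply the restriction of the ambient almost complex structure $I$; hence $\nabla^N$ is a complex connection on $N$, in the sense that it parallelizes $I|_N$. By part (ii) of \autoref{def:AffineStructure}, a flat torsion-free complex connection in $\upT N$ is exactly a complex affine structure on $N$ --- and, as remarked after that definition, such a connection is automatically holomorphic. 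Naturality is then a consequence of the uniqueness clause in \autoref{thm:ComplexBottConnection}, which makes $\nabla$, and therefore $\nabla^N$, canonical with no choices involved.

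The step requiring the most care --- though I do not expect it to be a genuine obstacle --- is the passage from a partial connection to a full one on the leaf, together with the verification that the restricted connection respects the complex structure of $N$. Everything else is formal, amounting to the tautology that restricting a flat, torsion-free, complex partial connection along an integrable distribution to one of its integral leaves yields a flat, torsion-free, complex connection on that leaf. The only place where the \emph{complex} (rather than merely real) nature of the setup is essential is in identifying $L|_N$ with the holomorphic tangent bundle $\upT^{(1,0)} N$ and in confirming the compatibility $\nabla^N I = 0$; both follow at once from the fact that the leaves are complex submanifolds and that $\nabla$ was constructed to be complex in \autoref{thm:ComplexBottConnection}.
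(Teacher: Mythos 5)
Your proposal is correct and matches the paper's own (only sketched) argument: the paper derives \autoref{thm:ComplexLagrangianAffine} directly from \autoref{thm:ComplexBottConnection}, exactly as you do, by restricting the flat, torsion-free, complex partial connection to a leaf, where it becomes a full connection on $\upT N$ and hence a complex affine structure in the sense of \autoref{def:AffineStructure}(ii). Your write-up simply makes explicit the steps (leaf tangent bundle equals $L|_N$, inheritance of flatness, torsion-freeness, and $\nabla I = 0$, naturality via uniqueness) that the paper leaves to the reader.
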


\subsubsection{Fundamental example: cotangent bundles}

Let us work either in the smooth or complex setting in what follows.
Recall from \autoref{ex:CotangentBundle} that if $M = \upT ^* N$ is the total space of a cotangent bundle, then $M$ carries a canonical
symplectic structure $\omega$ and the bundle projection $p : M \to N$ is a Lagrangian fibration.
By \autoref{thm:RealLagrangianAffine} or \autoref{thm:ComplexLagrangianAffine},
each fiber is equipped with a natural affine structure. On the other hand, any fiber has an obvious affine structure as a vector space. These two affine structures coincide:

\begin{theorem} \label{thm:BottCotangent}
The Bott affine structure in any fiber of $\upT ^*N$ is the same as its affine structure as a vector space.
\end{theorem}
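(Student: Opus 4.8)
The plan is to reduce the statement to a local coordinate computation using the natural coordinates on $\upT^* N$ introduced in \autoref{ex:CotangentBundle}. Fix a point of $N$ and choose local coordinates $(q^1, \dots, q^n)$ on $N$, giving natural coordinates $(q^1, \dots, q^n, p_1, \dots, p_n)$ on $M = \upT^* N$, in which $\omega = \upd p_i \wedge \upd q^i$. A single fiber $p^{-1}(q_0)$ is the leaf $\{q = q_0\}$ of the vertical Lagrangian foliation, and its tangent distribution $L$ is spanned by the coordinate vector fields $\partial/\partial p_1, \dots, \partial/\partial p_n$. The affine structure of the fiber as a vector space is precisely the flat affine structure for which these $\partial/\partial p_i$ are parallel (constant-coefficient) vector fields; so it suffices to show that the Bott connection $\nabla$ in $L$ satisfies $\nabla_{\partial/\partial p_i} (\partial/\partial p_j) = 0$ for all $i,j$.

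First I would compute $\nabla$ directly from the characterization in \autoref{prop:RealBottConnection}(i), namely
\begin{equation}
X \cdot \omega(Y,Z) = \omega(\nabla_X Y, Z) + \omega(Y, [X,Z]),
\end{equation}
valid for $X, Y$ tangent to the foliation and $Z$ arbitrary. Taking $X = \partial/\partial p_i$ and $Y = \partial/\partial p_j$ (both tangent to $L$), I would test against the coordinate vector fields $Z = \partial/\partial p_k$ and $Z = \partial/\partial q^k$. Since $\omega = \upd p_i \wedge \upd q^i$, the pairings $\omega(\partial/\partial p_j, Z)$ are constant (either $0$ or $\pm 1$ depending on $Z$), so the left-hand side vanishes; and all Lie brackets of coordinate vector fields vanish, so the second term on the right vanishes as well. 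This forces $\omega(\nabla_{\partial/\partial p_i}(\partial/\partial p_j), Z) = 0$ for every coordinate field $Z$, and since the coordinate fields span $\upT M$ and $\omega$ is nondegenerate, we conclude $\nabla_{\partial/\partial p_i}(\partial/\partial p_j) = 0$, which is exactly what we wanted.

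This establishes that the $\partial/\partial p_i$ are parallel for the Bott connection, so the Bott affine structure and the vector-space affine structure have the same parallel vector fields and hence coincide on the fiber. The same computation applies verbatim in the complex setting by \autoref{thm:ComplexLagrangianAffine} and \autoref{thm:ComplexBottConnection}, replacing real coordinate functions with holomorphic ones and using that the complex Bott connection agrees with the real Bott connections of $\Real(\omega)$ and $\Imag(\omega)$; no separate argument is needed.

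I do not expect a genuine obstacle here, as the result is essentially a bookkeeping check once the right coordinates are in place. The only point requiring mild care is the logical step from ``$\partial/\partial p_i$ are $\nabla$-parallel'' to ``the two affine structures agree'': I would note that an affine structure (equivalently, a flat torsion-free connection) is determined by its family of parallel local vector fields, and that the flat structure of the fiber as a vector space is by definition the one whose parallel fields are the constant vector fields $\partial/\partial p_i$. Verifying that the identification of the tangent space to the fiber with the vector space itself (the canonical identification $\upT_\alpha(p^{-1}(q_0)) \cong \upT^*_{q_0} N$) sends constant fields to the $\partial/\partial p_i$ is the one place to be explicit, but it is immediate from the linearity of the fiber coordinates $p_i$.
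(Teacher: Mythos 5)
Your proposal is correct and follows essentially the same route as the paper's proof: both work in the natural coordinates $(q^i, p_i)$ of \autoref{ex:CotangentBundle}, apply the characterization of the Bott connection from \autoref{prop:RealBottConnection}, and conclude that the connection is the standard flat one in the fiber coordinates, with the complex case delegated to \autoref{thm:ComplexBottConnection}. The only (harmless) difference is that you evaluate the defining formula on coordinate vector fields to get vanishing Christoffel symbols and then invoke the Leibniz rule, whereas the paper computes $\nabla_X Y$ for arbitrary foliation-tangent fields $X$, $Y$ directly.
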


Since this theorem is key to proving \autoref{thm:SchwarzianAffine}, let us produce a complete proof.

\begin{proof}
We write the proof in the real setting, and the extension to the complex setting is an immediate application of \autoref{thm:ComplexBottConnection}.
By \autoref{prop:RealBottConnection}, The Bott connection ${\nabla}$ is characterized by 
$ \omega(\nabla_X Y, Z) = X \cdot \omega(Y,Z) - \omega(Y, [X, Z])$
for any vector fields $X, Y, Z$ on $M = \upT ^*N$, where $X$ and $Y$ are tangent to the foliation.

Let us work on some open set $\upT ^* U \subset M$ with coordinates $(q^i, p_i)$ as in \autoref{ex:CotangentBundle}. 
The canonical symplectic structure $\omega$ is given by 
$ \omega = \upd p_i \wedge \upd q^i$
and the vector fields
$X$, $Y$, $Z$ can be written (recall that we use the Einstein notation for writing tensors):
\begin{equation}
 X = a_i \dfrac{\partial}{\partial p_i} \qquad
 Y = b_i \dfrac{\partial}{\partial p_i} \qquad
 Z = c^i \dfrac{\partial}{\partial q^i} + d_i \dfrac{\partial}{\partial p_i}~.
\end{equation}
By direct computation:
\begin{equation} \label{eq:XOYZ}
 X \cdot \omega(Y,Z) - \omega(Y, [X, Z]) = a_i \, c^j \, \frac{\partial b_j}{\partial p_i}~.
\end{equation}
On the other hand, writing
$\nabla_X Y$ in coordinates as $\nabla_X Y = v_j \dfrac{\partial}{\partial p_j}$ gives
\begin{equation} \label{eq:ONBX}
 \omega(\nabla_X Y, Z) = v_j\, c^j~.
\end{equation}
Equating \eqref{eq:XOYZ} and \eqref{eq:ONBX} yields $v_j = a_i \dfrac{\partial b_j}{\partial y^i}$, which gives us the expression
of $\nabla_X Y$ in these coordinates:
\begin{equation}
 \nabla_X Y = a_i \,\dfrac{\partial b_j}{\partial p_i} \, \frac{\partial}{\partial p_j}~.
\end{equation}
This is the expression of the standard covariant derivative in $\R^n$ in the coordinates $(p_1, \dots p_n)$.
\end{proof}

\section{Bi-Lagrangian structures} \label{sec:BiLagrangian}

In this section, we review bi-Lagrangian and para-Kähler structures in the real setting and study properties of the bi-Lagrangian connection.
We then introduce and study complex bi-Lagrangian structures. A good reference for the real setting is \cite{MR2193747}.

\subsection{Real bi-Lagrangian structures}

In what follows, $M$ is a smooth manifold.

\begin{definition}
 A \emph{bi-Lagrangian structure} in $M$ is the data of a symplectic structure
 $\omega$ and an ordered pair $(\mathcal{F}_1, \mathcal{F}_2)$ of transverse Lagrangian foliations.
\end{definition}
Bi-Lagrangian structures have also been called \emph{bipolarizations}, as a Lagrangian foliation is sometimes
called a (real) \emph{polarization}, especially in mathematical physics. 
We shall soon see that a bi-Lagrangian structure is also the same as a \emph{para-Kähler structure} (\autoref{subsec:ParaK}).

A connection $\nabla$ in a symplectic manifold $(M,\omega)$ is called a \emph{symplectic connection} if it is torsion-free and preserves the symplectic structure: $\nabla \omega = 0$.
Symplectic connections always exist but are not unique, contrary to Riemannian connections.
However, the additional data of a bi-Lagrangian structure determines a unique connection $\nabla$ called the \emph{bi-Lagrangian connection} as follows.
\begin{proposition} \label{prop:BiLagrangianConnection1}
 Let $(M,\omega, \mathcal{F}_1, \mathcal{F}_2)$ be a bi-Lagrangian manifold. There exists a unique symplectic connection $\nabla$ on $M$ which
 extends both the Bott connections in $\mathcal{F}_1$ and in $\mathcal{F}_2$.
\end{proposition}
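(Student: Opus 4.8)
The plan is to work with the transversality splitting $\upT M = L_1 \oplus L_2$, where $L_i = \upT\mathcal{F}_i$, and to exploit that each $L_i$ is Lagrangian: $\omega$ vanishes on $L_i \times L_i$ and restricts to a nondegenerate pairing $L_1 \times L_2 \to \R$. Writing every vector field in its two components, I first establish uniqueness by showing that the three requirements force $\nabla$ completely. Extending the two Bott connections means $\nabla_X Y = \nabla^{(i)}_X Y \in \Gamma(L_i)$ whenever $X, Y \in \Gamma(L_i)$ (here $\nabla^{(i)}$ denotes the Bott connection of $\mathcal{F}_i$), so the only undetermined data are the mixed derivatives $\nabla_{X_1} Y_2$ and $\nabla_{Y_2} X_1$ with $X_1 \in \Gamma(L_1)$, $Y_2 \in \Gamma(L_2)$. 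Each mixed derivative has an $L_1$- and an $L_2$-component, which by nondegeneracy are detected by pairing against $L_2$ and $L_1$ respectively. The symplectic identity $X\cdot\omega(Y,Z) = \omega(\nabla_X Y, Z) + \omega(Y, \nabla_X Z)$, applied with $Z \in \Gamma(L_1)$, expresses the $L_2$-component of $\nabla_{X_1}Y_2$ purely in terms of $\omega$ and the Bott connection $\nabla^{(1)}$; applied with $Z \in \Gamma(L_2)$ it likewise fixes the $L_1$-component of $\nabla_{Y_2}X_1$. The torsion-free identity $\nabla_{X_1}Y_2 - \nabla_{Y_2}X_1 = [X_1, Y_2]$ then transfers this information to the remaining two components. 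Thus every component is forced, proving uniqueness.

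For existence I would take these forced expressions as the definitions of the mixed terms, combine them with the Bott connections on pure terms, and extend bilinearly. First I would check that the resulting operation is genuinely a connection, i.e. tensorial in the differentiating slot and Leibniz in the other; this is a routine verification using the same properties for the Bott connections together with the defining formulas. That $\nabla$ extends both Bott connections is then true by construction. Torsion-freeness I would check componentwise: on pairs of sections of a single $L_i$ it follows from torsion-freeness of the Bott connection and involutivity of $L_i$ (so that $[X_i, Y_i] \in \Gamma(L_i)$), and on mixed pairs it is built into the definition through the relation $\nabla_{X_1}Y_2 - \nabla_{Y_2}X_1 = [X_1,Y_2]$.

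The substantive step is verifying $\nabla\omega = 0$, and here I would reduce to the eight cases indexed by the $L_1$/$L_2$-type of the triple $(X,Y,Z)$. Six of these are immediate: the two \emph{pure} cases are trivial because $\omega$ vanishes on each $L_i$, and the four cases with exactly one slot of the opposite type hold precisely because of the formulas used to define the mixed derivatives. The main obstacle is the two fully mixed cases, say $X_1 \in \Gamma(L_1)$ and $Y_2, Z_2 \in \Gamma(L_2)$ (together with its mirror image). Here both sides must be expanded, and after substituting the defining formulas for the $L_1$-components of $\nabla_{X_1}Y_2$ and $\nabla_{X_1}Z_2$ and using the torsion-freeness of the Bott connection $\nabla^{(2)}$, the condition collapses exactly to $\upd\omega(X_1, Y_2, Z_2) = 0$. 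This is the only non-formal point of the argument: it shows that closedness of $\omega$ is precisely the consistency condition that makes the construction work, and it is the step I expect to require the most careful bookkeeping.
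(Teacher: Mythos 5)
Your proof is correct, but it takes a genuinely different route from the paper's. The paper never argues directly from the axioms: it passes through the equivalence with para-Kähler structures, identifying the bi-Lagrangian connection with the Levi-Civita connection of the neutral metric $g(u,v)=\omega(Fu,v)$ (\autoref{thm:BiLagrangianConnection}), so that existence and uniqueness follow from the Koszul formula; the residual work is to check that this connection extends the two Bott connections, and the outcome is recorded in the explicit formula \eqref{eq:BiLagrangianConnection}. Yours is instead the direct, metric-free construction (essentially the original argument of He{\ss}, whom the paper cites for this statement): the pure components are prescribed by the Bott connections, the identity $\nabla\omega=0$ tested against the complementary distribution pins down one component of each mixed derivative, torsion-freeness transfers the remaining components, and existence becomes a finite case check. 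Your bookkeeping of those cases is accurate: the four cases in which the differentiating direction shares its type with one of the two arguments are formal consequences of the defining formulas, while the two fully mixed cases (direction of one type, both arguments of the other) reduce--after invoking torsion-freeness of the Bott connections--exactly to $\upd\omega=0$, which is indeed the only place closedness enters. One simplification worth noting: combining your forced formula for $(\nabla_{X_1}Y_2)_2$ with the characterization of the Bott connection in \autoref{prop:RealBottConnection} gives $(\nabla_{X_1}Y_2)_2=[X_1,Y_2]_2$ and $(\nabla_{X_1}Y_2)_1=0$, so your connection is precisely the one of \eqref{eq:BiLagrangianConnection}, and taking the mixed terms in this closed form from the start would shorten the verification. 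As for trade-offs: the paper's route buys existence and uniqueness instantly from pseudo-Riemannian geometry, together with parallelism of $g$ and $F$; yours is self-contained, avoids introducing the auxiliary neutral metric, and makes visible exactly which hypotheses (closedness of $\omega$, involutivity, and torsion-freeness of the Bott connections) carry the proof.
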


One can show that the bi-Lagrangian connection $\nabla$ is in fact the unique symplectic connection
in $M$ which satisfies the apparently weaker condition that it preserves both foliations, see \autoref{thm:BiLagrangianConnection}.

\subsection{Para-complex and para-Kähler structures} \label{subsec:ParaK}

The algebra of \emph{para-complex numbers}\footnote{Para-complex numbers are also sometimes called \emph{split-complex numbers}, \emph{hyperbolic numbers}
and a variety of other names that are listed in \cite{wiki:paracomplex}.} is the commutative algebra
\begin{equation}
\R[X]/(X^2-1) = \{a + fb : (a,b)\in\R^2, f^2 = 1\}~.
\end{equation}
Para-complex structures on manifolds are the para-complex analog of complex structures. We refer to \cite{MR1386154} for a survey of para-complex geometry.

\begin{definition} \label{def:ParaComplexStructure}
 Let $M$ be a smooth manifold. An \emph{almost para-complex structure} on $M$ is a smooth field of endomorphisms $F \in \End(\upT M)$
 such that $F^2 = \mathbf{1}$ and $\tr(F) = 0$.
\end{definition}

Call $L_1$ and $L_2$ the $+1$ and $-1$ eigendistributions of $F$, so that $\upT M = L_1 \oplus L_2$. 
The condition $\tr(F) = 0$ amounts to saying that $\dim L_1 = \dim L_2$. The almost para-complex structure $F$ is called \emph{integrable}
if $L_1$ and $L_2$ are involutive, in other words there are two transverse foliations $\mathcal{F}_1$ and $\mathcal{F}_2$ of $M$
such that $L_i$ is the tangent subbundle to $\mathcal{F}_i$. When $F$ is integrable, it is just called a \emph{para-complex structure}.
Clearly, any ordered pair of equidimensional transverse distributions $(L_1,L_2)$ conversely determines a unique almost para-complex 
structure $F$ on $M$.

Next we define para-Kähler structures, which are the para-complex analog of Kähler structures (compare with
\autoref{def:Kahler}).
\begin{definition}
 A \emph{para-Kähler} structure in a smooth manifold $M$ is the data of $(g, F, \omega)$, where:
 \begin{itemize}
  \item $g$ is a pseudo-Riemannian metric in $M$,
  \item $F$ is an (almost) para-complex structure in $M$,
  \item $\omega$ is an (almost) symplectic structure in $M$,
 \end{itemize}
 such that:
 \begin{enumerate}[(i)]
  \item $\omega(u,v) = g(F u, v)$ for any $u,v$ (\emph{compatibility condition})
  \item $F$ is parallel for the Levi-Civita connection of $g$: $\nabla F = 0$ (\emph{integrability condition}).
 \end{enumerate}
\end{definition}
It is easy to see that the signature of $g$ must be $(n,n)$ where $\dim M = 2n$, such a pseudo-Riemannian metric is called a \emph{neutral metric}.
As in the complex case, the integrability condition is equivalent to the simultaneous integrability of $F$ as a para-complex structure and closedness of $\omega$ as a $2$-form, 
so that a para-Kähler manifold is para-complex and symplectic manifold in addition to a pseudo-Riemannian manifold, and these three structures are mutually compatible.

If $(g, F, \omega)$ is a para-Kähler structure on $M$, then one quickly checks that the eigendistributions $L_1$ and $L_2$ of $F$ are isotropic for $g$ and $\omega$.
In particular, the two transverse foliations $\mathcal{F}_1$ and $\mathcal{F}_2$ defined by $F$ are Lagrangian for $\omega$ and thus define a bi-Lagrangian structure on $M$.
Conversely, it is clear that any bi-Lagrangian structure on $M$ determines a unique para-Kähler structure $(g, F, \omega)$.
\begin{proposition}
There is a 1-1 correspondence between bi-Lagrangian structures $(\omega, \mathcal{F}_1, \mathcal{F}_2)$ on $M$
and para-Kähler structures $(g, F, \omega)$ on $M$, where the tangent distributions to $\mathcal{F}_1$ and $\mathcal{F}_2$ 
are respectively the $+1$ and $-1$ eigendistributions of the para-complex structure $F$.
\end{proposition}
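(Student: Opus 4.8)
The plan is to exhibit two constructions and show they are mutually inverse. Given a bi-Lagrangian structure $(\omega, \mathcal{F}_1, \mathcal{F}_2)$, I write $L_i$ for the tangent distribution to $\mathcal{F}_i$; since both foliations are Lagrangian and transverse, $\upT M = L_1 \oplus L_2$ with $\dim L_1 = \dim L_2 = n$. I then define $F \in \End(\upT M)$ to be $+\mathbf{1}$ on $L_1$ and $-\mathbf{1}$ on $L_2$, and set $g(u,v) := \omega(Fu, v)$. In the other direction, given a para-Kähler structure $(g, F, \omega)$, I let $L_1, L_2$ be the $\pm 1$-eigendistributions of $F$; these are involutive because $F$ is integrable, so by the Frobenius theorem they integrate to transverse foliations $\mathcal{F}_1, \mathcal{F}_2$. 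It remains to check that each construction lands in the correct class of structures and that the two are inverse to one another.

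For the direction from para-Kähler to bi-Lagrangian, the only nontrivial point is that the eigendistributions are $\omega$-isotropic, which is what makes the foliations $\mathcal{F}_i$ Lagrangian. For $u, v \in L_1$ we have $Fu = u$ and $Fv = v$, so the compatibility condition together with the symmetry of $g$ yields $\omega(u,v) = g(Fu, v) = g(u,v) = g(v,u) = g(Fv, u) = \omega(v,u) = -\omega(u,v)$, forcing $\omega(u,v) = 0$; the computation on $L_2$ is identical up to signs. As $\dim L_i = n$, each leaf is Lagrangian and $(\omega, \mathcal{F}_1, \mathcal{F}_2)$ is a bi-Lagrangian structure.

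For the direction from bi-Lagrangian to para-Kähler, I first note that $F$ is an almost para-complex structure, since $F^2 = \mathbf{1}$ and $\tr F = \dim L_1 - \dim L_2 = 0$, and that it is integrable because $L_1, L_2$ are tangent to foliations. The compatibility condition is automatic from the definition, as $g(Fu, v) = \omega(F^2 u, v) = \omega(u,v)$. To verify that $g$ is symmetric I would decompose $u = u_1 + u_2$ and $v = v_1 + v_2$ along $L_1 \oplus L_2$: the isotropy of $L_1$ and $L_2$ kills the $\omega(u_1, v_1)$ and $\omega(u_2, v_2)$ terms, leaving $g(u,v) = \omega(u_1, v_2) - \omega(u_2, v_1)$, which is manifestly symmetric in $u$ and $v$ by the skew-symmetry of $\omega$. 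Nondegeneracy of $g$ follows from that of $\omega$ and the invertibility of $F$, and since $L_1, L_2$ are complementary $g$-isotropic subspaces of dimension $n$, the signature is $(n,n)$.

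The one substantive obstacle is the integrability condition $\nabla F = 0$, for $\nabla$ the Levi-Civita connection of $g$; this is the para-complex analog of the fact that a compatible triple is Kähler exactly when the (para-)complex structure is parallel, and I expect it to be the crux of the argument. The cleanest route is to invoke the equivalence recorded right after the definition of para-Kähler structures: $\nabla F = 0$ holds if and only if $F$ is integrable as a para-complex structure and $\upd\omega = 0$. Both hold here, so $\nabla F = 0$ follows. (For a self-contained argument one combines the Koszul formula with the standard identity expressing $\nabla F$ through $\upd\omega$ and the Nijenhuis tensor of $F$, both of which vanish.) Finally, the two constructions are mutually inverse: reconstructing the foliations from the eigendistributions of $F$ returns the original $\mathcal{F}_i$, while rebuilding $F$ from its eigendistributions returns $F$ unchanged, and since the compatibility condition is equivalent to $g(u,v) = \omega(Fu,v)$, the metric is recovered as well. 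This gives the claimed bijection.
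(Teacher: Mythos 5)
Your proof is correct and takes essentially the same route as the paper, which only sketches the argument: one direction via isotropy of the $\pm 1$-eigendistributions of $F$, the other via the definition $g(u,v) := \omega(Fu,v)$, and the parallelism $\nabla F = 0$ obtained from the stated equivalence with integrability of $F$ plus $d\omega = 0$. The details you supply (the isotropy computation, the symmetry and neutral signature of $g$, and the mutual inverseness of the two constructions) are precisely the ones the paper declares ``one quickly checks'' or ``it is clear.''
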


\subsection{Properties of the bi-Lagrangian connection} \label{subsec:PropertiesOfBLConnection}

Let $(M, \omega, \mathcal{F}_1, \mathcal{F}_2)$ be a bi-Lagrangian manifold. We call the neutral metric $g$ of the associated para-Kähler structure
the \emph{bi-Lagrangian metric}. Recall that there is a unique symplectic connection in $M$ extending both Bott connections in $\mathcal{F}_i$ called the 
\emph{bi-Lagrangian connection} (\autoref{prop:BiLagrangianConnection1}). The next theorem ensures that it can be alternatively 
defined as the Levi-Civita connection of $g$.

\begin{theorem} \label{thm:BiLagrangianConnection}
 Let $(M, \omega, \mathcal{F}_1, \mathcal{F}_2)$ be a bi-Lagrangian manifold and denote by $(g, F, \omega)$
 the associated para-Kähler structure. The bi-Lagrangian connection of $M$ is the unique torsion-free connection $\nabla$ satisfying the following equivalent conditions:
 \begin{enumerate}[(i)]
  \item $\nabla$ parallelizes $g$: $\nabla g = 0$.
  \item $\nabla$ parallelizes $\omega$ and $F$: $\nabla \omega = 0$ and $\nabla F = 0$.
  \item $\nabla$ parallelizes $\omega$ and preserves both foliations $\mathcal{F}_1$ and $\mathcal{F}_2$.
 \end{enumerate}
\end{theorem}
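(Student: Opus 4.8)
The plan is to reduce everything to the fundamental theorem of (pseudo-)Riemannian geometry together with the purely algebraic fact that the three tensor fields $g$, $F$, $\omega$ of a para-Kähler structure determine one another. Indeed, the compatibility relation $\omega(u,v) = g(Fu,v)$ together with $F^2 = \mathbf{1}$ yields the companion identity $g(u,v) = \omega(Fu,v)$, and (using the symmetry of $g$ and the antisymmetry of $\omega$) the fact that $F$ is both $g$-skew and $\omega$-skew. Consequently, for any linear connection $\nabla$, parallelizing any two of $g$, $F$, $\omega$ forces the third to be parallel as well. I would record these identities at the outset.

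Next I would show that conditions (i), (ii), (iii) are equivalent for a torsion-free connection and that each singles out one and the same connection. For (i) $\Rightarrow$ (ii): a torsion-free $\nabla$ with $\nabla g = 0$ is the Levi-Civita connection $\nabla^g$ of $g$, which is unique; since $(g,F,\omega)$ is para-Kähler, its defining integrability condition is precisely $\nabla^g F = 0$, whence $\nabla^g \omega = 0$ by the algebraic relation above. The converse (ii) $\Rightarrow$ (i) is the same algebraic implication read backwards. For (ii) $\Leftrightarrow$ (iii) I would observe that $\nabla F = 0$ is equivalent to $\nabla$ preserving the eigendistributions $L_1 = \upT \mathcal{F}_1$ and $L_2 = \upT \mathcal{F}_2$: if $Y \in \Gamma(L_i)$ then $FY = \pm Y$, and $(\nabla_X F)Y = 0$ says exactly that $\nabla_X Y$ again lies in the corresponding $\pm 1$-eigenbundle, i.e. in $\Gamma(L_i)$; the reverse implication is immediate. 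Thus the three conditions are equivalent, each is satisfied by $\nabla^g$, and uniqueness follows from the fundamental theorem applied to (i).

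It then remains to identify $\nabla^g$ with the bi-Lagrangian connection of \autoref{prop:BiLagrangianConnection1}, i.e. the unique symplectic connection extending both Bott connections. Since $\nabla^g$ is torsion-free with $\nabla^g\omega = 0$, it is a symplectic connection, so by the uniqueness in \autoref{prop:BiLagrangianConnection1} it suffices to check that $\nabla^g$ restricts on each $L_i$ to the Bott connection. I would verify this through the characterization in \autoref{prop:RealBottConnection}(i): for $X, Y \in \Gamma(L_i)$ and arbitrary $Z$, combining $\nabla^g\omega = 0$ with torsion-freeness gives $X\cdot\omega(Y,Z) = \omega(\nabla^g_X Y, Z) + \omega(Y, \nabla^g_Z X) + \omega(Y,[X,Z])$. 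Because $\nabla^g F = 0$ the vector field $\nabla^g_Z X$ lies in $\Gamma(L_i)$, and since $L_i$ is Lagrangian the middle term $\omega(Y, \nabla^g_Z X)$ vanishes; what survives is exactly the formula defining the Bott connection. As $\omega$ is nondegenerate, this pins down $\nabla^g_X Y$ and shows $\nabla^g|_{L_i}$ coincides with the Bott connection for $i = 1,2$, completing the identification.

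The conceptual work is front-loaded into the algebra of the first two paragraphs; the only genuine computation is the three-term expansion in the last step. The one point to handle with care is that ``preserving the foliation $\mathcal{F}_i$'' must be read as preserving the full distribution $L_i$ under $\nabla_X$ for \emph{every} $X$ — not merely for $X$ tangent to $\mathcal{F}_i$, which is all the partial Bott connection sees. I expect the main subtlety, rather than a true obstacle, to be bookkeeping: keeping straight which of the mutual determinations among $g$, $F$, $\omega$ is invoked at each implication, and ensuring that the para-Kähler integrability $\nabla^g F = 0$ is genuinely available from the bi-Lagrangian--para-Kähler correspondence rather than assumed circularly.
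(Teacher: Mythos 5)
Your proof is correct, but there is nothing in the paper to compare it against: the paper states \autoref{thm:BiLagrangianConnection} without proof, deferring implicitly to the literature cited at the head of the section (\cite{MR2193747}). Your argument supplies the missing details and does so along the standard lines: the algebraic interdeterminacy of $g$, $F$, $\omega$ via $\omega = g(F\cdot,\cdot)$ and $g = \omega(F\cdot,\cdot)$ shows that a connection parallelizing any two of them parallelizes the third; the equivalence of $\nabla F = 0$ with preservation of both eigendistributions handles (ii) $\Leftrightarrow$ (iii); the fundamental theorem of pseudo-Riemannian geometry (valid for the neutral metric $g$) gives existence and uniqueness through (i); and the three-term expansion $X\cdot\omega(Y,Z) = \omega(\nabla_X Y,Z) + \omega(Y,\nabla_Z X) + \omega(Y,[X,Z])$, with the middle term killed because $\nabla_Z X \in \Gamma(L_i)$ and $L_i$ is Lagrangian, correctly matches the characterization of the Bott connection in \autoref{prop:RealBottConnection} and hence identifies $\nabla^g$ with the connection of \autoref{prop:BiLagrangianConnection1}. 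The one point you rightly flag — that the integrability condition $\nabla^g F = 0$ must come from the bi-Lagrangian--para-K\"ahler correspondence rather than be assumed — is not circular here: the theorem's hypothesis hands you ``the associated para-K\"ahler structure,'' and the paper's correspondence (integrability of $F$ plus closedness of $\omega$ being equivalent to $\nabla^g F = 0$, as in the K\"ahler case) is what licenses it. If anything, your write-up could make explicit that the Bott characterization determines $\nabla_X Y$ uniquely \emph{within} $\Gamma(L_i)$ by nondegeneracy of $\omega$, and that $\nabla^g_X Y$ does lie in $\Gamma(L_i)$ before the comparison is made; both facts are present in your argument but deserve one sentence each.
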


Knowing that the bi-Lagrangian connection $\nabla$ is the Levi-Civita connection of the bi-Lagrangian metric $g$ allows one to compute it explicitly. 
Let $\nabla^{B_i}$ denote the Bott connection in $\mathcal{F}_i$ ($i \in \{1,2\}$). Recall that we write  any vector
$u \in \upT M$ as $u = u_1 + u_2$ where $u_i$ is tangent to the foliation $\mathcal{F}_i$. Then for any vector fields $X$ and $Y$:
\begin{equation} \label{eq:BiLagrangianConnection}
 \nabla_{X} Y = {\nabla_{X_1}^{B_1}} Y_1 + {\nabla_{X_2}^{B_2}} Y_2 + [X_1, Y_2]_2 + [X_2, Y_1]_1~.
\end{equation}

One can see from \eqref{eq:BiLagrangianConnection} that both foliations are totally geodesic. Note that they are also flat since the Bott connections $\nabla^{B_i}$ are flat in restriction to the leaves. Next we compute the curvature tensor $R$ of the bi-Lagrangian connection, which is straightforward using \eqref{eq:BiLagrangianConnection}:
\begin{proposition} \label{prop:BLCurvature}
 Let $X$, $Y$, and $Z$ be vector fields on $M$.
 \begin{itemize}
  \item If $X = X_i$ and $Y = Y_i$ are both tangent to $\mathcal{F}_i$ where $i \in \{1,2\}$, then $R(X,Y) = 0$.
  \item If $X = X_1$ is tangent to $\mathcal{F}_1$ and $Y = Y_2$ is tangent to $\mathcal{F}_2$, then:
    \begin{equation}
    \begin{split}
   R(X,Y)Z = {} & \nabla_X [Y, Z_1]_1 - [[X,Y]_2, Z_1]_1 - \nabla_{[X,Y]_1} Z_1 - [Y, \nabla_X Z_1]_1\\
           &  -\nabla_Y [X, Z_2]_2 - [[X,Y]_1, Z_2]_2 - \nabla_{[X,Y]_2} Z_2 - [X, \nabla_Y Z_2]_2~.
    \end{split}
  \end{equation}
 \end{itemize}
\end{proposition}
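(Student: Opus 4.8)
The plan is to compute the curvature tensor $R(X,Y)Z = \nabla_X \nabla_Y Z - \nabla_Y \nabla_X Z - \nabla_{[X,Y]} Z$ directly from the explicit expression \eqref{eq:BiLagrangianConnection} of the bi-Lagrangian connection. The single organizing observation I would make first is that, since $\nabla F = 0$ by \autoref{thm:BiLagrangianConnection}, the curvature operator commutes with $F$ and therefore preserves each eigendistribution $L_1$ and $L_2$. This has two consequences: it lets me evaluate $R(X,Y)$ on $Z_1$ and $Z_2$ separately, and it tells me in advance which projected terms can survive. Throughout I would use the two mechanical rules read off from \eqref{eq:BiLagrangianConnection}---namely that $\nabla_{X_i} Y_i = \nabla^{B_i}_{X_i} Y_i$ lands in $L_i$, while $\nabla_{X_i} Y_j = [X_i, Y_j]_j$ lands in $L_j$ for $i \neq j$---together with the involutivity of each distribution, which guarantees that $[X_i, Y_i]$ is again tangent to $\mathcal{F}_i$.

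For the first bullet I take $X = X_i$ and $Y = Y_i$ tangent to the same foliation and split $Z = Z_1 + Z_2$. Acting on the component $Z_i$, every connection and bracket stays inside $L_i$, so $R(X_i,Y_i)Z_i$ reduces to the curvature of the partial Bott connection $\nabla^{B_i}$, which vanishes by \autoref{prop:RealBottConnection} (equivalently, the leaves are flat and totally geodesic). Acting on the component $Z_j$ with $j \neq i$, each of the three terms becomes an iterated Lie bracket projected to $L_j$; using involutivity of $L_i$ to discard the pieces whose inner bracket already lies in $L_i$, the expression collapses to the $L_j$-projection of
\[
[X_i,[Y_i,Z_j]] - [Y_i,[X_i,Z_j]] - [[X_i,Y_i],Z_j]~,
\]
which is zero by the Jacobi identity. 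Hence $R(X_i,Y_i) = 0$.

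For the second bullet I set $X = X_1$, $Y = Y_2$ and again split according to $Z = Z_1 + Z_2$; the $L_1$-valued part of $R(X,Y)Z$ produces the first line of the claimed formula and the $L_2$-valued part produces the second. Each term is obtained by substituting \eqref{eq:BiLagrangianConnection} twice and carefully recording, at each step, whether the two vectors involved are tangent to the same foliation (in which case the Bott connection appears) or to different foliations (in which case a projected Lie bracket appears), and by decomposing $[X,Y] = [X,Y]_1 + [X,Y]_2$ in the $\nabla_{[X,Y]}Z$ term.

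The main obstacle I anticipate is purely one of bookkeeping rather than of idea: keeping the eight projected-bracket and Bott-derivative terms correctly labeled, and in particular not mislabeling a projection subscript or a sign. The only genuinely non-routine step is recognizing the Jacobi identity behind the vanishing in the first bullet and justifying the suppression of the cross-projection terms, both of which hinge on the involutivity of the two distributions and on the fact---itself a consequence of $\nabla F = 0$---that $R(X,Y)$ respects the splitting $\upT M = L_1 \oplus L_2$.
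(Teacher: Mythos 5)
Your approach is the same as the paper's: the paper offers no proof beyond the remark that the computation is straightforward from \eqref{eq:BiLagrangianConnection}, and your plan---substituting \eqref{eq:BiLagrangianConnection} twice into $R(X,Y)Z=\nabla_X\nabla_Y Z-\nabla_Y\nabla_X Z-\nabla_{[X,Y]}Z$ after splitting $Z=Z_1+Z_2$---is exactly that computation. Your first bullet is complete and correct: on $Z_i$ everything collapses to the curvature of the flat Bott connection (involutivity handles the $\nabla_{[X_i,Y_i]}Z_i$ term, and flatness is \autoref{prop:RealBottConnection}), and on $Z_j$ your projection argument plus the Jacobi identity is exactly the right mechanism. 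The preliminary observation that $\nabla F=0$ forces $R(X,Y)$ to preserve $L_1$ and $L_2$ is also correct; the paper records this only as a remark after the proposition.

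The one place you must be careful is precisely the bookkeeping you defer in the second bullet, because carried out faithfully it does \emph{not} reproduce the printed display: the $L_2$-part comes out as
\[
R(X,Y)Z_2=[X,\nabla_Y Z_2]_2-\nabla_Y[X,Z_2]_2-\nabla_{[X,Y]_2}Z_2-[[X,Y]_1,Z_2]_2~,
\]
with a \emph{plus} sign on $[X,\nabla_Y Z_2]_2$. Indeed $[X,\nabla_Y Z_2]_2=\nabla_X\nabla_Y Z_2$ while $\nabla_X[Y,Z_1]_1=\nabla_X\nabla_Y Z_1$, so these two terms must enter $R(X,Y)Z$ with the same sign whatever curvature convention is used; the statement gives them opposite signs, so the printed formula carries a sign typo in its last term. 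You can confirm the plus sign on the paper's own example \eqref{eq:BLCCP1}: taking $X=\partial/\partial w$ (tangent to $\mathcal{F}_1$) and $Y=Z=\partial/\partial z$ (tangent to $\mathcal{F}_2$), all terms involving $[X,Z]$ or $[X,Y]$ vanish and a direct Christoffel computation gives $R(X,Y)Z=\frac{-2}{(1+zw)^2}\,\partial/\partial z$, which equals $+[X,\nabla_Y Z]_2$. So when you do the substitution, trust your computation: the discrepancy in that single sign is an error in the statement, not in your method---but a write-up that claims to arrive at the formula exactly as printed would necessarily contain a compensating sign mistake.
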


By linearity and antisymmetry of the curvature tensor $R$ in $X$ and $Y$, \autoref{prop:BLCurvature} determines $R$ completely.
Note that $R(X,Y)$ always preserves both distributions in the sense
that $R(X,Y)Z$ is tangent to $\mathcal{F}_i$ whenever $Z$ is tangent to $\mathcal{F}_i$. Also observe that while we established that the restriction of 
$\nabla$ to a leaf of either foliations is flat, the fact that $R(X,Y) = 0$ whenever $X$ and $Y$ are tangent to one foliation is somewhat stronger. It implies 
that parallel transport with respect to $\nabla$ of any tensor on $M$
along a path contained in a leaf is independent of the path. We shall make use of that property in the construction of \autoref{thm:MainThmHK}, so let us record this 
precisely:  

\begin{corollary} \label{cor:PathIndependence}
 Let $(M, \omega, \mathcal{F}_1, \mathcal{F}_2)$ be a bi-Lagrangian manifold and denote by $\nabla$ the bi-Lagrangian connection. 
 Let $\gamma : [0,1] \to M$ be a smooth path
 contained in a leaf of either foliations $\mathcal{F}_i$ and let $T$ be a tensor on $M$ at $\gamma(0)$. Then the parallel transport of $T$ along $\gamma$ with respect
 to $\nabla$ does not depend on the choice of $\gamma$ in its homotopy class in the leaf rel.~endpoints.
\end{corollary}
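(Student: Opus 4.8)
The plan is to exploit the standard relationship between curvature and the path-dependence of parallel transport, feeding in the crucial input from \autoref{prop:BLCurvature} that the curvature tensor $R$ of $\nabla$ vanishes on any pair of vectors tangent to the same foliation. Since the homotopy is required to take place \emph{within the leaf}, all of the velocity vectors it produces are tangent to $\mathcal{F}_i$, and these are precisely the vectors that enter the curvature term measuring the failure of path-independence; hence that term vanishes identically.

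Concretely, let $N$ be the leaf of $\mathcal{F}_i$ containing $\gamma$, regarded as an immersed submanifold, and let $H : [0,1] \times [0,1] \to N$ be a smooth homotopy rel endpoints between two representatives $H(\cdot, 0)$ and $H(\cdot, 1)$ of the given class, so that $H(0,s) = \gamma(0)$ and $H(1,s) = \gamma(1)$ for all $s$. I would first treat the case of a single vector $v \in \upT_{\gamma(0)} M$, the general tensor case being identical. Let $W$ be the section of the pullback bundle $H^* \upT M$ obtained by parallel transporting $v$ along each curve $t \mapsto H(t,s)$; writing $\tfrac{D}{\upd t}$ and $\tfrac{D}{\upd s}$ for covariant differentiation along the $t$- and $s$-directions, this means $\tfrac{D}{\upd t} W = 0$ and $W(0,s) = v$ for all $s$. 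The goal is to show that $W(1,s)$ is independent of $s$.

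The computation is then routine. Since the coordinate vector fields on the square commute, the standard identity for the commutation of covariant derivatives along a map, combined with $\tfrac{D}{\upd t} W = 0$, gives
\begin{equation}
\frac{D}{\upd t}\frac{D}{\upd s} W = R\!\left(H_* \partial_t,\, H_* \partial_s\right) W~.
\end{equation}
Because $H$ maps into the leaf $N$, both $H_*\partial_t$ and $H_*\partial_s$ are tangent to $\mathcal{F}_i$, so \autoref{prop:BLCurvature} forces $R(H_*\partial_t, H_*\partial_s) = 0$, whence $\tfrac{D}{\upd t}\tfrac{D}{\upd s} W = 0$. Thus $\tfrac{D}{\upd s} W$ is parallel along each curve $t \mapsto H(t,s)$. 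At $t = 0$ the point $H(0,s) = \gamma(0)$ is fixed and $W(0,s) = v$ is constant in $s$, so $\tfrac{D}{\upd s} W$ vanishes there; being $t$-parallel, it vanishes for all $t$, in particular at $t = 1$. Since $H(1,s) = \gamma(1)$ is fixed, $\tfrac{D}{\upd s} W(1,s)$ reduces to the ordinary derivative $\tfrac{\upd}{\upd s} W(1,s)$ in the fixed fiber $\upT_{\gamma(1)} M$, which therefore vanishes: $W(1,s)$ is independent of $s$.

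To pass from vectors to an arbitrary tensor $T$, I would note that $\nabla$ induces a connection on every tensor bundle whose curvature is assembled from $R$ via the Leibniz rule, and hence also annihilates any pair of $\mathcal{F}_i$-tangent vectors; the argument above then applies verbatim with $W$ the parallel transport of $T$. I do not anticipate a genuine obstacle here: the entire content is the vanishing of $R$ on foliation-tangent pairs supplied by \autoref{prop:BLCurvature}, and the only care required is to choose $H$ smooth (a piecewise-smooth homotopy may be smoothed, or the argument applied rectangle by rectangle) and to work with $N$ as an immersed submanifold, so that ``tangent to the leaf'' genuinely means ``tangent to $\mathcal{F}_i$''.
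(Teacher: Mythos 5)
Your proof is correct and takes essentially the same route as the paper: the paper simply remarks that the corollary is obtained by adapting the standard argument that vanishing curvature implies local path-independence of parallel transport (citing \cite[Theorem 11.1]{VoronovLecture}), with \autoref{prop:BLCurvature} supplying the vanishing of $R$ on pairs of foliation-tangent vectors. You have written out exactly that adaptation, including the correct treatment of general tensors via the induced connection, so there is nothing to add.
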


The proof of \autoref{cor:PathIndependence} can be adapted  effortlessly from a standard proof
that the vanishing of the curvature of a connection is equivalent to the local path-independence of parallel transport, see \eg{}
\cite[Theorem 11.1]{VoronovLecture}.

\subsection{Complex bi-Lagrangian structures}

We now turn to complex bi-Lagrangian structures, which are the natural extension of real bi-Lagrangian structures in the complex setting.

Let $M$ be a complex manifold. We let $J$ denotes the (integrable) almost complex structure of $M$. 
We recall that an almost complex structure on a smooth manifold $M$ is a tensor field $J \in \End(\upT M)$ such that
$J^2 = -\mathbf{1}$. There is a notion of integrability for almost complex structures in terms of the vanishing of their so-called \emph{Nijenhuis tensor};
the Newlander-Nirenberg theorem says that $J$ is integrable if and only if $M$ can be given the structure of a complex manifold inducing $J$.

\begin{definition}
 A \emph{complex bi-Lagrangian structure} in $M$ is the data of a complex symplectic structure
 $\omega$ and an ordered pair $(\mathcal{F}_1, \mathcal{F}_2)$ of transverse complex Lagrangian foliations.
\end{definition}
Similarly to the real case, the data of a complex bi-Lagrangian structure in $M$ is equivalent to the data of 
a \emph{bicomplex para-Kähler structure} $(g, F, \omega)$, but before we define such structures, we need to review basic facts about holomorphic metrics.

\subsection{Holomorphic metrics} \label{subsec:HolomorphicMetrics}

A \emph{holomorphic (Riemannian) metric}
$g$ on $M$ is a holomorphic symmetric complex bilinear form on $M$ (\ie{} a holomorphic section of the symmetric product $S^2 \, \upT ^*M$) which is nondegenerate. 
The next proposition is elementary.

\begin{proposition}
Let $g$ be a holomorphic Riemannian metric on $M$, denote by $g = g_1 + i g_2$ its real and imaginary parts.
\begin{enumerate}[(i)]
 \item $g_1$ and $g_2$ are neutral metrics on $M$.
 \item $g_1(u,v) = g_2(J u, v)$ for any tangent vectors $u$, $v$ at a same point of $M$.
\end{enumerate}
\end{proposition}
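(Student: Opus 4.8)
The plan is to reduce both statements to a single algebraic fact: that a holomorphic Riemannian metric is $\mathbb{C}$-bilinear with respect to $J$, in the sense that $g(Ju,v) = g(u,Jv) = i\,g(u,v)$ for all real tangent vectors $u,v$ at a point. First I would make precise how $g$ is evaluated on real tangent vectors: identifying $\upT M$ with the holomorphic tangent bundle $\upT^{(1,0)}M$ via $u \mapsto \tfrac12(u - iJu)$ (a $\mathbb{C}$-linear isomorphism carrying $J$ to multiplication by $i$), the holomorphic section $g \in \Gamma(S^2\,\upT^*M)$ pulls back to a symmetric complex-valued form on $\upT M$ that is complex linear in each slot, whence the relation $g(Ju,v) = i\,g(u,v)$. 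This is really the only point requiring care; everything else is formal bookkeeping of real and imaginary parts.

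For (ii), I would expand $g(Ju,v) = i\,g(u,v)$ using $g = g_1 + i g_2$. The left side is $g_1(Ju,v) + i\,g_2(Ju,v)$ and the right side is $-g_2(u,v) + i\,g_1(u,v)$. Comparing imaginary parts gives exactly $g_1(u,v) = g_2(Ju,v)$, which is (ii); comparing real parts records the companion identity $g_2(u,v) = -g_1(Ju,v)$. Along the way I would also note the consequence $g(Ju,Jv) = i^2 g(u,v) = -g(u,v)$, so that $J$ is an \emph{anti-isometry} for both real forms: $g_1(Ju,Jv) = -g_1(u,v)$ and likewise for $g_2$.

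For (i), I would first deduce nondegeneracy of $g_1$ and $g_2$ from that of $g$. Given $u \neq 0$, nondegeneracy of $g$ yields $w$ with $g(u,w) \neq 0$; then $g_1(u,w) = \Real g(u,w)$ while $g_1(u,Jw) = \Real g(u,Jw) = \Real\bigl(i\,g(u,w)\bigr) = -\Imag g(u,w)$, and at least one of these is nonzero, so $g_1$ is nondegenerate (the argument for $g_2$ is identical, or follows from $g_2(u,v) = -g_1(Ju,v)$). Finally, to obtain signature $(n,n)$ I would invoke the anti-isometry property: if $P$ is a maximal positive-definite subspace for $g_1$, of dimension $p$, then $J(P)$ has the same dimension and is negative-definite since $g_1(Ju,Ju) = -g_1(u,u) < 0$, so the negative index $q$ satisfies $q \geq p$; the symmetric argument applied to a maximal negative-definite subspace gives $p \geq q$. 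Hence $p = q$, and since $g_1$ is nondegenerate on a space of real dimension $2n$ we conclude $p = q = n$, i.e. $g_1$ (and by the same token $g_2$) is neutral.

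I expect no genuine obstacle — the proposition is elementary — but the one spot deserving attention is the identification underlying $g(Ju,v) = i\,g(u,v)$, since conflating the real tangent space with $\upT^{(1,0)}M$ or misplacing a factor of $i$ would corrupt both parts simultaneously.
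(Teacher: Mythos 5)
Your proof is correct and complete: the reduction to the single identity $g(Ju,v)=i\,g(u,v)$ via the identification $u\mapsto\tfrac12(u-iJu)$ is handled carefully (and with the sign convention that indeed yields the paper's $g_1(u,v)=g_2(Ju,v)$ rather than its negative), the nondegeneracy argument works, and the anti-isometry trick $g_1(Ju,Ju)=-g_1(u,u)$ correctly forces the signature $(n,n)$. The paper itself gives no proof — it simply declares the proposition elementary — so there is nothing to diverge from; your argument is the standard one the authors evidently have in mind.
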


The ``fundamental theorem of Riemannian geometry'' holds for holomorphic metrics:
\begin{theorem} \label{thm:HolomorphicMetrics}
 Let $g$ be a holomorphic metric on $M$. There exists a unique torsion-free holomorphic connection
 $\nabla$ which parallelizes $g$, called its \emph{holomorphic Levi-Civita connection}.
 Moreover, $\nabla$ is the Levi-Civita connection of both the real and imaginary parts of $g$.
\end{theorem}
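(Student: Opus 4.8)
The plan is to adapt the classical proof of the fundamental theorem of Riemannian geometry, paying attention to the two genuinely complex-analytic features: the holomorphicity of the resulting connection, and its interaction with the real and imaginary parts $g_1 = \Real(g)$ and $g_2 = \Imag(g)$ furnished by the preceding proposition.

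First I would establish existence and uniqueness of a torsion-free holomorphic connection parallelizing $g$. Working in a holomorphic chart $(z^i)$ and writing $g = g_{ij}\, \upd z^i \otimes \upd z^j$ with $g_{ij}$ holomorphic and $(g_{ij})$ invertible, I would define $\nabla$ by the usual Christoffel symbols
\begin{equation*}
\Gamma^k_{ij} = \tfrac{1}{2}\, g^{kl}\!\left(\frac{\partial g_{il}}{\partial z^j} + \frac{\partial g_{jl}}{\partial z^i} - \frac{\partial g_{ij}}{\partial z^l}\right),
\end{equation*}
where $(g^{kl})$ is the inverse matrix. Since $g_{ij}$ and hence $g^{kl}$ are holomorphic, the $\Gamma^k_{ij}$ are holomorphic functions, so $\nabla$ is a holomorphic connection in the sense of \autoref{def:AffineStructure}; it is torsion-free because $\Gamma^k_{ij}$ is symmetric in $i,j$, and the standard computation yields $\nabla g = 0$. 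Uniqueness follows exactly as in the real case: any torsion-free $\nabla$ with $\nabla g = 0$ must obey the Koszul identity $2 g(\nabla_X Y, Z) = X g(Y,Z) + Y g(X,Z) - Z g(X,Y) + g([X,Y],Z) - g([X,Z],Y) - g([Y,Z],X)$, and nondegeneracy of $g$ then pins down $\nabla_X Y$. This argument is purely algebraic in $g$ and its first derivatives, so it transfers verbatim from $\R$ to the holomorphic category.

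For the ``moreover'' statement I would pass to the underlying real tangent bundle via the standard isomorphism $\upT M \cong \upT^{(1,0)} M$, so that $\nabla$ becomes a real connection on $\upT M$ with $\nabla J = 0$ (it preserves the type decomposition). Since $\nabla$ is a real operator, taking real and imaginary parts of the identity $\nabla g = 0$ yields $\nabla g_1 = 0$ and $\nabla g_2 = 0$ separately. The induced real connection is torsion-free: its torsion vanishes on $(1,0)$ vector fields by construction, and the mixed torsion vanishes because $J$ is integrable (so that in holomorphic coordinates $[\partial_{z^i}, \partial_{\bar z^j}] = 0$ and no mixing between $\upT^{(1,0)}M$ and $\upT^{(0,1)}M$ occurs). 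By the uniqueness half of the real fundamental theorem applied to the neutral metric $g_1$, the real connection underlying $\nabla$ is the Levi-Civita connection of $g_1$; applying it once more to $g_2$ shows it is also the Levi-Civita connection of $g_2$.

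The main obstacle lies in this last paragraph rather than in the formal Koszul argument: one must verify carefully that a holomorphic connection on $\upT^{(1,0)} M$ genuinely descends to a \emph{torsion-free} real connection on $\upT M$, and that parallelizing the complex-bilinear $g$ is equivalent to simultaneously parallelizing the two real neutral metrics $g_1$ and $g_2$. Both points rest on the integrability of $J$ and on the compatibility relation $g_1(u,v) = g_2(Ju,v)$ from the preceding proposition; once they are secured, the conclusion is immediate from real uniqueness.
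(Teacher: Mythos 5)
Your proof is correct and follows essentially the same route as the paper: the paper likewise adapts the classical fundamental theorem of Riemannian geometry, using the Koszul formula for existence and uniqueness and then taking real and imaginary parts to identify $\nabla$ with the Levi-Civita connections of $g_1$ and $g_2$. The only cosmetic differences are that you obtain existence and holomorphicity from explicit holomorphic Christoffel symbols in a chart (the paper instead derives holomorphicity directly from the Koszul formula and the holomorphicity of $g$), and you route the ``moreover'' statement through $\nabla g_1 = \nabla g_2 = 0$ plus uniqueness of the real Levi-Civita connection rather than through the real and imaginary parts of the Koszul formula itself.
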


\begin{proof}
The proof of \autoref{thm:HolomorphicMetrics} is the same as the usual proof for Riemannian metrics: the fact that $\nabla$ is torsion-free and parallelizes $g$ implies that it
verifies the \emph{Koszul formula}:
\begin{equation}
\begin{split}
 2\,g(\nabla_X Y, Z) &=  X \cdot g(Y,Z) + Y \cdot  g(Z,X) - Z \cdot g(X,Y) \\
 &+ g([X,Y],Z) - g([Y,Z], X) - g([X,Z], Y) ~.
\end{split}
\end{equation}
This formula gives existence and uniqueness. The fact that $\nabla$ is the Levi-Civita connection of $g_1$ and $g_2$ is derived
by taking the real and imaginary parts of the formula. Since $g_1( \cdot , \cdot ) = g_2(J \cdot, \cdot)$,
the fact that $g_1$ and $g_2$ are parallel implies that $J$ is parallel, so that $\nabla$ is a complex connection.
Finally, one derives from the Koszul formula and from $g$ being holomorphic that $\nabla$ is a holomorphic.
\end{proof}

\subsection{Bicomplex Kähler structures} \label{subsec:Bicomplex}

The algebra of \emph{bicomplex numbers} $\mathbb{BC}$ is the unital associative algebra over the real numbers generated by three elements $i$, $j$, and $f$ satisfying
the \emph{bicomplex relations}:
\begin{equation}
\begin{gathered}
 i^2 = -1 \quad j^2 = -1 \quad f^2 = +1\\
 ij = ji = f~.
\end{gathered}
\end{equation}
The algebra of bicomplex numbers $\mathbb{BC}$ is a $4$-dimensional algebra over $\R$: a generic bicomplex number is written $q = a + ib + jc + kd$ with $(a,b,c,d) \in \R^4$.
One quickly sees that $\mathbb{BC}$ can be simply be described as $\C \otimes_\R \C$ by writing $q = (a + bi) + (c + di)j$.
We refer to \cite{MR3410909} for the reader interested to learn more about bicomplex numbers.

\begin{definition}
An \emph{bicomplex structure} on a smooth manifold $M$ is the data of $(I, J, K)$ where:
\begin{itemize}
 \item $I$ and $J$ are integrable almost complex structures.
 \item $F$ is an integrable para-complex structure.
 \item $I$, $J$, $F$ satisfy the bicomplex relations as above.
\end{itemize}
\end{definition}

Of course, the data of only two of the three structures $I$, $J$, $F$ is enough to determine the third one via the relation $IJ = F$. 
This allows us to give the following equivalent definitions
of a \emph{holomorphic} bicomplex structure on $M$, when the complex structure on $M$ corresponding to $J$ is already given:
\begin{definition}
Let $(M,J)$ be a complex manifold. A \emph{holomorphic bicomplex structure} 
on $M$ is equivalently the data of:
\begin{enumerate}[(i)]
 \item An integrable almost complex structure $I$ which is complex linear 
 as an endomorphism of $\upT M$ and holomorphic as a tensor field on $M$, and such that $\tr(IJ) = 0$.
 \item An integrable almost para-complex structure $F$ which is complex linear 
 as an endomorphism of $\upT M$ and holomorphic as a tensor field on $M$.
 \item An ordered pair $(\mathcal{F}_1,\mathcal{F}_2)$ of transverse holomorphic foliations of $M$ by half-dimensional complex submanifolds.
\end{enumerate}
\end{definition}
This definition is seemingly weaker than another natural definition, namely the data of an atlas with bicomplex-holomorphic transition functions 
(as in \eg{} \cite{MR2746980}), but they turn out to be equivalent. This follows from the fact that a holomorphic bicomplex manifold is locally biholomorphic
to a product of complex manifolds, which is the complex declination of the fact that an integrable para-complex structure induces a local product structure
(see \eg{} \cite{MR0066020} for details).

We are now ready to define bicomplex Kähler structures:
\begin{definition}
Let $M$ be a complex manifold. A \emph{(holomorphic) bicomplex Kähler structure} on $M$ is the data of $(g, F, \omega)$, where:
 \begin{itemize}
  \item $g$ is a holomorphic metric in $M$,
  \item $F$ is a para-complex structure in $M$ defining a (holomorphic) bicomplex structure, and
  \item $\omega$ is a complex symplectic structure in $M$
 \end{itemize}
 such that:
 \begin{enumerate}[(i)]
  \item $\omega(u,v) = g(F u, v)$ for any $u,v$ (\emph{compatibility condition})
  \item $F$ is parallel for the Levi-Civita connection of $g$: $\nabla F = 0$ (\emph{integrability condition}).
 \end{enumerate}
\end{definition}

As in the real setting, it is straightforward to show the following proposition.
\begin{proposition} \label{prop:BLBKC}
There is a 1-1 correspondence between complex bi-Lagrangian structures $(\omega, \mathcal{F}_1, \mathcal{F}_2)$ on $M$
and holomorphic bicomplex Kähler structures $(g, F, \omega)$ on $M$, where the tangent distributions to $\mathcal{F}_1$ and $\mathcal{F}_2$ 
are the $+1$ and $-1$ eigendistributions of the para-complex structure $F$.
\end{proposition}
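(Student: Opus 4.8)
The plan is to mirror the real correspondence between bi-Lagrangian and para-Kähler structures, substituting the holomorphic Levi-Civita connection of \autoref{thm:HolomorphicMetrics} for the ordinary Levi-Civita connection, and to dispatch the one genuinely holomorphic point by passing to real and imaginary parts.

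Starting from a complex bi-Lagrangian structure $(\omega, \mathcal{F}_1, \mathcal{F}_2)$, I would first produce the para-complex structure $F$ as the endomorphism of $\upT M$ whose $+1$- and $-1$-eigendistributions are $\upT\mathcal{F}_1$ and $\upT\mathcal{F}_2$. Since $\mathcal{F}_1$ and $\mathcal{F}_2$ are transverse holomorphic foliations by half-dimensional complex submanifolds, this $F$ is precisely the data of characterization (iii) of a holomorphic bicomplex structure, so it is $\C$-linear (commutes with $J$) and holomorphic as a tensor field. I would then set $g(u,v) := \omega(Fu,v)$ and verify that $g$ is a holomorphic metric: it is holomorphic because $\omega$ and $F$ are; it is nondegenerate because $\omega$ is nondegenerate and $F$ is invertible; and its symmetry is exactly the condition that $\mathcal{F}_1$ and $\mathcal{F}_2$ are both Lagrangian, by the same short alternating-form computation as in the real case. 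The compatibility condition $\omega(u,v) = g(Fu,v)$ is then immediate from $F^2 = \mathbf{1}$.

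The only step not obtained by transcribing the real proof is the integrability condition $\nabla F = 0$, where $\nabla$ is the holomorphic Levi-Civita connection of $g$; this is the main obstacle, and I would remove it by reduction to the real theory via $\omega = \omega_1 + i\omega_2$. Each $\omega_i$ is a real symplectic form for which $\mathcal{F}_1$ and $\mathcal{F}_2$ remain Lagrangian, so $(\omega_i, \mathcal{F}_1, \mathcal{F}_2)$ is a real bi-Lagrangian structure whose bi-Lagrangian metric is $g_i = \omega_i(F\cdot,\cdot)$, and one checks $g_1 = \Real(g)$ and $g_2 = \Imag(g)$ with the same $F$. By \autoref{thm:HolomorphicMetrics}, $\nabla$ is simultaneously the Levi-Civita connection of $g_1$ and of $g_2$; by \autoref{thm:BiLagrangianConnection} applied to $(\omega_1, \mathcal{F}_1, \mathcal{F}_2)$, that connection is the real bi-Lagrangian connection and therefore satisfies $\nabla F = 0$. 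This produces the holomorphic bicomplex Kähler structure.

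For the converse, starting from $(g, F, \omega)$ I would let $\mathcal{F}_1$ and $\mathcal{F}_2$ be the foliations integrating the $\pm 1$-eigendistributions of $F$, which exist and are holomorphic because $F$ is an integrable para-complex structure defining a bicomplex structure. Combining the compatibility $\omega(u,v) = g(Fu,v)$ with the symmetry of $g$ and the antisymmetry of $\omega$ forces each eigendistribution to be $\omega$-isotropic, exactly as in the real case; being of complex dimension $\tfrac12\dim_\C M$, they are complex Lagrangian, so $(\omega, \mathcal{F}_1, \mathcal{F}_2)$ is a complex bi-Lagrangian structure. The two assignments are mutually inverse by construction, and apart from the identity $\nabla F = 0$ discussed above every verification is formally identical to the real setting.
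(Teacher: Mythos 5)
Your proposal is correct, and it follows exactly the route the paper intends: the paper gives no written proof, stating only that ``as in the real setting, it is straightforward,'' and your argument is a careful fleshing-out of that transcription, including the two genuinely complex-analytic points (holomorphicity of $g = \omega(F\cdot,\cdot)$, and $\nabla F = 0$ obtained by splitting $\omega = \omega_1 + i\omega_2$ and invoking \autoref{thm:HolomorphicMetrics} together with \autoref{thm:BiLagrangianConnection}). This reduction to the real and imaginary parts is also precisely the mechanism the paper itself uses just afterwards for \autoref{thm:ComplexBiLagConnection} and \autoref{thm:ComplexAndRealBL}, so there is no divergence of method and no circularity.
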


\subsection{Complex bi-Lagrangian metric and connection} \label{subsec:ComplexBiLagrangianMetricAndConnection}

Let $M$ be a complex manifold equipped with a complex bi-Lagrangian structure $(\omega, \mathcal{F}_1, \mathcal{F}_2)$ and denote by $(g, F, \omega)$
the holomorphic bicomplex Kähler structure (\autoref{prop:BLBKC}). We call $g$ the \emph{complex bi-Lagrangian metric}
and its Levi-Civita connection $\nabla$ the \emph{complex bi-Lagrangian connection}.
\begin{theorem} \label{thm:ComplexBiLagConnection}
The complex bi-Lagrangian connection $\nabla$ is the unique torsion-free holomorphic connection in $M$ which satisfies the equivalent conditions:
\begin{enumerate}[(i)]
  \item $\nabla$ parallelizes $g$.
  \item $\nabla$ parallelizes $\omega$ and preserves both foliations $\mathcal{F}_1$ and $\mathcal{F}_2$.
  \item $\nabla$ parallelizes $g$ and the bicomplex structure $(I, J, F)$.
\end{enumerate}
 Moreover, $\nabla$ extends both the complex Bott connections in $\mathcal{F}_1$ and in $\mathcal{F}_2$.
\end{theorem}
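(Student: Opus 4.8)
The plan is to reduce everything to the real theory of \autoref{thm:BiLagrangianConnection} by splitting $\omega = \omega_1 + i\omega_2$ into its real and imaginary parts. Since the leaves of $\mathcal{F}_1$ and $\mathcal{F}_2$ are complex Lagrangian, they are real Lagrangian for both $\omega_1$ and $\omega_2$, so $(\omega_k, \mathcal{F}_1, \mathcal{F}_2)$ is a genuine real bi-Lagrangian structure for each $k \in \{1,2\}$. I would first identify the two associated real bi-Lagrangian metrics. Writing $g = g_1 + i g_2$ and taking real and imaginary parts of the compatibility relation $\omega(u,v) = g(Fu,v)$ gives $\omega_k(u,v) = g_k(Fu,v)$, hence, replacing $u$ by $Fu$ and using $F^2 = \mathbf{1}$, the formula $g_k(u,v) = \omega_k(Fu,v)$. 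This is exactly the defining formula for the para-Kähler metric attached to $(\omega_k, \mathcal{F}_1, \mathcal{F}_2)$, so $g_1 = \Real(g)$ and $g_2 = \Imag(g)$ are precisely the real bi-Lagrangian metrics of the two real structures.

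I would then take $\nabla$ to be the holomorphic Levi-Civita connection of $g$ furnished by \autoref{thm:HolomorphicMetrics}, which is the unique torsion-free holomorphic connection with $\nabla g = 0$ and which is simultaneously the real Levi-Civita connection of $g_1$ and of $g_2$. By \autoref{thm:BiLagrangianConnection} applied to each real structure, $\nabla$ is therefore the real bi-Lagrangian connection of both $(\omega_1, \mathcal{F}_1, \mathcal{F}_2)$ and $(\omega_2, \mathcal{F}_1, \mathcal{F}_2)$. Condition (i) holds by construction. For (ii), the real \autoref{thm:BiLagrangianConnection} gives $\nabla \omega_1 = \nabla \omega_2 = 0$ and preservation of both foliations, and recombining the real and imaginary parts yields $\nabla \omega = 0$ together with preservation of $\mathcal{F}_1$ and $\mathcal{F}_2$. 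For (iii), $\nabla$ parallelizes $J$ (any torsion-free holomorphic connection is complex), while preservation of the two eigendistributions of $F$ forces $\nabla F = 0$; since $I = -FJ$ (equivalently $F = IJ$) this gives $\nabla I = 0$, so $\nabla$ parallelizes the entire bicomplex structure.

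For equivalence and uniqueness the key device is to push each complex condition down to the corresponding hypothesis of the real \autoref{thm:BiLagrangianConnection} for the single structure $(\omega_1, \mathcal{F}_1, \mathcal{F}_2)$: imposed on a torsion-free holomorphic $\nabla'$, condition (i) gives $\nabla' g_1 = 0$; condition (ii) gives $\nabla' \omega_1 = 0$ and preservation of both foliations; condition (iii) gives $\nabla' \omega_1 = 0$ and $\nabla' F = 0$. In each case the real theorem determines $\nabla'$ uniquely among all torsion-free real connections, a fortiori among holomorphic ones, and the connection so recovered is the real Levi-Civita connection of $g_1$, namely $\nabla$. This proves uniqueness and the equivalence of the three conditions at once. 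The step requiring the most care is exactly this transfer of uniqueness across the real--complex boundary: one must verify that each complex hypothesis is no weaker than the real hypothesis it feeds into (in particular that ``preserves both foliations'' means the same thing in both settings, and that $\nabla' \omega = 0$ splits into $\nabla' \omega_1 = \nabla' \omega_2 = 0$ because $\nabla'$ is a real connection), and observe that the recovered real connection is automatically holomorphic since it coincides with $\nabla$.

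It remains to establish the final assertion. Restricting $\nabla$ to $L_i = \upT \mathcal{F}_i$ along $L_i$, \autoref{prop:BiLagrangianConnection1} applied to the two real structures shows that this partial connection coincides with the real Bott connection of $\omega_1$ and with that of $\omega_2$ in $\mathcal{F}_i$. The restriction is torsion-free, flat along the leaves (the Bott connections are flat, as reflected in the curvature computation of \autoref{prop:BLCurvature}), and complex because $\nabla J = 0$. By the uniqueness characterization of \autoref{thm:ComplexBottConnection}, a flat torsion-free partial complex connection agreeing with both real Bott connections is the complex Bott connection; hence $\nabla$ extends the complex Bott connections in $\mathcal{F}_1$ and $\mathcal{F}_2$, completing the argument.
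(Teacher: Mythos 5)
Your proposal is correct and follows essentially the same route as the paper, which proves this theorem precisely by reducing it to a combination of \autoref{thm:ComplexBottConnection}, \autoref{thm:BiLagrangianConnection}, \autoref{thm:HolomorphicMetrics}, and \autoref{prop:BLBKC}; your argument is simply that reduction carried out in full detail (splitting $\omega$ and $g$ into real and imaginary parts, identifying $g_1, g_2$ as the real bi-Lagrangian metrics, taking $\nabla$ to be the holomorphic Levi-Civita connection, and transferring uniqueness through the real theorem). The identification $g_k(u,v) = \omega_k(Fu,v)$ and the final appeal to the uniqueness clause of \autoref{thm:ComplexBottConnection} are exactly the ingredients the paper has in mind.
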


Let us write $\omega$ in terms of its real and imaginary parts: $\omega = \omega_1 + i\omega_2$.
Note that $(\omega_1, \mathcal{F}_1, \mathcal{F}_2)$ and $(\omega_2, \mathcal{F}_1, \mathcal{F}_2)$ are both real
bi-Lagrangian structures in $M$, let us call them the real and imaginary parts of the complex bi-Lagrangian structure.
\begin{theorem}\label{thm:ComplexAndRealBL}
The bi-Lagrangian metrics for the real and imaginary parts of the complex bi-Lagrangian structure are respectively the real and imaginary parts of the complex bi-Lagrangian metric.
Moreover, the complex bi-Lagrangian connection is equal to the real bi-Lagrangian connection for both the real and imaginary parts of the complex bi-Lagrangian structure.
\end{theorem}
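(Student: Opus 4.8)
The plan is to deduce both assertions directly from the compatibility condition relating $\omega$ and $g$, together with the structural theorems already established. The starting observation is that the para-complex structure $F$ determined by the ordered pair $(\mathcal{F}_1, \mathcal{F}_2)$ is literally the same endomorphism of $\upT M$ whether we regard $(\mathcal{F}_1, \mathcal{F}_2)$ as a complex bi-Lagrangian polarization or as either of the two real bi-Lagrangian polarizations $(\omega_i, \mathcal{F}_1, \mathcal{F}_2)$: in all cases $F$ is $+\mathbf{1}$ on $\upT \mathcal{F}_1$ and $-\mathbf{1}$ on $\upT \mathcal{F}_2$. In particular $F$ commutes with $J$ and hence preserves the real tangent bundle, which is precisely what makes the separation into real and imaginary parts behave well.

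First I would establish the metric statement. The compatibility condition of the holomorphic bicomplex Kähler structure reads $\omega(u,v) = g(Fu, v)$, an identity of complex-valued bilinear forms. Writing $g = g_1 + ig_2$ and $\omega = \omega_1 + i\omega_2$ with $g_1 = \Real(g)$ and so on, and using that $Fu$ is a real tangent vector whenever $u$ is, taking real and imaginary parts yields $\omega_1(u,v) = g_1(Fu,v)$ and $\omega_2(u,v) = g_2(Fu,v)$. On the other hand, by definition the bi-Lagrangian metric $h_i$ of the real structure $(\omega_i, \mathcal{F}_1, \mathcal{F}_2)$ is characterized by $\omega_i(u,v) = h_i(Fu, v)$. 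Since $F^2 = \mathbf{1}$, each of these relations determines the metric uniquely from $\omega_i$ via the substitution $u \mapsto Fu$, so comparing the two characterizations gives $h_1 = g_1 = \Real(g)$ and $h_2 = g_2 = \Imag(g)$, which is the first claim.

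The connection statement then follows by chaining together three results already proved. By \autoref{thm:ComplexBiLagConnection}, the complex bi-Lagrangian connection $\nabla$ is the holomorphic Levi-Civita connection of $g$. By \autoref{thm:HolomorphicMetrics}, this same $\nabla$ is simultaneously the real Levi-Civita connection of $g_1 = \Real(g)$ and of $g_2 = \Imag(g)$. Finally, by \autoref{thm:BiLagrangianConnection}, the real bi-Lagrangian connection attached to $(\omega_i, \mathcal{F}_1, \mathcal{F}_2)$ is the Levi-Civita connection of its bi-Lagrangian metric $h_i$. Combining this with the identification $h_i = g_i$ from the previous step shows that the real bi-Lagrangian connection of each real part equals the Levi-Civita connection of $g_i$, which is exactly $\nabla$.

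I do not expect any serious obstacle here; the content is essentially bookkeeping built on the earlier theorems. The one point deserving care—and the only place where something could genuinely go wrong—is the claim that $F$ is common to the real and complex pictures and that real and imaginary parts may be extracted termwise from the compatibility identity. This rests on $F$ being complex-linear, so that it preserves $\upT M$ as a real bundle and the expression $g(Fu, v)$ splits cleanly into $g_1(Fu,v) + i\,g_2(Fu,v)$; since complex-linearity of $F$ is built into the definition of the holomorphic bicomplex structure, I would simply make this explicit at the outset to license the termwise computation.
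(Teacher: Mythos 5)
Your proof is correct and follows essentially the same route as the paper, which does not write out a proof at all but merely remarks that the statement "is easily reduced to a combination of" \autoref{thm:ComplexBottConnection}, \autoref{thm:BiLagrangianConnection}, \autoref{thm:HolomorphicMetrics}, and \autoref{prop:BLBKC}. Your argument is precisely that reduction made explicit: the identification $h_i = g_i$ via real and imaginary parts of the compatibility identity (with the key observation that $F$ is the common para-complex structure and is real), followed by the chain of Levi-Civita characterizations from \autoref{thm:BiLagrangianConnection} and \autoref{thm:HolomorphicMetrics}.
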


The proofs of \autoref{thm:ComplexBiLagConnection} and \autoref{thm:ComplexAndRealBL} are easily reduced to a combination of \autoref{thm:ComplexBottConnection}, \autoref{thm:BiLagrangianConnection}, 
\autoref{thm:HolomorphicMetrics}, and \autoref{prop:BLBKC}.

\section{Bi-Lagrangian structure in the complexification of a Kähler manifold} \label{sec:BiLagrangianComplexification}

In this section, we show that the complexification of a real-analytic Kähler manifold enjoys a natural complex bi-Lagrangian structure
and study some of its properties.

\subsection{Complexification of real-analytic manifolds} \label{subsec:Complexification}

Let us recall the essentials of complexification of real-analytic manifolds.
\begin{definition}
Let $M$ be a complex manifold and denote by $J$ its almost complex structure. A real-analytically embedded submanifold $\iota \colon N \hookrightarrow M$
is called \emph{maximal totally real} if for all $p\in N$:
\begin{equation}
\upT_p N \oplus J(\upT_p N) = \upT_p M~.
\end{equation}
In this case the embedding $\iota \colon N \hookrightarrow M$ is called a \emph{complexification of $N$}.  
\end{definition}

One can characterize complexifications of real-analytic manifolds in terms of fixed points of anti-holomorphic involutions:
\begin{proposition} \label{prop:AntiholoInvolution}
Let $N$ be a real-analytic submanifold of a complex manifold $M$. Then $M$ is a complexification of $N$ if and only if there exists an anti-holomorphic involution 
$\tau : U \to U$, where $U$ is a neighborhood of $N$ in $M$, such that $N$ is the set of fixed points of $\tau$.
\end{proposition}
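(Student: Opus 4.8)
The plan is to prove both implications: the reverse direction ($\Leftarrow$) by an infinitesimal linear-algebra argument at the fixed locus, and the forward direction ($\Rightarrow$) by constructing the involution in adapted holomorphic coordinates and gluing via the identity principle across a totally real submanifold.

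For the direction ``involution implies complexification'', suppose $\tau : U \to U$ is an anti-holomorphic involution with $\mathrm{Fix}(\tau) = N$. I would first analyze the differential $\upd\tau_p$ at a fixed point $p$: it is an $\R$-linear involution of $\upT_p M$, so $\upT_p M = E_+ \oplus E_-$ splits into its $\pm 1$-eigenspaces. Because $\tau$ is anti-holomorphic, $\upd\tau_p$ anticommutes with $J$, that is $\upd\tau_p \circ J = -J \circ \upd\tau_p$; this immediately forces $J(E_+) = E_-$, whence $\upT_p M = E_+ \oplus J(E_+)$. It then remains to identify $E_+$ with $\upT_p N$. Differentiating $\tau|_N = \mathrm{id}$ gives $\upT_p N \subseteq E_+$; for the reverse inclusion I would average a Riemannian metric over $\{\mathrm{id}, \tau\}$ to make $\tau$ an isometry, then use that an isometric involution is linearized by the exponential map, so that $\mathrm{Fix}(\tau) = \exp_p(E_+)$ locally and hence $\dim \upT_p N = \dim E_+$. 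Combined with the inclusion this yields $\upT_p N = E_+$, and therefore $\upT_p N \oplus J(\upT_p N) = \upT_p M$, which is exactly the maximal totally real condition.

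For the converse, assume $N$ is maximal totally real. The local model is clear: around $p \in N$ I would produce holomorphic coordinates $(z^1, \dots, z^n)$ in which $N = \{\Imag z^i = 0\}$, obtained by taking real-analytic coordinates $(u^i)$ on $N$, extending each $u^i$ to a holomorphic function $z^i$ near $p$ (possible by real-analyticity), and checking that the $z^i$ form a holomorphic coordinate system precisely because $\upT_p N$ is totally real, so that the $\C$-span of the covectors $\upd z^i$ is all of $\upT_p^* M$. In these coordinates the conjugation $z \mapsto \bar z$ is an anti-holomorphic involution with fixed set $N$; call this local map $\tau_\alpha$. The real content is then to glue the $\tau_\alpha$ into a single $\tau$ on a neighborhood of $N$: here I would invoke the identity principle across a maximal totally real submanifold, namely that a holomorphic map agreeing with another on $N$ agrees on a whole neighborhood. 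Applying this to $z \mapsto \overline{\tau_\alpha(z)}$ (which is holomorphic) shows that any two of the local conjugations coincide on overlaps, so they patch to a global anti-holomorphic $\tau : U \to U$; the same principle applied to the holomorphic map $\tau \circ \tau$, which fixes $N$, shows $\tau \circ \tau = \mathrm{id}$ near $N$, and shrinking $U$ makes $\mathrm{Fix}(\tau) = N$ exactly.

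I expect the main obstacle to be the gluing step, which hinges entirely on the uniqueness of holomorphic extension from $N$: the fact that a holomorphic function vanishing on a maximal totally real submanifold of full real dimension must vanish in a neighborhood. This is what guarantees both that the germ of $\tau$ is well defined (independent of the chart) and that it is forced to be an involution; the linear-algebra and coordinate-construction steps are then routine by comparison.
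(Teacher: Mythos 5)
The paper states this proposition without any proof, treating it as a standard fact from the theory of complexifications (in the spirit of Whitney--Bruhat), so there is no argument of record to compare yours against; judged on its own, your proof is correct and is essentially the standard one. In the direction where $\tau$ is given, the anticommutation $\upd\tau_p \circ J = -J \circ \upd\tau_p$ does force $J(E_+) = E_-$, and your appeal to metric averaging plus linearization of an isometric involution is exactly what is needed: it rules out the a priori possibility that the fixed-point manifold $N$ is strictly lower-dimensional than $E_+$, which is the one genuinely nontrivial point of that direction. In the converse direction, adapted coordinates in which $N$ is the real locus, conjugation in charts, and gluing via the identity principle across a maximal totally real submanifold (a holomorphic map agreeing with the identity on $N$ is the identity near $N$) is the right mechanism, and that identity principle also correctly delivers the involution property of the glued map.

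Two small points deserve tightening, though neither affects correctness. First, your parenthetical ``possible by real-analyticity'' for extending $u^i$ to a holomorphic $z^i$ understates what is used: holomorphic extension of a real-valued real-analytic function off $N$ requires $N$ to be totally real (it fails, for instance, on a complex submanifold), and the cleanest route is to complexify a real-analytic parametrization $\psi \colon \Omega \subset \R^n \to M$ of $N$ written in a holomorphic chart, then invert the holomorphic extension $\psi^c$ by the inverse function theorem; this produces the adapted coordinates and their independence in one stroke, with the maximal totally real hypothesis entering precisely to make $\upd\psi^c_0$ invertible over $\C$. Second, at the end one should make the shrunken neighborhood $\tau$-invariant, for example by replacing $U'$ with $U' \cap \tau(U')$, so that $\tau : U \to U$ is literally a self-map of $U$ with fixed-point set exactly $N$; as written, ``shrinking $U$'' leaves this administrative step implicit.
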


The following theorem guarantees existence of a complexification of any real-analytic manifold 
and uniqueness up to restriction, so that the \emph{germ} of a complexification is unique:
\begin{theorem} \label{thm:Complexification}
Let $N$ be a real-analytic manifold.
\begin{enumerate}[(i)]
 \item There exists a complexification $\iota \colon N \rightarrow M$.
 \item Let $\iota_1 \colon : N \to M_1$ and $\iota_2 \colon N \to M_2$ be two complexifications. There exists a unique germ of a biholomorphism 
$\phi:U_1\rightarrow U_2$
where $U_i$ is a connected neighborhood of $\iota_i(N)$ in $M_i$, such that $\iota_2 = \phi \circ \iota_1$.
\end{enumerate}
\end{theorem}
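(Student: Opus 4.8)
The plan is to prove both existence and the germ-uniqueness of a complexification. For existence, I would start from the fact that a real-analytic manifold $N$ admits a real-analytic atlas whose transition functions are real-analytic, hence locally extend to holomorphic functions. First I would take a real-analytic chart $\phi_\alpha : U_\alpha \to \R^n \subset \C^n$ and extend it to a biholomorphism on a neighborhood in $\C^n$; the key point is that each real-analytic transition map $\phi_\beta \circ \phi_\alpha^{-1}$, being real-analytic on an open subset of $\R^n$, extends uniquely to a holomorphic map on some complex neighborhood. The subtlety is to shrink these complex neighborhoods consistently so that the extended transition maps satisfy the cocycle condition where they overlap; since the holomorphic extensions are unique (by the identity theorem, as they agree with the real transition maps on a totally real set), the cocycle relations hold automatically on the (possibly smaller) common domains of definition. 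Gluing the complexified charts then produces a complex manifold $M$ containing $N$, and by construction each point of $N$ has a neighborhood biholomorphic to a neighborhood of $\R^n$ in $\C^n$, which makes $N$ maximal totally real.

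For uniqueness up to germ, suppose $f_1 : N \to M_1$ and $f_2 : N \to M_2$ are two complexifications. The strategy is to build the biholomorphism $\phi$ locally and then invoke analytic continuation to get a global germ. Working in real-analytic charts for $N$, I would identify neighborhoods of $f_i(N)$ with neighborhoods of $\R^n$ in $\C^n$; the transition between the two complexifications, restricted to $N$, is the identity, so it is a real-analytic map between open subsets of $\R^n$ that again extends uniquely to a local biholomorphism between complex neighborhoods. The maximal totally real condition is exactly what guarantees this: since $\upT_p N \oplus J(\upT_p N) = \upT_p M_i$, a real-analytic chart for $N$ genuinely complexifies to a full holomorphic chart for $M_i$, so there is no loss of dimension and the extension is a local biholomorphism rather than merely a holomorphic immersion. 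The uniqueness of these local holomorphic extensions (two holomorphic maps agreeing on the totally real set $N$ must coincide, by the identity theorem for holomorphic functions applied to the difference on a maximally totally real submanifold) lets the local pieces be glued into a single germ $\phi : U_1 \to U_2$.

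The main obstacle, and the step I would treat most carefully, is the uniqueness and coherence of holomorphic extensions across chart overlaps. The essential tool is that a holomorphic function vanishing on a maximal totally real submanifold vanishes identically in a neighborhood; this is the complex-analytic analog of the identity theorem and is what forces both the cocycle condition in the existence argument and the well-definedness of $\phi$ in the uniqueness argument. I would state this as a lemma (or cite it) and then everything else is bookkeeping: the local extensions are individually unique, so wherever two of them are both defined they must agree, and the global object they assemble into is therefore well-defined and unique as a germ. One should also verify the naturality relation $f_2 = \phi \circ f_1$, which is immediate since $\phi$ restricts to the identity on $N$ by construction, and that $\phi$ is a biholomorphism near $N$, which follows because its real linearization along $N$ is invertible (again by the maximal totally real condition on both sides).
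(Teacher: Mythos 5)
The paper never proves \autoref{thm:Complexification}: it is invoked as classical background (this is the Whitney--Bruhat theorem on complexification of real-analytic manifolds), so your proposal can only be judged on its own merits rather than against an argument in the text. Your uniqueness argument is essentially the standard one and is sound: in coordinates adapted to the maximal totally real embedding, $f_2 \circ f_1^{-1}$ extends holomorphically by power series, the extension is a local biholomorphism because its complex Jacobian at real points is the complexification of an invertible real Jacobian, and the identity theorem on maximal totally real submanifolds forces any two extensions to have the same germ at each point of $f_1(N)$. Two points should be tightened, though: two local extensions are only guaranteed to agree \emph{near} $f_1(N)$, not on their entire common domain (which can have components disjoint from $f_1(N)$, where agreement can genuinely fail), so the patching must be organized as a statement about germs along $f_1(N)$; and invertibility of the linearization along $f_1(N)$ only makes $\phi$ a local biholomorphism --- injectivity on an actual neighborhood of $f_1(N)$ needs a further (standard, but not free) shrinking argument.

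The genuine gap is in the existence part, at the sentence ``gluing the complexified charts then produces a complex manifold $M$'': nothing in your construction guarantees that the glued space is \emph{Hausdorff}, and this is precisely the hard point of the Whitney--Bruhat proof. A quotient of $\bigsqcup_\alpha \tilde U_\alpha$ by gluing identifications is Hausdorff only if the graphs of the gluing maps are closed in $\tilde U_\alpha \times \tilde U_\beta$, and this is not automatic --- it is exactly the mechanism that produces the line with two origins when an identification domain is smaller than it ``should'' be. Your construction is forced into this danger zone: as you yourself note, the holomorphic extensions of the transition maps are unique, and hence the cocycle identities hold, only on neighborhoods of the real overlaps (far from $N$, on components of overlaps not meeting $N$, extensions and cocycle relations can fail, e.g.\ for monodromy reasons). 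So you must shrink the gluing domains toward $N$; but once the gluing domains are no longer maximal, closedness of the identification graphs --- hence Hausdorffness --- can fail. Reconciling these two opposing requirements by a careful, locally finite choice of tube-like complex neighborhoods is the actual content of the classical theorem; alternatively one can avoid gluing altogether by invoking Grauert's embedding theorem to realize $N \subset \R^m \subset \C^m$ and taking a union of local complex submanifolds of $\C^m$ extending $N$ (Hausdorffness is then inherited from $\C^m$), but that rests on a deep result. As written, your existence proof is missing this idea.
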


A fundamental fact about complexification of a real-analytic manifold is that any analytic tensor field extends uniquely to (a germ of) a holomorphic tensor field in the complexification:
\begin{proposition} \label{prop:HolomorphicExtension}
 Let $N$ be a real-analytic manifold and let $\iota \colon N \to M$ be a complexification. Let $T$ be a real-analytic tensor field on $N$, then there exists a unique germ
 of a holomorphic tensor field $T^c$ in a neighborhood of $N$ in $M$ such that $\iota^* T^c = T$.
\end{proposition}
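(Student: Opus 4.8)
The plan is to reduce the statement to the scalar case --- the unique holomorphic extension of a real-analytic function --- and then promote it to tensor fields by working with components in adapted local coordinates. The whole assertion is local near $f(N)$ and concerns germs, so I would begin by fixing a point $p \in N$ and choosing a holomorphic chart $(z^1, \dots, z^n)$ on $M$ centered at $f(p)$ that straightens the complexification: because $f(N)$ is maximal totally real, one can arrange that $f(N)$ corresponds locally to the real locus $\{\Imag(z^i) = 0\}$, with $(x^i) := (\Real(z^i))|_N$ serving as real-analytic coordinates on $N$. This is the coordinate incarnation of the fact (underlying \autoref{thm:Complexification}) that a neighborhood of a maximal totally real submanifold looks like a neighborhood of $\R^n \subset \C^n$.

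The analytic heart of the proof is then the following elementary lemma: a real-analytic function $g(x^1, \dots, x^n)$ defined near the origin in $\R^n$ has a locally convergent power series, and substituting the complex variables $z^i$ for the $x^i$ produces a holomorphic function $g^c(z^1, \dots, z^n)$ on a neighborhood in $\C^n$ that restricts to $g$ on $\R^n$. Uniqueness is immediate: if a germ of holomorphic function vanishes on $\R^n$, then all of its iterated real derivatives along $\R^n$ vanish, hence all its Taylor coefficients vanish, so it vanishes identically near the origin. In other words $\R^n$ is a set of uniqueness for germs of holomorphic functions, precisely because it is maximal totally real.

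To pass to tensors, I would write $T$ in the chart as a collection of component functions $T^{i_1 \cdots i_r}_{j_1 \cdots j_s}(x)$, each real-analytic, and define $T^c$ by extending every component to a holomorphic function of $z$ as above, reassembling them into a holomorphic section of the corresponding bundle built from the holomorphic tangent and cotangent bundles of $M$. Here one uses the canonical real-linear isomorphism $\upT_p N \cong \upT_p^{(1,0)} M$ sending $\partial/\partial x^i$ to $\partial/\partial z^i$ (valid along $f(N)$ exactly because $N$ is totally real of maximal dimension), which identifies the pull-back $f^* T^c$ with $T$ in these coordinates and makes the defining equation $f^* T^c = T$ meaningful. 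The one thing to verify is that this locally defined $T^c$ is independent of the chart and therefore glues to a single germ: since the transition functions of $M$ are holomorphic and restrict to the real-analytic transition functions of $N$, the transformation law for the components is holomorphic, so two holomorphic reassemblings agreeing on $f(N)$ agree on a neighborhood by the scalar uniqueness statement. The same uniqueness yields global uniqueness of $T^c$.

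The main obstacle is not any single computation but the careful bookkeeping of the bundle identifications --- making precise how a holomorphic tensor bundle on $M$ restricts along the totally real submanifold $f(N)$ to the complexification of the real tensor bundle of $N$, so that $f^* T^c = T$ is a well-posed equation. Once the scalar extension lemma and this identification are in place, existence, coordinate-independence, and global uniqueness all follow formally from the identity principle for holomorphic functions.
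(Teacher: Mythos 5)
Your proof is correct and follows essentially the same route as the paper, which disposes of this proposition in a single sentence (``in local coordinates, [it] boils down to standard analytic continuation using power series''). Your write-up simply supplies the details the paper leaves implicit: straightening $f(N)$ to $\R^n \subset \C^n$ via uniqueness of complexification, extending components by power series, and invoking the identity principle on the totally real locus for uniqueness and chart-independence.
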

The proof of this proposition, in local coordinates, boils down to standard analytic continuation using power series. 
Let us clarify that by \emph{holomorphic tensor field}, we mean a holomorphic section of the tensor product of a finite number of copies of the holomorphic tangent bundle and its dual. 
Here are a few examples of this phenomenon:
\begin{itemize}
 \item Any real-analytic function on $N$ locally extends to a holomorphic function on $M$.
 \item Any real-analytic symplectic structure on $N$ locally extends to a complex symplectic structure.
 \item Any real-analytic Riemannian metric on $N$ locally extends to a holomorphic metric on $M$.
\end{itemize}

\subsection{Complexification of complex manifolds} \label{subsec:ComplexificationComplex}

Given a real-analytic manifold $N$, even though there exists an essentially unique complexification $M$, there is no canonical model for $M$ in general.
However, if $N$ happens to be a complex manifold then such a canonical complexification exists:

\begin{proposition} \label{prop:CanonicalComplexification}
 Let $(N, I_0)$ be a complex manifold. Let $N^c := N \times N$; equip $N^c$ with the integrable almost complex structure
 $J := I_{0} \oplus -I_{0}$ (in other words, $N^c = N \times \overline{N}$ according to a standard notation). Then the diagonal embedding $f : N \to N^c$ is a complexification  of $N$.
\end{proposition}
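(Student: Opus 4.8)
The plan is to verify directly that the diagonal embedding meets the maximal totally real condition in the definition of complexification. First I would observe that a complex manifold is in particular real-analytic, and that the diagonal map $f \colon N \to N \times N$, $p \mapsto (p,p)$, is a real-analytic (closed) embedding irrespective of which almost complex structure the target carries; its real-analyticity is immediate in local coordinates, and it is the standard diagonal, hence an embedding. It is worth flagging that $f$ is \emph{not} holomorphic for $J = I_0 \oplus (-I_0)$: it is holomorphic into the first factor and anti-holomorphic into the conjugate factor $\xoverline{N}$. But this is irrelevant, since the definition of a complexification only demands that $f$ be a real-analytic embedding with totally real image.

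The heart of the argument is then a pointwise linear-algebra computation. At $p \in N$, identify $\upT_{(p,p)} N^c$ with $\upT_p N \oplus \upT_p N$. The differential of the diagonal gives $df_p(\upT_p N) = \{(v,v) : v \in \upT_p N\}$, while the reversed complex structure on the second factor makes $J$ act by $J(u,w) = (I_0 u, -I_0 w)$, so that $J\bigl(df_p(\upT_p N)\bigr) = \{(I_0 v, -I_0 v) : v \in \upT_p N\}$. I would then show
\[ df_p(\upT_p N) \oplus J\bigl(df_p(\upT_p N)\bigr) = \upT_p N \oplus \upT_p N~. \]
Directness is immediate: if $(v,v) = (I_0 w, -I_0 w)$ then $v = I_0 w = -I_0 w$, forcing $v = 0$ and hence $w = 0$. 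Surjectivity follows by solving $(a,b) = (v + I_0 w,\, v - I_0 w)$ explicitly, namely $v = \tfrac12(a+b)$ and $w = \tfrac12 I_0(b-a)$ (using $I_0^{-1} = -I_0$), and a dimension count ($2n + 2n = 4n = \dim_\R N^c$) confirms it. This establishes that $f$ is maximal totally real, i.e. a complexification of $N$.

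As a cross-check I would mention an alternative route via \autoref{prop:AntiholoInvolution}, which avoids unwinding the definition. Consider the swap $\tau \colon N^c \to N^c$, $\tau(p,q) = (q,p)$, with differential $(u,w) \mapsto (w,u)$. A one-line computation gives $d\tau \circ J = -\,J \circ d\tau$, so $\tau$ is anti-holomorphic for $J = I_0 \oplus (-I_0)$; it is visibly an involution whose fixed-point set is exactly the diagonal $f(N)$. \autoref{prop:AntiholoInvolution} then yields at once that $N^c$ is a complexification of $N$.

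There is no genuine obstacle in this proposition. The only step that demands any care is the bookkeeping of the sign introduced by the conjugate factor $\xoverline{N}$: it is precisely this sign that makes $J$ act by $-I_0$ on the second summand, and hence what makes the diagonal totally real rather than complex. Getting that sign right is the entire content; everything else is routine.
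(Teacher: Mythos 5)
Your proposal is correct and matches the paper's treatment: the paper states this proposition without a formal proof, but the remarks immediately following it supply exactly your two ingredients---the holomorphic coordinates $(z,w)$ in which $f$ is $z \mapsto (z, \xoverline{z})$, and the observation that the swap $\tau(x,y) = (y,x)$ is the anti-holomorphic involution of \autoref{prop:AntiholoInvolution} whose fixed locus is the diagonal. Your direct pointwise verification that $df_p(\upT_p N)$ and $J\, df_p(\upT_p N)$ are complementary in $\upT_{(p,p)} N^c$ (including the sign bookkeeping showing $f$ is totally real rather than holomorphic) is the routine linear algebra the paper leaves implicit, and your cross-check via the swap involution is precisely the paper's own justification.
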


Let us call $N^c := N \times \overline{N}$ the \emph{canonical complexification} of $N$. 

Let $(z^1, \dots z^n)$ be local holomorphic coordinates in $N$,
denote $w^i = \overline{z^i}$ the conjugates in a copy of $N$. Then $(z^i, w^i)$ are local holomorphic
coordinates in $N^c$. The complexification map $f : N \to N^c$ is given by $f(z^1, \dots, z^n) = (z^1, \dots, z^n, \overline{z^1}, \dots, \overline{z^n})$.

Note that in this situation, the anti-holomorphic involution $\tau$ of \autoref{prop:AntiholoInvolution} is simply the map $\tau(x, y) = (y,x)$, defined everywhere
in $N^c = N \times \overline{N}$. In the coordinates $(z,w)$ above, it is given by $\tau(z,w) = (\overline{w}, \overline{z})$.

Remarkably, when $N$ is a complex manifold, any complexification $M$ admits two natural transverse holomorphic foliations $\mathcal{F}_1$
and $\mathcal{F}_2$ by half-dimensional complex submanifolds, at least in a neighborhood of $N$ in $M$.
When $M = N^c$ is given as the canonical complexification, these foliations are simply the vertical and horizontal foliations of the product  $N^c = N \times \overline{N}$.
In the coordinates $(z^i, w^i)$ above, the vertical foliation is given by $\{z = z_0\}$ where $z_0 \in \C^n$ is a constant, similarly the horizontal foliation is
given by $\{w = w_0\}$. Let us record this:
\begin{definition} \label{def:CanonicalFoliations}
 Let $N$ be a complex manifold and let $M$ be a complexification. The ordered pair $(\mathcal{F}_1, \mathcal{F}_2)$ of transverse holomorphic foliations of (a neighborhood of $N$ in) $M$
 described above will be called the \emph{canonical pair of foliations} of $M$.
\end{definition}
We insist that these foliations are not defined in any complexification of $N$ when $N$ is not equipped with a complex structure to begin with, 
in other words they depend on the complex structure on $N$.

\subsection{Complexification of Kähler manifolds}

We recall the definition of a Kähler manifold in order to fix notations and conventions:
\begin{definition} \label{def:Kahler}
 A \emph{Kähler} structure on a smooth manifold $N$ is the data of $(g, I, \omega)$, where:
 \begin{itemize}
  \item $g$ is a Riemannian metric in $N$,
  \item $I$ is an (integrable) almost complex structure, and
  \item $\omega$ is an (almost) symplectic structure in $N$
 \end{itemize}
 such that:
 \begin{enumerate}[(i)]
  \item $\omega(u, v) = g(Iu, v)$ for any tangent vectors $u, v$ (\emph{compatibility condition})
  \item $I$ is parallel for the Levi-Civita connection of $g$: $\nabla I = 0$ (\emph{integrability condition}).
 \end{enumerate}
\end{definition}
The integrability condition amounts to the simultaneous integrability of $I$ as a complex structure and closedness of $\omega$ as a $2$-form, so that a Kähler manifold is a complex manifold
and a symplectic manifold in addition to a Riemannian manifold, and the three structures are mutually compatible.

Now let $(N, g_0, I_0, \omega_0)$ be a \emph{real-analytic Kähler manifold}, \ie{} a Kähler manifold such that $g_0$ (and $I_0$ and $\omega_0$, automatically) are
real-analytic tensor fields for the real-analytic structure underlying the complex-analytic structure. Let $N \hookrightarrow M$ be a complexification of $N$. Denote by $J$ the (integrable) almost complex structure on $M$.
By \autoref{prop:HolomorphicExtension}, in a neighborhood $U$ of $N$ in $M$, the tensor fields $g_0$, $I_0$, and $\omega_0$ admit unique holomorphic extensions, namely:
\begin{itemize}
 \item The Riemannian metric $g_0$ extends to a holomorphic metric $g_0^c$ (see \autoref{subsec:HolomorphicMetrics} for the definition).
 \item The almost complex structure $I_0$ extends holomorphically to a complex endomorphism $I_0^c$ of $\upT M$ which squares to $-1.$
 \item The symplectic form $\omega_0$ extends to a complex symplectic form $\omega_0^c$.
\end{itemize}
In addition, $M$ enjoys two transverse holomorphic foliations $\mathcal{F}_1$ and $\mathcal{F}_2$ as we saw in
\autoref{def:CanonicalFoliations}. The next theorem shows that all this structure is encapsulated by a bi-Lagrangian structure.
\begin{theorem} \label{thm:BiLagComplexificationKahler}
 Let $(N, g_0,  I_0, \omega_0)$ be a real-analytic Kähler manifold. Let $M$ be a complexification of $N$.
 Then $M$ has a canonical complex bi-Lagrangian structure in a neighborhood of $N$. More precisely, in a sufficiently small connected open neighborhood of $N$
 in $M$, there exists a unique complex bi-Lagrangian structure $(\omega, \mathcal{F}_1, \mathcal{F}_2)$ such that :
 \begin{enumerate}[(i)]
  \item \label{theoremCKBLi} The complex symplectic structure $\omega$ is the holomorphic extension of $\omega_0$: $\omega = \omega_0^c$.
  \item \label{theoremCKBLii} The transverse complex Lagrangian foliations $\mathcal{F}_1$ and $\mathcal{F}_2$ are the canonical foliations of $M$ defined in \autoref{def:CanonicalFoliations}.
  \item \label{theoremCKBLiii} The complex bi-Lagrangian metric $g$ (see \autoref{subsec:ComplexBiLagrangianMetricAndConnection}) is  $-i$ times the holomorphic extension of $g_0$:
  \begin{equation} \label{eq:gBLgc}
  g = -i \, g_0^c~.
  \end{equation}
\end{enumerate}
\end{theorem}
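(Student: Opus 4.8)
The plan is to reduce the statement to a short computation in local holomorphic coordinates adapted to the canonical foliations, since every object in sight is holomorphic and only defined as a germ near $N$. First I would observe that it suffices to work on the canonical complexification $N^c = N\times\xoverline{N}$: by \autoref{thm:Complexification} any two complexifications of $N$ are biholomorphic near $N$, and such a biholomorphism transports the holomorphic extension $\omega_0^c$, the canonical foliations, and the holomorphic metric $g_0^c$ of one complexification to those of the other, so the isotropy and metric computations are insensitive to the choice of $M$. Concretely I would fix coordinates $(z^1,\dots,z^n,w^1,\dots,w^n)$ in which the leaves of the canonical foliations are the slices $\{z=\mathrm{const}\}$ and $\{w=\mathrm{const}\}$ and in which $w^j$ restricts to $\bar z^j$ along $N$. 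By \autoref{prop:HolomorphicExtension}, $\omega:=\omega_0^c$ and $g_0^c$ exist and are unique as germs; $\omega$ is closed (it extends a closed form) and of type $(2,0)$, and it is nondegenerate on a sufficiently small neighborhood of $N$ since nondegeneracy is an open condition and $\omega|_N=\omega_0$. This already shows that $\omega$ is a complex symplectic form, settling part (i), and reduces part (ii) to checking that the two canonical foliations are isotropic.

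The conceptual heart of the argument is that the Kähler hypothesis enters \emph{only} through the $(1,1)$-type of $\omega_0$. With the standard normalization of the Kähler form, in holomorphic coordinates one has $\omega_0 = i\,g_{i\bar j}\,\upd z^i\wedge\upd\bar z^j$, so its holomorphic extension is
\begin{equation}
 \omega = \omega_0^c = i\,g_{i\bar j}(z,w)\,\upd z^i\wedge\upd w^j~,
\end{equation}
where $g_{i\bar j}(z,w)$ is the holomorphic continuation of the metric coefficients. Every term contains exactly one $\upd z$ and one $\upd w$ factor, so $\omega$ restricts to zero on any slice $\{z=\mathrm{const}\}$ (all $\upd z^i$ vanish) and on any slice $\{w=\mathrm{const}\}$ (all $\upd w^j$ vanish). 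Hence both canonical foliations are isotropic, and being holomorphic foliations by half-dimensional complex submanifolds they are complex Lagrangian; this proves part (ii) and shows that $(\omega,\mathcal{F}_1,\mathcal{F}_2)$ is a complex bi-Lagrangian structure. Uniqueness is then immediate, since parts (i) and (ii) already determine all three pieces of data, so part (iii) is a property to be verified rather than an extra constraint. I would stress here that a generic real-analytic symplectic form on $N$ also extends to a complex symplectic form but is \emph{not} Lagrangian for the canonical foliations in general: it is precisely the vanishing of the $(2,0)$ and $(0,2)$ parts of $\omega_0$ that forces the slices to be isotropic.

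For part (iii) I would invoke the para-Kähler compatibility $g(u,v)=\omega(Fu,v)$ from \autoref{prop:BLBKC}, where $F=+\mathbf{1}$ on $\upT\mathcal{F}_1=\langle\partial_{w^j}\rangle$ and $F=-\mathbf{1}$ on $\upT\mathcal{F}_2=\langle\partial_{z^i}\rangle$. The pure blocks $g(\partial_z,\partial_z)$ and $g(\partial_w,\partial_w)$ vanish, exactly as the corresponding blocks of $\omega$ do, so it remains only to match the mixed block: $g(\partial_{z^k},\partial_{w^l})=-\,\omega(\partial_{z^k},\partial_{w^l})=-i\,g_{k\bar l}$, while the holomorphic extension of $g_0=g_{i\bar j}\,(\upd z^i\otimes\upd\bar z^j+\upd\bar z^j\otimes\upd z^i)$ gives $g_0^c(\partial_{z^k},\partial_{w^l})=g_{k\bar l}$. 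Comparing yields \eqref{eq:gBLgc}.

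I expect no serious analytic obstacle: existence and uniqueness of the holomorphic extensions are already packaged in \autoref{prop:HolomorphicExtension}, and the passage between different complexifications is handled by germ-uniqueness. The only genuinely conceptual point is the $(1,1)$-type observation of the second paragraph, and the only place that demands real care is the bookkeeping of normalization constants responsible for the factor $-i$ in part (iii), where one must commit to mutually compatible conventions for $g_0$, $I_0$, and $\omega_0$ at the very start.
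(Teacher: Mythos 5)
Your proposal is correct, and for parts (i) and (ii) it is essentially the paper's own proof: the paper also works in the adapted coordinates $(z^i,w^j)$, writes $\omega_0$ as a $(1,1)$-form with coefficients $h_{j\bar k}$, extends the coefficients holomorphically to get $\omega_0^c = -\tfrac{1}{2i}h^c_{j\bar k}\,\upd z^j\wedge \upd w^k$ (justified, as you do, by uniqueness of the holomorphic extension), and reads off isotropy of the slices $\{z=\mathrm{const}\}$ and $\{w=\mathrm{const}\}$ from the fact that every term contains exactly one $\upd z$ and one $\upd w$. Where you genuinely diverge is part (iii). The paper does not prove it inside this theorem at all: it defers the identity $g=-i\,g_0^c$ to the proof of \autoref{thm:BicomplexKahlerInComplexification}, where it is obtained coordinate-free from the bicomplex structure $(I=I_0^c,\,J,\,F=IJ)$ of \autoref{thm:BicomplexInComplexification}, using $J$-complex-linearity of $g_0^c$ and analytic continuation of the Kähler identity $g_0(I_0\cdot,\cdot)=\omega_0$ off $N$. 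You instead verify \eqref{eq:gBLgc} by a direct block computation: $F=+\mathbf{1}$ on $\langle\partial_{w}\rangle$, $F=-\mathbf{1}$ on $\langle\partial_{z}\rangle$ (which is indeed the eigendistribution convention of \autoref{prop:BLBKC}, since $\mathcal{F}_1$ is the vertical foliation), so the mixed block gives $g(\partial_{z^k},\partial_{w^l})=-\omega(\partial_{z^k},\partial_{w^l})=-i\,g_{k\bar l}$ while the pure blocks vanish on both sides. Both routes are valid; yours is more self-contained (it never needs the extension $I_0^c$ or the bicomplex formalism) and, being anchored to the eigendistributions of $F$ rather than to the formula $F=IJ$, it sidesteps the sign bookkeeping that the paper's abstract computation requires (the paper's own proof of \autoref{thm:BicomplexKahlerInComplexification} momentarily writes $F=-IJ$ against its statement), whereas the paper's route buys the stronger statement that the full bicomplex Kähler structure, not just the metric, complexifies the Kähler structure of $N$.
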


\begin{proof}
It is clear that we can take a neighborhood $U$ of $N$ in $M$ small enough so that $g_0^c$, $I_0^c$, and $\omega_0^c$, as well as $\mathcal{F}_1$ and $\mathcal{F}_2$, are all well-defined
(and uniquely defined) in $U$. In order to show that $(\omega, \mathcal{F}_1, \mathcal{F}_2)$ is a complex bi-Lagrangian structure, all that is left to prove
is that $\mathcal{F}_1$ and $\mathcal{F}_2$ are isotropic for $\omega = \omega_0^c$. Let $(z^i)$ be a system of local holomorphic 
coordinates on $N$. Since the Kähler form $\omega_0$ is of type $(1,1)$, it can be written:
\begin{equation}
 \omega_0 = -\frac{1}{2i}\,h_{j\bar{k}}\,\dz^j \wedge \overline{\dz^k}
\end{equation}
where $h_{j\bar{k}}$ are real-analytic complex-valued functions on $N$ which satisfy $h_{k\bar{j}} = \overline{h_{j\bar{k}}}$.
Each of these functions admits a unique holomorphic extension $h_{j\bar{k}}^c$ (provided $U$ is small enough), obtained by holomorphically extending
both the real and imaginary parts of $h_{j\bar{k}}$. Denote by $(z^i, w^i)$ the local holomorphic coordinates
in $M$ as in \autoref{subsec:ComplexificationComplex}. The holomorphic extension $\omega_0^c$ of $\omega_0$ is simply given by
\begin{equation}
 \omega_0^c = -\frac{1}{2i}\,h_{j\bar{k}}^c\,\dz^j \wedge \dw^k \eqqcolon \omega \,.
\end{equation}
Indeed, by uniqueness of the holomorphic extension, it is enough to check that:
\begin{enumerate}[(a)]
 \item $\omega$ is a complex symplectic structure. This is clear, since $(z^1, \dots, z^n, w^1, \dots, w^n)$ are holomorphic coordinates
 and $h_{j\bar{k}}^c$ are holomorphic functions.
 \item $\omega$ restricts to $\omega_0$ on $N$. This is also clear, since $w^i = \overline{z^i}$ and $h_{j\bar{k}}^c = h_{j\bar{k}}$ at points of $N \subset M$.
\end{enumerate}
Now recall that the canonical foliations $\mathcal{F}_1$ and $\mathcal{F}_2$ of $M$ are defined by $\{z = \mathit{constant}\}$ and $\{w = \mathit{constant}\}$ respectively.
With the expression of $\omega_0^c$ above, it is immediate that both these foliations are isotropic for $\omega$. This concludes the proof of \ref{theoremCKBLi} and \ref{theoremCKBLii}.
We delay proving \ref{theoremCKBLiii} until the proof of \autoref{thm:BicomplexKahlerInComplexification}.
\end{proof}

Observe in particular that the canonical foliations $\mathcal{F}_1$ and $\mathcal{F}_2$ are not only isotropic for the holomorphic metric, but also totally geodesic and flat
(see \autoref{subsec:PropertiesOfBLConnection}). Let us give the following corollary as a simple example of consequence.

\begin{corollary} \label{corollary:CKA}
Let $(N, g_0, I_0, \omega_0)$ be a real-analytic Kähler manifold. Assume that the holomorphic extension $\omega = \omega_0^c$ exists everywhere in the canonical 
complexification $N^c = N \times \overline{N}$. Then $N$ admits a natural family of complex affine structures, parametrized by the points of $N$.
\end{corollary}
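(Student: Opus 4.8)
The plan is to read this off directly from Theorem~\ref{thm:BiLagComplexificationKahler} combined with Theorem~\ref{thm:ComplexLagrangianAffine}, the sole role of the extra hypothesis being to globalize a structure that the theorem otherwise only provides as a germ near the diagonal.

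First I would invoke Theorem~\ref{thm:BiLagComplexificationKahler} to equip the canonical complexification $N^c = N \times \xoverline{N}$ with its complex bi-Lagrangian structure $(\omega, \mathcal{F}_1, \mathcal{F}_2)$, where $\omega = \omega_0^c$ and $(\mathcal{F}_1, \mathcal{F}_2)$ is the canonical pair of foliations. Here is where the standing assumption enters: the product foliations $\mathcal{F}_1$, $\mathcal{F}_2$ are defined on all of $N^c$ regardless, and since $\omega_0^c$ is assumed to extend everywhere, the coordinate expression $\omega = -\frac{1}{2i}\,h_{j\bar{k}}^c\,\upd z^j \wedge \upd w^k$ obtained in the proof of Theorem~\ref{thm:BiLagComplexificationKahler} shows that every slice $\{z = \mathit{const}\}$ and $\{w = \mathit{const}\}$ is Lagrangian throughout $N^c$. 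Thus the complex bi-Lagrangian structure is globally defined, not merely a germ near the diagonal.

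Next I would apply Theorem~\ref{thm:ComplexLagrangianAffine} to the horizontal foliation $\mathcal{F}_2$: its complex Bott connection endows each leaf with a natural complex affine structure. The leaves of $\mathcal{F}_2$ are precisely the horizontal slices $N \times \{w_0\}$ with $w_0 \in \xoverline{N}$. Each of these, carrying the complex structure induced by $J = I_0 \oplus (-I_0)$, is biholomorphic to $(N, I_0)$ via projection onto the first factor (on $N \times \{w_0\}$ the tangent spaces are of the form $\upT N \times \{0\}$, on which $J$ restricts to $I_0$). Transporting the Bott affine structure of the leaf $N \times \{w_0\}$ through this biholomorphism yields a complex affine structure on $N$ in the sense of Definition~\ref{def:AffineStructure}.

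Finally, letting $w_0$ range over $\xoverline{N}$---which as a set is $N$---produces the asserted family of complex affine structures on $N$ parametrized by the points of $N$; equivalently, to a point $p \in N$ one assigns the affine structure carried by the unique leaf of $\mathcal{F}_2$ through the diagonal image $f(p)$. No real computation is needed: the only point demanding care is the global-existence hypothesis, which is exactly what guarantees that the leaves are complete copies of $N$ rather than germs, so that the transported structures are genuine affine structures defined on all of $N$. (The symmetric argument applied to $\mathcal{F}_1$ gives an analogous parametrized family on $\xoverline{N}$.)
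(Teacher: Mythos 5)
Your proof is correct and follows essentially the same route as the paper, whose own (one-sentence) argument is precisely that $N$ can be identified with any horizontal leaf of the product $N \times \xoverline{N}$, each leaf carrying the Bott affine structure coming from \autoref{thm:BiLagComplexificationKahler} and \autoref{thm:ComplexLagrangianAffine}, with the leaf space parametrized by $N$. If anything, you are more explicit than the paper about the globalization step—namely that the everywhere-defined extension $\omega_0^c$ forces the horizontal slices to be Lagrangian (by analytic continuation of the local coordinate expression) on all of $N^c$, so the resulting affine structures live on entire leaves rather than on germs near the diagonal.
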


This is just a consequence of the fact that $N$ can be identified to any horizontal leaf in the product $N \times \overline{N}$, and the leaf space is parametrized by $N$. 
Note that these affine structures have no reason to be complete in general.

\begin{remark}
The existence of a flat connection in a compact complex manifold implies that all Chern classes of the tangent bundle vanish, 
in particular the Euler characteristic vanishes as it identifies with the top Chern class. The next corollary follows.
\end{remark}

\begin{corollary} \label{corollary:CKB}
 Let $N$ be a compact complex manifold which admits a Kähler form whose holomorphic extension exists everywhere in the canonical complexification $N^c = N \times \overline{N}$.
 Then $N$ has zero Euler characteristic.
\end{corollary}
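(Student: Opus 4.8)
The plan is to reduce \autoref{corollary:CKB} to the preceding results by exhibiting a flat connection on the tangent bundle of $N$ and invoking the remark that all Chern classes of a compact complex manifold admitting such a connection vanish. The key observation is that the hypothesis---that the holomorphic extension $\omega = \omega_0^c$ exists everywhere in $N^c = N \times \xoverline{N}$---is exactly the global version of the hypothesis in \autoref{corollary:CKA}, so I would first apply that corollary to equip $N$ with a complex affine structure.

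More precisely, I would proceed as follows. By \autoref{thm:BiLagComplexificationKahler}, the global existence of $\omega_0^c$ on $N^c$ promotes the canonical data $(\omega, \mathcal{F}_1, \mathcal{F}_2)$ into a genuine complex bi-Lagrangian structure defined on all of $N^c$, not merely in a neighborhood of the diagonal. As noted immediately after that theorem, the canonical foliations are totally geodesic and flat for the complex bi-Lagrangian connection (see \autoref{subsec:PropertiesOfBLConnection}); in particular the complex Bott connection restricts to a flat torsion-free complex connection on each leaf. Now identify $N$ with a single horizontal leaf $\{w = w_0\}$ of the product $N \times \xoverline{N}$, as in the justification of \autoref{corollary:CKA}: the projection onto the first factor restricts to a biholomorphism from this leaf onto $N$. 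Transporting the Bott connection on the leaf through this biholomorphism yields a flat torsion-free holomorphic connection $\nabla$ on the tangent bundle $\upT N$, i.e.\ a complex affine structure on $N$ in the sense of \autoref{def:AffineStructure}.

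With a flat connection $\nabla$ on $\upT N$ in hand, I would finish by the argument indicated in the preceding remark: a flat connection on a complex vector bundle over a compact complex manifold forces all its Chern classes to vanish (the curvature representative of each Chern class is identically zero by Chern--Weil theory). Applying this to $\upT N$, the top Chern class $c_n(\upT N) \in H^{2n}(N)$ vanishes; since for a compact complex $n$-manifold the Euler characteristic equals $\langle c_n(\upT N), [N]\rangle$, we conclude $\chi(N) = 0$.

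The only genuinely delicate point---and the step I would watch most carefully---is ensuring that the flat connection really lives on $\upT N$ as a bundle over the compact manifold $N$, rather than only on the leaf sitting inside the noncompact space $N^c$. The identification of $N$ with a horizontal leaf is a global biholomorphism precisely because the foliation $\mathcal{F}_2 = \{w = \text{const}\}$ is the product foliation and each leaf is a full copy of $N$; the subtlety is that the Bott connection on a leaf is a priori only a \emph{partial} connection along the leaf, but since the leaf is itself half-dimensional and the connection is a full connection \emph{on the leaf} (by \autoref{thm:ComplexLagrangianAffine}), pulling it back along the biholomorphism $N \xrightarrow{\sim} \{w=w_0\}$ gives an honest connection on all of $\upT N$. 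Once this identification is made cleanly, the compactness of $N$ and the Chern--Weil vanishing deliver the conclusion without further computation.
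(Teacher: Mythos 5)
Your proof is correct and follows essentially the same route as the paper: the paper's argument is precisely to use \autoref{corollary:CKA} to put a complex affine structure (equivalently, a flat torsion-free connection on $\upT N$) on $N$ via identification with a horizontal leaf, and then to invoke the remark that a flat connection on a compact complex manifold kills all Chern classes, in particular the top one, which computes the Euler characteristic. Your write-up simply makes explicit the leaf identification and the Chern--Weil step that the paper leaves as a remark.
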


\begin{remark}
Recently, paracomplex geometry has been applied to prove a famous conjecture of Chern on the vanishing Euler characteristic of closed affine manifolds with parallel volume
\cite{MR3665000}.
\end{remark}

\subsection{The bicomplex Kähler structure} \label{subsec:BicomplexKahlerInComplexification}

Let us know show how the bicomplex and bicomplex Kähler structures introduced in \autoref{subsec:Bicomplex} provide clarity
on the complexification of complex and Kähler manifolds.

\subsubsection{Bicomplex structure in the complexification of a complex manifold}

Let $(N,I_0)$ be a complex manifold and $(M, J)$ a complexification of $N$. Denote $I := I_0^c$ the holomorphic extension of $I_0$ (cf \autoref{prop:HolomorphicExtension}).
\begin{theorem} \label{thm:BicomplexInComplexification}
 The triple $(I, J, F:=IJ)$ is a holomorphic bicomplex structure in a neighborhood of $N$ in $M$. Moreover:
 \begin{enumerate}[(i)]
  \item $F$ is the integrable para-complex complex structure associated to the canonical pair of foliations $(\mathcal{F}_1, \mathcal{F}_2)$ of \autoref{def:CanonicalFoliations}.
  \item This holomorphic bicomplex structure is defined everywhere in the canonical complexification $M = N^c = N \times \overline{N}$.
 \end{enumerate}
\end{theorem}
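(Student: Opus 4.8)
The plan is to verify the bicomplex relations for $(I, J, F := IJ)$, read off $F$ in canonical coordinates to obtain (i), and then exhibit an explicit global model on $N^c$ to obtain (ii). First I would record what $I = I_0^c$ is: by \autoref{prop:HolomorphicExtension} the real-analytic field $I_0 \in \End(\upT N)$ extends uniquely to a holomorphic tensor field $I$ on a neighborhood $U$ of $N$ in $M$, and since the real-analytic identity $I_0^2 = -\mathbf 1$ propagates by uniqueness of holomorphic continuation, $I^2 = -\mathbf 1$ on $U$. The structural point is that a holomorphic tensor field is by definition a holomorphic section of tensor powers of $\upT^{(1,0)}M$ and its dual; thus $I$ is a complex-linear endomorphism of the holomorphic tangent bundle, and as a realified endomorphism of $\upT M$ it commutes with the ambient complex structure: $IJ = JI$. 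Combined with $J^2 = -\mathbf 1$, this gives $F^2 = (IJ)(IJ) = I^2 J^2 = \mathbf 1$, and $F = IJ$ is again complex-linear and holomorphic as a tensor field. So $(I, J, F)$ satisfies the bicomplex relations; it remains to see that $F$ is traceless and integrable, which I treat together with (i).

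For (i) I would compute $F$ in the canonical coordinates $(z^1, \dots, z^n, w^1, \dots, w^n)$ of \autoref{subsec:ComplexificationComplex}, in which $w^i = \xoverline{z^i}$ along $N$ and $\mathcal F_1 = \{z = \mathit{const}\}$, $\mathcal F_2 = \{w = \mathit{const}\}$. Everything hinges on the action of $I$ on the holomorphic frame $(\partial/\partial z^i, \partial/\partial w^i)$. Along $N$ the totally real identification carries $\partial/\partial z^i$ and $\partial/\partial w^i$ to the $(1,0)$ and $(0,1)$ vectors $\partial/\partial z^i$ and $\partial/\partial \xoverline{z^i}$ of $N$, on which $I_0$ acts by $+i$ and $-i$; since $I$ and these coordinate fields are all holomorphic, uniqueness of extension forces $I(\partial/\partial z^i) = i\,\partial/\partial z^i$ and $I(\partial/\partial w^i) = -i\,\partial/\partial w^i$ throughout $U$. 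As $J$ acts by $+i$ on both, $F(\partial/\partial z^i) = -\partial/\partial z^i$ and $F(\partial/\partial w^i) = +\partial/\partial w^i$. Hence the $(+1)$- and $(-1)$-eigendistributions of $F$ are precisely $\upT\mathcal F_1$ and $\upT\mathcal F_2$: this shows $\tr F = 0$, identifies $F$ with the para-complex structure associated to the ordered pair $(\mathcal F_1, \mathcal F_2)$ as in \autoref{def:ParaComplexStructure}, and shows $F$ is integrable because its eigendistributions are tangent to the holomorphic canonical foliations. This proves the structure is well-defined near $N$ and gives (i).

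For (ii) I would replace the germ by a global model on $N^c = N \times \xoverline N$. Writing $\upT N^c = \upT N \oplus \upT\xoverline N$ and using that the two factors have the same underlying real tangent bundle, set $I := I_0 \oplus I_0$ while $J = I_0 \oplus (-I_0)$ as in \autoref{prop:CanonicalComplexification}. Then $I^2 = -\mathbf 1$, $I$ commutes with $J$, $I$ is integrable (a product of integrable structures), and in the coordinates above it again acts by $+i$ on $\partial/\partial z^i$ and $-i$ on $\partial/\partial w^i$, hence is holomorphic; finally $I \circ df = df \circ I_0$ on the diagonal, so $f^* I = I_0$, and by the uniqueness clause of \autoref{prop:HolomorphicExtension} this global $I$ agrees with $I_0^c$ on $U$. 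Therefore $(I_0 \oplus I_0,\ I_0 \oplus (-I_0),\ F)$ is a holomorphic bicomplex structure defined on all of $N^c$ that extends the germ near $N$, which is (ii).

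The step I expect to be most delicate is the coordinate identification of $I$ in the second paragraph: one must track carefully how the holomorphic extension interacts with the totally real embedding, and in particular why the $w$-directions, coming from the conjugate factor $\xoverline N$, are $(-i)$-eigenvectors of $I$ rather than $(+i)$-eigenvectors. Once this sign is settled, the bicomplex relations, the tracelessness, the integrability of $F$, and the global extension all follow mechanically.
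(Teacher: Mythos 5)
Your proof is correct and follows essentially the same route as the paper's: the paper likewise reduces everything to the coordinate computation showing that $I$ acts by $+i$ on $\partial/\partial z^i$ and by $-i$ on $\partial/\partial w^i$ (a calculation it leaves to the reader), deduces the bicomplex relations from $IJ = JI$, gets holomorphicity of the structure from holomorphicity of $I$, and identifies the $\pm 1$-eigendistributions of $F$ with the canonical foliations, with your sign conventions matching the paper's. The only cosmetic difference is in part (ii): where you exhibit the explicit global model $I = I_0 \oplus I_0$ on $N^c$ and invoke uniqueness of the holomorphic extension, the paper instead observes that $J$ and the two foliations (hence $F$) exist everywhere on $N^c$ and recovers $I = -JF$; the two arguments are interchangeable.
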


\begin{proof}
The fact that $I$ and $J$ commute and that the $\pm1$-eigendistributions of $F = IJ$ are the vertical and horizontal foliations of $M$ is a straightforward calculation in the local
coordinates $(z^i, w^i)$ introduced in \autoref{subsec:ComplexificationComplex}. It follows that $(I, J, F)$ is a bicomplex structure,
moreover it is holomorphic because $I$ is holomorphic on $(M,J)$ (alternatively: the foliations $\mathcal{F}_i$ are holomorphic).
Since $J$ and the foliations $\mathcal{F}_i$ exist everywhere in $M = N^c$, so do $F$ and $I = -JF$.
\end{proof}

\subsubsection{Bicomplex Kähler structure in the complexification of a Kähler manifold}

Let $(N, g_0, I_0, \omega_0)$ be a real-analytic Kähler manifold and $(M, J)$ a complexification of $N$. 
Denote $I := I_0^c$, $ig := g_0^c$, and $\omega := \omega_0^c$ the holomorphic extensions of $I_0$, $g_0$, and $\omega_0$ respectively.

\begin{theorem} \label{thm:BicomplexKahlerInComplexification}
The triple $(g, F:= IJ, \omega)$ defines a holomorphic bicomplex Kähler structure in (a neighborhood of $N$ in) $M$.
Moreover, the associated complex bi-Lagrangian structure (cf \autoref{prop:BLBKC}) is the canonical bi-Lagrangian structure
$(\omega, \mathcal{F}_1$, $\mathcal{F}_2)$ (cf \autoref{thm:BiLagComplexificationKahler}).
\end{theorem}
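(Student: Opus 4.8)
The plan is to reduce everything to results already established about holomorphic extensions and the bicomplex structure, so that the only genuine computations are applications of the uniqueness of holomorphic extension (\autoref{prop:HolomorphicExtension}) to the two defining identities of a Kähler structure. First I would invoke \autoref{thm:BicomplexInComplexification}: it already provides that $(I, J, F := IJ)$ is a holomorphic bicomplex structure on a neighborhood of $N$, that $I$ and $J$ commute (so $F = IJ = JI$), and that the $\pm 1$-eigendistributions of $F$ are exactly the canonical foliations $\mathcal{F}_1, \mathcal{F}_2$. This settles the requirement that $F$ be a para-complex structure defining a holomorphic bicomplex structure. That $\omega = \omega_0^c$ is a complex symplectic form was shown in the proof of \autoref{thm:BiLagComplexificationKahler}, and $g = -i\,g_0^c$ is a holomorphic metric because $g_0^c$ is one and rescaling by the nonzero constant $-i$ preserves symmetry, holomorphicity and nondegeneracy. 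It then remains only to verify the compatibility and integrability conditions.

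For compatibility, I would start from the Kähler identity $\omega_0(u,v) = g_0(I_0 u, v)$ on $N$ and continue it analytically. Since $\omega = \omega_0^c$, $I = I_0^c$ and $g_0^c = ig$, applying uniqueness of holomorphic extension to the holomorphic tensor fields on both sides yields $\omega(u,v) = g_0^c(Iu, v) = i\,g(Iu, v)$ for all tangent vectors $u, v$. The key observation — and the reason for the seemingly mysterious factor $-i$ in \eqref{eq:gBLgc} — is that on the complex manifold $(M, J)$, scalar multiplication by $i$ in each tangent space is precisely the action of $J$. Since $g$ is $\mathbb{C}$-bilinear, this gives $i\,g(Iu, v) = g(J(Iu), v) = g(JIu, v)$, and because $I$ and $J$ commute, $JI = IJ = F$. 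Hence $\omega(u,v) = g(Fu, v)$, the compatibility condition. I expect this to be the crux of the proof: all the content lies in tracking how the constant $i$ converts into $J$ and then combines with the commutation $IJ = JI$ to produce exactly $F$.

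For integrability, I would note that the Levi-Civita connection $\nabla$ of $g = -i\,g_0^c$ coincides with that of $g_0^c$, since the Koszul formula is homogeneous and a constant rescaling of the metric leaves the connection unchanged. By \autoref{thm:HolomorphicMetrics} this $\nabla$ is a torsion-free holomorphic connection parallelizing $g_0^c$; in particular it is a complex connection, so $\nabla J = 0$, and its restriction to $N$ is the real Levi-Civita connection $\nabla^{g_0}$ of $g_0$ (the Christoffel symbols of $\nabla$ are the holomorphic extensions of those of $\nabla^{g_0}$). The Kähler condition on $N$ reads $\nabla^{g_0} I_0 = 0$, so the holomorphic tensor field $\nabla I$ restricts to $0$ on $N$; by uniqueness of holomorphic extension it must vanish identically. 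Therefore $\nabla F = \nabla(IJ) = (\nabla I)J + I(\nabla J) = 0$, which is the integrability condition, completing the verification that $(g, F, \omega)$ is a holomorphic bicomplex Kähler structure.

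Finally, by the correspondence of \autoref{prop:BLBKC}, this bicomplex Kähler structure determines a complex bi-Lagrangian structure whose transverse Lagrangian foliations are the $\pm 1$-eigendistributions of $F$, namely $\mathcal{F}_1, \mathcal{F}_2$, and whose complex symplectic form is $\omega = \omega_0^c$; by the uniqueness statement in \autoref{thm:BiLagComplexificationKahler} this is exactly the canonical complex bi-Lagrangian structure constructed there. As a by-product, the complex bi-Lagrangian metric is by definition the $g$ satisfying $\omega(\cdot,\cdot) = g(F\cdot,\cdot)$, so we have simultaneously shown $g = -i\,g_0^c$, which is precisely the delayed assertion \ref{theoremCKBLiii} of \autoref{thm:BiLagComplexificationKahler}.
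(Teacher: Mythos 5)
Your proof is correct. Its crux --- the compatibility identity $\omega(u,v)=g(Fu,v)$, obtained by analytically continuing the Kähler identity $\omega_0 = g_0(I_0\,\cdot\,,\cdot)$ to $\omega = g_0^c(I\,\cdot\,,\cdot)$, then converting the scalar $i$ into $J$ by complex bilinearity of $g$ and using $JI=IJ=F$ --- is exactly the computation in the paper's proof (your version is in fact sign-consistent with the stated convention $F:=IJ$, whereas the paper's own display writes $F=-IJ=-JI$, a slip relative to its statement). The one genuine difference is the treatment of the integrability condition $\nabla F=0$. The paper never checks it: since $(\omega,\mathcal{F}_1,\mathcal{F}_2)$ is already known to be a complex bi-Lagrangian structure (\autoref{thm:BiLagComplexificationKahler}), the correspondence of \autoref{prop:BLBKC} produces an associated bicomplex Kähler structure whose metric is characterized by the compatibility identity; verifying that identity therefore identifies $g$ with the complex bi-Lagrangian metric, and all remaining axioms, integrability included, come for free. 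You instead verify $\nabla F=0$ directly: the Levi-Civita connection of $g=-i\,g_0^c$ equals that of $g_0^c$, is holomorphic with $\nabla J=0$ (\autoref{thm:HolomorphicMetrics}), restricts along $N$ to the Levi-Civita connection of $g_0$, so $\nabla I$ is a holomorphic tensor field vanishing on $N$ and hence vanishing identically by uniqueness of holomorphic extension (\autoref{prop:HolomorphicExtension}). This costs an extra argument that the paper's organization avoids, but it is sound, it makes the verification of the bicomplex Kähler axioms self-contained rather than routed through the bi-Lagrangian correspondence, and, like the paper, it correctly discharges the delayed assertion (iii) of \autoref{thm:BiLagComplexificationKahler} as a by-product.
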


\begin{proof} 
We already know that $(I, J, F)$ is a holomorphic bicomplex structure in $M$ whose associated pair of foliations is $(\mathcal{F}_1, \mathcal{F}_2)$
by \autoref{thm:BicomplexInComplexification}, and that $(\omega, \mathcal{F}_1, \mathcal{F}_2)$ is a complex bi-Lagrangian structure by \autoref{thm:BiLagComplexificationKahler}.
It remains to show that $g$ is the complex bi-Lagrangian metric, in other words that the identity $\omega(u,v) = g(F u, v)$ holds.
This is a satisfying computation:
\begin{align}
    g(Fu,v) &= -i g_0^c(Fu,v) && \text{(by definition of }g\text{)}\\
            &= -i g_0^c(-JIu, v) && \text{(since }F = -IJ = -JI\text{)}\\
            &= g_0^c(Iu, v) && \text{(since }g_0^c\text{ is }J\text{-complex linear)}\\
            &= \omega_0^c(u, v) && \text{(see argument below)}
\end{align}
The last line is justified by analytic continuation: since $g_0(I \cdot, \cdot) = \omega_0(\cdot, \cdot)$ holds in $N \hookrightarrow M$, the identity $g_0^c(I \cdot, \cdot) = \omega_0^c(\cdot, \cdot)$
must hold everywhere defined in $M$.
\end{proof}

\autoref{fig:diagram} features a partial overview of the relationships between all structures discussed so far.

\medskip
\begin{figure}[!ht]
\begin{center}
\begin{footnotesize}
\noindent \begin{tikzpicture}[scale = 0.99\textwidth/(7*(72.27/2.54))] 
 \draw [rounded corners, fill=blue, fill opacity=0.2] (0,3) rectangle (2, 4) ;
 \draw (0.5, 3.6) node {$I_0$} ;
 \draw (0.5, 3.4) node {Complex} ;
 \draw [rounded corners, fill=yellow, fill opacity=0.2] (1,3) rectangle (3, 4) ;
 \draw (2.5, 3.6) node {$g_0$} ;
 \draw (2.5, 3.4) node {Riemannian} ;
 \draw [rounded corners, fill=red, fill opacity=0.2] (1,3) rectangle (2, 5) ;
 \draw (1.5, 4.6) node {$\omega_0$} ;
 \draw (1.5, 4.4) node {Symplectic} ;
 \draw (1.5, 3.5) node {Kähler} ;
 
 \draw[->,>=latex] (1.2, 2.7) to[bend right] (1.9, 1.8);
 \draw (1.9, 2.3) node {Complexification} ;
  
 \draw [rounded corners, fill=blue, fill opacity=0.2] (4,3) rectangle (6, 4) ;
 \draw (4.5, 3.6) node {$F$} ;
 \draw (4.5, 3.4) node {Para-complex} ;
 \draw [rounded corners, fill=yellow, fill opacity=0.2] (5,3) rectangle (7, 4) ;
 \draw (6.5, 3.6) node {$g$} ;
 \draw (6.5, 3.4) node {Neutral} ;
 \draw (6.5, 3.25) node {(pseudo-Riem.)} ;
 \draw [rounded corners, fill=red, fill opacity=0.2] (5,3) rectangle (6, 5) ;
 \draw (5.5, 4.4) node {Symplectic} ;
 \draw (5.5, 3.7) node {Para-Kähler} ;
 \draw (5.5, 3.5) node {$\Leftrightarrow$} ;
 \draw (5.5, 3.3) node {Bi-Lagrangian} ;
 \draw (5.5, 4.6) node {$\omega$} ;

 \draw[<->,>=latex] (5.8, 2.7) to[bend left] (5.1, 1.8);
 \draw (4.9, 2.3) node {Real vs complex setting} ;

 \draw [rounded corners, fill=blue, fill opacity=0.2] (2,0) rectangle (4, 1) ;
 \draw (2.5, 0.75) node {$I$, $J$, $F = IJ$} ;
 \draw (2.5, 0.6) node {($I = I_0^c$)} ;
 \draw (2.5, 0.4) node {Bicomplex} ;
 \draw [rounded corners, fill=yellow, fill opacity=0.2] (3,0) rectangle (5, 1) ;
 \draw (4.5, 0.6) node {$g$ $(= -ig_0^c)$} ;
 \draw (4.5, 0.4) node {Holomorphic} ;
 \draw (4.5, 0.25) node {Riemannian} ;
 \draw [rounded corners, fill=red, fill opacity=0.2] (3,0) rectangle (4, 2) ;
 \draw (3.5, 1.6) node {$\omega$ $(= \omega_0^c$)} ;
 \draw (3.5, 1.4) node {Complex} ;
 \draw (3.5, 1.25) node {symplectic} ;
  \draw (3.5, 0.8) node {Bicomplex} ;
 \draw (3.5, 0.65) node {Kähler} ;
 \draw (3.5, 0.5) node {$\Leftrightarrow$} ;
 \draw (3.5, 0.35) node {Complex} ;
 \draw (3.5, 0.2) node {bi-Lagrangian} ;
\end{tikzpicture}
\end{footnotesize} 
\end{center}
\caption{\label{fig:diagram}Partial overview of interrelationships between complex, para-complex, symplectic, and (pseudo-)Riemannian metric structures on real and complex manifolds.}
\end{figure}
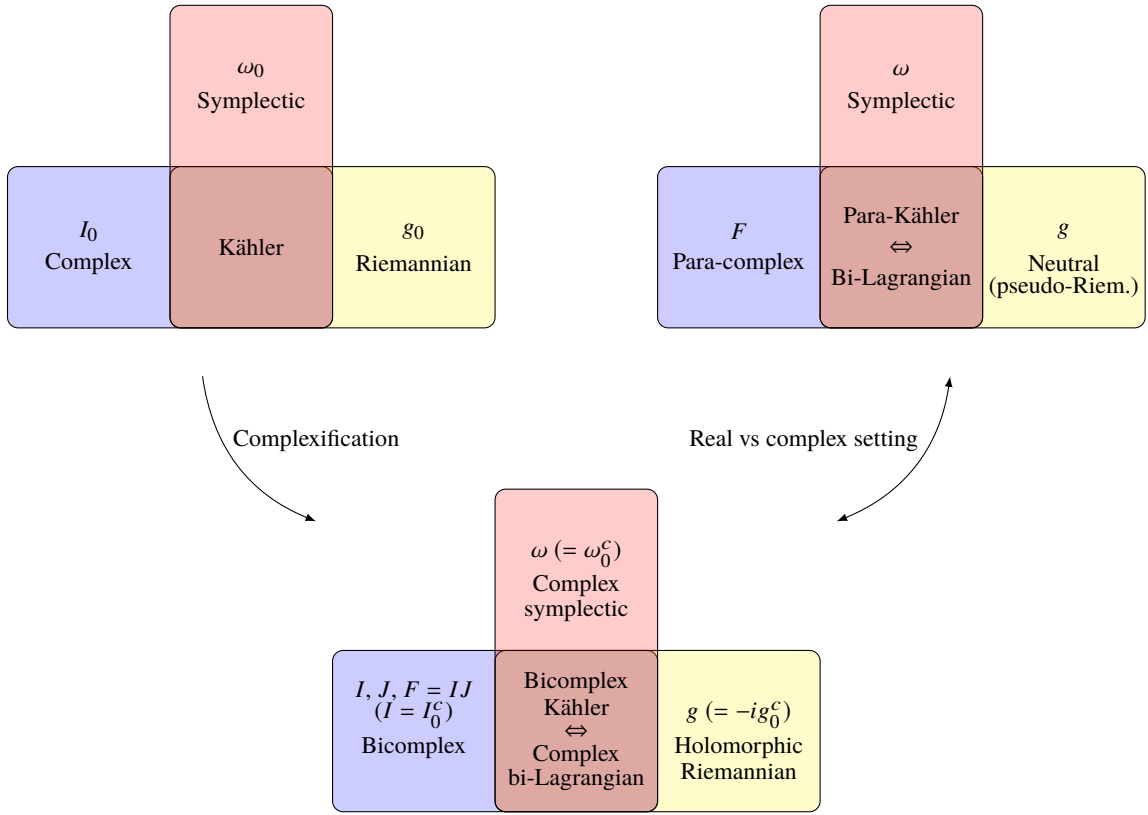
\medskip

\subsection{\texorpdfstring{Example: $\CP^1$}{Example: CP1}} \label{subsec:CP1BL}

 Let $N = \CP^1$ be the complex projective line. Let $N^c = \CP^1 \times \overline{\CP^1}$ denote the canonical complexification of $N$.
 Let $z$ denote the usual complex coordinate in the affine patch $U := \CP^1 \setminus  \left\{[1:0]\right\}$, and let $w = \overline{z}$ in a copy of $U$.
 Then $(z,w)$ are holomorphic coordinates in $U \times \overline{U} \subset N^c$. The manifold $N = \CP^1$ sits inside $N^c$ as the totally real locus $w = \overline{z}$.
 The canonical foliations in $U \times \overline{U}$ are the vertical and horizontal foliations of $U \times \overline{U} \approx \C \times \overline{\C}$,
 given by $\{z = \mathit{constant}\}$ and $\{w = \mathit{constant}\}$.

\subsubsection{Complexification of the Fubini-Study Kähler structure}
The complex projective line $\CP^1$ has a natural \emph{Fubini-Study} Kähler structure inherited from the flat Kähler structure of $\C^2$. Indeed,
$\CP^1$ can be described as the quotient of $S^3 \subset \C^2$ by the isometric action of $\U(1)$ by multiplication (this yields the Hopf fibration $S^1 \to S^3 \to S^2$).
In the affine coordinate $z$, the Fubini-Study Kähler structure $(g_0, I_0, \omega_0)$ is given by
\begin{equation}
 g_0 = \frac{\dz\,\dzbar}{\left(1 + |z|^2\right)^2} \qquad
 I_0 = i \, \dz \otimes \dd{z} -i \, \dzbar \otimes \dd{\overline{z}} \qquad
 \omega_0 = \frac{i \, \dz \wedge \dzbar}{2\left(1 + |z|^2\right)^2}\,.
\end{equation}

The holomorphic extensions of these tensor fields in $N^c$ are the holomorphic metric
$g_0^c$, the complex linear endomorphism $I_0^c$ and the complex symplectic form $\omega_0^c$ given by:
\begin{equation}
 g_0^c = \frac{\dz\,\dw}{\left(1 + z w\right)^2}\qquad
 I_0^c = i \, \dz \otimes \dd{z} - i \,  \dw \otimes \dd{w}\qquad
 \omega_0^c = \frac{i \, \dz \wedge \dw}{2\left(1 + z w\right)^2}\,.
\end{equation}
Observe that $g_0^c$ and $\omega_0^c$ are only defined in a neighborhood of $N = \{\overline{z} = w\}$ in $N^c$: they are singular
at points where $1 + zw = 0$.

\subsubsection{Bi-Lagrangian structure and connection}

The canonical foliations of $N^c$ are the vertical and horizontal foliations $\mathcal{F}_1$ and $\mathcal{F}_2$ of $\CP^1 \times \overline{\CP^1}$.
Let us call $\left(\mathcal{F}^{z_0}_1\right)$ (for ${z_0} \in \overline{\CP^1}$) and $\left(\mathcal{F}^{w_0}_2\right)$ (for ${w_0} \in {\CP^1}$) the leaves of the foliations, 
respectively given by
$\mathcal{F}^{z_0}_1 = \{(z, w) \in N^c :z = z_0\}$ and $\mathcal{F}^{w_0}_2 = \{(z, w) \in N^c : w = w_0\}$.
It is clear that these leaves are complex Lagrangian for $\omega_0^c$.

It is easy to check in this scenario that the complex bi-Lagrangian metric $g$ is equal to $-i g_0^c$
as predicted by \autoref{thm:BiLagComplexificationKahler} \ref{theoremCKBLiii}, indeed:
$g = \omega_0^c(F \cdot, \cdot)$ where 
$F = \mathrm{pr}_{1} - \mathrm{pr}_{2}$, that is:
\begin{equation}
 g = \frac{-i \, \dz\,\dw}{\left(1 + z w\right)^2} = -i\,g_0^c~.
\end{equation}

We now turn to the complex bi-Lagrangian connection $\nabla$, which we compute as the Levi-Civita connection of $g$ using
\emph{Cartan's structural equations}. 
We choose a $g$-orthonormal frame $(E_1, E_2)$ in the holomorphic tangent bundle of $N^c$ and compute its dual coframe $(\chi^1, \chi^2)$:
\begin{equation}
\begin{aligned}
 E_1 &= \alpha\left(\dd{z} + \dd{w}\right) \\
 E_2 &= i\alpha\left(\dd{z} - \dd{w}\right)
\end{aligned} \qquad
\begin{aligned}
 \chi^1 &= \dfrac{\dz + \dw}{2\alpha}\\
 \chi^2 &= \dfrac{\dz - \dw}{2i\alpha}
\end{aligned}
\end{equation}
with $\alpha \coloneqq \frac{1+i}{\sqrt{2}} (1 + z w)$.

The connection $1$-forms $\omega_2^1$ and $\omega_1^2 = -\omega_2^1$ must satisfy 
$\upd \chi^1 = \chi^2 \wedge \omega_2^1$ and
$\upd \chi^2 = \chi^1 \wedge \omega_1^2$ according to Cartan's first structural equations, which yields
\begin{equation}
\omega_2^1 = \frac{-2i}{1 + zw}(w\dz - z\dw) ~. 
\end{equation}
Note that $d\omega_2^1 = 4 \omega_0^c$ as expected from Cartan's second structural equation, knowing that the Fubini-Study metric
has constant sectional curvature $4$.
The connection $\nabla$ is then determined by $\nabla E_1 = \omega_1^2 \otimes E_2$ and $\nabla E_2 = \omega_2^1 \otimes E_1$. In the $(z,w)$ coordinates,
$\nabla$ is thus given by:
\begin{equation} \label{eq:BLCCP1}
\begin{aligned}
 \nabla \dd{z} &= \dfrac{-2 w}{1 + z w} \, \dz \otimes \dd{z}\\
 \nabla \dd{w} &= \dfrac{-2 z}{1 + z w} \,\dw \otimes \dd{w}
\end{aligned}\qquad
\begin{aligned}
 \nabla \dd{\overline{z}} &= \overline{\nabla \dd{z}} = \dfrac{-2 \overline{w}}{1 + \overline{z} \overline{w}} \, \dzbar \otimes \dd{\overline{z}}\\
 \nabla \dd{\overline{w}} &= \overline{\nabla \dd{w}} = \dfrac{-2 \overline{z}}{1 + \overline{z} \overline{w}} \, \dwbar \otimes \dd{\overline{w}}~.
\end{aligned}
\end{equation}

\subsubsection{Geodesics and affine structure in the vertical leaves}

Let $L = \mathcal{F}^{z_0}_1$ be a vertical leaf. Let us compute the geodesics in $L$ for the Bott connection. These are also
geodesics for the bi-Lagrangian connection $\nabla$, since $\nabla$ restricts to the Bott connection in $L$. Let $\gamma(t) = (z_0, w(t))$ be a path in $L$. 
It is a geodesic if and only if $\nabla_t \gamma(t) = 0$. Using the expression of $\nabla$ found above,
this reduces to the equation $w''(t) - \dfrac{2 z_0 w'(t)^2}{1 + z_0 w(t)} = 0$. This ODE is quickly solved noting that
it is rewritten $f''(t) = 0$, where $f(t) = \dfrac{1}{1 + z_0 w(t)}$. The next proposition follows.

\begin{proposition}
 The geodesic $\gamma(t)$ in $L$ (or in $N^c$) with initial value $\gamma(0) = (z_0, w_0)$ and initial
 tangent vector $\gamma'(0) = a \dd{w} + \overline{a} \dd{\overline{w}}$ is given by $\gamma(t) = (z_0, w(t))$
 with $w(t) = \dfrac{at + w_0(1 + z_0 w_0)}{-z_0 a t + 1 + z_0 w_0}$.
\end{proposition}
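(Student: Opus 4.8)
The plan is to integrate the geodesic equation already isolated in the discussion preceding the statement and then fix the two constants of integration from the prescribed initial conditions. Since the leaves of $\mathcal{F}_1$ are totally geodesic for the bi-Lagrangian connection $\nabla$, it suffices to work inside the vertical leaf $L = \mathcal{F}_1^{z_0}$, in which every curve takes the form $\gamma(t) = (z_0, w(t))$. By \eqref{eq:BLCCP1}, the equation $\nabla_{\gamma'} \gamma' = 0$ reduces to the single complex ODE
\[
w''(t) - \frac{2 z_0\, w'(t)^2}{1 + z_0\, w(t)} = 0 ~,
\]
its antiholomorphic counterpart $\xoverline{w''} - 2\xoverline{z_0}\,(\xoverline{w'})^2 / (1 + \xoverline{z_0}\,\xoverline{w}) = 0$ being exactly the complex conjugate of this equation, hence imposing no additional constraint.

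Next I would confirm that the substitution $f(t) := 1/(1 + z_0 w(t))$ linearizes the ODE. Differentiating twice gives $f'' = -\frac{z_0}{(1 + z_0 w)^2}\big(w'' - \frac{2 z_0 (w')^2}{1 + z_0 w}\big)$, so for $z_0 \neq 0$ the geodesic equation is equivalent to $f''(t) = 0$; the degenerate case $z_0 = 0$ reduces directly to $w'' = 0$ and is handled separately. Hence $f$ is affine, $f(t) = \beta + \alpha t$. Reading off the holomorphic part of $\gamma'(0) = a\,\dd{w} + \xoverline{a}\,\dd{\xoverline{w}}$ yields $w(0) = w_0$ and $w'(0) = a$, whence $\beta = f(0) = 1/(1 + z_0 w_0)$ and $\alpha = f'(0) = -z_0 a/(1 + z_0 w_0)^2$. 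Inverting $f$ gives $1 + z_0 w(t) = (1 + z_0 w_0)^2 / \big((1 + z_0 w_0) - z_0 a t\big)$; solving for $w(t)$ and simplifying the numerator via the identity $(1 + z_0 w_0)^2 - (1 + z_0 w_0) + z_0 a t = z_0\big(w_0(1 + z_0 w_0) + a t\big)$ produces exactly the claimed expression
\[
w(t) = \frac{a t + w_0 (1 + z_0 w_0)}{-z_0 a t + 1 + z_0 w_0} ~.
\]

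I expect no genuine obstacle here: the statement is an elementary integration made transparent by the substitution $f = 1/(1 + z_0 w)$, which is precisely an affine coordinate for the Bott affine structure on the leaf. The only points deserving a little care are the observation that the antiholomorphic component of the geodesic equation is the conjugate of the holomorphic one (so that the two real equations are consistent), the isolated treatment of $z_0 = 0$, and the algebraic simplification needed to recognize the answer as the stated fractional-linear function of $t$. As a sanity check one verifies directly that this $w(t)$ satisfies $w(0) = w_0$ and $w'(0) = a$.
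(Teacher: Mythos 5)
Your proof is correct and follows essentially the same route as the paper: the paper also reduces the geodesic equation via \eqref{eq:BLCCP1} to $w'' - 2z_0 (w')^2/(1+z_0 w) = 0$ and solves it by observing that $f(t) = 1/(1+z_0 w(t))$ satisfies $f'' = 0$, leaving the integration of constants implicit. Your write-up merely supplies the details the paper omits (the conjugate equation, the $z_0 = 0$ case, and the algebra recovering the fractional-linear formula), all of which check out.
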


We can then proceed to describe the complex affine structure in the leaf. 
The exponential map at a point $(z_0, w_0)$ 
for the Bott connection in $L$ identifies (an open set of) the tangent space $\upT _{(z_0, w_0)} L$ with (an open set of) $L$
as affine spaces. We choose $w_0 = 0$ and identify $\upT _{(z_0, 0)} L \approx \C$ via 
$a \dd{w} + \overline{a} \dd{\overline{w}} \mapsto a$. The affine structure in $L$ is then given by,
for $(z_0, w) \in L$ and $a \in \C$: 
\begin{equation}
 (z_0, w) \ast a = (z_0, f_a(w))
\end{equation}
 where $f_a$ is the projective linear transformation (homography) associated to the matrix
\begin{equation}
 M_a = \left(\begin{array}{cc}
        1 + a z_0 & a\\
        -a {z_0}^2 & 1 - az_0
       \end{array}\right)~.
\end{equation}
One checks that 
$M_a = P_{z_0} \left(\begin{array}{cc}
        1 & a\\
        0 & 1
       \end{array}\right) P_{z_0}^{-1}$ where $P_{z_0} = \left(\begin{array}{cc}
        z_0 & 1\\
        -{z_0}^2 & 0
       \end{array}\right)$. The next proposition ensues.

\begin{proposition}
 The affine structure in the leaf $L = \mathcal{F}^{z_0}_1 \approx \CP^1$, in the coordinate $w$, is the affine structure identifying $\CP^1$
 (minus a point) with the affine linear space $\C$ via the map
 \begin{equation}
 \begin{split}
  \CP^1 &\to \C\\
  w & \mapsto \frac{-1}{z_0(1 + z_0 w)}~.
  \end{split}
\end{equation}
\end{proposition}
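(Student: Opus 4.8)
The plan is to exploit the fact that the Bott affine structure in the leaf $L$ is precisely the one for which the locally simply transitive $\C$-action $\ast$ (the ``angle'' action of the Liouville-Arnold picture, cf. the discussion after \autoref{thm:RealLagrangianAffine}) is realized by translations. Concretely, a coordinate $u$ on $L$ is affine if and only if the action reads $u \mapsto u + a$ in that coordinate; so the task reduces to finding a coordinate that linearizes $w \mapsto f_a(w)$ into translation by $a$, and then identifying it with the asserted map $\CP^1 \to \C$.

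First I would use the conjugation already recorded just above the statement: writing $T_a = \left(\begin{smallmatrix}1 & a\\ 0 & 1\end{smallmatrix}\right)$ for the translation matrix, we have $M_a = P_{z_0}\, T_a\, P_{z_0}^{-1}$, and hence at the level of homographies $f_a = \phi \circ t_a \circ \phi^{-1}$, where $\phi$ is the homography associated to $P_{z_0}$ and $t_a(u) = u + a$. Setting $u := \phi^{-1}(w)$ and rearranging gives $\phi^{-1}(f_a(w)) = \phi^{-1}(w) + a$, which is exactly the statement that the coordinate $u$ intertwines the $\C$-action $\ast$ with translation by $a$. Thus $u = \phi^{-1}(w)$ is an affine coordinate for the Bott affine structure on $L$.

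It then remains to compute $\phi^{-1}$ explicitly. Inverting $P_{z_0} = \left(\begin{smallmatrix} z_0 & 1\\ -z_0^2 & 0\end{smallmatrix}\right)$ (whose determinant is $z_0^2$) and discarding the irrelevant overall scalar yields the representative $\left(\begin{smallmatrix} 0 & -1\\ z_0^2 & z_0\end{smallmatrix}\right)$, whose associated homography sends $w$ to $\frac{-1}{z_0^2 w + z_0} = \frac{-1}{z_0(1 + z_0 w)}$. This is precisely the asserted map, completing the proof.

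I do not expect any serious obstacle here: the only point requiring care is conceptual rather than computational, namely being precise about what ``the affine structure in the coordinate $w$'' means. One must justify that the Bott affine structure coincides with the flat structure whose affine parametrizations are the orbits of the $\ast$-action — equivalently, that a coordinate is affine exactly when $\ast$ becomes translation. This is the content of the identification of the Bott structure with a local $\R^n$- (here $\C$-) action discussed after \autoref{thm:RealLagrangianAffine}, made explicit through the exponential map in the paragraph preceding the statement; once this is granted, the verification $\phi^{-1}(f_a(w)) = \phi^{-1}(w) + a$ closes the argument immediately.
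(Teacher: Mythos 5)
Your proposal is correct and follows essentially the same route as the paper: the paper derives the $\C$-action $\ast$ via the exponential map, records the conjugation $M_a = P_{z_0}\left(\begin{smallmatrix}1 & a\\ 0 & 1\end{smallmatrix}\right)P_{z_0}^{-1}$, and states that the proposition "ensues" — your write-up simply makes that last step explicit by noting that $u = \phi^{-1}(w)$ turns $\ast$ into translation and computing $\phi^{-1}(w) = \frac{-1}{z_0(1+z_0w)}$. The computation of $P_{z_0}^{-1}$ and the resulting homography is exactly right, so there is no gap.
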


\begin{remark}
The affine structure is singular where $1 + z_0w = 0$, which is not surprising since the bi-Lagrangian structure is
singular there. In particular, the bi-Lagrangian connection is incomplete.
\end{remark}

\section{Background on Teichmüller theory} \label{sec:TeichmullerBackground}

In this section, we provide a brief introduction to Teichmüller theory and the theory of deformation spaces associated to surfaces. 
Our main purpose here is to establish the context and notations that we will rely on 
in the next section, where we study these deformation spaces using symplectic geometry. The reader who is familiar with Teichmüller theory and the aforementioned deformation spaces may safely skip this section and return to it whenever necessary.

Throughout this section, $S$ is a connected, oriented, smooth, closed surface of negative Euler characteristic. We let denote $\pi$ its fundamental group with respect to some fixed basepoint.

\subsection{Geometric structures and character varieties}

We start by introducing the notion of locally homogeneous geometric structures on manifolds, their attendant deformation spaces, 
and the corresponding holonomy maps to character varieties. We refer to \cite{MR2827816} for a nice survey (also see \cite{MR1435975, MR957518, MR3289710}).

\subsubsection{Geometric structures and their deformation space} \label{subsubsec:GeomStrDef}

Let $X = G/H$ be a homogeneous space. More precisely, assume $X$ is a real-analytic manifold on which
a Lie group $G$ acts analytically and transitively. The pair $(X,G)$ is called a \emph{(Klein) geometry}. 
An \emph{$(X,G)$-structure} on a smooth manifold $M$
is given by an atlas of charts mapping open sets of $M$ into $X$ such that the transition 
functions coincide with the action of elements of $G$. Such atlases are always assumed compatible
with the smooth structure of $M$ and the orientation if $M$ is oriented.

The group $\Diff(M)$ of (orientation-preserving) diffeomorphisms of $M$ has a right action
on the set of $(X,G)$-structures on $M$ by pulling back atlases. Let $\Diff_{0}(M)<\Diff(M)$ denote its identity component,
consisting of diffeomorphisms isotopic to the identity. 
The \emph{deformation space of $(X,G)$-structures} on $M$ is defined as the quotient:
\begin{equation}
 \mathcal{D}_{(X,G)}(M) = \bigslant{\{(X,G)\text{-structures on }M\}}{\Diff_0(M)}\,.
\end{equation} 

This space comes with a natural topology (see \cite{MR957518}). In general it may be pathological (\eg{} the deformation space
of affine structures on a $2$-torus is not Hausdorff \cite{MR2181958}), but in this paper we will study geometries for which
the deformation space is a nonsingular real-analytic manifold.

\begin{remark}
 The \emph{mapping class group} $\pi_0(\Diff(M)) =  \Diff(M)/\Diff_0(M)$ has a natural action on $\mathcal{D}_{(X,G)}(M)$, and the quotient
 space $\mathcal{M}_{(X,G)}(M) = \{(X,G)\text{-structures on }M\}/\Diff(M)$
 is the \emph{moduli space of $(X,G)$-structures} on $M$.
\end{remark}

\begin{remark} \label{remark:Diff1vsDiff0}
Some authors prefer to use the group $\Diff_1(M)$ of \emph{homotopically trivial} diffeomorphisms instead of $\Diff_0(M)$ in the definition of the deformation space, see
\cite[Remark 3.19]{MR3289710}. When $M = S$ is a surface, $\Diff_0(S)$ and $\Diff_1(S)$ coincide. In general 
$\Diff_0(M)$ is always a normal subgroup of $\Diff_1(M)$. See \cite{HatcherMO} for more details.
\end{remark}

\subsubsection*{Holonomy} \label{subsubsec:Holonomy}

A common way to construct $(X,G)$-structures is to find discrete subgroups of $G$ acting freely and properly on open sets $\Omega \subseteq X$.
The quotient $M \coloneqq \Omega / \Gamma$ inherits a unique $(X,G)$-structure such that the covering map
$\Omega \to M$ is a morphism of $(X,G)$-structures. Such $(X,G)$-structures are called \emph{embedded}.

Assume $M$ is a fixed smooth manifold and denote by $\pi$ its fundamental group. 
Any embedded $(X,G)$-structure on $M$ can be recovered from a group homomorphism
$\rho \colon \pi \to G$ with discrete image $\Gamma < G$
and a covering map $f : \tilde{M} \to \Omega \subseteq X$ that is \emph{$\rho$-equivariant}, \ie{} equivariant for the action of $\pi$ on the universal
cover $\tilde{M}$ by deck transformations and on $X$ via $\rho \colon \pi \to G$. The quotient map 
$\bar{f} \colon M \to \Omega / \Gamma$ identifies $M$ to $\Omega / \Gamma$ as $(X,G)$-manifolds.

More generally, any $(X,G)$-structure on $M$ can be described by a \emph{developing map} $f : \tilde{M} \to X$ and a \emph{holonomy representation}
$\rho \colon \pi \to G$, where $f$ is a $\rho$-equivariant local diffeomorphism. The developing map is essentially a global chart on $\tilde{M}$; conversely any $(X,G)$-structure has a developing map that can be reconstructed from any local chart 
by analytic continuation \cite{MR1435975}.
The pair $(f,\rho)$ for a given $(X,G)$-structure is unique
up to the action of $G$ by post-composition on $f$ and conjugation on $\rho$. 
Embedded $(X,G)$-structures are those whose developing map is a covering onto its image, and are called \emph{complete}
when it is surjective.

The group $\Diff(M)$ has a natural right action on the set $\Dev_{(X,G)}(M)$ of all development pairs $(f,\rho)$ 
by precomposition on $f$. This action commutes with the left action of $G$ on such pairs by post-composition on $f$ and conjugation on $\rho$.
Consequently, assigning to a marked $(X, G)$-structure the conjugacy class of its holonomy representation is a well-defined operation that induces 
the \emph{holonomy map}
\begin{equation} \label{eq:HolonomyMap1}
  \mathit{hol} \colon \mathcal{D}_{(X,G)}(M) \to \Hom(\pi, G)/G\,.
\end{equation}

\begin{remark}
After defining character varieties, we also call \emph{holonomy map} the map
$\mathcal{D}_{(X,G)}(M) \to {\cal X}(\pi, G)$ obtained by composing
\eqref{eq:HolonomyMap1} with the surjection $\Hom(\pi,G)/G \to {\cal X}(\pi, G)$.

\end{remark}

One expects the holonomy map to be a local homeomorphism, but this can fail in some instances, see \autoref{remark:holonomy}. 
However, we shall see that it is the case for the geometries considered in this paper.

\begin{remark} \label{remark:holonomy}
The best result about the holonomy map in general is that its $G$-equivariant lift 
\begin{equation}
 \widehat{\mathit{hol}} \colon \bigslant{\Dev_{(X,G)}(M)}{\Diff_0(M)} \to \Hom(\pi,G)
\end{equation}
is a local homeomorphism (\emph{Deformation theorem}, \label{DeformationTheorem} see \cite{MR2827816}). 
The quotient map $\mathit{hol}$ is still open, but local injectivity may fail in general (see \cite{MR1043223}, \cite{MR3289710}).
However, in restriction to the \emph{stable} locus of $(X,G)$-structures with irreducible holonomy (see \autoref{subsubsec:CharVar}),
the action of $G$ is proper, so that $\mathit{hol}$ descends to a local homeomorphism at least away from the fixed points of the action.
\end{remark}

\subsubsection{Character varieties} \label{subsubsec:CharVar}

The quotient $\Hom(\pi, G)/G$ is usually very pathological: $\Hom(\pi, G)$ can already be singular, and the action of $G$ on it by conjugation is generally neither proper nor free. It is often preferable to work with a Hausdorff quotient called the \emph{character variety}, which moreover has an algebraic structure when $G$ is algebraic. Let us briefly discuss this and refer to \cite{MR2931326}
for more details.

Let $\pi$ be a finitely generated group and let $G$ be a
reductive complex algebraic group\footnote{A reductive algebraic group is assumed affine by definition, equivalently it admits a faithful linear representation. A connected affine algebraic group over $\C$ is reductive if and only if it has a reductive Lie algebra and its center
is of multiplicative type. It can also be characterized as the complexification of a compact real algebraic group.}, 
for instance $G = \PSL_n(\C)$. Let $\Hom(\pi, G)$ denote the space of group homomorphisms from $\pi$ to $G$, abusively called \emph{representations}. 
It is an affine algebraic set on which $G$ acts by conjugation, and the character variety 
${\cal X}(\pi, G)$ is defined as the algebraic quotient or \emph{GIT quotient}\footnote{
Let us quickly recall how this works. Let $M$ be an affine algebraic set and $G$ an algebraic group acting on $M$. 
Let $\C[M]$ denote the coordinate ring of regular functions on $M$
and $\C[M]^{G} \subset \C[M]$ the subalgebra of $G$-invariant functions.
$\C[M]^{G}$ is finitely generated if $G$ is reductive by Nagata's theorem solving Hilbert's 14th problem. Therefore $\C[M]^{G}$ is the coordinate ring of an affine algebraic set, namely $\Spec \C[M]^{G}$. This affine set is the GIT quotient $M/\!/G$.}:
$\cX(\pi, G) = \bigdoubleslant{\Hom(\pi, G)}{G}$.

\begin{remark}
The character ``variety'' ${\cal X}(\pi, G)$ is an affine algebraic set, but it is not necessarily irreducible when $\Hom(\pi, G)$ itself is not.
\end{remark}

As a topological space, ${\cal X}(\pi, G)$ is the largest Hausdorff quotient of $\Hom(\pi, G)/G$, \ie{}
${\cal X}(\pi, G) = \Hom(\pi, G) / \mathord{\sim}$
where the equivalence relation $\sim$ identifies representations whose orbit closures intersect.
In fact this may be taken as a definition of the character variety
when $G$ is merely a real Lie group. When $G$ is a reductive complex algebraic group, 
any equivalence class contains a unique closed orbit, and representations with closed orbits are precisely the completely reducible ones\footnote{By
definition, a \emph{completely reducible} representation $\rho : \pi \to G$ is one for which $H \coloneqq \rho(\pi)$ is a completely reducible subgroup of $G$, meaning that 
its Zariski closure is a reductive subgroup. Equivalently, for every parabolic subgroup $P$ containing $H$, there is a Levi factor of $P$ containing $H$.}.
Such representations are the \emph{polystable} locus $\Hom^{\textnormal{ps}}(\pi, G) \subset \Hom(\pi, G)$ in the language of GIT. Consequently,
the character variety as a topological space is
$  {\cal X}(\pi, G) = \Hom(\pi, G) / \mathord{\sim} = \Hom^{\textnormal{ps}}(\pi, G) / G$.
In particular, there is a natural surjective map $\Hom(\pi,G)/G \to {\cal X}(\pi, G)$ which restricts to a bijection, in fact a homeomorphism,
$\Hom^{\textnormal{ps}}(\pi, G)/G \isomap {\cal X}(\pi, G)$.

\begin{remark}
\emph{Stable} representations in the language of GIT are the irreducible representations $\rho : \pi \to G$\footnote{A representation
$\rho : \pi \to G$ is called \emph{irreducible} when $\rho(\pi)$ is not contained
in any parabolic subgroup of $G$. Technically, irreducible representations are the stable points of $\Hom(\pi, G)$ for the action of $G/Z(G)$,
they are only (``properly'') stable for the action of $G$ when $G$ has finite center.}. When $\pi$ is the fundamental group of a closed surface
of negative Euler characteristic,
They are smooth points of $\Hom(\pi, G)$, so they project to either smooth points or orbifold points in ${\cal X}(\pi, G)$,
depending on whether their centralizer in $G$ is equal to the center $Z(G)< G$ or a finite extension of it.
It is known that there are no orbifold points for $G = \GL_n(\C)$ or $\SL_n(\C)$.
\end{remark}

\subsection{\texorpdfstring{Fricke-Klein space $\FS$ and the character variety ${\cal X}(\pi, \PSL_2(\R))$}
{Fricke-Klein space F(S) and the character variety X(S,PSL(2,R))}} \label{subsec:Fricke}

We now specialize to $2$-dimensional hyperbolic geometry, \ie{} the Klein geometry $(X,G)$ where $X = \mathbb{H}^2$ is the hyperbolic plane and 
$G = \Isom^+(\H^2) \approx \PSL_2(\R)$.
An $(X,G)$-structure on $S$ is called a \emph{hyperbolic structure}, and the deformation space
\begin{equation}
 \FS = \bigslant{\{\text{hyperbolic structures on }S\}}{\Diff_0(M)}
\end{equation} 
is called the \emph{Fricke-Klein space} of $S$.
It is topologically a cell of dimension $6g -6$ where $g$ is the genus of $S$,
and has a natural real-analytic structure (see \eg{} \cite{MR2284826} for several proofs).
 
In this setting, all $(X,G)$-structures are automatically complete and we have:
\begin{theorem}[Goldman \cite{MR2630832}]
 The holonomy map $\mathit{hol}: \FS \to {\cal X}(\pi, \PSL_2(\R))$ is an embedding. Its image in the character variety is
 a connected component, consisting precisely of the equivalence classes of all discrete and faithful group homomorphisms $\rho \colon \pi \to G$\footnote{To be accurate,
 the discrete and faithful representations form \emph{two} connected components of the real character variety.
 They correspond to holonomies of hyperbolic structures on $S$ equipped with either of its two possible orientations.}.
\end{theorem}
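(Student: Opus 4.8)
The plan is to break the statement into three tasks: identify the image of $\mathit{hol}$ with the discrete faithful locus, show that $\mathit{hol}$ is a homeomorphism onto its image, and prove that this image is an open and closed connected subset of the character variety. I would first dispatch the identification of the image. Given a hyperbolic structure on the closed surface $S$, its developing map $f \colon \tilde{S} \to \H^2$ is a diffeomorphism onto $\H^2$ (completeness being automatic since $S$ is compact) and the holonomy $\rho$ realizes $\pi$ as the group of deck transformations; proper discontinuity forces $\rho$ to be discrete, and freeness of the action forces $\rho$ to be faithful. Conversely, if $\rho \colon \pi \to \PSL_2(\R)$ is discrete and faithful, then $\rho(\pi)$ is torsion-free (as $\pi$ is) and hence acts freely on $\H^2$; the quotient $\H^2/\rho(\pi)$ is a hyperbolic surface with fundamental group $\pi$, which must be closed (a noncompact surface has free fundamental group, whereas $\pi$ is not free) and of genus $g$ by the Euler characteristic count, hence diffeomorphic to $S$. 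This produces a hyperbolic structure with holonomy $\rho$.

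For injectivity and the embedding property I would invoke the Deformation theorem (\autoref{DeformationTheorem}, see \cite{MR2827816}). Every Fuchsian representation is non-elementary, hence fixes no point of $\H^2 \cup \partial \H^2$ and is therefore irreducible, so it lies in the stable locus where the $\PSL_2(\R)$-action is proper; thus $\mathit{hol}$ descends from the local homeomorphism $\widehat{\mathit{hol}}$ to a local homeomorphism onto ${\cal X}(\pi, \PSL_2(\R))$. Injectivity is the rigidity statement that a hyperbolic structure is determined by its holonomy up to isotopy: if two structures have conjugate holonomies, then after conjugating we may assume the holonomies coincide, whereupon the two equivariant developing maps differ by a $\rho(\pi)$-equivariant diffeomorphism $h$ of $\H^2$ that is equivariantly homotopic to the identity (using geodesic convexity of $\H^2$); this $h$ descends to a self-diffeomorphism of $S$ homotopic to the identity, hence isotopic to it on a surface, so the two structures lie in the same $\Diff_0(S)$-orbit. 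A continuous injective local homeomorphism is an embedding, and since $\dim \FS = 6g-6 = (2g-2)\dim \PSL_2(\R)$ equals the dimension of the character variety at a smooth point, the image is moreover open.

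To show that the image is a full connected component I would use the Euler number. Each $\rho \colon \pi \to \PSL_2(\R)$ carries an integer Euler number $e(\rho)$, namely the Euler class of the flat circle bundle over $S$ associated to the action of $\PSL_2(\R)$ on $\partial \H^2$, evaluated on the fundamental class; the function $e$ is locally constant on ${\cal X}(\pi, \PSL_2(\R))$. The Milnor–Wood inequality gives $|e(\rho)| \leqslant |\chi(S)| = 2g-2$, and the holonomy of a hyperbolic structure attains the extremal value $e = \pm(2g-2)$, the sign recording the orientation. Since $e$ is locally constant, each level set $\{e = 2g-2\}$ is open and closed; combined with the connectedness of $\FS$ (it is a cell of dimension $6g-6$), the image of $\mathit{hol}$ is exactly one such level set, hence a single connected component, the other extremal component corresponding to the reversed orientation.

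The hard part will be the converse half of the Euler-number characterization: Goldman's theorem that $e(\rho) = \pm(2g-2)$ forces $\rho$ to be discrete and faithful, which is what pins the image down to a full component rather than merely an open-and-closed subset of the discrete faithful locus. I would prove this by the maximality argument: a representation of extremal Euler number admits a $\rho$-equivariant map $\tilde{S} \to \H^2$ whose pullback of the area form is everywhere of one sign with total integral equal in absolute value to the area, forcing this map to be a local diffeomorphism and then, by a degree and covering-space argument on the closed surface, a genuine developing map of a hyperbolic structure. Note that closedness of the discrete faithful locus — which would otherwise demand a separate Chuckrow–Jørgensen type convergence argument — then comes for free from the local constancy of $e$.
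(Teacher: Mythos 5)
The first thing to note is that the paper does not prove this theorem: it is stated as background and attributed to Goldman \cite{MR2630832}, and the only ``proof content'' in the paper is the remark that follows it --- completeness of hyperbolic structures on a closed surface is automatic because $\PSL_2(\R)$ acts on $\H^2$ with compact stabilizers, the holonomy map of a deformation space is an embedding under suitable topological restrictions, and Goldman identified the image as the component of maximal Euler class. So your proposal is not competing with an argument in the paper; it is a reconstruction of Goldman's own proof, and its decomposition (identify the image with the discrete faithful locus, prove $\mathit{hol}$ is an embedding via the Deformation theorem plus rigidity, pin down a full component via local constancy of the Euler number and the Milnor--Wood inequality) matches both the paper's remarks and the structure of the cited literature.

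Two substantive comments. First, your part 1 proves slightly too much: a discrete faithful $\rho$ yields a closed hyperbolic surface $\H^2/\rho(\pi)$ diffeomorphic to $S$, but the diffeomorphism inducing $\rho$ may reverse orientation, so only the representations with $e(\rho) = +(2g-2)$, not $-(2g-2)$, lie in the image of $\mathit{hol}$. This is exactly the content of the theorem's footnote, and your Euler-number bookkeeping in part 3 does repair it, but part 1 as written asserts that every discrete faithful representation is the holonomy of a marked structure on $S$, which is false by a sign. Second, and more serious as a matter of completeness: the step you correctly flag as hard --- that extremal Euler number forces $\rho$ to be discrete and faithful --- is the entire content of Goldman's cited theorem, and your sketch of it (an equivariant map $\tilde{S}\to\H^2$ whose pulled-back area form has one sign and extremal total integral) is a heuristic rather than an argument: nothing in the proposal produces such a map or establishes the sign condition, and that is where essentially all the work in Goldman's proof (or in the later harmonic-maps proofs) is concentrated. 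Since the paper itself simply cites this step, your proposal should be read the same way: correct in outline, with the one deep ingredient outsourced to the reference.
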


\begin{remark}
It is a general fact that if $G$ preserves a Riemannian metric on $X$ (\ie{} $G$ acts on $X$ with compact stabilizers) and $M$ is closed, then
all $(X,G)$-structures are complete. Furthermore, under some topological restrictions on $M$ (see \autoref{remark:Diff1vsDiff0}), the holonomy map
$\mathit{hol} \colon \mathcal{D}_{(X,G)}(M) \to {\cal X}(\pi, G)$ is an embedding. Goldman showed that 
in the specialization being discussed here, the image of $\FS$ in the character variety is the connected component with maximal Euler class.
\end{remark}

\begin{remark}
 The character ``variety'' ${\cal X}(\pi, \PSL_2(\R))$ does not come with an algebraic structure
 since $\PSL_2(\R)$ is not a complex reductive algebraic group.
 There is of course a natural ``inclusion''\footnote{This ``inclusion'' is actually a $2:1$ map, essentially because $\PSL_2(\R) \neq \PGL_2(\R)$.} 
 map ${\cal X}(\pi, \PSL_2(\R)) \to {\cal X}(\pi, \PSL_2(\C))$,
 but the image of ${\cal X}(\pi, \PSL_2(\R))$ in ${\cal X}(\pi, \PSL_2(\C))$ is only a real \emph{semi}-algebraic subset. 
In the case where $\pi$ is the fundamental group of a closed surface, it is also a real-analytic subvariety
 by virtue of the nonabelian Hodge correspondence\footnote{This real-analytic correspondence 
 between the character variety and the moduli space of Higgs bundles is due to Hitchin and Donaldson (\cite{MR887284}, \cite{MR887285}) and to Corlette and Simpson 
 (\cite{MR965220}, \cite{MR1159261, MR1179076}) in the more general case where $\pi$ is the fundamental group of a smooth Kähler variety and $G$ is a reductive complex algebraic group.}.
\end{remark}

\subsection{\texorpdfstring{Teichmüller space $\TS$ and the Weil-Petersson metric}{Teichmüller space T(S) and the Weil-Petersson metric}} \label{subsec:TSWP}

The \emph{Teichmüller space} of $S$ is the deformation space of complex structures on $S$. It is defined similarly to a deformation space
of $(X,G)$-structures, with the pseudogroup of invertible holomorphic functions
between open sets of $\C$ playing the role of the Lie group $G$ acting on $X = \C$. Good references for Teichmüller theory and the Weil-Petersson metric include \cite{MR2245223, MR2284826, MR2641916}.

A \emph{complex structure} on $S$ is given by an atlas of charts mapping open sets of $S$ into $\C$ such that the transition functions are holomorphic. We typically denote by $X$ the surface $S$ equipped with a complex structure, $X$ is called a Riemann surface. Complex structures on $S$ are always assumed compatible with its given smooth structure and orientation.
The group $\Diff(S)$ of orientation-preserving diffeomorphisms of $S$ acts on complex structures on $S$ by pulling back atlases. 
The \emph{Teichmüller space of $S$} is defined as the quotient $\TS \coloneqq \{\text{complex structures on }S\}/\Diff_0(S)$.

Kodaira-Spencer deformation theory (see \cite{MR0112154}, \cite{MR0276999}) shows that 
$\TS$ is a complex manifold with holomorphic tangent space $\upT _X \TS = \check{H}^1(X,\Theta_X)$,
where $\Theta_X$ is the sheaf of holomorphic vector fields on $X$. Via Serre duality,
one can identify the holomorphic cotangent space as $ \upT _X^* \TS = H^0(X, K_X^2)$ where $K_X$ is the canonical bundle
on $X$. In other words $\upT _X^* \TS$ is the space of \emph{holomorphic quadratic differentials} on $X$, that is tensors on $X$ of the form
$\phi =  \varphi(z)dz^2$ with $\varphi$ holomorphic. A quick application of the Riemann-Roch theorem shows that $\TS$ has complex dimension 
$3g - 3$ where $g$ is the genus of $S$.

An immediate consequence of the celebrated Poincaré uniformization theorem and the fact that any hyperbolic structure on $S$ is complete (\autoref{subsec:Fricke}) is the 1:1 correspondence between complex structures on $S$ and hyperbolic structures on $S$. Concretely, one can associate to each
complex structure $X$ the hyperbolic structure given by the unique conformal metric on $X$ with 
constant curvature $-1$.
This correspondence passes to the quotient as a real-analytic diffeomorphism:
\begin{equation} \label{eq:uniformizationmap}
F: \TS \isomap \FS
\end{equation}
where $\FS$ is the Fricke-Klein space (\autoref{subsec:Fricke}). 
For this reason, $\FS$ itself is sometimes called ``Teichmüller space'', but we will maintain the distinction between $\TS$ and $\FS$.

The \emph{Weil-Petersson product} of two holomorphic quadratic differentials $\phi$ and $\psi$ is defined by
\begin{equation}
 \left<\phi,\psi\right>_{\textnormal{WP}} = -\frac{1}{4} \int_X \phi \cdot {\sigma}^{-1} \cdot \overline{\psi} 
\end{equation}
where ${\sigma}^{-1}$ is the dual current of the Poincaré area form $\sigma$. 
This Hermitian inner product on $ \upT _X^* \TS = H^0(X, K_X^2)$ defines by duality a Hermitian metric $\left<\cdot,\cdot\right>_{\textnormal{WP}}$ on the complex manifold $\TS$, 
which turns out to be Kähler, as was first shown by Ahlfors \cite{MR0204641}. It is called the \emph{Weil-Petersson metric} on $\TS$ and we shall denote it $h_{\textnormal{WP}} = g_{\textnormal{WP}} - i \omega_{\textnormal{WP}}$
where $g_{\textnormal{WP}}$ and $\omega_{\textnormal{WP}}$ are the Weil-Petersson Riemannian metric and Kähler form respectively.

\subsection{\texorpdfstring{Deformation space of complex projective structures $\CPS$}{Deformation space of complex projective structures CP(S)}} \label{subsec:CPS}

\subsubsection{Deformation space and holonomy}

We now consider the geometry $(X,G)$, where $X = \CP^1$ and 
$G = \PSL_2(\C)$ acting projective linearly. An $(X,G)$-structure on $S$ is a \emph{complex projective structure}.
An excellent reference is \cite{MR2497780}.

The deformation space $\CPS$ is a complex manifold. As in the case of Teichmüller space $\TS$,
one can use deformation theory to express the tangent space of $\CPS$ in terms of \v{C}ech cohomology,
but the description of tangent vectors is not as explicit as with $\TS$. On the other hand, $\CPS$ has a remarkable parametrization as a holomorphic affine bundle over $\TS$. We briefly describe this so-called \emph{Schwarzian parametrization} in \autoref{subsubsec:SchwarzianParam}.

The holonomy of a complex projective structure on $S$ is always irreducible, so the holonomy map
$\mathit{hol}: \CPS \rightarrow  {\cal X}(\pi, \PSL_2(\C))$
lands in the smooth locus of the character variety. Moreover, it is a local biholomorphism \cite{MR0463429, MR624807, MR624819} (but not a covering map \cite{MR0463429}). Gallo, Kapovich, Marden \cite{MR1765706} precisely determined its image: 
a representation $\rho: \pi \rightarrow \PSL_2(\C)$ is the holonomy of a complex projective structure if and only if $\rho$ is nonelementary
and lifts to $\SL_2(\C)$. Here, $\rho$ is called \emph{elementary} if its image is bounded or virtually abelian.

\subsubsection{Schwarzian parametrization} \label{subsubsec:SchwarzianParam}

Since a complex projective atlas is in particular a holomorphic atlas, we have a ``forgetful projection'' 
\begin{equation} \label{eq:ForgetfulProjection}
 p \colon \CPS \to \TS~. 
\end{equation}
The map $p$ is onto $\TS$, because the Fuchsian section $\sigma_{0} : \TS \to \CPS$ is a right inverse (\autoref{subsec:FuchsianAndQF}).
It is also not hard to show that $p$ is holomorphic. It turns out that for any $X \in \TS$, the fiber $p^{-1}(X)$ can be equipped with the structure of an affine space
modeled on the vector space $H^0(X, K_X^2)$, we explain this soon. For now, recall that $H^0(X, K_X^2) = \upT _X^* \TS$,
so that globally $\CPS$ is an affine bundle modeled on the vector bundle $\upT ^* \TS$. 
The choice of a ``zero section'' $\sigma : \TS \to \CPS$ defines an isomorphism $\tau_\sigma : \CPS \to \upT ^*\TS$, characterized by the fact that $\tau_\sigma \circ \sigma$ is the zero section of $\upT ^*\TS$.

We now explain why, for $X \in \TS$, the fiber $p^{-1}(X) \subset \CPS$ is an affine space
modeled on $H^0(X, K_X^2)$. How can one associate to $Z_1, Z_2 \in p^{-1}(X)$ a holomorphic
quadratic differential $\phi \in H^0(X, K_X^2)$ representing the affine subtraction $Z_2 - Z_1$?  The \emph{Schwarzian derivative} holds the answer. We describe it quickly and refer to \cite{MR2497780} and \cite{MR2698860} 
for more details. 

The Schwarzian derivative of a locally injective holomorphic function $f : U \subseteq \C \to \C$ is
given by the explicit formula: $ Sf(z) = \frac{f'''(z)}{f'(z)} - \frac{3}{2}{\left(\frac{f''(z)}{f'(z)}\right)}^2$.
One should think of this as a holomorphic quadratic differential $Sf = Sf(z)\,dz^2$ rather than a function, given 
how it transforms under a projective change of coordinate on $\CP^1$.
In a sense that can be made precise, the Schwarzian derivative of $f$ measures how far $f$ deviates from being a projective linear transformation\footnote{The \emph{osculating map to $f$} is the map $\mathit{Osc}(f) : U \to \PSL_2(\C)$ whose value at a point $z$ is the projective linear transformation that
best approximates $f$ at $z$. The Darboux derivative of $\mathit{Osc}(f)$ is equal to the Schwarzian derivative of $f$, suitably interpreted. 
In particular, it is clear that $f$ is projective linear if and only if $\mathit{Osc}(f)$ is constant \ie $Sf = 0$.}.

Given a locally injective holomorphic map $f : Z_1 \to Z_2$ where $Z_1, Z_2 \in p^{-1}(X)$,
one can take the Schwarzian derivative of $f$ in local projective coordinates on 
$Z_1$ and $Z_2$, which yields a holomorphic quadratic differential $\phi \in H^0(X, K_X^2)$. When $f = \mathrm{id}_X$,
$\varphi$ is the affine subtraction $Z_2 - Z_1\in H^{0}(X, K_{X}^{2}).$ The fact that it respects the axioms of affine subtraction (Weyl's axioms) is a consequence of the classical properties of the Schwarzian derivative.

\subsection{Fuchsian and quasi-Fuchsian structures} \label{subsec:FuchsianAndQF}

\subsubsection{Fuchsian and quasi-Fuchsian deformation spaces}

A \emph{Kleinian group} is a discrete subgroup of $\PSL_2(\C)$\footnote{Kleinian groups
were classically required to have a nonempty domain of discontinuity (\eg{} for Thurston \cite{ThurstonNotes}, Maskit \cite{MR959135} or Kapovich \cite{MR2553578}), 
but modern usage tends to allow any discrete group.}. The group $\PSL_2(\C)$ can be regarded 
as the group of automorphisms of the complex projective line $\CP^1$ acting projectively linearly, or the group of the automorphisms the Riemann sphere $\hat{\C} = \C \cup \{\infty\} \approx S^2$
acting by Möbius transformations, or the group of orientation-preserving isometries of hyperbolic 3-space $\H^3$. 

Let $\Gamma$ be a Kleinian group. A point $x \in \CP^1$ is called a \emph{point of discontinuity} for $\Gamma$ if $x$ has a neighborhood $U \subset \CP^1$ such that
$gU \cap U = \emptyset$ for all but finitely many $g \in \Gamma$. The \emph{domain of discontinuity} of $\Gamma$ is the set $\Omega \subset \CP^1$
of all points of discontinuity. One can show that $\Gamma$ acts properly on $\Omega$, and $\Omega$ is the largest open set with that property.
The \emph{limit set} of $\Gamma$ is the set $\Lambda = \CP^1 - \Omega$. One can show that $\Lambda$ is finite if and only if $\Gamma$ is elementary (virtually abelian).
When $\Gamma$ is not elementary, its limit set $\Lambda$ can be characterized as the smallest $\Gamma$-invariant closed subset of $\CP^1$, or the set of accumulation points
of the orbit of any point $x \in \H^3 \cup \CP^1$.

A \emph{Fuchsian group} is a Kleinian group
conjugate to a subgroup of $\PSL_2(\R) \subset \PSL_2(\C)$. It is characterized as a Kleinian group whose limit set $\Lambda$ is contained 
in some round circle $C$, and which preserves both components of $\CP^1 - C$.
More generally, a \emph{quasi-Fuchsian group} is a Kleinian group $\Gamma$ whose limit set $\Lambda$ is contained in some topological circle $C$, and which preserves both components
of $\CP^1 - C$. A [quasi-]Fuchsian group $\Gamma$ is called \emph{of the first kind} or \emph{of the second kind}
depending on whether $\Lambda = C$ or $\Lambda \subsetneq C$. If $\Gamma$ is a finitely generated torsion-free Kleinian group, 
then $\Gamma$ is quasi-Fuchsian of the first kind if and only if its action on $\CP^1$ is topologically conjugate to the action of a Fuchsian group of the first kind, \ie{} there exists a homeomorphism $f : \CP^1 \to \CP^1$ such that $f \Gamma f^{-1}$ is Fuchsian
(\cite[Theorem 4]{MR0297992}). Moreover, $f$ must be quasiconformal.

Let $S$ be a closed oriented surface with fundamental group $\pi$. A representation $\rho \colon \pi \to \PSL_2(\C)$ is called
[quasi-]Fuchsian  if $\rho$ is injective and its image $\Gamma = \rho(\pi) \subset \PSL_2(\C)$ is a [quasi-]Fuchsian group of the first kind.
The \emph{deformation space of [quasi-]Fuchsian} structures denoted $\FS$ [$\QFS$] is the corresponding subset of the character variety $\mathcal{X}(\pi, \PSL_2(\C))$. Remarkably, $\QFS$ is an open subset of the character variety\footnote{This fact is essentially due to Bers' simultaneous uniformization theorem discussed in the next
paragraph. It is moreover true
that $\QFS$ is the interior of the subset $\mathcal{AH}(S) \subset \mathcal{X}(\pi, \PSL_2(\C))$ of equivalence classes of
discrete and faithful representations. This theorem
is due to the work of Ahlfors, Bers, Maskit, Kra, Marden, Thurston and 
Sullivan \cite{MR0115006, MR0167618, MR910353, MR0306485, MR0349992, MR806415}. 
We refer to \cite[Chapter 7]{MR2096234} for an exposition of this theory, which holds more generally for geometrically finite representations.
The positive answer to the Bers-Sullivan-Thurston \emph{density conjecture} \cite{MR3001608, MR2821565} shows that furthermore, 
$\mathcal{AH}(S)$ is the closure of $\QFS$ in the character variety.}, and lies in the smooth locus.

\subsubsection{Bers slices, Bers sections, Bers embeddings}

Let $\rho$ be quasi-Fuchsian and $\Gamma \coloneqq \rho(\pi) \subset \PSL_2(\C)$. The domain
of discontinuity $\Omega$ of $\Gamma$ is the disjoint union of two invariant topological disks $\Omega^+$ and $\Omega^-$ on which $\Gamma$ acts freely properly,
therefore $\Omega^+ / \Gamma$ and $\Omega^- / \Gamma$ are both closed surfaces diffeomorphic to $S$. 
Let us denote $S^+ = S$ and $S^-$ the same surface with the opposite orientation. 
We can assume that $\Omega^\pm$ has the same orientation as $S^\pm$ in the sense that there exists an orientation-preserving homeomorphism $f^\pm \colon S^\pm \to \Omega^\pm / \Gamma$ whose induced map at the level of fundamental groups is $\rho$. Both quotient surfaces $\Omega^\pm / \Gamma$ inherit complex projective structures, called
\emph{standard quasi-Fuchsian projective structures}\footnote{There are infinitely many complex projective structures with same quasi-Fuchsian holonomy $\rho$.
The standard one is the only one that is an embedded projective structure (whose developing map is an embedding), all the others have infinite-to-one developing maps. 
They are called \emph{exotic} quasi-Fuchsian structures and Goldman \cite{MR882826}  showed
that any one of them can be obtained from the standard one by grafting along an rational lamination. Shinpei Baba gave a generalization of this result
for an arbitrary holonomy fiber $\operatorname{hol}^{-1}(\rho) \subset \CPS$ \cite{MR3447103, BabaII}.}. 

This defines a map
$(Z^+, Z^-) \colon \QFS \to \mathcal{CP}(S^+) \times \mathcal{CP}(S^-)$, and composing with the forgetful projection yields a second map
$\QFS \to \mathcal{T}(S^+) \times \mathcal{T}(S^-)$. Bers's \emph{simultaneous uniformization} theorem \cite{MR0111834}
says that the latter is a biholomorphism. We denote its inverse by
\begin{equation} \label{eq:QFmap}
 \mathit{QF}( \cdot, \cdot) ~\colon~ \mathcal{T}(S^+) \times \mathcal{T}(S^-) \isomap \QFS\,.
\end{equation}
For a fixed $X^- \in \mathcal{T}(S^-)$, the map $\mathit{QF}(\cdot, X^-) \colon \TS \to \QFS$ is called a \emph{horizontal Bers slice}.
Similarly, for $X^+ \in \mathcal{T}(S)$, the map $\mathit{QF}(X^+, \cdot)\colon \mathcal{T}(S^-) \to \QFS$ is called a \emph{vertical Bers slice}.
The quasi-Fuchsian space $\QFS$ is thus foliated by horizontal Bers slices and by vertical Bers slices, and these two foliations are transverse.

After composing with $Z^+ \colon \QFS \to \mathcal{CP}(S)$, a horizontal Bers slice turns into a map $\sigma_{X^-} \colon \TS \to \CPS$ that is a section of $p \colon \CPS \to \TS$, called a \emph{Bers section}. On the other hand, a vertical Bers slice becomes a map
$B_{X^+} \colon \mathcal{T}(S^-) \to \CPS$ that lands in the fixed fiber $p^{-1}(X^+) \subset \CPS$, which is an affine space modeled on $H^0(X, K_{X^+}^2)$  (\autoref{subsubsec:SchwarzianParam}). 
The \emph{Bers embedding} $b_{X^+} \colon \mathcal{T}(S^-) \to H^0(X^+, K_{X^+}^2)$
is defined as the affine subtraction $b_{X^+}(X^-) = B_{X^+}(X^-) - Z_0$
where $Z_0$ is the standard Fuchsian projective structure $Z_0 = \mathit{QF}(X^+, \overline{X^+})$.
Bers showed that $b_{X^+}$ is a holomorphic embedding of Teichmüller space $\mathcal{T}(S^-) \approx \overline{\TS}$
in $H^0(X, K_{X^+}^2) \approx \C^{3g-3}$, and moreover has bounded image \cite{MR0111835, MR0130972}.

In the special case where $\rho$ is Fuchsian, $\Lambda$ is a round circle and 
$\Omega^+$ and $\Omega^-$ are round disks. After conjugating $\rho$, one can assume that $\Gamma \subset \PSL_2(\R)$, $\Lambda = \R \cup \{\infty\}$,
$\Omega^+ = \H$, and $\Omega^- = \overline{\H}$. Since $\H$ can be identified to the hyperbolic plane
and $\Gamma$ acts freely properly on $\H$ by isometries, the quotient surface $Z^+ = \H/\Gamma$ inherits a hyperbolic structure, in addition to a complex projective structure,
and $\rho$ is the holonomy of both these geometric structures. We have just described
an identification between the Fuchsian deformation space and the Fricke-Klein deformation space of $S$, 
which is the reason why we somewhat abusively use the same
notation $\FS$ for both.
Clearly, the Fuchsian space $\FS \subset \QFS$ arises as the image of the map $X \in \TS \mapsto \mathit{QF}(X,\overline{X})$
called the \emph{Fuchsian slice}. Seen as map from $\TS$ to the Fricke-Klein space $\FS$, this is nothing else than the uniformization map \eqref{eq:uniformizationmap}.
After composing the Fuchsian slice with the map $Z^+ \colon \QFS \to \mathcal{CP}(S)$, one obtains a map $\sigma_0 \colon \TS \to \CPS$ called the \emph{Fuchsian section}.

\section{Applications to Teichmüller theory} \label{sec:TeichmullerApplications}

In this section, we present some applications of the notions developed in sections \ref{sec:LagrangianAffine}, \ref{sec:BiLagrangian}, \ref{sec:BiLagrangianComplexification} to Teichmüller theory, more specifically to the deformation spaces introduced in section \ref{sec:TeichmullerBackground}.

Throughout this section again, $S$ is a connected, oriented, smooth, closed surface of negative Euler characteristic and fundamental group $\pi$.

\subsection{Symplectic structure of deformation spaces} \label{subsec:SymplecticStructureOfDeformationSpaces}

Deformation spaces associated to a closed surface tend to have a natural symplectic structure.
Goldman established the main reason for that:

\begin{theorem}[Goldman, \cite{MR762512}] \label{thm:GoldmanSymp}
Let $G$ be a complex semisimple algebraic group. The character variety ${\cal X}(\pi, G)$ enjoys a natural complex symplectic structure $\omega_G$.
\end{theorem}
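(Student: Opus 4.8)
The plan is to construct $\omega_G$ infinitesimally on the tangent spaces of ${\cal X}(\pi, G)$ using twisted group cohomology, and then to verify the defining properties of a holomorphic symplectic form. First I would work at a smooth point $[\rho]$, which by the discussion of \autoref{subsubsec:CharVar} we may take to correspond to a stable (irreducible) representation $\rho \colon \pi \to G$. Since $S$ is aspherical it is a $K(\pi,1)$, so the cohomology of $\pi$ agrees with the singular cohomology of $S$. Deformation theory identifies the Zariski tangent space to $\Hom(\pi, G)$ at $\rho$ with the space of $1$-cocycles $Z^1(\pi; \mathfrak{g})$, where $\mathfrak{g} = \mathrm{Lie}(G)$ carries the $\pi$-module structure $\mathrm{Ad} \circ \rho$, while the tangent space to the conjugation orbit is the space of coboundaries $B^1(\pi; \mathfrak{g})$. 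Hence
\begin{equation}
\upT_{[\rho]} {\cal X}(\pi, G) \cong H^1(\pi; \mathfrak{g}_{\mathrm{Ad}\rho})~.
\end{equation}

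Next I would define the pairing. As $G$ is semisimple, the Killing form $B$ on $\mathfrak{g}$ is nondegenerate, $\C$-bilinear, symmetric, and $\mathrm{Ad}$-invariant; the last property means that $B \colon \mathfrak{g} \otimes \mathfrak{g} \to \C$ is a morphism of $\pi$-modules (with $\C$ trivial). Combining the cup product with $B$ as coefficient pairing yields
\begin{equation}
H^1(\pi; \mathfrak{g}) \times H^1(\pi; \mathfrak{g}) \xrightarrow{\cup} H^2(\pi; \mathfrak{g} \otimes \mathfrak{g}) \xrightarrow{B_*} H^2(\pi; \C) \cong H^2(S; \C) \cong \C~,
\end{equation}
the final isomorphism given by evaluation against the fundamental class $[S]$. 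Call the resulting bilinear form $\omega_\rho$. It is skew-symmetric, since the cup product of two degree-one classes is graded-anticommutative while $B$ is symmetric. Its nondegeneracy is precisely Poincaré duality for $S$ with the local system $\mathfrak{g}_{\mathrm{Ad}\rho}$: duality gives a perfect pairing between $H^1(S; \mathfrak{g})$ and $H^1(S; \mathfrak{g}^*)$, and the $\mathrm{Ad}$-invariant form $B$ furnishes a $\pi$-module isomorphism $\mathfrak{g} \cong \mathfrak{g}^*$. The same invariance shows the construction is unchanged under conjugating $\rho$, so $\omega_\rho$ descends to a well-defined form $\omega_G$ on the quotient. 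Because every ingredient---the family of cohomology groups, the cup product, and the coefficient map $B$---is algebraic in $\rho$ and $\C$-linear, $\omega_G$ is a holomorphic $2$-form, hence a candidate complex symplectic structure.

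The remaining and genuinely hardest point is \emph{closedness}, $\upd \omega_G = 0$. I see two routes. The direct route is Goldman's: represent $\omega_G$ by an explicit bilinear expression on cocycles, compute $\upd\omega_G$ as an alternating trilinear form on $H^1$, and show it vanishes using the infinitesimal invariance $B([\xi,\eta],\zeta) + B(\eta,[\xi,\zeta]) = 0$ of the Killing form together with the graded-Jacobi identity for cup products, so that the antisymmetrization entering $\upd\omega_G$ kills all terms. The conceptual route is the Atiyah--Bott symplectic reduction: realize ${\cal X}(\pi, G)$ as the complex symplectic quotient of the affine space of (flat) connections on a principal $G$-bundle over $S$ by the gauge group, where the ambient constant-coefficient $2$-form $a, b \mapsto \int_S B(a \wedge b)$ is manifestly closed; the Marsden--Weinstein reduced form is then automatically closed and matches $\omega_G$ under the identification above. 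Either way, closedness is where the real work lies, the rest being formal homological algebra and Poincaré duality; I would present the cohomological construction for concreteness and invoke the symplectic-reduction picture to certify closedness cleanly.
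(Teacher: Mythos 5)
Your proposal follows essentially the same route as the paper: identifying $\upT_{[\rho]} {\cal X}(\pi, G)$ with $H^1(\pi, \mathfrak{g}_{\Ad \circ \rho})$, defining $\omega_G$ via cup product with the Killing form as coefficient pairing followed by the identification $H^2(\pi,\C) \cong \C$, observing nondegeneracy and algebraicity, and deferring closedness (correctly flagged as the genuinely hard step) to the Atiyah--Bott symplectic reduction argument as adapted by Goldman. Your added details on skew-symmetry, Poincar\'e duality with local coefficients, and conjugation-invariance are accurate and only make explicit what the paper's sketch leaves implicit.
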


\begin{remark}
The $2$-form $\omega_G$ is an algebraic tensor on the character variety, defining in particular a complex symplectic structure on the smooth locus.
If $G$ is merely a real or complex semisimple Lie group, $\omega_G$ is still well-defined as a real or complex symplectic structure on the smooth locus.
 If $G$ is only assumed reductive, the theorem also holds with the trade-off that the symplectic structure is not univocal, its definition requires a choice.
\end{remark}

Let us explain briefly how this symplectic structure is constructed. The Zariski tangent space to the character variety is given by $\upT _{[\rho]} {\cal X}(\pi, G) = H^1(\pi, \mathfrak{g}_{\Ad \circ \rho})$
where $\mathfrak{g}$ is the Lie algebra of $G$. The notation $\mathfrak{g}_{\Ad \circ \rho}$ signifies that $\mathfrak{g}$ is a $\pi$-module via 
$\Ad \circ \rho \colon \pi \to \Aut(\mathfrak{g})$, so it is possible to define the group cohomology of $\pi$ with coefficients in $\mathfrak{g}$.
Let $B$ denote the Killing form of $\mathfrak{g}$, which is nondegenerate because $\mathfrak{g}$ is semisimple.
Given two elements $\alpha$, $\beta \in H^1(\pi, \mathfrak{g}_{\Ad \circ \rho})$, one can take their cup product with $B$ as coefficient pairing to obtain an element
$\alpha \cup_B \beta \in H^2(\pi, \C)$. Since $S$ is a closed oriented surface, $H^2(\pi, \C)$ is isomorphic to the singular cohomology $H^2(S,\C)$ which is isomorphic to $\C$ 
by Poincaré duality. Therefore we have described how to assign a complex number to the pair $(\alpha, \beta)$. This globally
defines an algebraic nondegenerate $2$-form on ${\cal X}(\pi, G)$. It is not easy to show that $\omega_G$ is closed, 
but it follows from a beautiful argument of symplectic reduction due to Atiyah and Bott \cite{MR702806} that Goldman adapted to this setting.

A consequence of Goldman's theorem is that any deformation space of $(X,G)$-structures on $S$ also enjoys a natural symplectic structure: 
just pull back $\omega_G$ from the character variety by the holonomy map. Somewhat abusively, we still denote by $\omega_G$ the resulting symplectic structure. In \cite{MR762512}, Goldman showed that in the case of Fricke-Klein space $\FS$, $\omega_G$ coincides with the Weil-Petersson Kähler form $\omega_{\textnormal{WP}}$ on Teichmüller space. More precisely, the uniformization map
$F \colon (\TS, \omega_{\textnormal{WP}}) \to (\FS, \omega_G)$ is a real symplectomorphism.

In the case of complex projective structures, $\omega_G$ is a complex symplectic structure in $\CPS$. The symplectic geometry of $\CPS$ was carefully studied in 
\cite{MR3352248, MR3323643}. In particular, Loustau proved that the Schwarzian parametrization is a symplectomorphism, improving a result of Kawai \cite{MR1386110}.
More precisely:
\begin{theorem}[Loustau \cite{MR3352248}] \label{thm:LoustauA}
Let $\sigma$ be a section to the projection $p : \CPS \to \TS$ \eqref{eq:ForgetfulProjection}. Let $\tau_\sigma \colon \CPS \to \upT ^*\TS$ denote the isomorphism of affine bundles 
given by the Schwarzian parametrization using $\sigma$ as the ``zero section'' (see \autoref{subsubsec:SchwarzianParam}). The following are equivalent:
\begin{enumerate}[(i)]
 \item $\sigma$ is Lagrangian: $\sigma^* \omega_G = 0$.
 \item $(\tau_\sigma)^* \omega = -i \, \omega_G$, where $\omega$ is the canonical complex
 symplectic structure on $\upT ^*\TS$.
 \item $d(\sigma - \sigma_0) = i \, \omega_{\textnormal{WP}}$, where $\sigma_0 : \TS \to \CPS$ is the Fuchsian section.
\end{enumerate}
\end{theorem}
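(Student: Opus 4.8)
The plan is to deduce all three equivalences from a single \emph{master formula}
\begin{equation} \label{eq:master}
 \tau_\sigma^* \omega = -i\,\omega_G + i\, p^*(\sigma^* \omega_G)
\end{equation}
relating Goldman's form $\omega_G$ on $\CPS$, the Schwarzian pullback $\tau_\sigma^*\omega$, and the pullback of the base $2$-form $\sigma^*\omega_G$. Granting \eqref{eq:master}, the equivalence of (i) and (ii) is immediate: condition (i) reads $\sigma^*\omega_G = 0$, which collapses \eqref{eq:master} to exactly (ii), while conversely (ii) forces $p^*(\sigma^*\omega_G)=0$, hence $\sigma^*\omega_G=0$ since $p^*$ is injective on forms. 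So I would concentrate first on establishing \eqref{eq:master}, and only afterwards bring in the Weil--Petersson form to handle (iii).

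The first ingredient is elementary affine--cotangent bookkeeping. For a $1$-form $\alpha\in\Omega^1(\TS)$, regarded as a section $s_\alpha$ of $\upT^*\TS$, the tautological property $s_\alpha^*\xi=\alpha$ of the canonical $1$-form gives $s_\alpha^*\omega=\upd\alpha$. If $\sigma,\sigma'$ are two sections of $p$ with $\alpha:=\sigma'-\sigma$ (affine subtraction), then $\tau_\sigma\circ\sigma'=s_\alpha$ and $\tau_{\sigma'}$ equals $\tau_\sigma$ followed by fiberwise translation by $-\alpha$; since translation by $\beta$ sends $\xi$ to $\xi+p^*\beta$, one obtains
\begin{equation} \label{eq:secchange}
 \tau_{\sigma'}^*\omega = \tau_\sigma^*\omega - p^*\upd\alpha, \qquad (\sigma')^*\tau_\sigma^*\omega = \upd(\sigma'-\sigma),
\end{equation}
and in particular $\sigma^*\tau_\sigma^*\omega=0$. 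The crux is then the following \emph{vertical contraction lemma}: for every vector $V$ tangent to a fiber of $p$ one has $\iota_V\,\omega_G = i\,\iota_V(\tau_\sigma^*\omega)$, the right-hand side being independent of $\sigma$ (a one-line consequence of \eqref{eq:secchange}, as $p^*\upd\alpha$ is killed by $\iota_V$). Granting the lemma, the closed $2$-form $\Theta:=-i\,\omega_G-\tau_\sigma^*\omega$ satisfies $\iota_V\Theta=0$ for all vertical $V$; with $\upd\Theta=0$ and Cartan's formula $\mathcal{L}_V\Theta=\upd\iota_V\Theta+\iota_V\upd\Theta=0$, this makes $\Theta$ basic, i.e. $\Theta=p^*\beta$. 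Pulling back by $\sigma$ and using $p\circ\sigma=\mathrm{id}$ and $\sigma^*\tau_\sigma^*\omega=0$ gives $\beta=\sigma^*\Theta=-i\,\sigma^*\omega_G$, which is exactly \eqref{eq:master}.

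To reach (iii), I would apply the second identity of \eqref{eq:secchange} with reference section $\sigma_0$, giving $\sigma^*\tau_{\sigma_0}^*\omega=\upd(\sigma-\sigma_0)$, and then substitute the master formula \eqref{eq:master} written for $\sigma_0$, using $\sigma^*p^*=\mathrm{id}$, to obtain
\begin{equation} \label{eq:thirdcond}
 \upd(\sigma-\sigma_0) = -i\,\sigma^*\omega_G + i\,\sigma_0^*\omega_G.
\end{equation}
Here I invoke Goldman's theorem \cite{MR762512} in the form $\sigma_0^*\omega_G=\omega_{\textnormal{WP}}$: the complex Killing form of $\mathfrak{sl}_2(\C)$ restricts to the real Killing form along the Fuchsian locus, whose tangent directions are exactly the real cohomology classes, so the complex Goldman form pulls back under the Fuchsian section to the real Weil--Petersson form with no extra constant. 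Then \eqref{eq:thirdcond} becomes $\upd(\sigma-\sigma_0)=-i\,\sigma^*\omega_G+i\,\omega_{\textnormal{WP}}$, so (iii) holds if and only if $\sigma^*\omega_G=0$, i.e. if and only if (i), closing the cycle.

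The main obstacle is the vertical contraction lemma, which is a genuine computation about $\omega_G$ rather than a formal manipulation. I would prove it through the holonomy identification $\upT_Z\CPS\cong H^1(\pi,\mathfrak{sl}_2(\C)_{\Ad\circ\rho})$, under which $\omega_G$ is the cup product paired by the Killing form. Via the Eichler--Shimura isomorphism the vertical (fiber) directions correspond to the holomorphic quadratic differentials $H^0(X,K_X^2)=\upT_X^*\TS$, embedded as a complex Lagrangian subspace, whereas a horizontal vector projects isomorphically onto a Beltrami class in $\upT_X\TS$. The content of the lemma is that the mixed block of the cup-product pairing between a quadratic differential $\dot\phi$ and a Beltrami differential $\dot\mu$ equals $i\int_X\dot\phi\,\dot\mu$, precisely $i$ times the tautological pairing that $\iota_V(\tau_\sigma^*\omega)$ realizes. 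Carrying this out rigorously — selecting compatible cocycle representatives for the Schwarzian and Beltrami variations and tracking the normalization responsible for the constant $i$ — is the substantive step; it is essentially Kawai's computation \cite{MR1386110}, repackaged so that the Lagrangian-section hypothesis enters only through the basic-form argument above.
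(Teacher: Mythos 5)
Your proposal cannot be checked against an argument in this paper, because the paper contains none: \autoref{thm:LoustauA} is imported wholesale from \cite{MR3352248}, and the surrounding text only records its consequences. The comparison must therefore be made with the proof in that cited source.

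Your formal skeleton is correct and matches the bookkeeping set up there: the change-of-section identity $\tau_{\sigma'}^*\omega = \tau_\sigma^*\omega - p^*\upd(\sigma'-\sigma)$, the tautological identities $\sigma^*\tau_\sigma^*\omega = 0$ and $(\sigma')^*\tau_\sigma^*\omega = \upd(\sigma'-\sigma)$, the basic-form argument producing your master formula, and the use of Goldman's theorem $\sigma_0^*\omega_G = \omega_{\textnormal{WP}}$ to convert (i) into (iii) all check out, with constants consistent with this paper's conventions. The genuine gap is the ``vertical contraction lemma'' $\iota_V\,\omega_G = i\,\iota_V(\tau_\sigma^*\omega)$, which you state but do not prove, and which carries the entire analytic content of the theorem. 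Taking both vectors vertical, it already contains Goldman's Lagrangian-fibration theorem \cite{MR2094117}; its mixed part is exactly the pairing identity $\omega_G(u,v) = i\langle q_u, \mu_v\rangle$ that the present paper invokes in the proof of \autoref{thm:QFHoloMetric} and attributes to \cite[Corollary 6.12]{MR3352248} --- where it is derived \emph{from} \autoref{thm:LoustauA}, not used to prove it. So you cannot quote that corollary without circularity, and proving the lemma directly by choosing cocycle representatives and tracking the Eichler--Shimura normalization is, as you yourself say, essentially Kawai's computation \cite{MR1386110}; leaving it as a sketch leaves the theorem unproved, since everything else in your argument is soft.

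It is also worth noting that the cited source closes this gap by a different mechanism, which you could adopt to complete your proof. Rather than computing the Goldman pairing cohomologically, Loustau takes as analytic input McMullen's quasi-Fuchsian reciprocity \cite{MR1745010}, which shows that Bers sections satisfy condition (iii), i.e.\ exhibits explicit sections whose affine difference from $\sigma_0$ is a primitive of $i\,\omega_{\textnormal{WP}}$. Combining this with Goldman's identification of $\omega_G$ with $\omega_{\textnormal{WP}}$ along the Fuchsian locus and with analytic continuation across quasi-Fuchsian space (the Fuchsian slice being maximal totally real in $\QFS$, a holomorphic $2$-form identity holds on all of $\QFS$, hence on $\CPS$, once it holds there), one obtains condition (ii) for a single Bers section; your affine change-of-section formula then propagates the statement to arbitrary sections and yields all three equivalences. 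In short: your reduction is clean and correct, but the one step you defer is the theorem's actual content, and the standard way to supply it is McMullen plus analytic continuation, not a rerun of Kawai's computation.
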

Moreover, it is known that Bers sections (see \autoref{subsec:FuchsianAndQF}) and their generalizations including Schottky sections satisfy the conditions of this theorem, it is a consequence
of the work of McMullen \cite{MR1745010}, Takhtajan and Teo \cite{MR1997440}, and Loustau \cite{MR3352248}. A consequence \autoref{thm:LoustauA} is that  $p \colon \CPS \to \TS$ is 
a Lagrangian fibration, a fact already known to Goldman \cite{MR2094117}. Steven Kerckhoff also proved part or all of these facts in unpublished work.

\begin{remark}
Other recent activity in this subject are due to Bertola-Korotkin-Norton \cite{MR3735629} and Takhtajan \cite{TakhtajanKawai}.
\end{remark}

\subsection{Complex bi-Lagrangian geometry of quasi-Fuchsian space}

As an open subset in the smooth
locus of $\mathcal{X}(S, \PSL_2(\C))$, the quasi-Fuchsian space $\QFS$ is a complex manifold of dimension $3g-3$. Moreover $\QFS$
comes with a natural complex symplectic structure $\omega_G$, which is just the restriction of Goldman's.
Let us mention that the complex Fenchel-Nielsen coordinates introduced by Kourouniotis
\cite{MR1288062} and Tan \cite{MR1266284} are Darboux coordinates for the complex symplectic structure $\omega_G$ \cite{MR3352248}.

As a complex symplectic manifold, $(\QFS, \omega_G)$ is the complexification of the real-analytic symplectic manifold
$(\TS, \omega_{\textnormal{WP}})$:
\begin{proposition} \label{prop:FuchsianComplexification}
There is a commutative diagram
\begin{equation}
\begin{tikzcd}
\TS \arrow{d}{\Delta} \arrow{r}{F}
& \mathcal{F}(S) \arrow{d}{\textit{hol}} \\
\TS\times \overline{\TS} \arrow{r}{\mathit{QF}}
& \mathcal{QF}(S).
\end{tikzcd}
\end{equation}
where the vertical arrows are complexifications, and the horizontal arrows are (simultaneous) uniformization.  
Moreover, all maps are (complex) symplectomorphisms.
\end{proposition}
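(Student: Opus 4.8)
The plan is to establish the four assertions in turn—commutativity, the complexification property of the vertical maps, the identification of the horizontal maps with (simultaneous) uniformization, and the symplectomorphism property—reducing each to facts recorded above together with the uniqueness of holomorphic extension (\autoref{prop:HolomorphicExtension}). First I would check commutativity: by the construction recalled in \autoref{subsec:FuchsianAndQF}, the restriction of simultaneous uniformization to the diagonal is the Fuchsian slice $X \mapsto \mathit{QF}(X, \xoverline{X})$, and this map, viewed as landing in Fricke--Klein space, is precisely the Poincaré uniformization $F$ followed by the holonomy embedding. Hence $\textit{QF} \circ \Delta = \textit{hol} \circ F$, which is the commutativity of the square, and at the same time identifies the horizontal arrows with (simultaneous) uniformization.

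Next I would treat the vertical arrows. Since $\TS$ is a complex manifold, \autoref{prop:CanonicalComplexification} identifies $\Delta \colon \TS \to \TS \times \xoverline{\TS}$ with the canonical complexification, so $\Delta(\TS)$ is maximal totally real. By Bers' theorem $\textit{QF}$ is a biholomorphism, hence it commutes with the almost complex structures and therefore carries maximal totally real submanifolds to maximal totally real submanifolds. Using commutativity and the surjectivity of $F$ onto $\FS$, I obtain $\textit{hol}(\FS) = \textit{QF}(\Delta(\TS))$, which is thus maximal totally real in $\QFS$; this is exactly the assertion that $\textit{hol} \colon \FS \to \QFS$ is a complexification.

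For the symplectic statement, I would start from Goldman's theorem (\autoref{subsec:SymplecticStructureOfDeformationSpaces}), which says that $F \colon (\TS, \omega_{\textnormal{WP}}) \to (\FS, \omega_G)$ is a real symplectomorphism for the real Goldman form, and from the defining property $\Delta^* \omega_{\textnormal{WP}}^c = \omega_{\textnormal{WP}}$ of the holomorphic extension, which makes $\Delta$ a symplectomorphism onto its image. The crux is to verify that the complex Goldman form on $\QFS$ pulls back under $\textit{hol}$ to the real Goldman form on the Fuchsian locus: at a Fuchsian class $[\rho]$ one has $\upT_{[\rho]} \QFS = H^1(\pi, \mathfrak{sl}_2(\C)_{\Ad \circ \rho})$ and $\upT_{[\rho]} \FS = H^1(\pi, \mathfrak{sl}_2(\R)_{\Ad \circ \rho})$, and since $\mathfrak{sl}_2(\C) = \mathfrak{sl}_2(\R) \otimes_\R \C$ the complex Killing form and the cup-product pairing restrict complex-bilinearly to their real counterparts on the totally real tangent space. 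It then follows that $(\textit{hol} \circ F)^* \omega_G = F^* \omega_G = \omega_{\textnormal{WP}}$, so that $\textit{QF}^* \omega_G$ and the complex symplectic structure $\omega_{\textnormal{WP}}^c$ agree after pulling back along $\Delta$. Since both are holomorphic $2$-forms on $\TS \times \xoverline{\TS}$ agreeing on the maximal totally real submanifold $\Delta(\TS)$, the uniqueness of holomorphic extension (\autoref{prop:HolomorphicExtension}) forces $\textit{QF}^* \omega_G = \omega_{\textnormal{WP}}^c$; thus $\textit{QF}$ is a complex symplectomorphism and $\textit{hol}$ is compatible with the complex Goldman forms.

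I expect the main obstacle to be precisely that intermediate step—identifying the complex Goldman form on $\QFS$ with the complex-bilinear extension of the real Goldman form along the Fuchsian locus—since this is where the reality structure of the character variety and the complexification $\mathfrak{sl}_2(\C) = \mathfrak{sl}_2(\R) \otimes_\R \C$ must be used with care. Everything else is either a definitional matching of the maps or a formal consequence of Goldman's symplectomorphism theorem combined with the uniqueness of holomorphic extension.
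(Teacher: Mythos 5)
Your proposal is correct and follows essentially the same route as the paper's proof: commutativity and the complexification property of the vertical arrows are handled exactly as in the paper (the canonical complexification $\Delta$, plus the biholomorphism $\mathit{QF}$ carrying the totally real diagonal onto the Fuchsian locus), and the symplectic statement is deduced from Goldman's theorem for $F$, the fact that $\Delta$ is symplectic by construction of $\omega_{\textnormal{WP}}^c$, and the uniqueness of holomorphic extension to conclude $\mathit{QF}^*\omega_G = \omega_{\textnormal{WP}}^c$. The only real difference is that where the paper simply cites Goldman for $\mathit{hol}$ being a symplectic embedding, you verify that step directly via $\mathfrak{sl}_2(\C) = \mathfrak{sl}_2(\R) \otimes_\R \C$ and the compatibility of the Killing forms and cup products, fleshing out a point the paper treats as known.
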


\begin{remark}
Given the above proposition, we see that \emph{simultaneous uniformization is the complexification of uniformization}.  
By the uniqueness of complexification, this predicts the existence of a unique biholomorphism 
between a neighborhood of the diagonal in $\TS\times \overline{\TS}$ and a neighborhood of $\FS$ in $\QFS$, independently from
Bers's simultaneous uniformization theorem.
\end{remark}

\begin{proof}
 The commutativity of the diagram is obvious from the discussions in section \ref{subsec:FuchsianAndQF}. 
 The diagonal map $\Delta: \TS \to  \TS\times \overline{\TS}$ is a complexification by definition of the canonical complexification (\autoref{prop:CanonicalComplexification}).  
 Moreover, the construction of the complex symplectic structure on $\TS\times \overline{\TS}$ is via analytic continuation of the K\"{a}hler form of $\TS$ along the diagonal, therefore $\Delta$ is a symplectic embedding by construction.
 
By the commutativity of the diagram, and the fact that the \emph{Fuchsian slice} $QF\circ \Delta: \TS\rightarrow \QFS$ is the composition of a maximal totally real embedding followed by a biholomorphism,
the map $\textnormal{hol}: \FS\rightarrow \QFS$ is a complexification.
It was proved by Goldman in \cite{MR762512}
  (as mentioned in \autoref{subsec:SymplecticStructureOfDeformationSpaces}) that $\textnormal{hol}$ is a symplectic embedding.
  
Finally, since $F: \TS\rightarrow \FS$ is a symplectomorphism, the uniqueness of the complex symplectic structure in the complexification implies that $\mathit{QF}$ is a symplectomorphism.
\end{proof}

Recall that Teichmüller space $\TS$ equipped with the Weil-Petersson metric is a Kähler manifold.
We can thus use \autoref{thm:BiLagComplexificationKahler} to show that $\QFS$, as the complexification of $\TS$, enjoys a natural complex bi-Lagrangian structure.
\autoref{thm:BiLagComplexificationKahler} only predicts that this structure exists in some neighborhood of the Fuchsian slice, 
but we show that it is actually well-defined everywhere in $\QFS$. Moreover, it is remarkable that the complex symplectic structure and 
the two transverse Lagrangian foliations that define the bi-Lagrangian structure
are respectively Goldman's symplectic structure and the two foliations of $\QFS$ by Bers slices:

\begin{theorem} \label{thm:QFBL}
There exists a complex bi-Lagrangian structure $(\omega, {\cal F}_1, {\cal F}_2)$ in $\QFS$ such that:
\begin{enumerate}[(i)]
 \item The complex symplectic structure $\omega$ is equal to Goldman's symplectic structure $\omega_G$.
 \item The complex Lagrangian foliations ${\cal F}_1$ and ${\cal F}_2$ are the horizontal and vertical foliations of $\QFS$ by Bers slices.
\end{enumerate}
Moreover, this bi-Lagrangian structure coincides with the complex bi-Lagrangian structure predicted by \autoref{thm:BiLagComplexificationKahler},
where $\QFS$ is seen as a complexification of Teichmüller space $\TS$ via the Fuchsian slice 
$QF\circ\Delta=\textnormal{hol}\circ F: \TS \rightarrow \QFS$ (cf \autoref{prop:FuchsianComplexification}).
\end{theorem}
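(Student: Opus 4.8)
The plan is to combine \autoref{prop:FuchsianComplexification}, which identifies $\QFS$ with the complexification of the real-analytic Kähler manifold $(\TS, \omega_{\textnormal{WP}})$, with \autoref{thm:BiLagComplexificationKahler}, which endows any such complexification with a canonical complex bi-Lagrangian structure. The only genuinely new work is to promote the \emph{local} structure produced by \autoref{thm:BiLagComplexificationKahler} to a \emph{global} one on all of $\QFS$ and to match the three pieces of data with Goldman's symplectic form and the Bers slice foliations. First I would observe that the Weil-Petersson metric is real-analytic and Kähler, so \autoref{thm:BiLagComplexificationKahler} applies verbatim and yields a canonical complex bi-Lagrangian structure $(\omega, \mathcal{F}_1, \mathcal{F}_2)$ in \emph{some} neighborhood of the Fuchsian slice, with $\omega = \omega_{\textnormal{WP}}^c$ and with $\mathcal{F}_1, \mathcal{F}_2$ the canonical (horizontal and vertical) foliations of $\TS \times \xoverline{\TS}$.

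Next I would identify the pieces of data. By the very definition of the horizontal and vertical Bers slices (\autoref{subsec:FuchsianAndQF}), the biholomorphism $\mathit{QF} \colon \TS \times \xoverline{\TS} \to \QFS$ carries the horizontal leaves $\TS \times \{X^-\}$ and the vertical leaves $\{X^+\} \times \xoverline{\TS}$ precisely to the Bers slices; hence the images under $\mathit{QF}$ of the canonical foliations are exactly the two transverse foliations of $\QFS$ by Bers slices, which is condition (ii). For the symplectic form, \autoref{prop:FuchsianComplexification} asserts that $\mathit{QF}$ is a symplectomorphism, so $\mathit{QF}^* \omega_G = \omega_{\textnormal{WP}}^c = \omega$ near the diagonal, which is condition (i).

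The main obstacle is the global extension, since \autoref{thm:BiLagComplexificationKahler} only guarantees the structure in a neighborhood of the Fuchsian slice. Here I would exploit two features special to this situation: Bers' simultaneous uniformization makes $\mathit{QF}$ a biholomorphism defined on \emph{all} of $\TS \times \xoverline{\TS}$, and Goldman's symplectic form $\omega_G$ is holomorphic and defined on \emph{all} of $\QFS$. Transporting everything back via $\mathit{QF}$, the pulled-back form $\Omega := \mathit{QF}^* \omega_G$ is a globally defined holomorphic symplectic form on the connected manifold $\TS \times \xoverline{\TS}$ (it is, in particular, the global holomorphic extension of $\omega_{\textnormal{WP}}^c$), and the candidate foliations are the globally transverse horizontal and vertical foliations. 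In a local holomorphic frame $(z^i, w^j)$ these foliations are Lagrangian for $\Omega$ precisely when the ``pure'' components $\Omega(\partial_{z^a}, \partial_{z^b})$ and $\Omega(\partial_{w^a}, \partial_{w^b})$ vanish; these components are holomorphic functions that already vanish near the diagonal, where $\Omega = \omega_{\textnormal{WP}}^c$ and \autoref{thm:BiLagComplexificationKahler} applies. Since $\TS \times \xoverline{\TS}$ is connected, the identity theorem forces them to vanish identically, so the Bers slice foliations are Lagrangian for $\omega_G$ everywhere and $(\omega_G, \mathcal{F}_1, \mathcal{F}_2)$ is a complex bi-Lagrangian structure on all of $\QFS$.

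Finally, this global structure restricts near the Fuchsian slice to the canonical structure of \autoref{thm:BiLagComplexificationKahler} and is its unique holomorphic extension, which gives the last assertion. The only point demanding care is the verification that the pure components above are genuinely holomorphic functions on the connected manifold $\TS \times \xoverline{\TS}$, so that analytic continuation is legitimate; this is immediate because $\omega_G$ is a holomorphic symplectic form and $\mathit{QF}$ is a biholomorphism, and it is precisely what upgrades the local statement of \autoref{thm:BiLagComplexificationKahler} to a global one.
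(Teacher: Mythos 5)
Your proposal is correct and follows essentially the same route as the paper: combine \autoref{prop:FuchsianComplexification} with \autoref{thm:BiLagComplexificationKahler}, identify the canonical foliations of $\TS \times \xoverline{\TS}$ with the Bers slice foliations under $\mathit{QF}$, and then globalize using that $\omega_G$ and the Bers foliations are defined on all of $\QFS$. Your analytic-continuation argument for the global Lagrangian property is precisely the detail the paper compresses into ``an easy consequence,'' so if anything your write-up is more explicit on that step.
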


\begin{proof}
We have seen that $\QFS$ can be identified holomorphically to $\TS \times \overline{\TS}$
by simultaneous uniformization. Under this identification, the foliations of $\QFS$ by horizontal and vertical Bers slices correspond by definition to the horizontal and vertical foliations
of the product $\TS \times \overline{\TS}$, which are the canonical foliations of the complexification (see \autoref{def:CanonicalFoliations}), 
and we showed in \autoref{thm:BiLagComplexificationKahler}
that these two foliations are the foliations of the complex bi-Lagrangian structure of the complexification. The fact that the complex symplectic structure
coincides with Goldman's was proved in \autoref{prop:FuchsianComplexification}. The fact that the bi-Lagrangian structure exists everywhere
in $\QFS$ is an easy consequence of the fact that Goldman's symplectic structure, as well as the two foliations of $\QFS$ by Bers slices, exist everywhere in $\QFS$.
\end{proof}

An immediate consequence of \autoref{thm:QFBL} is the existence of a natural holomorphic metric in $\QFS$, 
namely the complex bi-Lagrangian metric. As far as we are aware, this holomorphic metric has not been introduced prior to this paper, 
and suggests that the most natural "metric" structure on $\mathcal{QF}(S)$ is neutral pseudo-Riemannian rather than Riemannian.

\begin{theorem} \label{thm:QFHoloMetric}
 There exists a holomorphic metric $g$ on quasi-Fuchsian space $\QFS$ such that:
 \begin{enumerate}[(i)]
  \item \label{theoremQFHoloMetrici} $g$ restricts to $-i g_{\textnormal{WP}}$ on the Fuchsian slice $\FS$,
  where $g_{\textnormal{WP}}$ is the Weil-Petersson metric on Teichmüller space $\TS$, identified to $\FS$ via uniformization. 
  \item \label{theoremQFHoloMetricii} The two foliations of quasi-Fuchsian space by Bers slices are isotropic for $g$, furthermore they are totally geodesic and flat
  for the real and imaginary parts of $g$.
  \item \label{theoremQFHoloMetriciii} The Levi-Civita connection of $g$ parallelizes Goldman's complex symplectic structure $\omega_G$.
  \item \label{theoremQFHoloMetriciv} Let $u$, $v$ be tangent vectors at some point of $\QFS$. Then
     \begin{equation} \label{eq:explicitQFmetric}
  g(u,v) = i ( \langle q_{u_1}, \mu_{v_2} \rangle + \langle q_{v_1}, \mu_{u_2} \rangle )
 \end{equation}
 where $\langle \, , \rangle$ is the duality pairing between $\upT^* \TS$ and $\upT \TS$, and the notations $q_{u_i}$,  $\mu_{v_i}$ are explained right below this theorem.
 \end{enumerate}
\end{theorem}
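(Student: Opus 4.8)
The plan is to let $g$ be the complex bi-Lagrangian metric attached to the complex bi-Lagrangian structure $(\omega_G, \mathcal{F}_1, \mathcal{F}_2)$ produced by \autoref{thm:QFBL} (cf.\ \autoref{subsec:ComplexBiLagrangianMetricAndConnection} and \autoref{prop:BLBKC}); since that structure is defined on all of $\QFS$, so is the holomorphic metric $g$. Assertions (i)--(iii) are then formal consequences of the general theory developed earlier. For (i), \autoref{thm:QFBL} identifies this structure with the one furnished by \autoref{thm:BiLagComplexificationKahler} for $\QFS$ viewed as the complexification of the real-analytic Kähler manifold $(\TS, g_{\textnormal{WP}})$ along the Fuchsian slice; part (iii) of that theorem gives $g = -i\,g_{\textnormal{WP}}^c$, and the holomorphic extension $g_{\textnormal{WP}}^c$ restricts to $g_{\textnormal{WP}}$ on the totally real locus $\FS$, whence $g|_{\FS} = -i\,g_{\textnormal{WP}}$. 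Assertion (ii) is the observation recorded just after the proof of \autoref{thm:BiLagComplexificationKahler}: the canonical (here Bers-slice) foliations are $g$-isotropic, totally geodesic, and flat; the corresponding statements for $\Real(g)$ and $\Imag(g)$ follow from \autoref{thm:ComplexAndRealBL} combined with the totally-geodesic and flatness discussion following \eqref{eq:BiLagrangianConnection} in \autoref{subsec:PropertiesOfBLConnection}. Assertion (iii) holds because the Levi-Civita connection of $g$ is the complex bi-Lagrangian connection, which parallelizes $\omega$ by \autoref{thm:ComplexBiLagConnection}, and $\omega = \omega_G$ by \autoref{thm:QFBL}.

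The real content is the explicit formula (iv), which I would establish in two steps. The first is purely algebraic. Writing $F$ for the para-complex structure, so that $Fu = u_1 - u_2$ under the splitting $u = u_1 + u_2$ into $\mathcal{F}_1$- and $\mathcal{F}_2$-components, the compatibility relation $\omega(u,v) = g(Fu,v)$ gives $g(u,v) = \omega(Fu,v)$ since $F^2 = \mathbf{1}$. Expanding $\omega_G(u_1 - u_2, v_1 + v_2)$ and using that both foliations are Lagrangian, i.e.\ $\omega_G(u_1,v_1) = \omega_G(u_2,v_2) = 0$, this collapses to
\begin{equation}
 g(u,v) = \omega_G(u_1, v_2) + \omega_G(v_1, u_2)~.
\end{equation}
Thus everything reduces to evaluating Goldman's form on \emph{mixed} pairs, with one vector tangent to each foliation.

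The second step computes these mixed evaluations in the product model $\QFS \cong \TS \times \xoverline{\TS}$ of \autoref{prop:FuchsianComplexification}, under which $\omega_G = \omega_{\textnormal{WP}}^c$ and $\mathcal{F}_1, \mathcal{F}_2$ are the vertical and horizontal foliations. Representing each $\mathcal{F}_i$-component by the Beltrami and quadratic-differential data $\mu_{\cdot}$, $q_{\cdot}$ introduced just after the theorem statement, I would write the Weil-Petersson Kähler form in local Bers coordinates, analytically continue it to the mixed $dz \wedge dw$ expression exactly as in the proof of \autoref{thm:BiLagComplexificationKahler}, and read off $\omega_G(u_1, v_2) = i\,\langle q_{u_1}, \mu_{v_2} \rangle$; substituting into the displayed identity then yields (iv). The main obstacle I expect is bookkeeping: pinning down the precise dictionary between the abstract $\mathcal{F}_i$-components, their harmonic Beltrami representatives, the associated quadratic differentials, and Goldman's normalization, so that every factor of $i$, every $\tfrac{1}{2}$, and every sign lands correctly. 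A useful consistency check is that, specialized to the Fuchsian slice—where the anti-holomorphic involution forces $\mu_{u_1} = \xoverline{\mu_{u_2}}$ and $q_{u_1} = \xoverline{q_{u_2}}$—formula (iv) must reproduce $g|_{\FS} = -i\,g_{\textnormal{WP}}$ from part (i), which rigidly constrains all the normalizations and provides a guard against sign errors.
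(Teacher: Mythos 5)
Your choice of $g$ (the complex bi-Lagrangian metric of the structure from \autoref{thm:QFBL}), your treatment of (i)--(iii) as formal consequences of \autoref{thm:BiLagComplexificationKahler}, \autoref{thm:ComplexAndRealBL} and \autoref{thm:ComplexBiLagConnection}, and your algebraic reduction of (iv) to
$g(u,v) = \omega_G(u_1,v_2) + \omega_G(v_1,u_2)$
all coincide with the paper's proof. The divergence, and the genuine gap, is in your second step for (iv).

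You propose to obtain the mixed pairing $\omega_G(u_1,v_2) = i\,\langle q_{u_1}, \mu_{v_2}\rangle$ by writing the Weil-Petersson form in local coordinates, analytically continuing to the mixed $\dz \wedge \dw$ expression as in the proof of \autoref{thm:BiLagComplexificationKahler}, and ``reading off'' the formula, with the remaining work dismissed as bookkeeping. This cannot work as described. Analytic continuation expresses $\omega_G$ in the product coordinates of $\TS \times \xoverline{\TS}$, so it pairs the $\partial/\partial w$-components of $u_1$ (a Beltrami-type datum on the second factor) against the $\partial/\partial z$-components of $v_2$. But $q_{u_1}$ is defined by an entirely different mechanism: pushing $u_1$ forward by $Z^+ \colon \QFS \to \CPS$ into a fiber of $p \colon \CPS \to \TS$ and using the Schwarzian affine structure of that fiber. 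Translating between these two descriptions of $u_1$ is exactly the derivative of the Bers embedding, which in this paper is \autoref{thm:DerivativeBersEmbedding} --- a result proved \emph{as a consequence of} the present theorem. So your route, as stated, is circular: the ``dictionary'' you defer is not bookkeeping but the entire mathematical content of (iv). What the paper does instead is invoke the general fact that $\omega_G(u,v) = i \langle q_u, \mu_v \rangle$ for $u$ vertical and $v$ horizontal, which is a consequence of \autoref{thm:LoustauA} (see \cite[Corollary 6.12]{MR3352248}); that fact in turn rests on the nontrivial input that Bers sections are Lagrangian (McMullen, Takhtajan--Teo, Loustau) and that the Schwarzian parametrization relative to a Lagrangian section is a symplectomorphism. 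Your proposal never cites \autoref{thm:LoustauA}, and without it (or an equivalent hard input such as the classical self-reproducing-formula computation) the formula cannot be ``read off.'' The Fuchsian-slice consistency check does not repair this: matching (i) on the diagonal can fix a normalization constant, but it cannot establish that the mixed pairing is the duality pairing against the Schwarzian-defined $q_{u_1}$ at general points of $\QFS$.
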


Before writing the proof of \autoref{thm:QFHoloMetric}, let us introduce the notations that we use in \eqref{eq:explicitQFmetric}. 
First recall that for a tangent vector $u$ to $\QFS$, we write $u = u_1 + u_2$, where $u_1$ (resp. $u_2$) is tangent to the vertical (resp. horizontal) Bers foliation. 
Now observe that:
\begin{itemize}
 \item Since $u_2$ is tangent to a horizontal Bers slice which is a copy of Teichmüller space, one can identify $u_2$ to a tangent vector
to Teichmüller space. We denote it $\mu_{u_2} \in \upT \TS$.
 \item Since $u_1$ is tangent to a vertical Bers slice which can be embedded in a fiber of the bundle $p \colon \CPS \to \TS$ 
 (see \autoref{subsec:FuchsianAndQF}),
 one can identify $u_1$ to a tangent vector to that fiber. Recall that fibers of $p$ are affine spaces modeled on holomorphic quadratic differentials (see
 \autoref{subsec:CPS}), so one can identify $u_1$ to a cotangent vector to Teichmüller space.
 We denote it $q_{u_1} \in \upT^* \TS$.
\end{itemize}

\begin{proof}[Proof of \autoref{thm:QFHoloMetric}]
Since $g$ is the complex bi-Lagrangian metric associated to the complex bi-Lagrangian structure of \autoref{thm:QFBL}, the proof of
\ref{theoremQFHoloMetrici}, \ref{theoremQFHoloMetricii}, and \ref{theoremQFHoloMetriciii} is a direct
application of \autoref{thm:ComplexBiLagConnection}, \autoref{thm:ComplexAndRealBL}, \autoref{thm:BiLagComplexificationKahler}
and \autoref{thm:QFBL}.

Let us now prove \ref{theoremQFHoloMetriciv}. By definition of the complex bi-Lagrangian metric $g$ (cf \autoref{subsec:ComplexBiLagrangianMetricAndConnection}),
 \begin{equation}
 \begin{split}
  g(u,v) &= \omega(F u , v)\\
         &= \omega(u_1 - u_2, v_1 + v_2)
 \end{split}
 \end{equation}
 where $\omega = \omega_G$ is Goldman's symplectic structure. Since both foliations are isotropic for $\omega$, the expression above reduces to 
 $g(u,v) = \omega(u_1 , v_2) - \omega(u_2, v_1)$. The conclusion follows from the general fact that $\omega_G(u, v) = i \langle q_u, \mu_v \rangle$
 whenever $u$ is vertical and $v$ is horizontal. This fact is a consequence of \autoref{thm:LoustauA}, see \cite[Corollary 6.12]{MR3352248} for details.
\end{proof}

\subsection{Weil-Petersson metric on Teichmüller space} \label{subsec:WPredef}

A consequence of \autoref{thm:QFHoloMetric} is that the Weil-Petersson metric can be defined purely through complex
symplectic geometry.
Indeed, it is sufficient to know that quasi-Fuchsian space is a complex bi-Lagrangian manifold (see below) in order to define the Weil-Petersson metric as $-i$ times 
the restriction of the complex bi-Lagrangian metric to the Fuchsian slice. 

This definition of the Weil-Petersson metric is arguably simpler than the classical definition using
the Poincaré metric (see \autoref{subsec:TSWP}), which requires the celebrated but highly nontrivial uniformization theorem\footnote{
On the other hand, we define the Weil-Petersson metric on the Fuchsian space $\FS$ (or the Fricke-Klein deformation space) rather than on Teichmüller space $\TS$. 
Of course, uniformization is required 
if one wishes to identify $\TS$ to $\FS$. Let us mention that there also exists a ``uniformization-free'' definition
of the Weil-Petersson metric on $\FS$ purely in terms of Riemannian geometry, see \cite[\S 2.6]{MR1164870}.}. 

Let us recap
how one argues that $\QFS$ is a complex bi-Lagrangian:
\begin{itemize}
 \item $\QFS$ is a complex symplectic manifold as an open subset of  $\mathcal{X}(\pi, \PSL_2(\C))$, which enjoys
 a complex symplectic structure by Goldman's algebraic construction (\autoref{subsec:SymplecticStructureOfDeformationSpaces}).
 \item The fact that the two transverse foliations of $\QFS$ by horizontal and vertical Bers slices are Lagrangian 
 is part of \autoref{thm:QFBL}, but it can also be seen as a weaker version
 of \autoref{lemma:LagrangianFibration}.
\end{itemize}

\subsection{\texorpdfstring{Affine bundle structure of $\CPS$}{Affine bundle structure of CP(S)}} \label{subsec:AffineCPS}

One of the striking features of the deformation space of complex projective structures $\CPS$ is that 
the ``forgetful projection'' $p \colon \CPS \to \TS$ is a holomorphic affine bundle. 
The affine structure in the fibers of $p^{-1}(X)$ is not obvious: it is constructed from a differential operator called the Schwarzian derivative 
(see \autoref{subsec:CPS}). We prove in \autoref{thm:SchwarzianAffine} below that this affine structure can instead be defined 
easily using symplectic geometry: is just the Bott affine structure in the leaves of a Lagrangian foliation (\autoref{thm:ComplexLagrangianAffine}).

\begin{lemma} \label{lemma:LagrangianFibration}
 The forgetful projection $p \colon \CPS \to \TS$ is a holomorphic Lagrangian fibration.
\end{lemma}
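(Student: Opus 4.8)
The plan is to reduce the statement to the model case of a cotangent bundle via the Schwarzian parametrization. Recall from \autoref{subsec:CPS} that $p \colon \CPS \to \TS$ is a holomorphic affine bundle modeled on the holomorphic cotangent bundle $\upT^* \TS$, and that a choice of section $\sigma \colon \TS \to \CPS$ produces an isomorphism of holomorphic affine bundles $\tau_\sigma \colon \CPS \to \upT^* \TS$ over $\TS$. The key external input will be Loustau's theorem (\autoref{thm:LoustauA}), which relates the Lagrangian property of $\sigma$ to the symplectic behaviour of $\tau_\sigma$.

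First I would fix a \emph{Lagrangian} section of $p$. Such a section exists: as recalled after \autoref{thm:LoustauA}, every Bers section $\sigma = \sigma_{X^-}$ satisfies condition (i) of that theorem, namely $\sigma^* \omega_G = 0$. (Note that the Fuchsian section $\sigma_0$ is \emph{not} Lagrangian, since condition (iii) would read $d(\sigma - \sigma_0) = i\,\omega_{\textnormal{WP}} \neq 0$, so it is important to use a genuine Bers section here.) Having fixed such a $\sigma$, I would invoke the equivalence (i) $\Leftrightarrow$ (ii) of \autoref{thm:LoustauA} to conclude that
\[ (\tau_\sigma)^* \omega = -i\,\omega_G, \]
where $\omega$ denotes the canonical complex symplectic structure of $\upT^* \TS$ (\autoref{ex:CotangentBundle}).

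The conclusion would then follow by transport of structure. Since $\tau_\sigma$ is an isomorphism of affine bundles over $\TS$, it is fiber-preserving: writing $\pi \colon \upT^* \TS \to \TS$ for the bundle projection, one has $\pi \circ \tau_\sigma = p$, so that each fiber satisfies $p^{-1}(X) = \tau_\sigma^{-1}\big(\pi^{-1}(X)\big)$. By \autoref{ex:CotangentBundle} the projection $\pi$ is a holomorphic Lagrangian fibration for $\omega$, i.e. each $\pi^{-1}(X)$ is a complex Lagrangian submanifold. As $\tau_\sigma$ is a biholomorphism pulling $\omega$ back to $-i\,\omega_G$, it carries $\pi^{-1}(X)$ to the complex submanifold $p^{-1}(X)$, which is therefore Lagrangian for $-i\,\omega_G$, hence for $\omega_G$ (rescaling a symplectic form by a nonzero constant does not affect isotropy). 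Since $p$ is holomorphic, this exhibits $p$ as a holomorphic Lagrangian fibration.

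The main obstacle is entirely contained in the input \autoref{thm:LoustauA}, whose proof genuinely uses the Schwarzian derivative and Goldman's construction of $\omega_G$; granting it, the remainder is a formal transport of the cotangent Lagrangian fibration along the symplectomorphism $\tau_\sigma$. The only bookkeeping to keep in mind is that the fibers have complex dimension $3g-3$, exactly half of $\dim_\C \CPS = 6g-6$, so that ``isotropic of half dimension'' indeed amounts to ``Lagrangian''; this is automatic once the identification with the fibers of $\upT^* \TS$ is in place.
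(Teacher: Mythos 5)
Your proposal is correct and takes essentially the same route as the paper, which dispatches this lemma as ``a direct consequence of \autoref{thm:LoustauA}'' (citing Corollary 6.11 of Loustau's paper): namely, Bers sections are Lagrangian, so $\tau_\sigma$ is a symplectomorphism up to the constant $-i$, and the fibers of $p$ correspond to the fibers of $\upT^*\TS$, which are Lagrangian. You have merely spelled out the transport-of-structure details (including the correct observation that the Fuchsian section cannot serve as the Lagrangian section) that the paper leaves implicit.
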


This lemma is a direct consequence of \autoref{thm:LoustauA} (see \cite[Corollary 6.11]{MR3352248}) but it was formerly known: 
it is a consequence of Kawai's work \cite{MR1386110}, and was also proven directly by Goldman \cite{MR2094117}
with a concise algebraic argument.

\begin{theorem} \label{thm:SchwarzianAffine}
The complex affine structure in the fibers of the projection $p \colon \CPS \to \TS$ constructed via the Schwarzian derivative
coincides with the natural complex affine structure in the leaves of a complex Lagrangian foliation.
\end{theorem}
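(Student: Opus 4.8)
The plan is to transport both affine structures to the cotangent bundle $\upT^*\TS$ by means of the Schwarzian parametrization and check that each becomes the standard vector-space affine structure there. First I would invoke \autoref{lemma:LagrangianFibration}: since $p\colon\CPS\to\TS$ is a holomorphic Lagrangian fibration, its fibers are the leaves of a complex Lagrangian foliation, and \autoref{thm:ComplexLagrangianAffine} endows each fiber $p^{-1}(X)$ with a complex Bott affine structure. This is precisely the ``affine structure in the leaves of a complex Lagrangian foliation'' that I want to identify with the Schwarzian one.

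Next I would fix a Lagrangian section $\sigma\colon\TS\to\CPS$ (for instance a Bers section, which satisfies the hypotheses of \autoref{thm:LoustauA}) and consider the associated Schwarzian parametrization $\tau_\sigma\colon\CPS\to\upT^*\TS$. By its very construction this is an isomorphism of holomorphic affine bundles over $\TS$; in particular, on each fiber it is an affine isomorphism carrying the Schwarzian affine structure of $p^{-1}(X)$ onto the vector-space affine structure of the cotangent fiber $\upT^*_X\TS$. This settles the Schwarzian side: under $\tau_\sigma$ the Schwarzian structure becomes the vector-space structure. (Since the final statement compares two \emph{canonical} structures, it suffices to run the argument for one convenient choice of $\sigma$.)

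For the Bott side, the key input is \autoref{thm:LoustauA}\,(ii), which gives $\tau_\sigma^*\omega=-i\,\omega_G$, so that $\tau_\sigma$ is a symplectomorphism up to the nonzero constant factor $-i$. Being fiber-preserving, $\tau_\sigma$ sends the Lagrangian foliation of $(\CPS,\omega_G)$ by fibers of $p$ onto the Lagrangian foliation of $(\upT^*\TS,\omega)$ by fibers of the cotangent projection. Now the Bott connection of a Lagrangian foliation is assembled solely from the Lie derivative (on the annihilator) and the symplectic musical isomorphism $\flat$; the former is natural under any foliation-preserving diffeomorphism, and the latter is altered only by the constant scale $-i$, which cancels when one passes from $L^\perp$ back to $L$. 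Hence $\tau_\sigma$ intertwines the two Bott connections exactly, and so pushes the Bott affine structure of $p^{-1}(X)$ onto that of $\upT^*_X\TS$. By \autoref{thm:BottCotangent}, the latter coincides with the vector-space affine structure of the cotangent fiber.

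Combining the two previous paragraphs, $\tau_\sigma$ carries both the Schwarzian and the Bott affine structures of $p^{-1}(X)$ to one and the same vector-space affine structure on $\upT^*_X\TS$; since $\tau_\sigma$ is a biholomorphism, the two structures on $p^{-1}(X)$ must agree, as claimed. I expect the only delicate point to be the naturality assertion of the third paragraph—namely that the foliation-preserving property together with the scale-invariance of the Bott construction forces $\tau_\sigma$ to intertwine the two Bott connections; once that is verified, the result reduces to the definition of $\tau_\sigma$ as an affine-bundle isomorphism together with the already-established \autoref{thm:BottCotangent} and \autoref{thm:LoustauA}.
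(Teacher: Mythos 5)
Your proof is correct and follows essentially the same route as the paper's: transport both affine structures to $\upT^*\TS$ via the Schwarzian parametrization $\tau_\sigma$ associated to a Lagrangian (Bers) section, using \autoref{thm:LoustauA} to see that $\tau_\sigma$ respects the Lagrangian foliations and their Bott connections, and \autoref{thm:BottCotangent} to identify the Bott affine structure on cotangent fibers with the vector-space one. Your extra care with the constant factor in $\tau_\sigma^*\omega=-i\,\omega_G$ (which the paper elides by calling $\tau_\sigma$ a symplectomorphism outright) is a welcome refinement, and your justification is right: the Bott connection is unchanged under constant rescaling of the symplectic form, as is visible from its characterizing formula.
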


\begin{proof}
 Choose any complex Lagrangian section $\sigma \colon \TS \to \CPS$, such as a Bers section, 
 and denote by $\tau_\sigma \colon \CPS \to \upT ^*\TS$ the associated isomorphism
 of affine bundles over $\TS$ given by the Schwarzian parametrization using $\sigma$ as the ``zero section''. 
 By definition, the restriction of $\tau^{\sigma}$ to any fiber $p^{-1}(X)$ is an isomorphism of complex affine spaces ${\tau^{\sigma}}\evalat{p^{-1}(X)} \colon p^{-1}(X) \to \upT _X^* \TS$.
 On the other hand, $\tau_\sigma \colon \CPS \to \upT ^*\TS$ is additionally a complex symplectomorphism by \autoref{thm:LoustauA}. In particular it is an isomorphism of complex Lagrangian foliations,
 so that the map ${\tau^{\sigma}}\evalat{p^{-1}(X)}$ identifies the complex affine structures of $p^{-1}(X)$ and $\upT _X^* \TS$
 given by their respective Bott connections.
 By \autoref{thm:BottCotangent}, that the Bott affine structure in $\upT _X^* \TS$ coincides with its affine structure as a vector space. The conclusion follows.
\end{proof}

\subsection{Affine structures on Teichmüller space}

It is well-known that Teichmüller space is a Stein manifold. In fact, any Bers embedding $b_{\overline{X}} \colon \TS \to H^0(\overline{X}, K_{\overline{X}}^2)$ 
(cf \autoref{subsec:FuchsianAndQF}) is a holomorphic embedding of $\TS$ in a complex vector space of the same dimension, and its image is a bounded domain of holomorphy \cite{MR0168800}. 
In particular, Bers embeddings define a family of (incomplete) complex affine structures
on $\TS$ (in the sense of \autoref{def:AffineStructure}), parametrized by $X \in \TS$.
Remarkably, this family of affine structures is precisely the one predicted by \autoref{corollary:CKA}:
\begin{theorem} \label{thm:TSAffine}
 The family of affine structures on $\TS$ parametrized by $X\in \TS$ of \autoref{corollary:CKA} is equal to the family of affine structures on $\TS$ provided by the 
 Bers embeddings $b_{\overline{X}}$, $X \in \TS$.
\end{theorem}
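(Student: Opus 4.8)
The plan is to factor the Bers embedding through the map $Z^-\colon\QFS\to\mathcal{CP}(S^-)$ and to reduce the statement to \autoref{thm:SchwarzianAffine}, which already identifies the Schwarzian affine structure on a fiber of the forgetful projection $p^-\colon\mathcal{CP}(S^-)\to\mathcal{T}(S^-)$ with the Bott affine structure of its complex Lagrangian foliation. First I would unwind the two families to be compared. By \autoref{thm:QFBL}, $\QFS$ carries the complex bi-Lagrangian structure of the complexification of $(\TS,\omega_{\textnormal{WP}})$, and under the identification $\QFS\cong\TS\times\xoverline{\TS}$ the horizontal leaves $\{w=\xoverline{X}\}=\mathit{QF}(\cdot,\xoverline{X})(\TS)$ are exactly the copies of $N=\TS$ used in \autoref{corollary:CKA}. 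Thus the first family is, for each $X\in\TS$, the Bott affine structure of \autoref{thm:ComplexLagrangianAffine} on this horizontal leaf, pulled back to $\TS$ via $\mathit{QF}(\cdot,\xoverline{X})$. On the other side, I would recall that the Bers embedding factors as $b_{\xoverline{X}}=(\text{translation by }-Z_0)\circ Z^-\circ\mathit{QF}(\cdot,\xoverline{X})$: post-composing the horizontal Bers slice with $Z^-$ lands in the fiber $(p^-)^{-1}(\xoverline{X})$, an affine space modeled on $H^0(\xoverline{X},K_{\xoverline{X}}^2)$, and subtracting the standard Fuchsian point $Z_0$ of that fiber is a translation identifying it with the vector space $H^0(\xoverline{X},K_{\xoverline{X}}^2)$. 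Since a translation is an affine isomorphism, the Bers-embedding affine structure on $\TS$ equals the pullback via $Z^-\circ\mathit{QF}(\cdot,\xoverline{X})$ of the Schwarzian affine structure on that fiber, which by \autoref{thm:SchwarzianAffine} is its Bott affine structure.

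Comparing the two families then reduces to a single claim: the restriction of $Z^-$ to a horizontal Bers slice is an affine isomorphism onto $(p^-)^{-1}(\xoverline{X})$ intertwining the Bott structure inherited from $\QFS$ with the Bott structure inherited from $\mathcal{CP}(S^-)$. I would establish this from two facts. First, $Z^-$ is a complex symplectomorphism onto its image: the symplectic structure on $\mathcal{CP}(S^-)$ is the holonomy pullback of Goldman's $\omega_G$ and $\operatorname{hol}^-\circ Z^-$ is the inclusion $\QFS\hookrightarrow\mathcal{X}(\pi,\PSL_2(\C))$, so that $(Z^-)^*\big((\operatorname{hol}^-)^*\omega_G\big)=\omega_G|_{\QFS}$; moreover $Z^-$ is a local biholomorphism, being a section of the local biholomorphism $\operatorname{hol}^-$. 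Second, $Z^-$ carries the horizontal foliation to the fiber foliation of $p^-$: indeed, under $\QFS\cong\TS\times\xoverline{\TS}$, the map $p^-\circ Z^-$ is the second projection $\mathrm{pr}_2$, whose fibers are precisely the horizontal Bers slices. Given these, I would invoke the intrinsic nature of the Bott connection, which by the characterization of \autoref{prop:RealBottConnection} (and its complex refinement \autoref{thm:ComplexBottConnection}) is determined by $\omega$ and the foliation alone; hence any foliation-preserving symplectomorphism intertwines the two Bott connections, and therefore the two induced affine structures on leaves.

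The main obstacle is precisely this last claim: checking that $Z^-$ is a foliation-respecting symplectomorphism, so that the Bott affine structure on a Bers slice of $\QFS$ transports without distortion to a fiber of $\mathcal{CP}(S^-)$. Everything else is bookkeeping — matching the base points $\xoverline{X}$ on the two sides and noting that both $\mathit{QF}(\cdot,\xoverline{X})$ and the translation by $-Z_0$ are affine isomorphisms. Once the claim is in hand, the composition $\TS\to(p^-)^{-1}(\xoverline{X})\to H^0(\xoverline{X},K_{\xoverline{X}}^2)$ is a chain of affine isomorphisms (Bott to Bott, then Schwarzian to vector space), so the affine structure it induces on $\TS$, namely the Bers-embedding structure, coincides with the Bott affine structure on the horizontal leaf, namely the structure of \autoref{corollary:CKA}. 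As this holds for every $X\in\TS$, the two families agree.
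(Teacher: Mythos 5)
Your proposal is correct and is essentially the paper's own argument: both proofs transport the Bott affine structure on the Bers-slice foliation of $\QFS$ into the fibers of the forgetful projection via the foliation-preserving complex symplectomorphism $Z^{\pm}$, and then apply \autoref{thm:SchwarzianAffine} to identify that Bott structure with the vector-space affine structure underlying the Bers embeddings. The only differences are cosmetic: you work with $Z^-$ and the horizontal slices, which matches the statement's $b_{\xoverline{X}} \colon \TS \to H^0(\xoverline{X}, K_{\xoverline{X}}^2)$ most directly, whereas the paper uses the mirror convention of $Z^+$ and the vertical slices, and you spell out (via the factorization of the holonomy map and $p^- \circ Z^- = \mathrm{pr}_2$) why $Z^-$ is a foliation-preserving symplectomorphism, a fact the paper asserts without proof.
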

Note that \autoref{corollary:CKA} applies in this situation because the complex symplectic structure $\omega_{\textnormal{WP}}^c = \omega_G$ exists everywhere
in the canonical complexification $\TS \times \overline{\TS} \approx \QFS$. 

\begin{proof}
 The map $Z^+ \colon \QFS \to \CPS$
 defined in \autoref{subsec:FuchsianAndQF}
 is a complex symplectomorphism which sends the vertical foliation of $\QFS$ by Bers slices to (an open subset of) the foliation of $\CPS$ given by the fibers of the projection
 $p \colon \CPS \to \TS$. It follows that the Bott affine structure in the leaves of the vertical foliation of $\QFS$, \ie{} the family of affine structures
 of \autoref{corollary:CKA}, agrees with the Bott affine structure in the fibers of the projection
 $p \colon \CPS \to \TS$. By  \autoref{thm:SchwarzianAffine}, this also coincides with the affine structure in the fibers of the cotangent bundle $\upT^* \TS$, which in turn is
 precisely the family of affine structures given by the Bers embeddings.
\end{proof}

\begin{remark}
Any manifold equipped with an affine structure can be locally embedded in a vector space of the same dimension in an essentially unique way. 
\autoref{thm:TSAffine} could thus be used to define the Bers embeddings themselves, but more work is required to show that they are injective.
\end{remark}

\subsection{Derivative of the Bers embedding at the origin}

Fix a point $X^+ \in \TS$ and consider the Bers embedding $b_{X^+} \colon \mathcal{T}(S^-) \to H^0(X^+, K_{X^+}^2)$ defined in \autoref{subsec:FuchsianAndQF}. 
Its derivativ at a point $X^- \in \mathcal{T}(S^-)$ is a linear map
${\upd b_{X^+}}\evalat{X^-} : \upT _{X^-}\mathcal{T}(S^-) \to H^0(X^+, K_{X^+}^2)$.
Recall that $X \mapsto \overline{X}$ yields an identification $\mathcal{T}(S^-) \approx \overline{\TS}$,
and that $H^0(X^+, K_{X^+}^2)$ can be identified as the complex cotangent space $\upT _{X^+}^* \TS$. Thus the derivative of the Bers embedding $b_{X^+}$ at 
$X^- = \overline{X^+}$ may be seen as a complex antilinear map
\begin{equation} \label{eq:DerivativeBersEmbedding}
 {\upd b_{X^+}}\evalat{\overline{X^+}} : \upT_{X^+} \TS \to \upT _{X^+}^* \TS~.
\end{equation}

\begin{theorem} \label{thm:DerivativeBersEmbedding}
The derivative of the Bers embedding at the origin \eqref{eq:DerivativeBersEmbedding} is equal to -1/2 times the musical isomorphism induced by the Weil-Petersson metric:
\begin{equation}
\begin{split}
  {\upd b_{X^+}}\evalat{\overline{X^+}} &= -\frac{1}{2} \flat\\
  v &\mapsto -\frac{1}{2} v^\flat :=  {h_{\textnormal{WP}}}\left(\cdot, v\right)\,.
\end{split}
\end{equation}
\end{theorem}

\begin{remark}
\autoref{thm:DerivativeBersEmbedding} is by no means a new result: the derivative of the Bers embedding at the origin is a standard calculation in Teichmüller theory. Nevertheless, we emphasize that we recover the result using symplectic geometry instead of complex analysis.

The classical proof on the following
 \emph{self-reproducing formula} (\cite{MR0192046}, \cite[\S 5.7]{MR903027}).
Let $\varphi$ be a holomorphic function on $\mathbb{H} \subset \C$, denote by $\mathbb{L} \subset \C$ the lower half-plane and
let $\vert \sigma \vert = \frac{\vert \upd w \vert^2}{\Imag(w)^2}$ denote the area density of the Poincaré metric on $\mathbb{L}$. Then 
\begin{equation}
 \varphi(z) = \frac{12}{\pi} \int_{\mathbb{L}} \frac{\varphi(\overline{w})}{(w-z)^4} \vert \sigma \vert~.
\end{equation}
An concise exposition of this calculation is in \cite[proof of Theorem 7.1]{MR1745010}.
We thank Curtis McMullen for pointing this out to us and the fact that a stronger result is proved in \cite{MR1047763}.
\end{remark}

\begin{proof}[Proof of \autoref{thm:DerivativeBersEmbedding}]
 Let $\nu$ be a tangent vector to $\TS$ at $X^+$, denote by $\overline{\nu}$ the corresponding tangent vector to $\mathcal{T}(S^-)$ at $X^- = \overline{X^+}$. 
 We need to show that for every $\mu \in \upT_{X^+} \TS$,
 \begin{equation}
 \langle (b_{X^+})_* \overline{\nu}, \mu \rangle = -\frac{1}{2} h_{\textnormal{WP}}(\mu, \nu)~.
 \end{equation}
 Let $\rho = \mathit{QF}(X^+, X^-) \in \FS \subset \QFS$ and consider the following tangent vectors:
 \begin{equation}
 \begin{aligned}
  a &= (\mu, \overline{\mu}) \in \upT_{(X^+, X^-)} \mathcal{T}(S^+) \times \mathcal{T}(S^-)\\
  u &= (\mathit{QF})_* a \in \upT_\rho \FS \subset \upT_\rho \QFS
 \end{aligned}\qquad
 \begin{aligned}
  b &= (\nu, \overline{\nu}) \in \upT_{(X^+, X^-)} \mathcal{T}(S^+) \times \mathcal{T}(S^-)\\
  v &= (\mathit{QF})_* b \in \upT_\rho \FS \subset \upT_\rho \QFS~.
 \end{aligned}
 \end{equation}
Using the notations introduced below \autoref{thm:QFHoloMetric}, observe that we have by construction:
\begin{equation}
\begin{aligned}
 u_1 &= (\mathit{QF})_* (0, \overline{\mu})\\
 u_2 &= (\mathit{QF})_* (\mu, 0)
\end{aligned} \qquad
\begin{aligned}
 q_{u_1} &=  (b_{X^+})_* \overline{\mu}\\
 \mu_{u_2} &= \mu
\end{aligned} \qquad
\begin{aligned}
 v_1 &= (\mathit{QF})_* (0, \overline{\nu})\\
 v_2 &= (\mathit{QF})_* (\nu, 0)
\end{aligned} \qquad
\begin{aligned}
 q_{v_1} &=  (b_{X^+})_* \overline{\nu}\\
 \mu_{v_2} &= \nu
\end{aligned}
\end{equation}
By \autoref{thm:QFHoloMetric} \ref{theoremQFHoloMetriciv}, one can thus express $g(u,v)$ as:
\begin{equation} \label{eq:guv1}
 g(u,v) = i ( \langle (b_{X^+})_* \overline{\mu}, \nu \rangle + \langle (b_{X^+})_* \overline{\nu}, \mu \rangle )~.
\end{equation}
On the other hand, since $u$ and $v$ are both tangent to $\FS \subset \QFS$ and \autoref{thm:QFHoloMetric} \ref{theoremQFHoloMetrici} guarantees that $g$ restricts 
to $-i g_{\textnormal{WP}}$ on $\FS$, we have
\begin{equation} \label{eq:guv2}
 g(u,v) = -i g_{\textnormal{WP}}(\mu, \nu)~.
\end{equation}
Equating \eqref{eq:guv1} and \eqref{eq:guv2} yields:
\begin{equation} \label{eq:gwpuv}
  g_{\textnormal{WP}}(\mu, \nu) = - \langle (b_{X^+})_* \overline{\mu}, \nu \rangle - \langle (b_{X^+})_* \overline{\nu}, \mu \rangle~.
\end{equation}
The conclusion follows, recalling that $h_{\textnormal{WP}} = g_{\textnormal{WP}} - i \omega_{\textnormal{WP}}$ and 
$\omega_{\textnormal{WP}}(\mu, \nu) = g_{\textnormal{WP}}(i\mu, \nu)$.
\end{proof}

\section{Hyper-Hermitian structure in the complexification of a Kähler manifold} \label{sec:HyperKahler}

In this final section, we construct a natural almost hyper-Hermitian structure in the complexification of a real-analytic Kähler manifold,
relying on the canonical complex bi-Lagrangian structure constructed in section \ref{sec:BiLagrangianComplexification}. This almost hyper-Hermitian structure
is unfortunately not integrable in general; in particular it is not the same as the Feix-Kaledin hyper-Kähler structure in the cotangent bundle
of a real-analytic Kähler manifold which we review in \autoref{subsec:FeixKaledin}.

\subsection{Almost hyper-Hermitian structures}

The algebra of \emph{quaternions} $\H$ is the unital associative algebra over the real numbers generated by three elements $i$, $j$, and $k$ satisfying
the \emph{quaternionic relations}:
\begin{equation} \label{eq:QuaternionicRelations}
  \begin{gathered}
    i^2 = j^2 = k^2 = -1\\
    ij = -ji = k
  \end{gathered}
\end{equation}
$\H$ is a $4$-dimensional algebra over $\R$: a generic quaternion is written $q = a + ib + jc + kd$ with $(a,b,c,d) \in \R^4$.
We refer to \cite[A.2]{LoustauAsheville} for basic notions of quaternionic linear algebra relevant for quaternionic differential geometry.

\begin{definition} \label{def:AlmostHH}
 An \emph{almost hyper-Hermitian} structure on $M$ is the data of $(g, I, J, K)$ where:
 \begin{itemize}
  \item $g$ is a Riemannian metric on $M$.
  \item $I$, $J$, and $K$ are three almost complex structures on $M$ which are compatible with $g$ (\textit{i.e.} $g$-orthogonal as endomorphisms of $\upT M$)
  and satisfy the quaternionic relations \eqref{eq:QuaternionicRelations}.
 \end{itemize}
 If moreover $I$, $J$, and $K$ are parallel for the Levi-Civita connection of $g$ (\emph{integrability condition}) 
 then $(g, I, J, K)$ is called a \emph{hyper-Kähler} structure.
\end{definition}

Given an almost hyper-Hermitian manifold $(M, g, I, J, K)$, we denote by $\omega_I$, $\omega_J$, and $\omega_K$ the three $2$-forms on $M$
defined by $\omega_I = g(I \cdot, \cdot)$, $\omega_J = g(J \cdot, \cdot)$, and $\omega_K = g(K \cdot, \cdot)$.

\begin{remark}
By definition, an almost hyper-Hermitian structure equips a Riemannian manifold $(M,g)$ with an isometric action of the algebra of quaternions $\H$ in its tangent bundle $\upT M$.
 In the language of $G$-structures, an almost hyper-Hermitian structure is equivalent to a $\Sp(m)$-structure, where $\Sp(m) = \U(m, \mathbb{H})$ is the quaternionic unitary group.
 \end{remark}
 
\begin{remark}
Some authors use \emph{almost hyper-Kähler} instead of \emph{almost hyper-Hermitian}, \eg{} Joyce \cite{MR1787733} and Bryant \cite{MR1338391}.
It seems more consistent to us to call \emph{almost hyper-Kähler} an almost hyper-Hermitian structure whose three Kähler forms
$\omega_I$, $\omega_J$, $\omega_K$ are closed, although almost hyper-Kähler structures
in this sense are in fact always hyper-Kähler (\cite[Lemma 6.8]{MR887284}).
\end{remark}

\subsection{The Feix-Kaledin hyper-Kähler structure} \label{subsec:FeixKaledin}

Feix and Kaledin independently discovered a canonical hyper-Kähler structure in a neighborhood of the zero section of the holomorphic cotangent bundle of a real-analytic Kähler
manifold:

\begin{theorem}[Feix \cite{FeixThesis,MR1817502}, Kaledin \cite{MR1815021, MR1848662}] \label{thm:FeixKaledin}
Let $(N, g_0, I_0, \omega_0)$ be a real-analytic Kähler manifold. There exists a unique\footnote{Kaledin \cite{MR1848662} states 
that the hyper-Kähler structure is only unique up to symplectic fiber-wise automorphisms of $\upT^*$N. 
However, unless we are mistaken, there are no such automorphisms besides the identity map.} hyper-Kähler structure $(g, I, J, K)$ in a neighborhood of the zero section in $\upT^* N$ 
such that:
\begin{enumerate}[(i)]
 \item $g$ restricts to $g_0$ along the zero section.
 \item $I$ is the usual almost complex structure in $\upT^* N$.
 \item $\omega_J + i \omega_K = \omega$ is the canonical complex symplectic form in $\upT^*N$ (\autoref{ex:CotangentBundle})
 \item The $\U(1)$-action in $\upT^*N$ by multiplication in the fibers is $g$-orthogonal.
\end{enumerate}
\end{theorem}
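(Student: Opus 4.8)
The plan is to follow Feix's twistor-theoretic construction, which dovetails with the complexification machinery developed above. Recall that, by the inverse twistor theorem of Hitchin--Karlhede--Lindstr\"om--Ro\v{c}ek, a hyper-Kähler structure on a manifold $M$ is equivalent to the data of its \emph{twistor space}: a complex manifold $Z$ of dimension $\dim_\C M + 1$ equipped with (a) a holomorphic submersion $\pi \colon Z \to \CP^1$, (b) a real structure covering the antipodal map of $\CP^1$, (c) a family of holomorphic sections (twistor lines) with normal bundle $\mathcal{O}(1)^{\oplus \dim_\C M}$, and (d) an $\mathcal{O}(2)$-twisted holomorphic symplectic form along the fibers of $\pi$. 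So I would first reduce the theorem to producing such a $Z$: the underlying manifold is recovered as (a component of) the space of real twistor lines, and the hyper-Kähler metric is reconstructed from this family. Writing $n = \dim_\C N$, so that $\upT^* N$ has complex dimension $2n$, the twistor lines should have normal bundle $\mathcal{O}(1)^{\oplus 2n}$ and $Z$ should have dimension $2n+1$.

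The central step is to build $Z$ from the complexification $N^c$, and here I would exploit exactly the structures established above. The manifold $N^c = N \times \xoverline{N}$ carries the canonical transverse foliations $(\mathcal{F}_1, \mathcal{F}_2)$ together with the complex symplectic form $\omega = \omega_0^c$ (\autoref{thm:BiLagComplexificationKahler}), and each leaf of these foliations carries the flat complex affine Bott structure (\autoref{thm:ComplexLagrangianAffine}). Over $\lambda = 0$ the twistor fiber should be $(\upT^* N, I)$ and over $\lambda = \infty$ it should be $(\upT^* \xoverline{N}, -I)$; I would glue these two charts over $\lambda \in \C^*$ using the flat affine identification of nearby leaves supplied by the Bott connection, with $\omega_0^c$ providing the $\mathcal{O}(2)$-twisted symplectic form once it is assigned the correct weight under the rotation $\lambda \mapsto t\lambda$. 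The anti-holomorphic involution $\tau(z,w) = (\xoverline{w}, \xoverline{z})$ of $N^c$, whose fixed locus is $N$, furnishes the real structure, and the totally real $N \subset N^c$ is what singles out the real twistor lines through the zero section.

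The third step is verification of the twistor axioms, and this is where the main work lies. The delicate points are: that the glued object $Z$ is genuinely a complex manifold with $\pi$ holomorphic; that through each real point there passes a twistor line with normal bundle \emph{precisely} $\mathcal{O}(1)^{\oplus 2n}$ (this splitting is an open condition, which is exactly why the metric is obtained only \emph{in a neighborhood} of the zero section); and that the reconstructed real metric $g$ is positive definite near the zero section and restricts to $g_0$ there. I expect the normal-bundle computation --- equivalently, the statement that the space of real twistor lines is smooth of the correct dimension with the expected deformation theory --- to be the principal obstacle, and it is here that both the real-analyticity and the precise flat affine structure on the leaves are essential. Properties (i)--(iv) then follow by inspecting the distinguished fibers: (ii) and (iii) are built into the $\lambda = 0$ chart; (iv) is the residual $\U(1) \subset \C^*$ rotation symmetry of $Z$; and (i) holds because $Z$ restricts over $N \subset N^c$ to the original Kähler data.

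Finally, for uniqueness I would argue that any hyper-Kähler structure satisfying (i)--(iv) determines a twistor space with the same $\mathcal{O}(2)$-twisted symplectic form, the same real structure, and the same restriction along the real locus; by the uniqueness of complexification (\autoref{thm:Complexification}) and analytic continuation outward from the zero section, two such structures must agree on a neighborhood. As an alternative to the entire argument, one may instead follow Kaledin's more analytic route, extending $g_0$ fiberwise as a formal power series that solves the hyper-Kähler equations and establishing convergence via real-analyticity; in that approach the hard step becomes the convergence estimate rather than the normal-bundle count.
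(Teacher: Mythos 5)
The first thing you should know is that the paper contains no proof of \autoref{thm:FeixKaledin}: the theorem is quoted from the literature, and the authors explicitly call the proofs of Feix and Kaledin ``difficult'', offering only heuristic remarks about Feix's strategy in \autoref{subsec:FeixKaledin}. So your proposal can only be measured against those original works. In spirit it is faithful to Feix's route---she really does build the twistor space out of the complexification $N^c$, using the flat Bott affine structure on the leaves of the canonical foliations, which are precisely the two ingredients you invoke---so the architecture you describe (cotangent-bundle charts over $\lambda = 0,\infty$, an intermediate region governed by $N^c$, the real structure induced by the involution $\tau$, and $\omega_0^c$ as the $\mathcal{O}(2)$-twisted fiberwise symplectic form) is the right one.

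As a proof, however, the proposal has genuine gaps, and they sit exactly where Feix's paper does its work. First, the gluing is never defined: saying the two charts are identified ``using the flat affine identification of nearby leaves'' specifies no transition functions; the actual construction (spaces of affine functions on the leaves, assembled into a vector bundle of rank $n+1$ over each leaf space, with the $\C^*$-weights that make the projection to $\CP^1$ holomorphic) is what produces a complex manifold at all, and nothing in your sketch substitutes for it. Second, the normal bundle computation $\mathcal{O}(1)^{\oplus 2n}$, which you yourself flag as ``the principal obstacle'', is the heart of the theorem; deferring it defers the theorem, and the same goes for positive definiteness of the reconstructed metric near the zero section. Third, the uniqueness argument is incomplete: one must show that conditions (i)--(iv) pin down the twistor data, and the $\U(1)$-invariance (iv) enters essentially here---without it there are many hyper-K\"ahler extensions of the same K\"ahler metric---yet your argument never uses it; note also the paper's own footnote recording that even Kaledin's uniqueness statement carries a caveat (uniqueness only up to fiberwise symplectic automorphisms), a sign that this step is more delicate than analytic continuation plus uniqueness of complexification. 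In short: what you have written is a correct roadmap of Feix's proof, not a proof.
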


In the case where $N = \CP^n$ with the Fubini-Study Kähler structure, the Feix-Kaledin metric coincides with the Calabi metric \cite{MR543218}, which is a complete hyper-Kähler metric
defined everywhere in $\upT^*N$. The Eguchi-Hanson metric described in \autoref{subsubsec:FKCP1} is a special case of the Calabi metric when $n=1$.
On the other hand, Feix shows that when $N$ is a compact Riemann surface of genus $>1$, the hyper-Kähler metric in a neighborhood of the zero section of $\upT^*N$
cannot be extended everywhere and is incomplete (\cite[Example 5.14]{FeixThesis}).

The proofs of Feix and Kaledin are both difficult and are very different in nature. Let us make a couple of heuristic comments about the proof of Feix \cite{FeixThesis}
because it is relevant to this paper. Feix does not define the hyper-Kähler structure on $\upT^*N$ directly, instead she constructs the \emph{twistor space} of the hyper-Kähler structure,
which is a holomorphic object encoding the hyper-Kähler data (see \cite{MR877637}). Interestingly, she constructs the twistor space 
for the complexification $N^c$ rather than $\upT^*N$\footnote{Feix then recovers $\upT^*N$ by considering the fiber
over $0$ instead of the fiber over $1$ in the twistor space $Z \to \CP^1$, we refer to \cite{FeixThesis} for details.}. The bi-Lagrangian structure of $N^c$ (see section 
\ref{sec:BiLagrangianComplexification}) is key in the in her construction, especially the Bott affine structure in the leaves of the Lagrangian foliations, 
though she does not use this terminology. This initially led us to believe that our construction would recover the same hyper-Kähler structure,
but we soon realized that was not the case (\autoref{subsec:CP1HK}). 

In this paper, we are interested in hyper-Kähler structures in the complexification $N^c$ of a Kähler manifold rather than in the cotangent bundle $\upT^*N$.
As we mentioned above, Feix defines a hyper-Kähler structure in $N^c$, but no theorem is given to characterize the existence and uniqueness of such a structure.
There is however at least one proper way to define a ``canonical'' hyper-Kähler structure in the complexification $N^c$:

\begin{theorem} \label{thm:StraightMap}
Let $(N, g_0, I_0, \omega_0)$ be a real-analytic Kähler manifold. Let $(\tilde{g}, \tilde{I}, \tilde{J}, \tilde{K})$ denote the Feix-Kaledin hyper-Kähler structure 
in a neighborhood $V$ of the zero section of the cotangent bundle $\upT^*N$.
Let $f \colon N \to M$ be a complexification of $N$. \newline
There exists a unique hyper-Kähler structure $(g, I, J, K)$ in a sufficiently small neighborhood $U$ of $f(N)$ in $M$ such that
there exists an embedding $T \colon U \to V$ (necessarily unique) such that:
\begin{enumerate}[(i)]
 \item $T$ is an isomorphism of hyper-Kähler structures: $T^* (\tilde{g}, \tilde{I}, \tilde{J}, \tilde{K}) = (g, I, J, K)$.
 \item $J$ is the almost complex structure of $M$.
 \item $T \circ f$ is the zero section $N \to \upT^* N$.
\end{enumerate} 
\end{theorem}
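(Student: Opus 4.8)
The plan is to transport the Feix--Kaledin structure $(\tilde g, \tilde I, \tilde J, \tilde K)$ from the cotangent bundle onto $M$ by invoking the \emph{uniqueness of complexification} (\autoref{thm:Complexification}). The decisive observation, which I would establish first, is that $(\upT^* N, \tilde J)$ together with the zero section $s_0 \colon N \to \upT^* N$ is itself a complexification of $N$. Granting this, both $f \colon N \to M$ and $s_0 \colon N \to \upT^* N$ are complexifications of the same real-analytic manifold $N$, so \autoref{thm:Complexification}\,(ii) furnishes a unique germ of biholomorphism $T \colon U \to V$ from a neighborhood $U$ of $f(N)$ in $M$ onto a neighborhood $V$ of $s_0(N)$ in $\upT^* N$ satisfying $s_0 = T \circ f$ --- which is exactly condition (iii). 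Being a biholomorphism, $T$ conjugates the complex structure of $M$ to $\tilde J$. I would then simply \emph{define} the structure on $U$ by pullback, $(g, I, J, K) := T^*(\tilde g, \tilde I, \tilde J, \tilde K)$: this is a hyper-Kähler structure by transport of structure under a diffeomorphism, condition (i) holds by construction, and $J = T^* \tilde J$ equals the complex structure of $M$ by the previous remark, which is condition (ii).

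The heart of the matter --- and the only step that is not purely formal --- is the claim that the zero section is a maximal totally real submanifold for $\tilde J$, which I would prove by a short pointwise argument. Fix a point $p$ of the zero section and let $H_p := \upT_p N$ denote the (real) tangent space to the zero section inside $\upT_p(\upT^* N)$. Since the canonical $1$-form $\xi$ of \autoref{ex:CotangentBundle} pulls back to zero under $s_0$, one has $s_0^* \omega = s_0^*(\upd \xi) = \upd(s_0^* \xi) = 0$, so the zero section is isotropic, and being of half the complex dimension it is complex Lagrangian for the canonical complex symplectic form $\omega = \omega_{\tilde J} + i\,\omega_{\tilde K}$ (condition (iii) of \autoref{thm:FeixKaledin}). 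In particular $\omega_{\tilde J}$ vanishes on $H_p$, i.e. $\tilde g(\tilde J u, v) = 0$ for all $u, v \in H_p$, whence $\tilde J(H_p) \subseteq H_p^{\perp}$, the orthogonal complement with respect to $\tilde g$. Now $\tilde g$ restricts along the zero section to $g_0$ (condition (i) of \autoref{thm:FeixKaledin}), which is positive definite, so $H_p \cap H_p^{\perp} = 0$; as $\tilde J$ is a $\tilde g$-isometry, a dimension count gives $\tilde J(H_p) = H_p^{\perp}$, and therefore $H_p \oplus \tilde J(H_p) = \upT_p(\upT^* N)$. This is precisely the maximal totally real condition, so $(\upT^* N, \tilde J)$ is a complexification of $N$.

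Finally I would dispatch uniqueness. Suppose a hyper-Kähler structure $(g', I', J', K')$ on some $U'$ and an embedding $T'$ satisfy (i)--(iii). Condition (i) gives $T'^* \tilde J = J'$ and condition (ii) identifies $J'$ with the complex structure of $M$, so $T'$ is again a biholomorphism intertwining the complex structure of $M$ with $\tilde J$; condition (iii) gives $T' \circ f = s_0$. Thus $T'$ is a germ of biholomorphism near $f(N)$ onto a neighborhood of $s_0(N)$ compatible with the complexification maps, and the uniqueness clause of \autoref{thm:Complexification}\,(ii) forces $T' = T$ on a neighborhood of $f(N)$. Pulling back then yields $(g', I', J', K') = T'^*(\tilde g, \tilde I, \tilde J, \tilde K) = T^*(\tilde g, \tilde I, \tilde J, \tilde K) = (g, I, J, K)$, proving the uniqueness of both $T$ and the hyper-Kähler structure. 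I expect the totally-real verification to be the genuine content; once it is in place, everything reduces to a direct application of the uniqueness of complexification.
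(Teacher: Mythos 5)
Your proposal is correct and follows essentially the same route as the paper: observe that the zero section is complex Lagrangian for the canonical complex symplectic form, deduce that $\tilde{J}$ maps its tangent space to the $\tilde{g}$-orthogonal complement so that $(\upT^*N, \tilde{J})$ is a complexification of $N$, invoke the uniqueness of complexification (\autoref{thm:Complexification}) to get the germ $T$, and pull back the Feix--Kaledin structure. Your write-up merely fills in details the paper leaves implicit (the pointwise positive-definiteness and dimension count for the totally real condition, and the explicit uniqueness argument), so it matches the intended proof.
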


\begin{proof}
The zero section of the cotangent bundle is complex Lagrangian for the canonical complex symplectic structure $\omega$, \textit{a fortiori} it is real Lagrangian for 
$\omega_{\tilde{J}} = \Real(\omega)$ (cf \autoref{thm:FeixKaledin}). Since $\omega_{\tilde{J}} = \tilde{g}(\tilde{J} \cdot, \cdot)$, it follows that $\tilde{J}$ sends the tangent
space to the zero section to its $\tilde{g}$-orthogonal complement. In particular, $\upT^*N$ equipped with the complex structure $\tilde{J}$
is a complexification of the zero section. By uniqueness of complexification (cf \autoref{thm:Complexification}), there exists a unique map $T \colon U \to V$ (for $U$ and $V$ sufficiently small)
such that $T$ is holomorphic as map $(U, J) \to (V, \tilde{J})$, and $T \circ f$ is the zero section $N \to \upT^* N$.
Now just take $g = T^* \tilde{g}$, $I = T^*\tilde{I}$, and $K = T^* \tilde{K}$.
\end{proof}

\begin{definition} \label{def:FKComplexification}
 We shall call $(g, I, J, K)$ the \emph{Feix-Kaledin hyper-Kähler structure in the complexification $M$} (defined in a neighborhood of $N \hookrightarrow M$).
\end{definition}

We would like to characterize the Feix-Kaledin hyper-Kähler structure in the complexification as the unique hyper-Kähler \emph{admissible extension} of the Kähler structure 
in the following sense.

\begin{definition} \label{def:AdmissibleExtension}
 Let $(N, g_0, I_0, \omega_0)$ be a real-analytic Kähler manifold. An almost hyper-Hermitian structure $(g, I, J, K)$ in a neighborhood of $N$ 
 in a complexification $M$ is called an \emph{admissible extension} of the Kähler metric off $N$ when:
 \begin{enumerate}[(i)]
 \item $J$ is the almost complex structure of $M$.
 \item $g$, $I$, and $\omega_I$ extend $g_0$, $I_0$, and $\omega_0$ respectively.
 \item $\omega_I - i\omega_K$ is the complexification $\omega_0^c$ of $\omega_0$.
 \end{enumerate}
\end{definition}

The next theorem is another straightforward application of analytic continuation.
\begin{theorem}
 The Feix-Kaledin hyper-Kähler structure in a neighborhood of $N$ in a complexification $M$ (cf \autoref{def:FKComplexification}) is an admissible extension of the Kähler structure
 off $N$.
\end{theorem}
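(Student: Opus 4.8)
The plan is to verify the three conditions of \autoref{def:AdmissibleExtension} one by one, using the construction of the Feix--Kaledin structure in the complexification recorded in \autoref{thm:StraightMap} together with uniqueness of holomorphic extension (\autoref{prop:HolomorphicExtension}). Recall that by construction $(g, I, J, K) = T^*(\tilde{g}, \tilde{I}, \tilde{J}, \tilde{K})$, where $T \colon U \to V$ is the embedding of \autoref{thm:StraightMap} satisfying $T \circ f = s_0$ (the zero section) and intertwining $J$ with $\tilde{J}$. In particular $J$ is the almost complex structure of $M$, which is condition (i) for free.

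For condition (ii) I would exploit the identity $f^* \circ T^* = (T \circ f)^* = s_0^*$. Since $\tilde{g}$ restricts to $g_0$ along the zero section (\autoref{thm:FeixKaledin}), this gives $f^* g = s_0^* \tilde{g} = g_0$. For the almost complex structure, writing $I = (dT)^{-1}\, \tilde{I}\, dT$ and using $dT \circ df = ds_0$, the differential $dT$ carries $df(\upT_p N)$ onto $ds_0(\upT_p N)$, on which $\tilde{I}$ acts as $I_0$ (again \autoref{thm:FeixKaledin}); hence $I$ preserves $\upT N$ along $N$ and restricts there to $I_0$. Combining these facts with the Kähler compatibility $\omega_0 = g_0(I_0 \cdot, \cdot)$ then yields $f^* \omega_I = \omega_0$, completing condition (ii).

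For condition (iii), the key observation is that the quaternionic relations force $\omega_I - i\omega_K$ to be of type $(2,0)$ with respect to $J$: a short computation using $IJ = K$ and $KJ = -I$ gives $(\omega_I - i\omega_K)(J\cdot, \cdot) = i(\omega_I - i\omega_K)$. Since $\omega_I$ and $\omega_K$ are closed (the structure being hyper-Kähler, $I$ and $K$ are parallel for the Levi-Civita connection), $\omega_I - i\omega_K$ is a closed $(2,0)$-form, hence a holomorphic form on $(M, J)$. Its restriction to $N$ is computed as above: $f^* \omega_I = \omega_0$, while $f^* \omega_K = s_0^* \omega_{\tilde{K}} = 0$, because the zero section is complex Lagrangian for $\omega_{\tilde{J}} + i\omega_{\tilde{K}} = \omega$ (\autoref{thm:FeixKaledin}), which forces both $s_0^* \omega_{\tilde{J}}$ and $s_0^* \omega_{\tilde{K}}$ to vanish. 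Thus $f^*(\omega_I - i\omega_K) = \omega_0$, and uniqueness of holomorphic extension (\autoref{prop:HolomorphicExtension}) identifies $\omega_I - i\omega_K$ with $\omega_0^c$.

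The only genuine subtlety — and the step I expect to require the most care — is the type computation together with the bookkeeping of which holomorphic symplectic combination is adapted to $J$ rather than to the fiberwise complex structure $\tilde{I}$ of $\upT^* N$; one must track signs in the quaternion relations to confirm that it is $\omega_I - i\omega_K$, and not $\omega_I + i\omega_K$, that is holomorphic for $J$. Everything else reduces to the identity $f^* \circ T^* = s_0^*$ and uniqueness of holomorphic extension, which is precisely why the statement is a routine consequence of analytic continuation.
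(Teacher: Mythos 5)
Your proposal is correct and matches the paper's approach: the paper omits the proof of this theorem, saying only that it is ``another straightforward application of analytic continuation (\autoref{prop:HolomorphicExtension})'', and that is exactly the engine of your condition-(iii) argument. Your verifications of (i) and (ii) via $T \circ f = s_0$, together with the type computation showing that $\omega_I - i\omega_K$ (with the correct sign) is the closed $(2,0)$-form for $J$ restricting to $\omega_0$, hence equal to $\omega_0^c$ by uniqueness of the holomorphic extension, supply precisely the details the paper leaves to the reader.
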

In the next subsection, we construct another almost hyper-Hermitian admissible extension, but it is typically not integrable.
It is unclear to us whether uniqueness of the admissible extension holds amongst hyper-Kähler structures. 
Note that in the tangent space at points of $N \hookrightarrow N^c$, it is easy to check that uniqueness does indeed hold:
it is just linear algebra (\autoref{lem:linearHKcomplexification}).

\begin{question} \label{qu:FKComp}
Is the Feix-Kaledin hyper-Kähler structure in a sufficiently small neighborhood of $N$ in a complexification $M$ the unique admissible hyper-Kähler extension?
\end{question}

\subsection{Construction of the almost hyper-Hermitian structure} \label{subsec:HHconstruction}

Let $(N, g_0, I_0, \omega_0)$ be a real-analytic Kähler manifold. Let $M$ be a complexification of $N$, we can assume without loss of generality that $M = N^c = N \times \overline{ N}$
is the canonical complexification (cf \autoref{subsec:ComplexificationComplex}). We are going to construct an almost hyper-Hermitian structure in a neighborhood of $N$ in $N^c$ 
which is admissible in the sense of 
\autoref{def:AdmissibleExtension}. We start by pointing out that the tensors $(g, I, J, K)$ are uniquely defined 
in the tangent spaces at points of $N$ inside $N^c$ by the following lemma.
\begin{lemma} \label{lem:linearHKcomplexification}
Let $(V, g_0, I_0, \omega_0)$ be a real vector space with a linear Hermitian structure. Denote by $V^c := V \times \overline{V}$ the complexification of $(V, I_0)$,
\textit{i.e.} the real vector space $V \times V$ equipped with the linear complex structure $(I_0, -I_0)$.
There exists a unique linear hyper-Hermitian structure $(g, I, J, K)$ in $V^c$ which is admissible in the sense of \autoref{def:AdmissibleExtension}.
\end{lemma}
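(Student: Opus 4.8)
The plan is to treat this as a pure exercise in quaternionic linear algebra, isolating what the three admissibility conditions of \autoref{def:AdmissibleExtension} force. Since $J = I_0 \oplus (-I_0)$ is prescribed by (i), and condition (iii) equates the complex-valued $2$-form $\omega_I - i\omega_K$ with the complex-bilinear extension $\omega_0^c$ of $\omega_0$ to $V^c$, the first observation is that (iii) already pins down $\omega_I$ and $\omega_K$ individually: taking real and imaginary parts gives $\omega_I = \Real(\omega_0^c)$ and $\omega_K = -\Imag(\omega_0^c)$ as honest real $2$-forms on $V^c$. Thus the only remaining unknowns are the complex structure $I$ (equivalently the metric $g$, since $\omega_I = g(I\cdot,\cdot)$) and $K = IJ$.

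For uniqueness, I would argue that $I$ is completely determined. Reading condition (ii) as the requirement that $I$ preserve the real locus $V = \Delta = \{(u,u)\}$ and restrict there to $I_0$, and using that $I$ must anticommute with the fixed $J$, the decomposition $V^c = \Delta \oplus J\Delta$ forces $I(a + Jb) = I_0 a - J I_0 b$ for $a, b \in \Delta$; that is, $I$ is nothing but $I_0^c \circ \tau$, where $\tau(x,y) = (y,x)$ is the canonical antiholomorphic involution of the complexification (cf. the discussion following \autoref{prop:CanonicalComplexification}) and $I_0^c = I_0 \oplus I_0$ is the bicomplex extension of \autoref{thm:BicomplexInComplexification}. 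Once $I$ is known, $K = IJ$ is determined, and the metric is recovered from $\omega_I = g(I\cdot,\cdot)$ together with $I^{-1} = -I$ as $g(u,v) = -\omega_I(Iu,v)$. Hence there is at most one admissible structure.

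For existence, I would run this construction forward: set $I := I_0^c\circ\tau$, $K := IJ$, and $g(u,v) := -\Real(\omega_0^c)(Iu,v)$, and then verify all the axioms. The quaternionic relations $I^2 = J^2 = K^2 = -\mathbf{1}$ and $IJ = -JI = K$ follow from the elementary identities $\tau^2 = \mathbf{1}$, $\tau J = -J\tau$, $I_0^c J = J I_0^c$, and $(I_0^c)^2 = -\mathbf{1}$. The identity $\omega_I = g(I\cdot,\cdot) = \Real(\omega_0^c)$ holds by construction (using $I^2 = -\mathbf{1}$), condition (ii) is read off directly since $\omega_0^c$ restricts to $\omega_0$ on $\Delta$, and condition (iii) is completed by checking $g(K\cdot,\cdot) = -\Imag(\omega_0^c)$; both this and the symmetry of $g$ reduce, via $\omega_0^c(u,v) = g_0^c(I_0^c u, v)$, to the analytic continuation of the Kähler compatibility $\omega_0 = g_0(I_0\cdot,\cdot)$ and the $I_0^c$-skewness of $g_0^c$ (\autoref{thm:BicomplexKahlerInComplexification}, \autoref{prop:HolomorphicExtension}).

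The step I expect to be the main obstacle is confirming that the forced symmetric form $g$ is genuinely positive-definite on all of $V^c$, not merely that it restricts to the positive form $g_0$ on $\Delta$. This is the only place where the positivity of the Kähler metric $g_0$ must enter in an essential, global way. The cleanest route will be to reduce to the flat model: choosing a $g_0$-unitary basis identifies $(V, g_0, I_0)$ with $\C^n$ and $V^c$ with $\H^n$ in such a way that $(g, I, J, K)$ becomes the standard flat quaternionic Hermitian structure, with $J$ realized as right multiplication by $i$; positive-definiteness is then manifest. Since every hypothesis and every tensor in the statement is natural under complex-linear isometries, establishing the claim on the model yields it in general, and simultaneously provides an explicit cross-check of the existence verifications above.
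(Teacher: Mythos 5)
Your proof is correct. The paper never actually writes a proof of this lemma (it is declared ``elementary and spared to the reader''), and your argument --- extracting $\omega_I=\Real(\omega_0^c)$ and $\omega_K=-\Imag(\omega_0^c)$ from condition (iii), forcing $I=I_0^c\circ\tau$ from condition (ii) together with $IJ=-JI$ via the splitting $V^c=\Delta\oplus J\Delta$, recovering $g=-\omega_I(I\cdot,\cdot)$, and then proving existence by naturality under unitary identification with the flat model, where symmetry, orthogonality and positive-definiteness of $g$ are manifest --- is exactly the linear algebra the authors intend; as a cross-check, your formula $I(\delta z,\delta w)=(i\,\overline{\delta w},-i\,\overline{\delta z})$ matches the paper's explicit almost hyper-Hermitian structure on $\CP^1\times\xoverline{\CP^1}$ at points of the diagonal, where $\eta=1$.
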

The definitions of a linear Hermitian structure and linear hyper-Hermitian structure should be obvious so we do not repeat them, but they may be found in \eg{} \cite[A.2]{LoustauAsheville}.
The proof of \autoref{lem:linearHKcomplexification} is elementary and spared to the reader.

Recall that $N^c$ enjoys a canonical pair of transverse foliations $(\mathcal{F}_1, \mathcal{F}_2)$ (cf \autoref{subsec:ComplexificationComplex})
and a canonical complex bi-Lagrangian structure (cf \autoref{thm:BiLagComplexificationKahler}). We are now ready to state the main theorem of this section.
\begin{theorem} \label{thm:MainThmHK}
There exists a unique almost hyper-Hermitian structure $(g, I, J, K)$ in a sufficiently small neighborhood of $N$ in $N^c$ such that:
\begin{enumerate}[(i)]
 \item $(g, I, J, K)$ is admissible in the sense of \autoref{def:AdmissibleExtension}.
 \item $(g, I, J, K)$ is parallel with respect to the complex bi-Lagrangian connection of $N^c$ (\autoref{thm:BicomplexInComplexification})
 along the foliation $\mathcal{F}_1$.
\end{enumerate}
Moreover, if $N$ is simply connected and $\omega_0$ extends holomorphically throughout $N^c$, then the almost hyper-Hermitian structure
$(g, I, J, K)$ exists throughout $N^c$.
\end{theorem}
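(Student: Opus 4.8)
The plan is to build the structure first along $N$, where it is forced by linear algebra, and then to spread it off $N$ by parallel transport along the leaves of $\mathcal{F}_1$ using the complex bi-Lagrangian connection $\nabla$. Concretely, at each point $p$ of $N \hookrightarrow N^c$ the tangent space is the complexification $(\upT_p N)^c = \upT_p N \oplus J(\upT_p N)$ of the Hermitian vector space $(\upT_p N, g_0, I_0)$, so \autoref{lem:linearHKcomplexification} determines a unique admissible linear hyper-Hermitian structure $(g, I, J, K)$ there; note that $J$ is simply the ambient complex structure of $N^c$ and is fixed once and for all. Since $N$ is maximal totally real while the leaves of $\mathcal{F}_1$ are $J$-invariant and half-dimensional, a computation in the coordinates $(z,w)$ of \autoref{subsec:ComplexificationComplex} shows that $N$ is transverse to $\mathcal{F}_1$ and meets each leaf $\{z = z_0\}$ in the single point $(z_0, \xoverline{z_0})$. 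I would then define $g$, $I$, $K$ at an arbitrary nearby point by $\nabla$-parallel transport, along a path inside the leaf of $\mathcal{F}_1$ through that point, of their already-defined values at the unique point where the leaf meets $N$.

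The first thing to check is that this extension is well defined, and this is where the main subtlety lies. By \autoref{prop:BLCurvature} the curvature of $\nabla$ vanishes on pairs of vectors tangent to a single foliation, so $\nabla$ is flat in restriction to each leaf of $\mathcal{F}_1$; \autoref{cor:PathIndependence} then guarantees that parallel transport of a tensor along a leaf depends only on the homotopy class of the path rel endpoints. Working in a contractible neighborhood of $N$ (or, for the global statement, on a simply connected leaf) removes this homotopy ambiguity entirely, so the transport is path-independent and the extension is unambiguous and smooth. That the extended tensors again form an almost hyper-Hermitian structure is then automatic: the crucial observation is that $J$ is itself $\nabla$-parallel, being part of the bicomplex structure $(I_0^c, J, F)$ parallelized by $\nabla$ (\autoref{thm:ComplexBiLagConnection}), so along each leaf all four of $g, I, J, K$ are $\nabla$-parallel. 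As parallel transport is a fibrewise linear isomorphism commuting with tensor operations, the quaternionic relations, the non-degeneracy and positivity of $g$, and the $g$-orthogonality of $I, J, K$ --- all of which hold at the base point on $N$ by \autoref{lem:linearHKcomplexification} --- are preserved along the leaf.

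It remains to verify admissibility (\autoref{def:AdmissibleExtension}) and uniqueness. Conditions (i) and (ii) are pointwise statements along $N$ and hold there by construction via \autoref{lem:linearHKcomplexification}; condition (iii), the identity $\omega_I - i\omega_K = \omega_0^c$, I would propagate off $N$ by observing that both sides are $\nabla$-parallel along $\mathcal{F}_1$ --- the left-hand side because $g, I, K$ are, the right-hand side because $\nabla \omega_0^c = 0$ by \autoref{thm:ComplexBiLagConnection} and \autoref{thm:BiLagComplexificationKahler} --- and that they already agree on $N$; two $\nabla$-parallel tensors that agree at the point where a leaf meets $N$ must agree along the whole leaf, hence everywhere. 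The same argument yields uniqueness: any admissible structure parallel along $\mathcal{F}_1$ has $J' = J$, is forced by \autoref{lem:linearHKcomplexification} to coincide with $(g,I,K)$ along $N$, and is then determined on each leaf by its value on $N$. Finally, for the global statement, the hypothesis that $\omega_0$ extends throughout $N^c$ makes $\nabla$ globally defined, while the leaves of $\mathcal{F}_1$ are copies of $\xoverline{N}$ and are therefore simply connected when $N$ is; since each such leaf still meets the diagonal $N$ in exactly one point, the path-independent transport carries the structure from $N$ to all of $N^c$.
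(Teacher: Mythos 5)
Your proposal is correct and follows essentially the same route as the paper: fix the values of $(g,I,J,K)$ along $N$ by the linear-algebra lemma (\autoref{lem:linearHKcomplexification}), propagate them by $\nabla$-parallel transport along the leaves of $\mathcal{F}_1$ using the path-independence from \autoref{cor:PathIndependence}, and verify admissibility and uniqueness by noting that $J$, $\omega_0^c$, and the algebraic identities are all $\nabla$-parallel (\autoref{thm:ComplexBiLagConnection}). Your added observations (each vertical leaf meets the diagonal $N$ in exactly one point, and the leaves are copies of $\xoverline{N}$, hence simply connected when $N$ is) are details the paper leaves implicit, but they do not change the argument.
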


\autoref{thm:MainThmHK} follows fairly easily \autoref{cor:PathIndependence} and \autoref{thm:ComplexBiLagConnection}:
\begin{proof}
Let $U$ be a neighborhood of $N$ in $N^c$ sufficiently small so that:
\begin{itemize}
 \item The leaves of $\mathcal{F}_1$ are simply connected.
 \item The complex symplectic form $\omega = \omega_0^c$ extending $\omega_0$ is well-defined.
\end{itemize}
In particular, the canonical complex bi-Lagrangian structure $(\omega, \mathcal{F}_1, \mathcal{F}_2)$ is well-defined.
Since the tensor fields $g$, $I$, $J$, and $K$ must be parallel with respect to the complex bi-Lagrangian connection $\nabla$ along the vertical foliation $\mathcal{F}_1$, 
they are completely determined by their values at one point that can be chosen freely in every leaf: their values everywhere else can be obtained by parallel transport along paths
contained in the leaves. However the values of $g$, $I$, $J$, and $K$ are uniquely defined at points $p \in N  \subset U$ by \autoref{lem:linearHKcomplexification}. Thus we have showed uniqueness.
Conversely, parallel transport along vertical paths of the tensors $g_p$, $I_p$, $J_p$, and $K_p$ at points $p \in N$ yields tensor fields
$g$, $I$, $J$, $K$ that are well-defined in $U$ by virtue of \autoref{cor:PathIndependence} and the simple-connectedness of the leaves.
Of course, the linear algebraic identities verified by $g_p$, $I_p$, $J_p$, and $K_p$ are preserved by parallel transport, so that $(g, I, J, K)$ is a almost hyper-Hermitian structure in $U$.
It remains to argue that it is admissible in the sense of \autoref{def:AdmissibleExtension}:
 \begin{enumerate}[(i)]
 \item $J$ is the almost complex structure of $U \subset N^c$: this is because we know that the almost complex structure of $N^c$ is parallel with respect to the bi-Lagrangian connection
 by \autoref{thm:ComplexBiLagConnection}. 
 \item $g$, $I$, and $\omega_I$ extend $g_0$, $I_0$, and $\omega_0$ respectively: this is clearly the case by construction.
 \item $\omega_I - i\omega_K$ is the complexification $\omega = \omega_0^c$ of $\omega_0$: this is because we know 
 that $\omega$ is parallel with respect to the bi-Lagrangian connection by \autoref{thm:ComplexBiLagConnection}, therefore the identity $\omega = \omega_I - i\omega_K$ which holds
 at points $p \in N  \subset U$ is preserved under parallel transport.
 \end{enumerate}
\end{proof}

\begin{remark}
The almost hyper-Hermitian structure is typically not integrable (\autoref{subsubsec:CP1AlmostHH}), therefore it is not the same as
the Feix-Kaledin hyper-Kähler structure. Note however that it is ``2/3 integrable'' in the sense that $\omega_I$ and $\omega_K$
are both closed, since $\omega = \omega_I - i \omega_K$ is the complex symplectic structure of $N^c$. We hypothesize that the flatness of 
the Kähler metric $g_0$ is necessary and sufficient for the integrability
of the almost hyper-Hermitian structure, in which case it is equal to Feix-Kaledin.
\end{remark}

\subsection{The biquaternionic structure}

Our construction actually yields more than an almost hyper-Hermitian structure. Indeed, the almost hyper-Hermitian structure 
of \autoref{thm:MainThmHK} is compatible with the bicomplex Kähler structure associated 
to the complex bi-Lagrangian structure (cf \autoref{subsec:BicomplexKahlerInComplexification}). The resulting ``package''  is an almost \emph{biquaternionic Hermitian structure}, let us explain
this in what follows.

The algebra of \emph{(ordinary) biquaternions} $\mathbb{BH}$ is the unital associative algebra over the real numbers generated by four elements $h$, $i$, $j$, $k$ satisfying
the \emph{biquaternionic relations}:
\begin{equation} \label{eq:BiquaternionicRelations}
  \begin{gathered}
    h^2 = i^2 = j^2 = k^2 = -1\\
    ij = -ji = k\\
    hi = ih \quad hj = jh \quad hk = kh~.\\
  \end{gathered}
\end{equation}
One quickly sees that $\mathbb{BH}$ is an $8$-dimensional algebra over $\R$ which can be simply be described as $\H \otimes_\R \C$ by writing 
a generic biquaternion $q = q_1 + q_2 h$, where $q_1 = a_1 + i b_1 + j c_1 + k d_1$ and $q_2 = a_2 + i b_2 + j c_2 + k d_2$ are quaternions.

Let $(N, g_0, I_0, \omega_0)$ be a real-analytic Kähler manifold and let $M$ be a complexification of $N$. We denote by $(g, I, J, K)$ the almost hyper-Hermitian structure
of \autoref{thm:MainThmHK}. On the other hand, let $H := I_0^c$ denote the holomorphic extension of $I_0$. Recall that the triple $(H, J, F:= HJ)$ is 
the holomorphic bicomplex structure in (a neighborhood of $N$ in) $M$
associated to the canonical pair of foliations $(\mathcal{F}_1$, $\mathcal{F}_2)$ by \autoref{thm:BicomplexInComplexification}.
Mind that we denote by $H$ instead of $I$ the almost complex structure $I_0^c$ because $I$ is now a different almost complex structure, and the choice of the letter $H$
is motivated by the following theorem.

\begin{theorem} \label{thm:BiQuaternionicHermitian}
Let $(N, g_0, I_0, \omega_0)$ be a real-analytic Kähler manifold and let $M$ be a complexification of $N$.
The Riemannian metric $g$ and the quadruple of almost (para-)complex structures $(H, I, J, K)$ as above define an \emph{almost biquaternionic Hermitian structure} in (a neighborhood of $N$ in) $M$
in the sense that:
 \begin{itemize}
  \item $H$, $I$, $J$, $K$ satisfy the biquaternionic relations \eqref{eq:BiquaternionicRelations}.
  \item $H$, $I$, $J$, $K$ are compatible with $g$ (\textit{i.e.} $g$-orthogonal as endomorphisms of $\upT M$).
 \end{itemize}
\end{theorem}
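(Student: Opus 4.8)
The plan is to notice that most of the biquaternionic relations \eqref{eq:BiquaternionicRelations} are already in hand, isolate the genuinely new content, verify it by a pointwise computation along $N$, and then spread it off $N$ by parallel transport. Indeed, \autoref{thm:MainThmHK} supplies $I^2=J^2=K^2=-\mathbf 1$, $IJ=-JI=K$ and the $g$-orthogonality of $I,J,K$, while \autoref{thm:BicomplexInComplexification} supplies $H^2=-\mathbf 1$ and $HJ=JH$ (these being part of the bicomplex relations for $(H,J,F)$). Thus the only assertions left are $HI=IH$, $HK=KH$, and that $H$ is $g$-orthogonal. Moreover $HK=KH$ is a formal consequence of $HI=IH$ and $HJ=JH$ (write $K=IJ$), so the whole theorem reduces to proving $[H,I]=0$ and $g(H\cdot,H\cdot)=g$.

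Next I would establish both assertions pointwise at each $p\in N$. By uniqueness of complexification (\autoref{thm:Complexification}) there is no loss in taking $M=N^c=N\times\overline N$ (\autoref{prop:CanonicalComplexification}); writing $V:=\upT_pN$ one identifies $\upT_pN^c\cong V\oplus\overline V$, on which $J=(I_0,-I_0)$ is the ambient complex structure and $H=I_0^c$ is diagonal, $H=(I_0,I_0)$ (the pointwise form of \autoref{thm:BicomplexInComplexification}). Since $I$ restricts to $I_0$ on the diagonal $\upT_pN$ (admissibility, \autoref{def:AdmissibleExtension}) and anticommutes with $J$, a short linear-algebra argument forces the off-diagonal form $I(a,b)=(I_0b,I_0a)$; in particular $I$ exchanges the two eigendistributions of $F=HJ$, i.e. $IF=-FI$. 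Combined with $HJ=JH$ and $IJ=-JI$ this gives $(HI-IH)J=0$, hence $[H,I]=0$. For $g$-orthogonality I would use that the two foliations are complex Lagrangian for $\omega$, hence real Lagrangian for $\omega_I=\Real(\omega)$; since $I$ interchanges $\upT\mathcal F_1$ and $\upT\mathcal F_2$, isotropy of a leaf for $\omega_I=g(I\cdot,\cdot)$ translates into $g(\upT\mathcal F_1,\upT\mathcal F_2)=0$. As $H$ preserves each distribution and acts there as the $g_0$-orthogonal map $I_0$ (with $g$ restricting to a multiple of $g_0$ on each factor by \autoref{lem:linearHKcomplexification}), it follows that $H$ is $g$-orthogonal at $p$.

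Finally I would propagate these pointwise identities into a neighborhood of $N$ by parallel transport along $\mathcal F_1$. The complex bi-Lagrangian connection $\nabla$ parallelizes the entire bicomplex structure, so $\nabla H=\nabla J=\nabla F=0$ everywhere (\autoref{thm:ComplexBiLagConnection}), while $(g,I,J,K)$ is $\nabla$-parallel along $\mathcal F_1$ by the very construction of \autoref{thm:MainThmHK}. Hence the tensor fields $HI-IH$, $HK-KH$, $HJ-JH$, $H^2+\mathbf 1$ and the symmetric tensor $g(H\cdot,H\cdot)-g$ are all $\nabla$-parallel along the leaves of $\mathcal F_1$; each vanishes on $N$ by the previous step, and since every point near $N$ is joined to $N$ within its simply connected $\mathcal F_1$-leaf, \autoref{cor:PathIndependence} forces them to vanish throughout the neighborhood. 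The main obstacle is the pointwise step, and within it the $g$-orthogonality of $H$: this is where the specific geometry enters, through the facts that $I$ swaps the two Lagrangian directions and that $\omega_I=\Real(\omega)$, so that Lagrangianity of the foliations upgrades to $g$-orthogonality of the splitting $\upT N^c=\upT\mathcal F_1\oplus\upT\mathcal F_2$.
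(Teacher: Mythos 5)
Your proposal is correct and follows essentially the same route as the paper's (sketched) proof: verify the identities pointwise at points of $N$, where all five tensors are explicit (``just linear algebra''), then propagate them off $N$ by parallel transport along $\mathcal{F}_1$, using that $g,I,J,K$ are $\nabla$-parallel along $\mathcal{F}_1$ by construction (\autoref{thm:MainThmHK}) and that $H$ is $\nabla$-parallel because $(H,J,F)$ belongs to the parallel bicomplex Kähler structure (\autoref{thm:BicomplexKahlerInComplexification}, \autoref{thm:ComplexBiLagConnection}), together with the path-independence of \autoref{cor:PathIndependence}. Your write-up in fact supplies the linear-algebra details (the reduction to $[H,I]=0$ and $g$-orthogonality of $H$, the off-diagonal form of $I$, and the orthogonality of the splitting $\upT\mathcal{F}_1\oplus\upT\mathcal{F}_2$) that the paper leaves to the reader.
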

\begin{proof}
Let us only sketch the proof of this theorem for brevity, as we have previously detailed similar arguments. The fact that $H$, $I$, $J$, $K$ satisfy the biquaternionic relations 
and are compatible with $g$ can be checked directly at points of $N$ inside $M$ because all these tensors are explicit there--it is ``just linear algebra''. Then we argue that these linear identities
are satisfied everywhere because they are preserved by parallel transport. Indeed, the tensor fields $g$, $H$, $I$, $J$, $K$ are all invariant 
by parallel transport along the vertical foliation $\mathcal{F}_1$ with respect to the complex bi-Lagrangian connection. For $g$, $I$, $J$, $K$, this is true by \autoref{thm:MainThmHK}.
For $H$, this is because the triple $(g, F= HJ, \omega)$ is the holomorphic bicomplex Kähler structure associated to the canonical bi-Lagrangian structure
$(\omega, \mathcal{F}_1$, $\mathcal{F}_2)$ by \autoref{thm:BicomplexKahlerInComplexification}, it follows that $H$ is parallel with respect to the bi-Lagrangian connection.
\end{proof}

Observe that the algebra of biquaternions $\mathbb{BH}$ contains as subalgebras:
\begin{itemize}
 \item The algebra of quaternions $\mathbb{H} \approx \operatorname{span}\{1, i, j, k\}$.
 \item The algebra of bicomplex numbers $\mathbb{BC} \approx \operatorname{span}\{1, h, j, f\}$ where $f := hj$.
 \item The algebras of \emph{para-quaternions} $\mathbb{P} \approx \operatorname{span}\{1, i, f, g\}$ where $f := hj$ and $g := hk$. This is the unital associative algebra
 over the real numbers generated by three elements $i$, $f$, $g$ satisfying the \emph{para-quaternionic relations}:
\begin{equation}
\begin{gathered}
 i^2 = -1 \quad f^2 = +1 \quad g^2 = +1\\
 if = -fi = g~.
\end{gathered}
\end{equation}
\noindent Para-quaternions are also called \emph{split-quaternions}, \emph{coquaternions}, and \emph{quaternions of the second kind} in the context
of differential geometry \cite{MR0046735, MR0079766}.
\end{itemize}

Corresponding to these subalgebras we have an almost hyper-Hermitian structure, an almost bicomplex Hermitian structure and an \emph{almost para-hyper-Hermitian structure} on $M$.
We have already established the presence of the first two: they are respectively the almost hyper-Hermitian structure of \autoref{thm:MainThmHK} and the holomorphic bicomplex Kähler structure
of \autoref{thm:BicomplexKahlerInComplexification}. The third one also deserves to be noted:

\begin{corollary} \label{cor:ParaHH}
Let $(N, g_0, I_0, \omega_0)$ be a real-analytic Kähler manifold and let $M$ be a complexification of $N$.
The biquaternionic Hermitian structure $(g, H, I, J, K)$ of \autoref{thm:BiQuaternionicHermitian} induces an 
underlying \emph{almost para-hyper-Hermitian structure} $(g, I, G, F)$ where $F := HJ$ and $G := HK$.
\end{corollary}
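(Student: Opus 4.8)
The plan is to prove \autoref{cor:ParaHH} by direct verification, reading off everything from the almost biquaternionic Hermitian structure $(g, H, I, J, K)$ furnished by \autoref{thm:BiQuaternionicHermitian}. Concretely, I will check that the triple $(I, F, G)$ with $F := HJ$ and $G := HK$ realizes a copy of the para-quaternion algebra $\mathbb{P} \subset \mathbb{BH}$ inside $\End(\upT M)$, and that each of $I$, $F$, $G$ is $g$-orthogonal; by the definition of an almost para-hyper-Hermitian structure (as the structure attached to the subalgebra $\mathbb{P} \approx \operatorname{span}\{1, i, f, g\}$ noted just above the corollary) this is exactly what is required. The only inputs are the biquaternionic relations \eqref{eq:BiquaternionicRelations}---crucially that $H$ is central, commuting with $I$, $J$, $K$, and squares to $-\mathbf{1}$---together with the $g$-orthogonality of $H$, $I$, $J$, $K$.

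First I would establish the para-quaternionic relations. Since $H$ commutes with $J$ and $H^2 = J^2 = -\mathbf{1}$, we get $F^2 = (HJ)(HJ) = H^2 J^2 = \mathbf{1}$, and likewise $G^2 = (HK)(HK) = H^2 K^2 = \mathbf{1}$, while $I^2 = -\mathbf{1}$ is part of the hyper-Hermitian data. Using the centrality of $H$ together with the quaternionic relation $IJ = K$ yields $IF = I H J = H(IJ) = HK = G$, whereas $JI = -K$ gives $FI = H J I = -HK = -G$; hence $IF = -FI = G$, which is precisely the defining relation of $\mathbb{P}$. Thus $(I, F, G)$ satisfies $I^2 = -\mathbf{1}$, $F^2 = G^2 = \mathbf{1}$, and $IF = -FI = G$.

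Next I would verify compatibility with $g$. Each of $F$ and $G$ is a composition of $g$-orthogonal endomorphisms: from \autoref{thm:BiQuaternionicHermitian} the maps $H$, $J$, $K$ are $g$-orthogonal, so for any $u, v$ one computes $g(Fu, Fv) = g(HJu, HJv) = g(Ju, Jv) = g(u,v)$ and similarly $g(Gu, Gv) = g(u,v)$; combined with the $g$-orthogonality of $I$, all three structures $I$, $F$, $G$ are compatible with $g$. This completes the verification that $(g, I, G, F)$ is an almost para-hyper-Hermitian structure.

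There is essentially no serious obstacle here---the statement is a purely algebraic corollary of \autoref{thm:BiQuaternionicHermitian}, entirely parallel to the way the almost hyper-Hermitian and bicomplex Kähler structures arise from the subalgebras $\mathbb{H}$ and $\mathbb{BC}$ of $\mathbb{BH}$. The only points requiring care are bookkeeping: one must invoke the centrality of $H$ so that the two factors of $F = HJ$ and $G = HK$ square off independently to produce $+\mathbf{1}$, and one must keep the sign $JI = -K$ straight in order to land on $IF = -FI = G$ rather than the opposite orientation. As in the proof of \autoref{thm:BiQuaternionicHermitian}, one could alternatively check these endomorphism identities at points of $N \hookrightarrow M$, where all tensors are explicit, and propagate them by parallel transport along $\mathcal{F}_1$ via \autoref{cor:PathIndependence}; but since the relations above are tensorial pointwise identities among $g$, $H$, $I$, $J$, $K$, the direct computation already holds wherever these structures are defined.
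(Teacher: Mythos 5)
Your proposal is correct and matches the paper's (implicit) argument: the paper states \autoref{cor:ParaHH} without a separate proof, precisely because it follows from \autoref{thm:BiQuaternionicHermitian} together with the observation that $\mathbb{P} \approx \operatorname{span}\{1, i, f, g\}$ with $f = hj$, $g = hk$ is a subalgebra of $\mathbb{BH}$ --- which is exactly the computation you spell out (centrality of $H$ giving $F^2 = G^2 = \mathbf{1}$ and $IF = -FI = G$, plus $g$-orthogonality by composition). Your closing remark that these are pointwise tensorial identities, so no parallel-transport argument is even needed once \autoref{thm:BiQuaternionicHermitian} is in hand, is also accurate.
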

Para-hyper-Hermitian structures have been studied by several authors under a variety of names such as \emph{para-hyperhermitian} and \emph{para-hyperkähler} \cite{AMTParaK, MR2554487, MR2868386}, 
\emph{hyper-para-Kähler} \cite{MR2104536, Merker2012}, \emph{hypersymplectic}--a terminology coined
by Hitchin \cite{HitchinHyperSymplectic} and followed by many authors, \emph{pseudo-hyperKähler}
\cite{MR2436230, MR2775123}, and \emph{neutral hyperKähler} \cite{MR1715481}.

\subsection{\texorpdfstring{Example: $\CP^1$}{Example: CP1}} \label{subsec:CP1HK}

Let us compute and compare the Feix-Kaledin hyper-Kähler structure and the almost hyper-Hermitian structure of \autoref{thm:MainThmHK}
in the cotangent bundle and complexification of $\CP^1$. We continue using the same notations as in \autoref{subsec:CP1BL}.

\subsubsection{Feix-Kaledin hyper-Kähler structure} \label{subsubsec:FKCP1}

The Feix-Kaledin hyper-Kähler structure in the total space of the cotangent bundle of $\CP^1$ coincides with the metric discovered by Eguchi-Hanson \cite{MR540896}
and is a special case of \emph{Calabi metric} studied in \cite{MR543218}. However it is typically not expressed in the standard holomorphic coordinates of the cotangent bundle
in the mathematics and physics literature. Let us find its expression in such coordinates, it will be instructive to see how to recover it only using the conditions of \autoref{thm:FeixKaledin}.

Let $z$ be the usual complex coordinate in the affine patch $\C = \CP - \{[1 : 0]\}$. 
Let $(z,u)$ be the corresponding coordinates on the holomorphic cotangent bundle
$M = \upT^*\CP$: a covector $\alpha \in \upT^*\C$ having coordinates $(z,u)$ means that $\alpha = u \,dz\evalat{z}$.

\begin{lemma} \label{lem:MAHK}
The Feix-Kaledin hyper-Kähler structure $(g, I, J, K)$ in the cotangent bundle $M = \upT^*N$ of a Riemann surface $N$ satisfies 
$\omega \wedge \overline{\omega} = 2 \omega_I \wedge \omega_I$, where
$\omega$ is the canonical complex symplectic structure in $M = \upT^* N$.
\end{lemma}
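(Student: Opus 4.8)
The plan is to reduce the stated identity to a pointwise computation in quaternionic linear algebra, carried out in the model fibre $\H$. Since both sides of the asserted equality are $4$-forms built algebraically and pointwise out of the data $(g,I,J,K)$, it suffices to verify the identity at each point, and there one may work in a single $g$-orthonormal frame adapted to the quaternionic structure. Note first that by \autoref{thm:FeixKaledin}\,(iii) we have $\omega=\omega_J+i\omega_K$, whence $\xoverline{\omega}=\omega_J-i\omega_K$ because $\omega_J$ and $\omega_K$ are real forms. Because $2$-forms commute under the wedge product (the sign $(-1)^{2\cdot 2}$ is $+1$), the mixed terms cancel and one gets the clean expansion
\begin{equation}
 \omega\wedge\xoverline{\omega}=(\omega_J+i\omega_K)\wedge(\omega_J-i\omega_K)=\omega_J\wedge\omega_J+\omega_K\wedge\omega_K~.
\end{equation}

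Next I would invoke the standard relations of hyper-Hermitian linear algebra in \emph{real dimension four}, which is exactly the relevant dimension here since $N$ is a Riemann surface and hence $M=\upT^*N$ has complex dimension $2$. At a point, the quaternionic-Hermitian tangent space is isometric to $\H$ with its standard structure, so there is a $g$-orthonormal coframe $(e^0,e^1,e^2,e^3)$ in which
\begin{equation}
 \omega_I=e^0\wedge e^1+e^2\wedge e^3,\quad
 \omega_J=e^0\wedge e^2-e^1\wedge e^3,\quad
 \omega_K=e^0\wedge e^3+e^1\wedge e^2~.
\end{equation}
A one-line computation in each case then gives $\omega_I\wedge\omega_I=\omega_J\wedge\omega_J=\omega_K\wedge\omega_K=2\,e^0\wedge e^1\wedge e^2\wedge e^3=2\,\mathrm{vol}_g$, the three Kähler forms being self-dual with equal squares equal to twice the Riemannian volume form. (The pairwise products $\omega_J\wedge\omega_K$ etc. vanish, but this is not even needed here.)

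Combining the two steps, $\omega\wedge\xoverline{\omega}=\omega_J\wedge\omega_J+\omega_K\wedge\omega_K=4\,\mathrm{vol}_g$, while $2\,\omega_I\wedge\omega_I=4\,\mathrm{vol}_g$, which is the claim. I do not expect any genuine obstacle: the content is purely the quaternionic linear algebra of dimension four, and integrability of the hyper-Kähler structure plays no role. The only point that deserves emphasis is precisely the dimension hypothesis — it is because $N$ is a surface that $\omega_I\wedge\omega_I$ is a \emph{top}-degree form and that the three squares coincide; in higher dimensions the analogous identity would involve the appropriate powers and would take a different form. It is also worth remarking that the argument uses only the pointwise almost hyper-Hermitian structure together with the normalization $\omega=\omega_J+i\omega_K$, so the lemma in fact holds verbatim for any admissible almost hyper-Hermitian structure on $M$, not merely the Feix-Kaledin one.
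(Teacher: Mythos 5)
Your proof is correct and takes essentially the same route as the paper's: both expand $\omega \wedge \xoverline{\omega} = \omega_J \wedge \omega_J + \omega_K \wedge \omega_K$ using $\omega = \omega_J + i\omega_K$ and the commutativity of $2$-forms, and then use $\omega_I \wedge \omega_I = \omega_J \wedge \omega_J = \omega_K \wedge \omega_K = 2\,\mathrm{vol}_g$. The only cosmetic difference is that the paper justifies the latter by quoting the Hermitian identity $\omega_I^n = n!\,\mathrm{vol}_g$, whereas you verify it by an explicit orthonormal-coframe computation in real dimension four; your closing remark that integrability is never used (so the lemma holds for any admissible almost hyper-Hermitian structure) is also accurate.
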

\begin{proof}
In any Kähler (in fact Hermitian) manifold $(M, g, I, \omega_I)$ of complex dimension $n$, the identity $\omega_I^n = n! \vol_g$ holds. Thus we have here 
\begin{equation}
 \omega_I \wedge \omega_I = \omega_J \wedge \omega_J = \omega_K \wedge \omega_K = 2 \vol_g~.
\end{equation}
On the other hand, since $\omega = \omega_J + i\omega_K$ (this is a requirement of the Feix-Kaledin hyper-Kähler structure), we have:
\begin{equation}
 \omega \wedge \overline{\omega} = \omega_J \wedge \omega_J + \omega_K \wedge \omega_K = 4 \vol_g~.
\end{equation}
The conclusion follows.
\end{proof}
Note that more generally, a similar argument shows that a hyper-Kähler manifold admits a holomorphic volume form, and thus is \emph{Calabi-Yau} (see \eg{} \cite{YauScholarpedia} for details).
Coming back to $M = \upT^*\CP^1$, \autoref{lem:MAHK} means that a \emph{Kähler potential} 
$\varphi \colon M \to \R$ (such that $\frac{i}{2}\partial \overline{\partial} \varphi = \omega_I$) 
must satisfy the \emph{Monge-Ampère equation}
\begin{equation} \label{eq:MA}
\varphi_{z \bar{z}} \varphi_{u \bar{u}} - \varphi_{u \bar{z}} \varphi_{z \bar{u}} = 1~.
\end{equation}
Let $r \colon M \to \mathbb{\R}$ be the function given by 
$r(\alpha) = \Vert \alpha \Vert^2$, where $\Vert \cdot \Vert^2$ is the metric in the cotangent bundle induced by $g_0$, explicitly:
$ r(z,u) = 4 u \overline{u}{(1+ z \overline{z})}^2$.
Since the Feix-Kaledin hyper-Kähler metric $g$ is invariant under the $\U(1)$-action in $M = \upT^*M$ which acts transitively in the level sets of $r$, we 
can look for a Kähler potential of the form $\varphi(z, u) = y (r)$, where $y \colon [0, +\infty) \to \R$. The Monge-Ampère equation \eqref{eq:MA} yields the
ordinary differential equation
\begin{equation}
8 r^2 y' y'' + 8 r (y')^2 = 1~.
\end{equation}
This ODE is easily solved as it is rewritten $4 (Y^2)' = 1$ where $Y = ry'$. The general solution up to an additive constant is 
\begin{equation}
 y = \sqrt{r + a} - \sqrt{a} \operatorname{arcoth}\left(\sqrt{1 + \frac{r}{a}}\right)
\end{equation}
where $a$ is some constant of integration, which we choose $a = 1$ in order to recover the Fubini-Study metric $g_0$ on the zero section at the end of the calculation.
We can now proceed to compute the Kähler form $\omega_I = \frac{i}{2} \partial \overline{\partial} \varphi$, the metric $g = -\omega_I(I\cdot, \cdot)$, etc.
\begin{proposition}
The Feix-Kaledin hyper-Kähler structure in $\upT^*\CP^1$ in the coordinates $(z,u)$ is:
\begin{equation}
\begin{split}
g &= \varphi_{z \bar{z}}\, \dz \dzbar + \varphi_{u \bar{z}}\, \upd u \dzbar + \varphi_{z \bar{u}}\, \dz \dubar + \varphi_{u \bar{u}}\, \upd u \dubar  \\
I &= i \dd{z} \otimes \dz - i \dd{\overline{z}} \otimes \dzbar + i \dd{u} \otimes \du - i \dd{\overline{u}} \otimes \dubar\\
J &=  \left(\varphi_{z \bar{u}} \dd{\overline{z}} - \varphi_{z \bar{z}} \dd{\overline{u}} \right) \otimes \dz
      + \left(\varphi_{u \bar{z}} \dd{z} - \varphi_{z \bar{z}} \dd{u} \right) \otimes \dzbar \\
      &\quad + \left(\varphi_{u \bar{u}} \dd{\overline{z}} - \varphi_{u \bar{z}} \dd{\overline{u}} \right) \otimes \du 
      + \left(\varphi_{u \bar{u}} \dd{z} - \varphi_{z \bar{u}} \dd{u} \right) \otimes \dubar\\
K &= -i\left(\varphi_{z \bar{u}} \dd{\overline{z}} - \varphi_{z \bar{z}} \dd{\overline{u}} \right) \otimes \dz
      + i\left(\varphi_{u \bar{z}} \dd{z} - \varphi_{z \bar{z}} \dd{u} \right) \otimes \dzbar \\
      &\quad  -i\left(\varphi_{u \bar{u}} \dd{\overline{z}} - \varphi_{u \bar{z}} \dd{\overline{u}} \right) \otimes \du 
      + i\left(\varphi_{u \bar{u}} \dd{z} - \varphi_{z \bar{u}} \dd{u} \right) \otimes \dubar\\
\omega_I &= \frac{i}{2}\left(\varphi_{z \bar{z}}\, \dz \wedge \dzbar + \varphi_{u \bar{z}}\, \upd u \wedge \dzbar 
+ \varphi_{z \bar{u}}\, \dz \wedge \dubar + \varphi_{u \bar{u}}\, \upd u \wedge \dubar \right) \\
\omega_J &= -\frac{1}{2}\left(\dz \wedge \du + \dzbar \wedge \dubar\right)\\
\omega_K &= \frac{i}{2}\left(\dz \wedge \du - \dzbar \wedge \dubar\right)\\
\end{split}
\end{equation}
where:
\begin{equation}
\begin{aligned}
 \varphi_{z \bar{z}} &= \frac{1 + r(1+ z \overline{z})}{\sqrt{1+r}(1 + z \overline{z})^2}\\
 \varphi_{u \bar{z}} &= \frac{2\overline{u}z(1+ z \overline{z})}{\sqrt{1+r}}
\end{aligned}\qquad
\begin{aligned}
 \varphi_{z \bar{u}} &= \frac{2u\overline{z}(1+ z \overline{z})}{\sqrt{1+r}}\\
 \varphi_{u \bar{u}} &= \frac{(1 + z \overline{z})^2}{\sqrt{1+r}}
\end{aligned}
\end{equation}
\end{proposition}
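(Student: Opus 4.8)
The plan is to read off each tensor from data that is already pinned down. By \autoref{thm:FeixKaledin} the complex structure $I$ is the standard one on $\upT^*\CP^1$ and $\omega=\omega_J+i\omega_K$ is the canonical complex symplectic form, while $g$ and $\omega_I$ are governed by the $\U(1)$-invariant Kähler potential $\varphi=y(r)$ with $r(z,u)=4u\bar u(1+z\bar z)^2$ and $y=\sqrt{r+1}-\operatorname{arcoth}(\sqrt{1+r})$. I would therefore proceed in four steps: compute the complex Hessian $\varphi_{a\bar b}$; extract $\omega_I$ and $g$; write $\omega_J$ and $\omega_K$ as the real and imaginary parts of the canonical form; and finally recover $J$ and $K$ by raising an index with $g$.

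For the Hessian I first differentiate $y$. Either directly or, more cleanly, from the first integral $ry'=\tfrac12\sqrt{1+r}$ of the Monge--Ampère ODE that already appears above, one finds $y'=\frac{\sqrt{1+r}}{2r}$ and $y''=-\frac{r+2}{4r^2\sqrt{1+r}}$. I then record the first and mixed second derivatives of $r$, which are monomials in $z,\bar z,u,\bar u$ (e.g.\ $r_z=8u\bar u\,\bar z(1+z\bar z)$, $r_{z\bar z}=8u\bar u(1+2z\bar z)$, $r_{u\bar u}=4(1+z\bar z)^2$), and apply the chain rule $\varphi_{a\bar b}=y''\,r_a r_{\bar b}+y'\,r_{a\bar b}$ for $a,b\in\{z,u\}$. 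Substituting $u\bar u=\frac{r}{4(1+z\bar z)^2}$ to eliminate $u\bar u$ makes each expression collapse to the four stated rational functions; this is the most computation-heavy step, but it is entirely routine.

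From there, $\omega_I=\frac{i}{2}\partial\bar\partial\varphi=\frac{i}{2}\sum_{a,b}\varphi_{a\bar b}\,\upd a\wedge\upd\bar b$ is immediate, and the Kähler compatibility $\omega_I=g(I\cdot,\cdot)$ with the standard $I$ yields the displayed symmetric tensor $g$. Since $\omega_J+i\omega_K$ must be the canonical complex symplectic form $\upd u\wedge\upd z$ of $\upT^*\CP^1$ (\autoref{ex:CotangentBundle}), I take real and imaginary parts to get $\omega_J=\Real(\upd u\wedge\upd z)=-\tfrac12(\dz\wedge\du+\dzbar\wedge\dubar)$ and $\omega_K=\Imag(\upd u\wedge\upd z)=\tfrac{i}{2}(\dz\wedge\du-\dzbar\wedge\dubar)$.

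Finally $J$ is determined by $\omega_J=g(J\cdot,\cdot)$, that is by raising an index on $\omega_J$ with $g$, and $K=IJ$. This is where the Monge--Ampère equation (\autoref{lem:MAHK}) does the decisive work: the Hermitian block $\bigl(\begin{smallmatrix}\varphi_{z\bar z}&\varphi_{z\bar u}\\\varphi_{u\bar z}&\varphi_{u\bar u}\end{smallmatrix}\bigr)$ has determinant $\varphi_{z\bar z}\varphi_{u\bar u}-\varphi_{z\bar u}\varphi_{u\bar z}=1$, so its inverse is simply the cofactor matrix and the coefficients of $J$ become the combinations $\varphi_{u\bar u},\varphi_{z\bar u},\varphi_{u\bar z},\varphi_{z\bar z}$ of the statement; applying the standard $I$ then turns each into its $\pm i$ multiple to give $K$. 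I expect this index-raising to be the main obstacle: because $g$ is a real metric built from a Hermitian Hessian, one must track carefully how the musical isomorphism mixes the holomorphic and antiholomorphic coframes, and the clean answer hinges entirely on the normalization $\det=1$. Rather than invert a $4\times4$ matrix by hand, I would confirm the formulas by checking the characteristic pairings, for instance $g(J\dd{z},\dd{u})=\tfrac12(\varphi_{z\bar u}\varphi_{u\bar z}-\varphi_{z\bar z}\varphi_{u\bar u})=-\tfrac12=\omega_J(\dd{z},\dd{u})$ together with $J^2=-\mathbf{1}$; this simultaneously verifies that $(g,I,J,K)$ is a hyper-Hermitian triple and, by the uniqueness in \autoref{thm:FeixKaledin}, that it is the Feix--Kaledin structure.
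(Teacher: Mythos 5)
Your proposal is correct and takes essentially the same route as the paper: the paper likewise obtains the proposition by computing the complex Hessian of the $\U(1)$-invariant potential $\varphi = y(r)$ pinned down by the Monge--Ampère equation of \autoref{lem:MAHK}, reading off $g$ and $\omega_I$, taking $\omega_J + i\omega_K$ to be the canonical complex symplectic form of \autoref{ex:CotangentBundle}, recovering $J$ (and $K = IJ$) by raising an index with $g$, and finally checking the requirements (hence, by uniqueness, the identification) of \autoref{thm:FeixKaledin}. Your added spot-checks, namely the determinant identity $\varphi_{z\bar z}\varphi_{u\bar u} - \varphi_{z\bar u}\varphi_{u\bar z} = 1$ and the pairings verifying $J^2 = -\mathbf{1}$ and $g(J\,\cdot,\cdot) = \omega_J$, are exactly the computations that make the displayed formulas come out as stated.
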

It is straightforward to check that this hyper-Kähler structure satisfies all the requirements of \autoref{thm:FeixKaledin} as expected.

One would like to proceed to compute the Feix-Kaledin hyper-Kähler structure of \autoref{def:FKComplexification} in the complexification of $\CP^1$.
This problem seems difficult in general: the natural approach is to find $J$-holomorphic coordinates in the cotangent bundle extending 
real coordinates off the zero section, but such a solution of the Newlander-Nirenberg theorem
is not explicit in general. An alternative approach is to look for extra symmetries in the case at hand
and predict the hyper-Kähler isomorphism $T \colon \CP^1 \times \overline{\CP^1} \to \upT^*\CP^1$, as Thomas Hodge does in the case of 
$\mathbb{H}^2$ in \cite[Lemma 3.1.4]{HodgePHD}. However we were not able to recover an explicit expression of $T$ by using Hodge's ansatz, 
though it seems likely that a small adjustment of it could work. 
%

\subsubsection{Almost hyper-Hermitian structure} \label{subsubsec:CP1AlmostHH}

We carry out the explicit computation of the almost hyper-Hermitian structure of \autoref{thm:MainThmHK} in the complexification of $\CP^1$. 
The bulk of the work was already executed in \autoref{subsec:CP1BL}.

\begin{lemma}
Let $\gamma : [0,1] \to N^c$ be a path contained in a vertical leaf. Denote $\gamma(0) = (z_0, w_0)$ and $\gamma(1) = (z_0, w_1)$.
The parallel transport $P_\gamma : T_{(z_0, w_0)} N^c \to T_{(z_0, w_1)} N^c$ is
given by
\begin{equation}
\begin{split}
   \dd{z} &\mapsto \dd{z}\\
   \dd{w} &\mapsto \left(\frac{1 + z_0 w_1}{1 + z_0 w_0}\right)^2 \dd{w}~.
\end{split} 
\end{equation}
\end{lemma}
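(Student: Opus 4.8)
The plan is to compute the parallel transport directly from the explicit expression of the complex bi-Lagrangian connection $\nabla$ found in equation \eqref{eq:BLCCP1}, restricted to a vertical leaf $L = \mathcal{F}^{z_0}_1$. By \autoref{cor:PathIndependence}, parallel transport along a vertical path depends only on the endpoints (the leaves are simply connected here, being copies of $\CP^1$ minus a point), so it suffices to integrate the parallel transport equation along any convenient path from $(z_0, w_0)$ to $(z_0, w_1)$; I will parametrize $\gamma(t) = (z_0, w(t))$ with $w(0) = w_0$, $w(1) = w_1$.

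First I would write a parallel vector field $V(t)$ along $\gamma$ in the basis $\left(\dd{z}, \dd{w}\right)$ and impose $\nabla_{\dot\gamma} V = 0$. Since $\dot\gamma = w'(t) \dd{w}$ is tangent to the leaf, only the term $\nabla \dd{w}$ from \eqref{eq:BLCCP1} contributes, namely $\nabla \dd{w} = \dfrac{-2 z_0}{1 + z_0 w}\,\dw \otimes \dd{w}$. Writing $V(t) = a(t)\dd{z} + b(t)\dd{w}$, the equation $\nabla_{\dot\gamma} V = 0$ separates into $a'(t) = 0$ and
\begin{equation}
 b'(t) - \frac{2 z_0 w'(t)}{1 + z_0 w(t)}\, b(t) = 0~.
\end{equation}
The first equation gives that the $\dd{z}$-component is constant, which yields $\dd{z} \mapsto \dd{z}$. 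The second is a linear first-order ODE whose solution is $b(t) = b(0)\exp\!\left(\int_0^t \frac{2 z_0 w'(s)}{1 + z_0 w(s)}\,\upd s\right)$.

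The integral is elementary: the integrand is $2\,\frac{\upd}{\upd s}\log(1 + z_0 w(s))$, so integrating from $0$ to $1$ gives $2\log\dfrac{1 + z_0 w_1}{1 + z_0 w_0}$, whence $b(1) = b(0)\left(\dfrac{1 + z_0 w_1}{1 + z_0 w_0}\right)^2$. This establishes $\dd{w} \mapsto \left(\dfrac{1 + z_0 w_1}{1 + z_0 w_0}\right)^2 \dd{w}$, which is the claimed formula. I do not anticipate any serious obstacle: the computation is a direct integration, and the only point requiring a word of justification is the well-definedness of $P_\gamma$ (independence of the choice of path and hence of the parametrization $w(t)$), which is exactly guaranteed by \autoref{cor:PathIndependence} together with the flatness of $\nabla$ along the leaves recorded in \autoref{prop:BLCurvature}. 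One should also note in passing that $P_\gamma$ respects the splitting $\upT N^c = L_1 \oplus L_2$, consistent with the fact from \autoref{prop:BLCurvature} that $R(X,Y)$ preserves both distributions.
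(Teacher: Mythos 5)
Your proposal is correct and follows essentially the same route as the paper: write a parallel field $a(t)\,\partial_z + b(t)\,\partial_w$ along $\gamma$, use the explicit connection \eqref{eq:BLCCP1} to get $a'=0$ and $b' - \tfrac{2z_0w'}{1+z_0w}\,b = 0$, and solve the latter ODE (the paper recognizes it as $y' - 2\tfrac{\alpha'}{\alpha}y=0$ with solution $y(t) = (\alpha(t)/\alpha(0))^2 y_0$, which is the same computation as your exponential of the integrated logarithmic derivative). Your additional remarks on path-independence via \autoref{cor:PathIndependence} merely make explicit what the paper notes after its proof.
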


\begin{proof}
Let $u(t) = a(t) \dd{z} +  b(t) \dd{w}$ be a vector field along $\gamma$. Then
$u(t)$ is parallel along $\gamma$ if and only if $\nabla_t u(t) = 0$, which gives the equations:
\begin{equation}
\left\{\begin{array}{l}
a'(t) = 0\\
b'(t) - \frac{2 z_0 w'(t)}{1 + z_0 w(t)} b(t) = 0~.
\end{array}\right.
\end{equation}
The second equation is easily solved noting that it is an ODE of the form $y'(t) - 2 \frac{\alpha'(t)}{\alpha(t)}y(t) = 0$, solutions to this
are of the form $y(t) = \left(\frac{\alpha(t)}{\alpha(0)}\right)^2\,y_0$. The conclusion follows.
\end{proof}
Note that $P_\gamma$ only depends on the endpoints of $\gamma$ as expected.

We are now ready to compute the almost hyper-Hermitian structure by using vertical parallel transport along the leaves. 
\begin{theorem}
The almost hyper-Hermitian structure in $N^c$ is given by:
\begin{equation}
\begin{split}
g &= \frac{1}{2} \left[\frac{1}{\left(1 + |z|^2\right)^2}\dz \dzbar
+ \frac{\left(1 + |z|^2\right)^2}{\left|1 + z w\right|^4}\dw \dwbar\right]\\
I &= i \left[\overline{\eta}\, \dwbar \otimes \dd{z} - \eta \,\dw \otimes \dd{\overline{z}}
- \frac{1}{\eta}\, \dzbar \otimes \dd{w} + \frac{1}{\overline{\eta}} \,\dz \otimes \dd{\overline{w}}\right]\\
J &= i \left[\dz \otimes \dd{z} - \dzbar \otimes \dd{\overline{z}}
+ \dw \otimes \dd{w} - \dwbar \otimes \dd{\overline{w}}\right]\\
K &= \overline{\eta}\, \dwbar \otimes \dd{z} + \eta \,\dw \otimes \dd{\overline{z}}
- \frac{1}{\eta}\, \dzbar \otimes \dd{w} - \frac{1}{\overline{\eta}} \,\dz \otimes \dd{\overline{w}}\\
\omega_I &= \frac{i}{4} \left[\frac{\dz \wedge \dw}{\left(1 + z w\right)^2} 
- \frac{\dzbar \wedge \dwbar}{\left(1 + \overline{z} \overline{w}\right)^2}\right]\\
\omega_J &= \frac{i}{4} \left[\frac{1}{\left(1 + |z|^2\right)^2}\dz \wedge \dzbar 
+ \frac{\left(1 + |z|^2\right)^2}{\left|1 + z w\right|^4}\dw \wedge \dwbar\right]\\
\omega_K &= \frac{-1}{4} \left[\frac{\dz \wedge \dw}{\left(1 + z w\right)^2} 
+ \frac{\dzbar \wedge \dwbar}{\left(1 + \overline{z} \overline{w}\right)^2}\right]
\end{split}
\end{equation}
where we have written $\eta = \left(\frac{1 + |z|^2}{1 + z w}\right)^2$ in the expressions of $I$, $J$, $K$.
\end{theorem}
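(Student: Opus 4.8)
The plan is to follow verbatim the construction carried out in the proof of \autoref{thm:MainThmHK}: the almost hyper-Hermitian structure $(g,I,J,K)$ is completely determined by its values along $N \hookrightarrow N^c$ together with parallel transport, with respect to the complex bi-Lagrangian connection $\nabla$, along the vertical foliation $\mathcal{F}_1$. Since $N = \CP^1$ and all of the relevant data has already been recorded in \autoref{subsec:CP1BL} --- the holomorphic extensions $g_0^c$, $I_0^c$, $\omega_0^c$, the bi-Lagrangian connection \eqref{eq:BLCCP1}, and the explicit vertical parallel transport $P_\gamma$ in the preceding lemma --- the computation reduces to two essentially mechanical steps. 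First I would determine the four tensors at points of $N$; then I would spread them over $N^c$ by transporting along the vertical leaves $\{z = \mathrm{const}\}$.

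For the first step, fix $p = (z, \xoverline{z}) \in N$. By \autoref{lem:linearHKcomplexification} the admissible hyper-Hermitian data at $p$ is uniquely pinned down by linear algebra from the Kähler data: $J_p$ is the ambient complex structure (acting as $+i$ on $\partial_z,\partial_w$ and $-i$ on $\partial_{\xoverline{z}},\partial_{\xoverline{w}}$), while $g_p$, $I_p$, and $\omega_I$ restrict to $g_0$, $I_0$, $\omega_0$. The decisive constraint is admissibility condition (iii), $\omega_I - i\omega_K = \omega_0^c$. Writing $K = IJ$ and using $\omega_I = g(I\,\cdot,\cdot)$, $\omega_K = g(K\,\cdot,\cdot)$, this reads $\omega_0^c = g\big(I(\mathbf{1} - iJ)\,\cdot,\,\cdot\big)$; since $\mathbf{1}-iJ$ is twice the projection onto the holomorphic tangent space, the equation determines $I_p$ (hence $K_p = I_p J_p$) and $g_p$ from the known $\omega_0^c$ and $J_p$. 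Note that $\eta = 1$ on $N$, so at this stage all $\eta$-factors in the target formulas are trivial and one simply reads off the ``diagonal'' values.

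For the second step, I would transport from $(z,\xoverline{z})$ to an arbitrary point $(z,w)$ of the same vertical leaf using the map $P_\gamma$ of the preceding lemma, which sends $\partial_z \mapsto \partial_z$ and $\partial_w \mapsto \big(\tfrac{1+zw}{1+z\xoverline{z}}\big)^2 \partial_w = \tfrac{1}{\eta}\,\partial_w$, with $\partial_{\xoverline{z}},\partial_{\xoverline{w}}$ transported by the complex-conjugate rule (as flagged in the note after that lemma, $\partial_{\xoverline{w}}\mapsto \tfrac{1}{\bar\eta}\partial_{\xoverline{w}}$). One transports the $(1,1)$-tensors $I,K$ by the conjugation $P_\gamma \circ (\,\cdot\,)\circ P_\gamma^{-1}$ and the $(0,2)$-tensor $g$ by $g(P_\gamma^{-1}\,\cdot,\,P_\gamma^{-1}\,\cdot)$. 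The factors $\eta,\bar\eta,1/\eta,1/\bar\eta$ in the statement are precisely the entries of $P_\gamma$ and $P_\gamma^{-1}$ in the four coordinate directions, and collecting them yields the displayed expressions for $g,I,J,K,\omega_I,\omega_J,\omega_K$.

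The main obstacle is bookkeeping rather than conceptual: because $I$ and $K$ anticommute with $J$ they interchange the holomorphic and anti-holomorphic directions, so the holomorphic transport factor $1/\eta$ and its conjugate $1/\bar\eta$ enter asymmetrically --- this is exactly why the coefficients in the formula for $I$ are $\bar\eta,\eta,1/\eta,1/\bar\eta$ rather than appearing symmetrically (e.g. $I(\partial_z)=\tfrac{i}{\bar\eta}\partial_{\xoverline{w}}$, since the image lies in the $\partial_{\xoverline{w}}$ direction whose transport carries $\bar\eta$). One must be careful that the complexified connection transports $\partial_{\xoverline{w}}$ by $\overline{P_\gamma}$, not $P_\gamma$. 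As consistency checks I would verify that $J$ and $\omega = \omega_I - i\omega_K$ come out $\eta$-independent (they are parallel by \autoref{thm:ComplexBiLagConnection}, hence invariant under transport), that $\omega_I$ and $\omega_K$ are closed while $\omega_J$ is not, and that $\omega_I\wedge\omega_I = \omega_J\wedge\omega_J = \omega_K\wedge\omega_K$. The failure of $\upd\omega_J = 0$ is precisely what confirms that this structure is genuinely distinct from the Feix--Kaledin hyper-Kähler structure of \autoref{subsubsec:FKCP1}.
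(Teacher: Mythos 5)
Your proposal is correct and takes essentially the same route as the paper: the paper also obtains the theorem by pinning down $(g,I,J,K)$ at points of $N$ via the linear-algebra uniqueness (\autoref{lem:linearHKcomplexification}, where $\eta = 1$) and then spreading these tensors over $N^c$ by the vertical parallel transport $P_\gamma$ computed in the preceding lemma, with $\partial_w \mapsto \tfrac{1}{\eta}\partial_w$ and the conjugate rule $\partial_{\xoverline{w}} \mapsto \tfrac{1}{\xoverline{\eta}}\partial_{\xoverline{w}}$. Your bookkeeping (conjugation action $P_\gamma \circ (\,\cdot\,) \circ P_\gamma^{-1}$ on $I$ and $K$, inverse pullback on $g$, and the resulting asymmetric placement of $\eta,\xoverline{\eta},1/\eta,1/\xoverline{\eta}$) reproduces exactly the displayed formulas.
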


It is clear from the expressions of $\omega_I$, $\omega_J$, and $\omega_K$ that $\omega_I$ and $\omega_K$ are both closed, 
in fact $\omega_0^c = \frac{i \, \dz \wedge \dw}{2\left(1 + zw\right)^2} = \omega_I -i \omega_K$ as expected.
One can also check that all the properties of an admissible extension in the sense of \autoref{def:AdmissibleExtension} are indeed satisfied.

On the other hand, we observe that $\omega_J$ is not closed, meaning that the hyper-Hermitian structure is not integrable. Thus the Levi-Civita connection
of $g$ does not parallelize $J$, unlike the bi-Lagrangian connection. In particular, the metric $g$ (and hence the hyper-Hermitian structure)
differs from the Feix-Kaledin hyper-Kähler structure.

\cleardoublepage\phantomsection 
\bibliographystyle{alpha}
\newgeometry{top=0.09\paperheight, bottom=0.09\paperheight, left=0.10\paperwidth, right=0.10\paperwidth}
{\small \bibliography{biblio}}
\restoregeometry

\end{document}